\documentclass[hidelinks,12pt,reqno]{amsart}

\usepackage[T1]{fontenc}
\usepackage{crimson}
\usepackage[dvipsnames]{xcolor}
\usepackage{xcolor,amssymb,enumerate,tikz-cd,hyperref,amsmath,mathrsfs,mathtools,enumitem,changepage,
mathdots,eucal,xspace,amsthm,parskip,cancel,
amsfonts,fge,xpatch,array,pgfplots,mathabx,stackrel,stmaryrd,makecell,tabularx}
\usepackage[table]{xcolor}
\usetikzlibrary{3d,patterns}
\usepackage{contour}
\contourlength{3pt}

\newcolumntype{C}{>{\centering\arraybackslash}X}

\pgfplotsset{compat=1.18}

\usepackage{scalerel}[2016/12/29]
\usepackage{xparse,subcaption,hyperref,cleveref}

\allowdisplaybreaks

\usepackage[margin=1in]{geometry}

\usepackage[backend=biber,style=alphabetic,doi=false,isbn=false,url=false,maxbibnames=99,maxalphanames=99,minalphanames=1]{biblatex}

\AtBeginBibliography{\footnotesize}

\newtoggle{biblink}

\renewbibmacro*{begentry}{%
  \iffieldundef{doi}
    {\iffieldundef{url}
      {\global\togglefalse{biblink}}
      {\global\toggletrue{biblink}%
       \edef\@biburl{\thefield{url}}%
       \pdfstartlink attr{/C [0 0 1] /Border [0 0 0]} user{/Subtype /Link /A <</S /URI /URI (\@biburl)>>}%
       \bgroup}}
    {\global\toggletrue{biblink}%
     \edef\@biburl{https://doi.org/\thefield{doi}}%
     \pdfstartlink attr{/C [0 0 1] /Border [0 0 0]} user{/Subtype /Link /A <</S /URI /URI (\@biburl)>>}%
     \bgroup}%
}

\renewbibmacro*{finentry}{%
  \iftoggle{biblink}{\egroup\pdfendlink}{}%
  \finentry
}

\newtheorem{thmX}{Theorem}
\newtheorem{corX}[thmX]{Corollary}

\theoremstyle{definition}
\newtheorem{defn}[subsubsection]{Definition}
\newtheorem*{defn*}{Definition}

\newtheorem*{idea*}{Definition}

\newtheorem{ass*}{Assumption}
\newtheorem{rem}[subsubsection]{Remark}
\newtheorem*{rem*}{Remark}

\newtheorem*{example*}{Example}

\newtheorem*{construction*}{Construction}

\newtheorem*{varnt*}{Variant}

\newtheorem*{notat*}{Notation}

\newtheorem*{warning*}{Warning}

\newtheorem*{ques*}{Question}

\newtheorem*{attempt*}{Attempt}

\newtheorem*{lemdefn*}{Lemma-Definition}

\newtheorem{prop}[subsubsection]{Proposition}
\newtheorem*{prop*}{Proposition}
\newtheorem{theorem}[subsubsection]{Theorem}
\newtheorem*{theorem*}{Theorem}
\newtheorem{lem}[subsubsection]{Lemma}
\newtheorem*{lem*}{Lemma}
\newtheorem{cor}[subsubsection]{Corollary}
\newtheorem*{cor*}{Corollary}

\newtheorem*{conj*}{Conjecture}

\usetikzlibrary{decorations.markings}
\newcommand{\myarrowlength}{10pt}
\tikzset{mytip/.tip={Butt Cap[black, length=\myarrowlength, sep=-1.6pt]>[black]},
    std/.style={white, text=black, #1, decoration={transform={xshift=.5*\myarrowlength}, markings, mark=at position .5 with {\arrow{mytip}}}, postaction=decorate},
    myarrow/.default={}}

\usetikzlibrary{decorations.pathmorphing,shapes}
\usetikzlibrary{arrows.meta}
\tikzcdset{arrow style=tikz,
    squigarrow/.style={
        decoration={
        snake, 
        amplitude=.4mm,
        segment length=2mm
        }, 
        rounded corners=.2pt,
        decorate
        }
    }

\tikzset{
  white ultra thick/.style={
    ultra thick,
    preaction={draw,white,line width=#1}
  }
}
\tikzset{
  white thick/.style={
    thick,
    preaction={draw,white,line width=#1}
  }
}

\newcommand{\rightloop}[1][]{%
  \begin{tikzpicture}[baseline={(0,-0.1)}]
    \draw[->, in=30, out=-30, looseness=6] (0,-0.1) to (0,0.1);
  \end{tikzpicture}%
}

\newcommand{\leftloop}[1][]{%
  \begin{tikzpicture}[baseline={(0,-0.1)}]
    \draw[->, in=-30+180, out=30+180, looseness=6] (0,-0.1) to (0,0.1);
  \end{tikzpicture}%
}

\makeatletter   
\xpatchcmd{\@tocline}
{\hfil\hbox to\@pnumwidth{\@tocpagenum{#7}}\par}
{\ifnum#1<0\hfill\else\dotfill\fi\hbox to\@pnumwidth{\@tocpagenum{#7}}\par}
{}{}
\makeatother    

\makeatletter
\def\l@subsection{\@tocline{2}{-9pt}{4pc}{6pc}{}}
\def\l@subsubsection{\@tocline{3}{-9pt}{8pc}{8pc}{}}
 \makeatother

\newcommand{\sbt}{\mathrel{\scaleobj{0.75}{*}}}

\newcommand{\defeq}{\vcentcolon=}

\newcommand{\Gm}{\textup{\textbf{G}}_m\xspace}
\newcommand{\Ga}{\textup{\textbf{G}}_a\xspace}
\newcommand{\BGm}{\textup{B}\textup{\textbf{G}}_m\xspace}
\newcommand{\BT}{\textup{B}T\xspace}

\newcommand{\spn}{\textup{span}\xspace}

\newcommand{\rank}{\textup{rk}\xspace}

\newcommand{\cprim}{P(\mathcal{A}^{*}_{Q})}

\newcommand{\Md}{\text{-}\textup{Mod}\xspace}

\newcommand{\CoMd}{\text{-}\textup{CoMod}\xspace}

\ExplSyntaxOff

\ExplSyntaxOn
\NewDocumentCommand{\massdefine}{m}
 {
  \clist_map_inline:nn { #1 }
   {
    \cs_new_protected:cpn { ##1 } { \textup{##1}\xspace }
   }
 }
\ExplSyntaxOff

\massdefine{Higgs,Bun,Set,Fun,Maps,loc,Hilb,QCoh,Coh,triv,
BM,Obj,Ext,Hom,Mor,Aut,pt,gr,Vect,Frac,im,Rep,Gr,corank,
coker,ad,cl,Res,act,elt,Ind,End,ch,ev,tr,Perf,Pic,Ran,Sh,CoSh,id,Zhu,Mod,coMod,YDMod,Alg,Art,Supp,colim,Stk,Fuk,
Cob,LocSys,Stab,Perv,IC,SI,IH,cSh,fib,cofib,oblv,cone,cocone,Top,Loc,
Ann,Sym,Gal,IndCoh,LGr,HF,PreStk,fin,Whit,KD,Dix,Vir,AbCat,Cat,Div,ComCoAlg,
fact,Lie,CoLie,CoAlg,HN,Jac,CoHA,Hitch,BD,BV,vol,crit,Cone,Hol,Isom,Sch,reg,
PSL,BGL,BP,BG,BSp,BSL,PGL,GL,PSp,PSO,Proj,ShvCat,Groth,dgCat,Conf,Fin,SES,FinSet,Prim,dR,Spf,
LieAlg,LieCoAlg,LieBiAlg,Aff,Tr,Disk,Disks,tors,Wh,CommAlg,CommCoAlg,PoisAlg,
PoisCoAlg,PoisBiAlg,Rib,BPS,PervCoh,corr,Tot,CE,codim,BiAlg,Comm,Pois,CoCommCoAlg,
BU,BSU,BConf,Ar,Diff,Crit,Fact,Harr,PSet,Inst,Tay,Spec,ncSpec,Op,Th,FactCat,FactAlg,FactCoAlg,
CoFactCat,CoMod,FactBiAlg,Cf,surj,Gpd,Gp,Sp,SL,fac,Eu,obj,Sing,Man,FactMod,FactCoMod,Ell,MF,QM,TQFT,taut,fr,BA,Mfld,HC,SSet,ord,
per,KU,HP,lax,oplax,Nak,CoSpan,Corr,Span,CoCorr,Tor,Av,ren,Hecke,
Sph,sm,hol,ext}

\ExplSyntaxOn

\NewDocumentCommand{\massdefinemathcal}{m}
 {
  \clist_map_inline:nn { #1 }
   {
    \cs_new_protected:cpx { ##1 } { \exp_not:N \mathcal { \tl_range:nnn { ##1 } { 1 } { -2 } } }
   }
 }

\NewDocumentCommand{\massdefinemathfrak}{m}
 {
  \clist_map_inline:nn { #1 }
   {
    \cs_new_protected:cpx { ##1 } { \exp_not:N \mathfrak { \tl_range:nnn { ##1 } { 1 } { -2 } } }
   }
 }
 
 \NewDocumentCommand{\massdefinetext}{m}
 {
  \clist_map_inline:nn { #1 }
   {
    \cs_new_protected:cpx { ##1 } { \exp_not:N \textup { \tl_range:nnn { ##1 } { 1 } { -2 } } }
   }
 }

\NewDocumentCommand{\textbftextup}{m}{%
  \textbf{\textup{#1}}%
}

 \NewDocumentCommand{\massdefinetextbf}{m}
 {
  \clist_map_inline:nn { #1 }
   {
    \cs_new_protected:cpx { ##1 } { \exp_not:N \textbftextup { \tl_range:nnn { ##1 } { 1 } { -2 } } }
   }
 }
 
\ExplSyntaxOff

\massdefinemathcal{Al,Bl,Cl,Dl,El,Fl,Gl,Hl,Il,Jl,Kl,Ll,Ml,Nl,Ol,Pl,Ql,Rl,Sl,Tl,Ul,Vl,Wl,Xl,Yl,Zl}
\massdefinemathfrak{Ak,Bk,Ck,Dk,Ek,Fk,Gk,Hk,Ik,Jk,Kk,Lk,Mk,Nk,Ok,Pk,Qk,Rk,Sk,Tk,Uk,Vk,Wk,Xk,Yk,Zk,
ak,bk,ck,dk,ek,fk,gk,hk,ik,jk,kk,lk,mk,nk,ok,pk,qk,rk,sk,tk,uk,vk,wk,xk,yk,zk,
slk,glk,sok,spk,ospk}
\massdefinetext{At,Bt,Ct,Dt,Et,Ft,Gt,Ht,It,Jt,Kt,Lt,Mt,Nt,Ot,Pt,Qt,Rt,St,Tt,Ut,Vt,Wt,Xt,Yt,Zt,
at,bt,ct,dt,et,ft,gt,jt,kt,lt,mt,nt,ot,qt,rt,st,ut,vt,wt,wtt,xt,yt,zt,sst,SSt,SUt,SLt,GLt,HHt,SOt,Spt,CEt,SpOt,OSpt,HCt,CSt,MCt,opt}
\massdefinetextbf{Ab,Bb,Cb,Db,Eb,Fb,Gb,Hb,Ib,Jb,Kb,Lb,Mb,Nb,Ob,Pb,Qb,Rb,Sb,Tb,Ub,Vb,Wb,Xb,Yb,Zb,
ab,bb,cb,db,eb,fb,gb,hb,ib,jb,kb,lb,mb,nb,ob,pb,qb,rb,tb,ub,vb,wb,xb,yb,zb,Grb,RPb,CPb}

\newcommand{\UU}{\mathrm{U}\xspace}

\newcommand{\HH}{\textup{H}\xspace}

\newcommand{\B}{\Bt}

\newcommand{\N}{\Nb}

\newcommand{\AQW}[3][]{\mathcal{A}^{#1}_{{#2},{#3}}}

\newcommand{\MSd}[3][]{\mathcal{M}^{#1}_{#2,#3}}

\begin{document}

\address{Ecole Polytechnique Fédérale de Lausanne (EPFL), CH-1015 Lausanne, Switzerland}
\email{shivang.jindal@epfl.ch}
\author{Shivang Jindal}
\address{Kavli Institute for the Physics and Mathematics of the Universe (WPI), The University
of Tokyo Institutes for Advanced Study, The University of Tokyo, Kashiwa, Chiba 277-8583,
Japan}
\email{sarunas.kaubrys@ipmu.jp}
\author{ \v{S}arūnas Kaubrys}
\address{Beijing Institute of Mathematical Sciences and Applications (BIMSA), Huairou District Beijing, 101408, China}
\email{alyoshalatyntsev@gmail.com}
\author{Alexei Latyntsev}

\date{\today}
\title{Critical CoHAs, vertex coalgebras and Deformed Drinfeld coproducts}
\maketitle
\begin{adjustwidth}{20pt}{20pt}
\small{\textsc{Abstract}: We construct a vertex coproduct on the Kontsevich--Soibelman cohomological Hall algebra (CoHA) of a quiver
with potential, following Joyce \cite{Jo}. We show it forms a vertex bialgebra. By applying a vertex algebraic analogue of Majid--Radford bosonisation, we form an extension of the CoHA of quivers with potential which incorporates a Cartan part. In the case of ADE quivers our vertex coproduct recovers Drinfeld's deformed coproduct on the Yangian. We compare the vertex coproduct with a localised
coproduct defined by Davison and with the construction of Dotsenko--Mozgovoy when the potential is trivial. Our construction gives a new proof of the cohomological integrality theorem for symmetric quivers with trivial potential.}
\end{adjustwidth}
\setcounter{tocdepth}{1}
\tableofcontents

\setcounter{section}{-1}

\section{Introduction}
\noindent
Using natural geometric structures on moduli spaces of objects $\Ml_{\Cl}$ in abelian or derived categories $\Cl$, Joyce \cite{joyce2021enumerativeinvariantswallcrossingformulae,Jo} has constructed a structure of a vertex algebra on the homology $\Ht_{\sbt}(\Ml_{\Cl})$ or equivalently a vertex coalgebra on the cohomology. This structure has already found several applications in enumerative geometry such as \cite{lim_bojko_moreira,gross_joyce_tanaka,bu_counting_sheaves} and \cite{liu2025equivariantktheoreticenumerativeinvariants} in the K-theory setting. However, most enumerative geometry is interested instead in the \textit{vanishing cycle cohomology} or Borel--Moore homology of such stacks. In this paper we fill this gap, defining a vertex coalgebra on critical Cohomological Hall algebras of quivers with potential. This gives a new geometric representation theory interpretation of these vertex coproducts, matching them with known (Drinfeld) coproducts on quantum groups.

Let $Q$ be a quiver with potential $W$. We study two structures on the following vector space
\begin{equation} \label{coha_vector_space_intro}
    \Al_{Q,W}\ =\ \bigoplus_{d \in \Nb^{Q_{0}}} \Ht^{\sbt}(\Ml_{Q,d}, \varphi_{\textup{Tr}W_{d}}\Qb_{\Ml_{Q,d}}[\dim \Ml_{Q,d}]),
\end{equation}
where $\Ml_{Q}$ is the moduli stack of quiver representations and $\varphi_{\textup{Tr} W,d}\Qb_{\Ml_{Q,d}}[\dim \Ml_{Q,d}]$ is the vanishing cycles perverse sheaf. 
The first algebraic structure is the Kontsevich-Soibelman Cohomological Hall algebra (CoHA) \cite{KS}.
The second is the new vertex coproduct, an algebraic structure generalising work of Joyce \cite{Jo} in the case of $W=0$, and analogous to Liu's vertex coproduct on critical K-theory \cite{liu2022multiplicative} for tripled quivers. 

We show these structures are compatible: they form a (vertex) \textit{bialgebra} in an ambient (meromorphic) braided category of modules over a tautological subring of cohomology. These types of results go back to Ringel and Green \cite{Gr,Ri} who also showed that Hall algebras are quantum groups. Analogously, we show that for triples of Dynkin quivers associated to simple Lie algebra $\gk$, we have an isomorphism of (vertex) bialgebras exchanging our vertex coproduct with the Drinfeld coproduct on Yangians:\footnote{The superscripts relate to working equivariantly with respect to a one-dimensional torus with $T_{\hbar}$ with $\Ht^{\sbt}(\BT_\hbar)=\Cb[\hbar]$, adding a sign correction $\chi$ to the CoHA product, and bosonisation, respectively.}
\begin{equation} 
  \label{eqn:CoHAYangian} 
  \Al_{\widetilde{Q},\widetilde{W}}^{T_{\hbar}, \chi, \ext} \ \simeq \ Y_{\hbar}(\gk)^{\ge 0}.
\end{equation}
The isomorphism as algebras is due to Yang--Zhao \cite{YZ}, in which the authors also construct a coproduct, which they show is equivalent to the Drinfeld coproduct.

Being a vertex bialgebra allows us to apply \textit{vertex Majid--Radford bosonisation} to extend CoHAs uniformly, i.e. add a Cartan piece, and in the Dynkin case this formula for its coproduct induces \eqref{eqn:CoHAYangian}. If $W=0$, we show that the Joyce vertex algebra is a universal chiral envelope and show it is isomorphic to the construction of \cite{dotsmozgva}. 
\subsection{Cohomological Hall product and Joyce--Liu vertex coproduct}  \label{ssec:CoHA_intro}
We now give a more detailed introduction to the algebraic structures we study in this paper.
\subsubsection{} All algebraic structures on $\Al_{Q,W}$ will be mirrored by and constructed from geometric structures on the moduli stack $\Ml_Q$ of representations of the quiver $Q$.
\begin{equation} \label{corresp}
\begin{tikzcd}[row sep = {30pt,between origins}, column sep = {45pt,between origins}]
 &\SES_Q \ar[rd,"p"]\ar[ld,,"q"'] & &[20pt] & \SES_Q  \ar[ld,"p"'] \ar[rd,"q"] & \\ 
\Ml_Q \times \Ml_Q & &\Ml_Q & \Ml_Q&& \Ml_Q \times \Ml_Q \ar[ll,"\oplus"'] 
\end{tikzcd}
\end{equation}
On the left we have the stack parametrising short exact sequences of representations of $Q$, with maps $p$ and $q$ forgetting all but the middle and outer terms. On the right we have the map $\oplus$ taking direct sum of representations, and only commutes if we replace $q$ with its section $s$ taking a pair of representations to the trivial direct sum extension.

In fact, the above diagrams are equivariant for a certain class of actions by a torus $T$ leaving the potential of the quiver invariant, and we usually consider a $T$-equivariant version $\mathcal{A}^{T}_{Q,W}$ of the vanishing cycles cohomology in \eqref{coha_vector_space_intro}, and $\Ml_Q^T$ the stack quotient by $T$.

The Kontsevich--Soibelman CoHA product \cite{KS} on critical cohomology is defined by pulling back by $q$ and then pushing forward by $p$. The Joyce-Liu vertex coproduct is instead defined by using the map $\oplus$:
\begin{enumerate}
    \item We can define a direct sum pullback $\oplus^{*} : \mathcal{A}^{T}_{Q,W} \to \mathcal{A}^{T}_{Q,W} \otimes \mathcal{A}^{T}_{Q,W}$ using compatibility of the vanishing cycles functor with various sheaf operations.
    \item We can define a translation, via pullback under the action $\act \colon \B \Gb_m \times \Ml^{T}_{Q} \to \Ml^{T}_{Q}$, which scales the automorphisms,
    \begin{equation}
        \act^{*} \colon \mathcal{A}^{T}_{Q,W} \ \to \ \mathcal{A}^{T}_{Q,W} \otimes \Ht^{\sbt}(\B \Gb_{m}) \ \simeq\ \mathcal{A}^{T}_{Q,W}[z]
    \end{equation}
    \item We multiply resulting structure by the so called \emph{Joyce-Borcherds} bicharacter
\begin{equation}\label{eqn:PsiIntro}
  \Psi(\Ext,-z)\ =\ \sum_{k \geq 0} (-z)^{\textup{rk} \Ext - k}c_{k}(\Ext)
\end{equation}
of the two term complex of vector bundles $\Ext  =  \textup{RHom}(-,-)  \in \textup{Perf}(\Ml^{T}_{Q} \times_{\BT} \Ml^{T}_{Q}).$
\end{enumerate}

The result below understands this structure as being a (braided colocal) \textit{vertex coproduct}, a definition due to Borcherds and Hubbard \cite{Bo,Hu}; see section \ref{sec:VA}. Loosely speaking, it is meant to capture the notion of a vector ``sitting at'' a point in $\Cb$ splitting into two vectors sitting at $z_1,z_2\in \Cb$, and this assignment depending meromorphically on $z=z_1-z_2$; see \cite{beilinson_drinfeld}.
\begin{thmX}[Theorem \ref{quiver_potential_jl}]\label{quiver_potential_jl_intro}
Let $Q$ be any quiver with potential $W$ and a torus action that leaves the potential invariant, and assume it satisfies the $T$-equivariant K\"unneth property \eqref{eqn:KunnethAssumption}.\footnote{This is known to hold if $T =1$ or in canonical tripled cases, see section \ref{sssec:kunneth_assumpt} for more discussion.}. Then
\begin{samepage}
\begin{align} \label{joyce_co_va_intro}
    \Delta(z)\ :\ \mathcal{A}^{T}_{Q,W} \ & \to \  \mathcal{A}^{T}_{Q,W} \otimes_{\Ht^{\sbt}(\BT)} \mathcal{A}^{T}_{Q,W}  (( z^{-1})) \nonumber \\[5pt]
    \alpha \ & \mapsto \  \ \Psi(\Ext,-z) \cdot \act^{*}_{1} \oplus^{*}(\alpha)
\end{align}
\end{samepage}
 defines a coassociative (alias noncolocal or weakly coassociative) vertex coalgebra. Furthermore, $\Delta(z)$  is cocommutative (alias colocal) in the sense that we have
 \begin{equation}
     \Delta(z) =   \sigma\cdot S(z)\Delta(-z) \act^{*} 
 \end{equation}
 with $  S(z)  = (-1)^{\rank \Ext}\frac{\Psi(\sigma^* \Ext^\vee, z)}{\Psi(\Ext, z)}$ and $\sigma$ the involution swapping the factors with Koszul sign rule. 
\end{thmX}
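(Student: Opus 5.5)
The proof reduces the two identities (coassociativity, and cocommutativity up to $S(z)$) to geometric identities among maps of moduli stacks, plus multiplicativity properties of $\Psi$. These identities are then transported through the vanishing cycles functor. First I would establish the functorialities needed for vanishing cycle cohomology. Pullback along $\oplus$ is available because $\oplus^*\textup{Tr}W = \textup{Tr}W\boxplus\textup{Tr}W$, and the Thom--Sebastiani isomorphism together with the $T$-equivariant Künneth assumption \eqref{eqn:KunnethAssumption} identifies the target with $\mathcal{A}^T_{Q,W}\otimes_{\Ht^\sbt(\BT)}\mathcal{A}^T_{Q,W}$. Pullback along $\act$ is available because $\textup{Tr}W$ is invariant under scaling of automorphisms. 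Both are compatible with cup product by classes pulled back from $\Ht^\sbt(\Ml_Q^T\times_{\BT}\Ml_Q^T)$, such as the Chern classes of $\Ext$. The shifts $[\dim\Ml_{Q,d}]$ must be tracked. They contribute the factor $(-z)^{\rank\Ext}$ hidden in $\Psi$, and, under swapping, the Koszul signs together with $(-1)^{\rank\Ext}$.

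\textbf{Coassociativity.} I would compare $(\Delta(z_1)\otimes\id)\Delta(z_2)$ and $(\id\otimes\Delta(z_2))\Delta(z_1+z_2)$ (or whichever normalisation Section~\ref{sec:VA} uses) by pulling back along $\oplus^{(3)}:\Ml_Q^3\to\Ml_Q$ and along $\act$ of $(\B\Gb_m)^2$ on the triple product. The geometric inputs are three commutative squares:
\begin{itemize}
\item associativity of $\oplus$;
\item $\oplus\circ(\act\times\act)=\act\circ\oplus$, meaning the diagonal $\B\Gb_m$ acts compatibly with direct sum;
\item $\act_{a}\circ\act_{b}=\act_{a+b}$ on cohomology.
\end{itemize}
The Ext factors then need to match. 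This follows from additivity of $\Ext$ under direct sum,
\[
(\oplus\times\id)^*\Ext=\Ext_{13}\oplus\Ext_{23},
\]
from the weight computation $\act_1^*\Ext=\Ext\otimes\chi^{-1}$, which translates $\Psi(\Ext,-z)$ in $z$, and from multiplicativity $\Psi(E\oplus F,z)=\Psi(E,z)\Psi(F,z)$. Each side is then a product of $\Psi(\Ext_{ij},\cdot)$ over pairs $i<j$ with the correct spectral parameters, applied to $\act^*\oplus^{(3)*}\alpha$.

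The main obstacle is that $\Psi(\Ext,-z)$ is only a Laurent series in $z^{-1}$. To make sense of these products, the three-variable expansions must be shown to land in a common domain, such as $((z_1^{-1}))((z_2^{-1}))$, where they agree. I would handle this by writing $\Psi$ as $(-z)^{\rank}$ times a polynomial in the Chern roots, so that everything becomes a rational function in $z_1,z_2$ with poles only along the lines $z_i=0$ and $z_1+z_2=0$. The two expansions then agree in the weak (noncolocal) sense required by the definition in Section~\ref{sec:VA}.

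\textbf{Cocommutativity.} Use $\oplus\circ\textup{swap}=\oplus$, which gives $\sigma\,\oplus^*=\oplus^*$ up to the Koszul sign coming from the shifts. Swapping the factors sends $\Ext$ to $\sigma^*\Ext$. Applying $\act$ to one factor corresponds to $z\mapsto -z$ after an overall translation by $\act^*$. So
\[
\sigma\Delta(-z)\act^*=\Psi(\sigma^*\Ext,z)\cdots,
\]
and the ratio between this and $\Delta(z)$ is exactly $\Psi(\Ext,-z)/\Psi(\sigma^*\Ext,z)$. Using the symmetry $\Psi(E,-z)=(-1)^{\rank E}\Psi(E^\vee,z)$, which holds because $c_k(E^\vee)=(-1)^kc_k(E)$, this ratio equals $S(z)^{-1}$ in the stated form. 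What remains is to match conventions so that $\sigma^*\Ext^\vee$ appears in the numerator as claimed.
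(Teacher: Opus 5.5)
Your plan is the paper's proof. For coassociativity, the paper uses Proposition~\ref{jb_prop} to reduce both sides to $\Psi(-z)_{12}\Psi(-z-w)_{13}\Psi(-w)_{23}$ times a pullback. It then needs only $\act^*_{z,1}(\oplus^*\otimes\id)\act^*_{w,1}=\act^*_{z+w,1}\act^*_{w,2}(\oplus^*\otimes\id)$, which your three commutative squares supply. For colocality (part (6) of Theorem~\ref{thm:CoHABialgebra}), it uses cocommutativity of $\oplus^*$, the identity $\act^*_{-z,1}\oplus^*\act^*_z=\act^*_{z,2}\oplus^*$, and Lemma~\ref{lem:psi_properties}(4), which are exactly your ingredients. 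The paper matches the $\Psi$-factors formally and never discusses expansion domains, so your attention to this is a genuine refinement. However, no single ring such as your $((z_1^{-1}))((z_2^{-1}))$ works. The right-hand side is naturally a series in $(z_1+z_2)^{-1}$, and in general it does not re-expand convergently there. What the factor matching actually shows is the weak statement: both sides are expansions of one rational function in different regions.

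The one step that is wrong is the end of the colocality argument. From $\sigma\bigl(\Delta(-z)\act^*\alpha\bigr)=\Psi(\sigma^*\Ext,z)\,\act^*_{z,1}\oplus^*\alpha$ you correctly obtain the factor $\Psi(\Ext,-z)/\Psi(\sigma^*\Ext,z)$. But this factor is not $S(z)^{-1}$, and no convention needs adjusting. The factor multiplies \emph{outside} $\sigma$, on $\Ml_{Q,d_1}\times\Ml_{Q,d_2}$, whereas $S(z)$ in the statement acts \emph{inside} $\sigma$, on $\Ml_{Q,d_2}\times\Ml_{Q,d_1}$. Since $f\cdot\sigma(X)=\sigma(\sigma^*f\cdot X)$ for the even classes $f$ involved, moving it inside exchanges $\Ext$ and $\sigma^*\Ext$:
\begin{align*}
\Delta_{d_1,d_2}(z)\alpha
&= \frac{\Psi(\Ext_{d_1,d_2},-z)}{\Psi(\sigma^*\Ext_{d_2,d_1},z)}\cdot\sigma\bigl(\Delta_{d_2,d_1}(-z)\act_z^*\alpha\bigr)\\
&= \sigma\Bigl(\frac{\Psi(\sigma^*\Ext_{d_1,d_2},-z)}{\Psi(\Ext_{d_2,d_1},z)}\,\Delta_{d_2,d_1}(-z)\act_z^*\alpha\Bigr)\\
&= (-1)^{\rank \Ext_{d_1,d_2}}\,\sigma\Bigl(\frac{\Psi(\sigma^*\Ext^\vee_{d_1,d_2},z)}{\Psi(\Ext_{d_2,d_1},z)}\,\Delta_{d_2,d_1}(-z)\act_z^*\alpha\Bigr).
\end{align*}
The last step uses $\Psi(E,-z)=(-1)^{\rank E}\Psi(E^\vee,z)$. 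This is exactly $\sigma\cdot S(z)\Delta(-z)\act^*$, with $\sigma^*\Ext^\vee$ in the numerator and the sign $(-1)^{\rank\Ext_{d_1,d_2}}$, as in part (6) of Theorem~\ref{thm:CoHABialgebra}. In particular, that sign comes only from this duality identity, not from Koszul signs of the shifts as your first paragraph suggests.
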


If the torus is trivial and the quiver symmetric, this says that the vertex coproduct is colocal up to a sign $S(z) = (-1)^{\rank \Ext}$.

\subsection{Localised versus vertex coproducts}

\subsubsection{} We also compare with another way of dealing with the singularities in \eqref{eqn:PsiIntro} using \textit{localised coproducts}. Both this and the vertex coproduct are attempting to define the pull-push ``$q_*p^*$'' along the right diagram \eqref{corresp}, but with $q$ not being proper this is not a priori well-defined. 

The method of Davison \cite{Da2} is to formally define $q_*$ with the inverse of $q^*$ composed with multiplication by its Euler class of its shifted tangent complex, which is a quotient $e(Q_{0})/e(Q_1)$ of cohomology classes. The resulting coproduct
$$\Delta_{\loc} \ : \ \Al^T_{Q,W} \ \to \ \Al^T_{Q,W} \otimes_{\Ht^{\sbt}(\BT)}\Al^T_{Q,W}[S^{-1}]$$
is therefore only well-defined if we localise a set $S$ of cohomology classes.

The connection between the localised coproduct and the Joyce-Liu coproduct is given by an observation that the class $e(Q_{0})/e(Q_1)$ is exactly the Euler class of the complex $e(\Ext)$. Furthermore, we have that 
\begin{equation}
    \act^{*}_{1} e(\Ext)\ = \ \Psi(\Ext, -z)
\end{equation}
where on the left we pull back by the $\BGm$ action on the first factor then expand as a Laurent series in $z^{-1}$. This allows us to bypass the need to invert, at the cost of working with series in $z$, and gives the following comparison result.

\begin{thmX}[Theorem \ref{thm:DavisonIsJoyce}] \label{loc_intro}
There is a map
\begin{equation}
      \act^{*}_{1} : \AQW[T]{Q}{W} \otimes  \AQW[T]{Q}{W}[S^{-1}]\ \to \  \AQW[T]{Q}{W} \otimes \AQW[T]{Q}{W} ((z^{-1}))
\end{equation}
intertwining the Joyce--Liu vertex and Davison localised coproducts: $\act^{*}_{1} \cdot \Delta_{\loc}= \Delta(z)$.
\end{thmX}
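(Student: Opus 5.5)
Both coproducts factor through the direct-sum pullback $\oplus^{*}$, so the comparison reduces to matching the Euler-class factors. Recall that Davison's localised coproduct is defined, for $\alpha$ in the $d$-graded piece with $d = d_1 + d_2$, by
\[
\Delta_{\loc}(\alpha) \;=\; e(\Ext)^{-1}\cdot\Bigl(\frac{e(Q_0)}{e(Q_1)}\Bigr)\cdot \oplus^{*}(\alpha) \;=\; e(\Ext)\cdot \oplus^{*}(\alpha),
\]
suitably normalised. Here $\oplus^{*}$ is the same pullback used in the definition of $\Delta(z)$ in Theorem \ref{quiver_potential_jl_intro}, and the two definitions share the same sign conventions and Künneth identification. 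The map in the statement is $\act^{*}_{1}$, extended to the localisation by $S^{-1}$. I would proceed in three steps.

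\textbf{Step 1 (the map is well defined).} The pullback $\act^{*}_{1}$ along the $\BGm$-action on the first factor is a ring homomorphism
\[
\act^{*}_{1} \colon \Ht^{\sbt}(\Ml^T_{d_1}\times_{\BT}\Ml^T_{d_2}) \to \Ht^{\sbt}(\Ml^T_{d_1}\times_{\BT}\Ml^T_{d_2})[z]
\]
and a module map on the critical cohomology. The multiplicative set $S$ is generated by Euler classes of the weight-space bundles $\Hom(V_{1,i},V_{2,j})$ and their duals, twisted by $T$-characters. Under $\act^{*}_{1}$ each such class $e(E)$ goes to $\sum_k (-z)^{\rank E-k}c_k(E)$. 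This has leading term $(-z)^{\rank E}$, which is invertible in the Laurent series ring in $z^{-1}$. Hence $\act^{*}_{1}$ extends uniquely to $S^{-1}$ with values in $((z^{-1}))$.

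\textbf{Step 2 (the Euler-class identity).} Apply $\act^{*}_{1}$ to $e(\Ext)$. Since $\Ext$ is a two-term complex $\bigoplus_{Q_0}\Hom(V_{1,i},V_{2,i}) \to \bigoplus_{Q_1}\Hom(V_{1,s(a)},V_{2,t(a)})$, both $e$ and $\Psi$ are multiplicative in the K-theory class. It therefore suffices to check the identity on a line bundle of weight $-1$ under the $\Gm$-scaling, where $\act^{*}_{1} c_1(L) = c_1(L) - z$. This gives $\act^{*}_{1} e(\Ext) = \Psi(\Ext,-z)$ by the splitting principle.

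\textbf{Step 3 (compatibility with the sheaf operations).} Show that
\[
\act^{*}_{1}\circ \oplus^{*} \;=\; \act^{*}_{1}\oplus^{*}
\]
as maps on vanishing-cycle cohomology. That is, the map $\act^{*}_{1}$ applied after Davison's coproduct agrees with the composite used in $\Delta(z)$. Multiplicativity of $\act^{*}_{1}$ then gives
\[
\act^{*}_{1}\Delta_{\loc}(\alpha) \;=\; \Psi(\Ext,-z)\,\act^{*}_{1}\oplus^{*}(\alpha) \;=\; \Delta(z)(\alpha).
\]

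The main obstacle is Step 3 together with the extension in Step 1 on the critical side. One must check that $\act^{*}_{1}$ on $\varphi$-cohomology is linear over the cohomology ring acting by cup product, including after localisation. One must also check that the vanishing-cycle Thom–Sebastiani and Künneth isomorphisms commute with the $\BGm$-action. I would handle this by noting that $\act$ is a morphism of stacks over which $\Tr W$ is invariant. Base change for $\varphi$ along the smooth map $\act$ then gives the required linearity, and its compatibility with $\oplus$ follows from the functoriality already used in constructing $\Delta(z)$.
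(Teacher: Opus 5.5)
Your route is the paper's. Step 1 is its invertibility check for $\act_1^*e(\Ext_i)$, using the leading term $(-z)^{\rank}$. Step 2 is the identity $\act_{z,1}^*e(\Ext)=e(\gamma^{-1}\boxtimes\Ext)=\Psi(\Ext,-z)$ of Proposition \ref{prop:PsiEuler}. Your final chain of equalities is also the paper's.

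The one step you have not established is the opening ``recall''. Davison's coproduct is defined through the extension correspondence as $\frac{e(Q_0)}{e(Q_1)}\cdot(q^*)^{-1}p^*$, not through $\oplus$. So $\Delta_{\loc}=e(\Ext)\cdot\oplus^*$ has to be proved; it is Lemma \ref{lem:DavisonDirectSum}. It needs two facts:
\begin{enumerate}
\item $q$ is an affine fibration with section $s$, so $q^*$ is invertible with $(q^*)^{-1}=s^*$. Since $p\circ s=\oplus$, this gives $(q^*)^{-1}p^*=s^*p^*=\oplus^*$. The functions match because $s^*p^*\tr W=\oplus^*\tr W=\tr W\boxplus\tr W$.
\item $e(\Ext)=e(\Ext_0)/e(\Ext_1)=e(Q_0)/e(Q_1)$, by Proposition \ref{prop:euler_ext_to_euler_ben}.
\end{enumerate}
The second fact shows your first display is wrong as written. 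The expression $e(\Ext)^{-1}\cdot\frac{e(Q_0)}{e(Q_1)}\cdot\oplus^*(\alpha)$ is just $\oplus^*(\alpha)$, so the factor $e(\Ext)^{-1}$ must be deleted, not ``normalised''.

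Step 3, as you state it, is a tautology and is not where any difficulty lies. What is actually needed is that the map of Definition \ref{defn:VertexFromLocalised}, $\frac{a\otimes a'}{s}\mapsto\frac{\act^*(a)\otimes a'}{\act_1^*(s)}$, agrees with $\act_1^*$ on unlocalised classes. It must also be multiplicative for the cup-product action of $\Ht^{\sbt}(\Ml^T_Q\times_{\BT}\Ml^T_Q)$, which is Lemma \ref{lem:linearity_two_acts}. No separate base-change argument for $\varphi$ along $\act$ is required. Once you supply $(q^*)^{-1}p^*=\oplus^*$, your proof is complete and coincides with the paper's.
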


We expect this should be viewed as a comparison result between algebraic structures on configuration spaces of the cohomology rings of $\Ml^{T}_{Q}$.

\subsection{Compatibility and bosonisation}

\subsubsection{} The first main result of this paper is to show that our two structures are compatible. We state the theorem for the sign twisted (c.f. section \ref{ssec:SignTwist}) versions of the multiplication although we get analogous theorems for the usual CoHA.
\begin{thmX}[Theorem \ref{thm:CoHABialgebra} symmetric case, Theorem \ref{thm:CoHABialgebra_non_symm} general case] \label{thm:BialgIntro}
 The ($\psi$ sign twisted) CoHA product $\star$ and Joyce--Liu vertex coproduct $\Delta(z)$ on together form a vertex bialgebra on $\mathcal{A}^{T,\psi}_{Q,W}$; in particular,
    $$\Delta(b\star b', z) \ = \ \Delta(b,z)\star_{R(z)}\Delta(b',z)$$
    where on the right we have braided by the spectral $R$ matrix 
   \begin{equation}
    R(z)\ =\  \frac{\Psi(\sigma^* \Ext^{\vee}, z)}{\Psi(\Ext, z)}
\end{equation}
   before multiplying the first/third and second/fourth factors. \footnote{Note that this $R$-matrix agrees with the operator $S(z)$ in Theorem \ref{quiver_potential_jl_intro} up to a sign.} 
\end{thmX}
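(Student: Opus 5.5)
The proof will be a geometric comparison of two ways of computing $\Delta(z)\circ\star$. Both sides are pull--push operations along correspondences built from $\SES_Q$ and $\oplus$, so the identity should come from a base-change square for $\oplus$ against the extension stack. Concretely, I would consider the fibre product of $\oplus:\Ml_Q\times\Ml_Q\to\Ml_Q$ with $p:\SES_Q\to\Ml_Q$. A point of it is a short exact sequence $0\to E_1\to E\to E_2\to 0$ together with a splitting $E\simeq F\oplus F'$. This fibre product is not itself a product of extension stacks. However, after pulling back along the $\BGm$-action on the second factor and restricting to $\Gm$-fixed loci (in the sense of scaling automorphisms), the relevant part becomes $\SES_Q\times\SES_Q$: the subobject $E_1$ splits as $E_1\cap F\oplus E_1\cap F'$, and likewise for the quotient. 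This is the standard hyperbolic-localisation / fixed-point mechanism in Joyce's and Liu's arguments.

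The key steps, in order, are as follows.

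First, I would write $\Delta(b\star b',z)=\Psi(\Ext,-z)\,\act_1^*\oplus^*p_*q^*(b\boxtimes b')$.

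Second, I would use the commutative diagram relating $\oplus^*p_*$ to $(p\times p)_*(\oplus_{13,24})^*$ on $\SES\times\SES$. I would verify this diagram is compatible with vanishing cycles using the Thom--Sebastiani isomorphism and the base-change and proper-pushforward compatibilities of $\varphi$ already used to construct $\oplus^*$ and the CoHA product. This requires the $T$-equivariant Künneth assumption, so that everything lands in $\Al\otimes_{\Ht^\sbt(\BT)}\Al$.

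Third, I would commute $\act_1^*$ past the pushforward, which is automatic since the $\BGm$ action is functorial.

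Fourth, I would track the Euler class correction. The discrepancy between the fibre product and $\SES\times\SES$ is measured by the virtual normal bundle. This bundle is given by the cross terms $\Ext(E_1,F'_2)$-type pieces of $\Ext$ on the four-fold product, with nontrivial $\Gm$-weight. After applying $\act_1^*$, their Euler classes become ratios of $\Psi$'s.

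Fifth, I would decompose $\Ext$ on the total space via additivity of $\Psi$ in exact triangles, $\Psi(A\oplus B)=\Psi(A)\Psi(B)$. Writing $\Ext_{ij}$ for $\mathrm{RHom}$ between the $i$th and $j$th factors, the factor $\Psi(\Ext_{(12),(34)},-z)$ coming from $\Delta$ of the product should split as $\Psi(\Ext_{13},-z)\Psi(\Ext_{24},-z)\Psi(\Ext_{14},-z)\Psi(\Ext_{23},-z)$. The $13$ and $24$ factors recombine into $\Delta(b,z)$ and $\Delta(b',z)$. The remaining $23$ cross term must be traded via $\sigma$, which swaps the middle factors in the braided product $\star_{R(z)}$. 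Swapping turns $\Psi(\Ext_{23},-z)$ into the $\Psi(\Ext_{32}^\vee,z)$-type factor, and together with the normal-bundle Euler class from step four this produces exactly $R(z)=\Psi(\sigma^*\Ext^\vee,z)/\Psi(\Ext,z)$ on the middle pair.

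Finally, I would check signs. In the symmetric case the $\psi$ twist is chosen so that the Koszul signs from $\sigma$ and the factor $(-1)^{\rank\Ext}$ cancel. In the non-symmetric case I would carry the extra Euler form terms and check that they again cancel against the twist.

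The main obstacle is the fourth step: justifying the base change of $p_*$ along $\oplus^*$ with the correct normal-bundle Euler class, in the presence of vanishing cycles and with $z$-series expansions rather than localisation. To handle it, I would first prove the identity after inverting Euler classes, using Davison's localised coproduct. There it is a formal consequence of localisation plus the CoHA–localised coproduct compatibility. I would then transport it to the vertex setting via Theorem \ref{loc_intro}, since $\act_1^*\Delta_{\loc}=\Delta(z)$ and $\act_1^*$ is injective on the relevant Laurent expansions.
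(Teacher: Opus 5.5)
Your decisive move---obtaining the identity in the localised setting from Davison's localised bialgebra compatibility \eqref{eqn:DavisonMeinhardt} and pushing it forward along $\act$ via $\act\circ\Delta_{\loc}=\Delta(z)$ (Theorem \ref{thm:DavisonIsJoyce})---is exactly the paper's proof; its other inputs are $\act\circ\beta_{\loc}=\beta(z)\circ\act$ (Corollary \ref{cor:BetaMeromorphicBetaLocalised}, from $\Psi(V,z)=e(\gamma\boxtimes V)$), together with the facts that $\act$ commutes with $m\otimes m$ (by $\BGm$-equivariance and cup-product linearity of the CoHA product) and with the auxiliary localisation $u$ (since $\act$ turns the $(14)$, $(32)$ Euler classes into invertible Laurent series). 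Once you argue this way, your geometric steps 1--5 are redundant (step 2's ``commutative diagram'' for $\oplus^*p_*$ only holds after localisation, with the normal-bundle correction of step 4), and injectivity of $\act$ is never used, since the localised identity is only pushed forward.
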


The form of Theorem \ref{thm:BialgIntro} suggests that the CoHA $\mathcal{A}^{T,\psi}_{Q,W}$ is a bialgebra object internal to some ``meromorphic'' braided monoidal category, with braiding given by the spectral $R$-matrix $R(z)$. Indeed, to state the bialgebra axioms one is forced to work within a category with at least a braided monoidal structure. 

We will consider the category of modules over the \textit{tautological subring} $\Ht^{\sbt}(\Ml^{T}_Q)_{\taut} \subseteq  \Ht^{\sbt}(\Ml^{T}_Q)$ of the cohomology of the moduli stack, which is generated by chern classes of tautological bundles. Joyce's $R$-matrix splits as
$$R_{d_{1},d_{2}}(z) \ = \ z^{\chi(d_{1},d_{2})-\chi(d_{2},d_{1})}R_{\taut,d_{1},d_{2}}(z)$$
consisting of a rational function which is trivial in the symmetric case, and the component $R_{\taut}(z)$ is tautological-valued:
$$R_{\taut}(z) \ \in \ \Ht^{\sbt}(\Ml^{T}_Q)_{\taut} \otimes_{\Ht^{\sbt}(\BT)}\Ht^{\sbt}(\Ml^{T}_Q)_{\taut}[[z^{-1}]].$$
Thus $B=\Al^{T,\psi}_{Q,W}$ and $H= \Ht^{\sbt}(\Ml^{T}_Q)_{\taut}$ satisfy the conditions of Theorem \ref{thm:IntroBosonisation} below. \par
In the non-symmetric case we do not give such an interpretation since in that case the category of tautological ring modules is \textit{not} braided. See subsection \ref{ssec:non_symm} for some more discussion.
\subsubsection{Bosonisation} 
\begin{thmX}[Theorem \ref{thm:VertexBosonisation}] \label{thm:IntroBosonisation} Let $B$ be a vertex bialgebra inside $H\Md$, for $H$ a holomorphic\footnote{See section \ref{sssec:Hol} for a definition, and Lemma \ref{lem:holomorphic_covertex}. } vertex bialgebra with commutative multiplication and spectral quasitriangular element $R(z)$. Then the vector space 
$$B\rtimes H\ = \ B\otimes H$$
has a canonical vertex bialgebra structure, with explicit formulas \eqref{eqn:VertexBosonisationProduct} and \eqref{eqn:VertexBosonisationCoproduct}.
\end{thmX}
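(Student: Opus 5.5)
We would prove Theorem \ref{thm:IntroBosonisation} by transporting the classical Majid--Radford construction to the vertex setting. The first step is to write the candidate structures explicitly. The product on $B\otimes H$ is the smash product $(b\otimes h)(b'\otimes h') = b\,(h_{(1)}\triangleright b')\otimes h_{(2)}h'$, where the $H$-action on $B$ is the given module structure. Since $H$ is holomorphic, its vertex coproduct $\Delta_H(z)$ has no poles, so we may set $z=0$ to obtain an honest coproduct for forming $h_{(1)}\otimes h_{(2)}$. Alternatively, if $H$ is cocommutative-commutative, as in the tautological case, the module structure is just multiplication and the product is the ordinary tensor product twisted by the action.

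The coproduct is the vertex analogue of $\Delta(b\otimes h) = b_{(1)}\,R^{(2)}_{\cdot}\otimes \ldots$. Concretely,
\[
\Delta(z)(b\otimes h) \;=\; \sum b_{(1)}(z)\otimes R^{(2)}(z)\, h_{(1)}(z)\otimes\bigl(R^{(1)}(z)\triangleright b_{(2)}(z)\bigr)\otimes h_{(2)}(z),
\]
suitably reordered into $(B\otimes H)^{\otimes 2}((z^{-1}))$. Here the spectral quasitriangular element replaces the coaction that the Yetter--Drinfeld structure would provide classically. In other words, $B$ becomes an $H$-comodule via $R(z)$, in the spirit of how the braiding $R(z)$ on $H\Md$ is induced from $H$.

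The verification then splits into four checks, done in this order.

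1. Associativity and unit of the product. These follow formally from $B$ being an algebra in $H\Md$ and $H$ being commutative.

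2. Coassociativity $(\Delta(z_1)\otimes 1)\Delta(z_2) = (1\otimes\Delta(z_2))\Delta(z_1+z_2)$ in the appropriate expansion domain. This uses coassociativity of $B$ and $H$ together with the spectral hexagon/cabling identities $(\Delta_H(z_1)\otimes 1)R(z_2) = R_{13}(z_1+z_2)R_{23}(z_2)$ and its counterpart.

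3. The translation and counit axioms. These are compatible because all structure maps intertwine $\act^*$, i.e. the derivation $T$.

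4. Multiplicativity of $\Delta(z)$. This is the main obstacle. Expanding $\Delta((b\otimes h)(b'\otimes h'),z)$ requires three ingredients, which we will isolate as lemmas before the main computation:
\begin{itemize}
\item the braided bialgebra axiom for $B$, namely $\Delta_B(bb',z) = \Delta_B(b,z)\star_{R(z)}\Delta_B(b',z)$;
\item the $H$-linearity of $\Delta_B$, which exchanges $h\triangleright$ with the coproduct using $\Delta_H$;
\item a spectral Yang--Baxter/intertwining relation of the form $R(z)\Delta_H(z) = \sigma\Delta_H(-z)R(z)$ up to translation, which commutes the $R$-factors past the $h_{(i)}$.
\end{itemize}
Commutativity of $H$ is what allows the $R^{(1)},R^{(2)}$ legs to be collected. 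The delicate part is tracking the spectral parameters and expansion directions, in particular distinguishing $R(z)$ evaluated at $z$ from evaluation at $-z$, and the associated translation operators. Our strategy is to first do the computation with $z$ formal and $R$ a genuine power series, since it is valued in $[[z^{-1}]]$. We will then check that each step respects the Laurent expansion $((z^{-1}))$, so that all compositions of series are well defined.
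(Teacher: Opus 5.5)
Your outline matches the paper's proof. It uses the same smash product and $R(z)$-twisted coproduct, gets associativity from $H$-linearity of $B$'s product and coassociativity of $\Delta_H$, gets vertex coassociativity from the spectral hexagon identities (together with $H$-linearity of $\Delta_B$ and commutativity of $H$), and proves the bialgebra axiom by expanding via the braided bialgebra property of $B$.

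The one difference is cosmetic. To move the $R$-legs past the $h$'s, you invoke the spectral quasitriangular intertwining relation, whereas the paper uses cocommutativity of $H$ directly; the two are equivalent here because $H$ is commutative, so $R(z)$ is central. That step also needs the hexagon relations for the ordinary coproduct $\Delta_H=\Delta_H(0)$ and $H$-linearity of $B$'s product, which you should add to your list of inputs for the multiplicativity check.
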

This should be viewed as a Tannakian reconstruction of the algebra whose category of modules is $B\Md(H\Md)$. The vertex coproduct comed from structure on this category. In the case of ordinary bialgebras the procedure of constructing $B \rtimes H$ from a  bialgebra $B$ in $H\Md$  is known as bosonisation, discovered by Majid and Radford \cite{MaBos,Ra}.

\begin{corX}[Theorem \ref{thm:CoHABosonisation}] \label{cor:CoHAExtended}
    Let $Q$ be a symmetric quiver with potential. Then the extended CoHA 
    \begin{equation}\label{eqn:IntroAExt}
        \mathcal{A}^{T, \psi, \textup{ext}}_{Q,W}\ =\ \mathcal{A}^{T,\psi}_{Q,W} \rtimes \Ht^{\sbt}(\Ml_{Q})_{\textup{taut}},
    \end{equation}
 whose underlying vector space is $\mathcal{A}^{T,\psi}_{Q,W} \otimes_{\Ht^{\sbt}_{T}(\pt)} \Ht^{\sbt}(\Ml_{Q})_{\textup{taut}}$, has a canonically defined vertex bialgebra structure, whose ``extended CoHA'' product \eqref{eqn:CoHABosonisedProduct} and ``extended Joyce--Liu'' vertex coproduct \eqref{eqn:CoHABosonisedCoproduct}, e.g.
 $$ \Delta^{\ext}(b\otimes h,z) \ = \ \left(\Delta(b,z)_{(1)}\otimes R^{(2)}(z)\Delta(h,z)_{(1)} \right)\otimes \left(R^{(1)}(z)\cup\Delta(b,z)_{(2)}\otimes \Delta(h,z)_{(2)} \right).$$
\end{corX}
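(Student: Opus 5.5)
The corollary follows from Theorem \ref{thm:IntroBosonisation} applied with $B=\mathcal{A}^{T,\psi}_{Q,W}$ and $H=\Ht^{\sbt}(\Ml_{Q})_{\taut}$. So the plan is to check each hypothesis of that theorem in this setting, in the following order.

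\textbf{Step 1: $H$ is a holomorphic vertex bialgebra.} The product on $H$ is the cup product, which is commutative. The vertex coproduct on $H$ is the $W=0$ Joyce coproduct restricted to tautological classes, i.e.\ $\act_1^*\oplus^*$ without the $\Psi$ factor. Chern classes of tautological bundles pull back under $\oplus$ to classes built from the tautological bundles on the two factors, and under $\act^*$ to polynomials in $z$. Hence this coproduct lands in $H\otimes H[z]$, which is exactly holomorphicity (Lemma \ref{lem:holomorphic_covertex}). Compatibility of product and coproduct holds because pullbacks are ring maps.

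\textbf{Step 2: $R(z)$ is a spectral quasitriangular element of $H$.} Since $Q$ is symmetric, $\chi(d_1,d_2)=\chi(d_2,d_1)$, so the rational prefactor $z^{\chi(d_1,d_2)-\chi(d_2,d_1)}$ is trivial and $R=R_{\taut}\in H\otimes H[[z^{-1}]]$. I would check the hexagon-type identities
\[
(\Delta\otimes\id)R=R_{13}R_{23}, \qquad (\id\otimes\Delta)R=R_{13}R_{12}
\]
using multiplicativity of $\Psi$ in short exact sequences of the complex $\Ext$, together with $\oplus^*\Ext=\Ext_{13}\oplus\Ext_{23}$ on triple products. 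The quasi-cocommutativity condition $\sigma\Delta^{op}(z)=R(z)\Delta(z)R(z)^{-1}$ is automatic because $H$ is commutative and cocommutative.

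\textbf{Step 3: $B$ is a vertex bialgebra inside $H\Md$.} The action of $H$ on $B$ is cup product with cohomology classes of $\Ml_Q$; this is compatible with the vanishing-cycle cohomology because it is a module over $\Ht^{\sbt}(\Ml_Q)$. Then:
\begin{itemize}
\item The CoHA product $\star$ is $H$-linear in the braided sense. For $h$ tautological, $q^*$ of the Künneth decomposition of $p^*h$ is given by the coproduct of $h$, since tautological bundles on $\SES_Q$ are extensions of the pulled-back ones.
\item The vertex coproduct $\Delta(z)$ is $H$-colinear. This follows because $\Delta(z)$ intertwines cup products, i.e.\ $\Delta(h\cdot\alpha)=\Delta_H(h)\Delta(\alpha)$.
\item The bialgebra axiom with braiding by $R(z)$ is exactly Theorem \ref{thm:CoHABialgebra}.
\end{itemize}

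\textbf{Step 4: conclude.} Theorem \ref{thm:IntroBosonisation} now gives the vertex bialgebra structure on $B\otimes_{\Ht^{\sbt}_T(\pt)}H$. The displayed formula for $\Delta^{\ext}$ is \eqref{eqn:VertexBosonisationCoproduct} specialised to this case.

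\textbf{Main obstacle.} I expect the hardest part to be Step 3's module-algebra property for $\star$: one has to identify the pullback of tautological classes along $\SES_Q\to\Ml_Q\times\Ml_Q$ with the coproduct action, including the $\psi$ sign twist. A secondary concern is that relative tensor products over $\Ht^{\sbt}(\BT)$ must be handled using the Künneth assumption \eqref{eqn:KunnethAssumption}.
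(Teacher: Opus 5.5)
Your route is the paper's. The proof of Theorem \ref{thm:CoHABosonisation} feeds $B=\Al^{T,\psi}_{Q,W}$ and $H=\Ht^{\sbt}(\Ml^T_Q)_{\taut}$ into Theorem \ref{thm:VertexBosonisation}, takes all hypotheses from Theorem \ref{thm:CoHABialgebra}, and reads off the formulas. Your Steps 1 and 3 re-derive parts of that theorem essentially as the paper does. Your extension-additivity argument for $p^*h=q^*\oplus^*h$ is a fine substitute for the paper's $q^*\oplus^*=q^*s^*p^*=p^*$ combined with the projection formula.

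\textbf{The gap is in Step 2.} The identities you check, $(\oplus^*\otimes\id)R=R_{13}R_{23}$ and $(\id\otimes\oplus^*)R=R_{12}R_{13}$, are only the $w=0$ case of what vertex bosonisation needs. Vertex coassociativity of $\Delta^{\ext}$ uses the spectral hexagon relations
\[
(\oplus^*(w)\otimes\id)R(z)=R_{13}(z-w)R_{23}(z),\qquad (\id\otimes\oplus^*(w))R(z)=R_{12}(z+w)R_{13}(z),
\]
where $\oplus^*(w)=\act^*_{w,1}\oplus^*$. These reduce to your identities plus translation covariance, $\act^*_{w,1}R(z)=R(z-w)$ and $\act^*_{w,2}R(z)=R(z+w)$, which you never address.

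That covariance is exactly where symmetry of $Q$ enters. Your identities hold for $R_{\taut}$ for every quiver. By Lemma \ref{lem:SpectralHexagon}, however, $\act^*_{w,1}R_{\taut}(z)=\bigl(\tfrac{z-w}{z}\bigr)^{\widetilde{\chi}}R_{\taut}(z-w)$. So for nonsymmetric $Q$ your checklist is satisfied while coassociativity of $\Delta^{\ext}$ fails (cf.\ \ref{ssec:nonsymm_boson}). You use symmetry only to make $R$ tautological-valued, which misses its other role.

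The fix is short. Derive covariance of the full $R(z)$ from $\act_1^*\Ext\simeq\gamma^{-1}\boxtimes\Ext$, $\act_2^*\Ext\simeq\gamma\boxtimes\Ext$ and part (3) of Lemma \ref{lem:psi_properties}; in the symmetric case $R(z)=R_{\taut}(z)$. Alternatively, cite Lemma \ref{lem:SpectralHexagon} with $\widetilde{\chi}=0$.

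Smaller points:
\begin{enumerate}
\item $H$-linearity of $\star$ is with respect to the plain coproduct $\oplus^*$; no braiding is involved.
\item Compatibility of $\Delta(z)$ with $H$ is linearity for the vertex coproduct $\oplus^*(h,z)=\act^*_{z,1}\oplus^*h$, not colinearity.
\item The $\psi$ twist is a scalar on each graded piece, and $H$ sits in even degree, so the twist is not an obstacle.
\end{enumerate}
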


\subsection{Obtaining Drinfeld's coproduct on ADE Yangians}\label{ssec:DrinfeldYangians}

\subsubsection{} For a semisimple finite-dimensional Lie algebra $\mathfrak{g}$, Drinfeld \cite{Dr2} defined a deformation of the universal enveloping loop algebra 
$$\gr_\hbar Y_{\hbar}(\gk)\ \simeq\ \UU(\gk[u])$$
called the \textit{Yangian} $Y_\hbar(\gk)$. This carries three algebraic structures: an associative product, an ordinary \textit{standard} coproduct, and the \textit{deformed Drinfeld  coproduct}
$$\Delta_{\textup{Dr}}(z) \ : \ Y_\hbar(\gk) \ \to \ Y_\hbar(\gk)\otimes_\hbar Y_\hbar(\gk)((z^{-1}))$$
whose definition was finally worked out in full in \cite{gautam2017meromorphic,GTW}. We recall it in section \ref{ssec:DrinfeldCoproduct}.

As a vector space the Yangian admits a triangular decomposition
$$Y_\hbar(\gk) \ = \ Y^{<0}_\hbar(\gk) \otimes Y_\hbar^0(\gk) \otimes Y^{>0}_\hbar(\gk)$$
where each tensor factor is a subalgebra and $\Delta_{\textup{Dr}}(z)$ preserves the positive and negative Borel parts $Y^{\ge 0}_\hbar(\gk)$, $Y^{\le 0}_\hbar(\gk)$: they are sub vertex bialgebras.

\subsubsection{} By combining the work of Yang--Zhao \cite{yang2018cohomological}, Schiffmann-Vasserot  \cite{schiffmann2022cohomologicalhallalgebrasquivers} and an algebraic dimensional reduction theorem \cite{dimensionalreductionalgebras,yangzhaodimension}, there is an isomorphism of $\mathbf{C}[\hbar]$-algebras 
\begin{equation}\label{eqn:YangZhaoIso}
    Y^{>0}_{\hbar}(\gk)\ \stackrel{\sim}{\to} \ \mathcal{A}^{T,\chi}_{\widetilde{Q},\widetilde{W}}
\end{equation}
where $\mathcal{A}^{T,\chi}_{\tilde{Q},\tilde{W}}$ is the cohomological Hall algebra of the tripled Dynkin quiver $\widetilde{Q}$ with canonical cubic potential, an an appropriately chosen torus.

In this setting, in Proposition \ref{prop:JoyceTripledComputation} we compute the Joyce--Liu extended coproduct on spherical elements of tripled quivers. Their value is very simple: they are all vertex-primitive, and so under the isomorphism \eqref{eqn:YangZhaoIso}  corresponds to the ``unbosonised Drinfeld'' vertex coproduct 
$$\Delta_{\textup{uDr}}(x_i^+(u),z)\ = \ x^+_i(u-z)\otimes 1 + 1 \otimes x_i^+(u).$$ 
Then identifying $Y^0(\gk)$ with the tautological cohomology ring, we obtain the deformed Drinfeld coproduct as a bosonisation:

\begin{thmX}[Theorem \ref{thm:DrinfeldJoyce}] \label{drinfeld_to_joyce_comparison_intro} 
For $Q$ an ADE quiver, there is an isomorphism  
\begin{equation}
       f \ : \ Y^{\ge 0}_\hbar(\gk)  \ \stackrel{\sim}{\to} \ \mathcal{A}^{T,\chi, \textup{ext}}_{\widetilde{Q},\widetilde{W}} 
\end{equation}
identifying the Drinfeld coproduct $\Delta_{\textup{Dr}}(z)$ and the Joyce--Liu coproduct $\Delta(z)$. 
\end{thmX}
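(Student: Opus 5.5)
The plan is to build $f$ in two stages, matching the two tensor factors of the bosonisation $\mathcal{A}^{T,\chi,\ext}_{\widetilde{Q},\widetilde{W}} = \mathcal{A}^{T,\chi}_{\widetilde{Q},\widetilde{W}}\rtimes \Ht^{\sbt}(\Ml_{\widetilde Q})_{\taut}$ of Corollary \ref{cor:CoHAExtended}. Then we check that both the product and the vertex coproduct are determined by their values on generators.

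On the positive part we use the Yang--Zhao isomorphism \eqref{eqn:YangZhaoIso}, $x_i^+(u)\mapsto$ the spherical class in dimension vector $e_i$. On the Cartan part $Y^0_\hbar(\gk)$, generated by the coefficients of $h_i(u)$, we send $h_i(u)$ to a generating series of tautological Chern classes. Concretely this is the ratio of Chern polynomials of the tautological bundles dictated by the Euler form of $\widetilde Q$, chosen so that $\Psi$ of the $\Ext$ complex against a vertex at $e_i$ reproduces $h_i$. Since $Y^0_\hbar$ is a polynomial algebra in these coefficients and the tautological ring is generated by Chern classes of tautological bundles, $f|_{Y^0}$ is an isomorphism of commutative algebras.

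Next, we show that $f$ respects products, i.e.\ that the cross relation $[h_i(u),x_j^+(v)]$ in $Y^{\ge0}$ matches the bosonised product \eqref{eqn:CoHABosonisedProduct}. This product is the $H$-action of the tautological ring on the CoHA. On spherical classes this action is multiplication by a tautological class restricted to $\Ml_{e_j}$, which yields the rational factor $\frac{u-v+a_{ij}\hbar/2}{u-v-a_{ij}\hbar/2}$-type relation via the $R$-matrix $R(z)$. Together with the triangular decomposition, $f$ is then an algebra isomorphism.

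Since $\Delta_{\mathrm{Dr}}(z)$ and $\Delta^{\ext}(z)$ are both vertex algebra maps (by Theorem \ref{thm:IntroBosonisation} on the CoHA side, and by \cite{gautam2017meromorphic,GTW} on the Yangian side), it suffices to compare them on the generators $x_i^+(u)$ and $h_i(u)$. For $h_i(u)$ the tautological ring is holomorphic, and its coproduct is given by pullback along $\oplus$ with translation. Chern polynomials are multiplicative under $\oplus^*$, so $h_i(u)\mapsto h_i(u-z)\otimes h_i(u)$, which matches Drinfeld's group-like formula for the Cartan currents. For $x_i^+(u)$ we invoke Proposition \ref{prop:JoyceTripledComputation}: spherical elements are vertex-primitive, giving $\Delta_{\mathrm{uDr}}$. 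Plugging into the bosonised coproduct formula gives $x_i^+(u-z)\otimes 1 + R^{(2)}(z)\otimes R^{(1)}(z)x^+_i(u)$, and we must identify $R^{(2)}\otimes R^{(1)}$ on degree $e_i$ with $h_i$-type Cartan currents. This yields Drinfeld's formula $x_i^+(u-z)\otimes 1 + \sum (\text{Cartan})\otimes x_i^+(u)$, expanded in $z^{-1}$.

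I expect the main obstacle to be this last identification. It requires matching the $z^{-1}$-expansion of $R_{\taut}(z)$, after restricting one factor to dimension $e_i$ and applying the $\chi$ sign twist, with the precise (truncated/regularised) expression of $\Delta_{\mathrm{Dr}}(x_i^+(u))$ from \cite{GTW}. This includes the correct placement of $\hbar$-shifts coming from the torus $T$ weights on the tripled arrows. I would first compute $R(z)$ for a single vertex $e_i$ paired against arbitrary $d$ using $\Ext=\mathrm{RHom}$ explicitly in terms of tautological bundles, then compare coefficientwise with the generating-series form of $h_i$.
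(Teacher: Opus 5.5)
Your route is the paper's. You use Yang--Zhao on the positive part and tautological series on the Cartan part, check the cross relations through the bosonised product, and then compare $\Delta_{\mathrm{Dr}}(z)$ and $\Delta^{\ext}(z)$ on generators, using vertex-primitivity of spherical classes and the restriction of the $R$-matrix to dimension $\delta_i$ (Propositions \ref{prop:phiareincartan} and \ref{prop:JoyceTripledComputation}).

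\textbf{Gap: the Cartan isomorphism.} The fact that the tautological ring is generated by Chern classes says nothing about whether the particular series $f(\xi_{i,r})$ generate it or are algebraically independent. The missing ingredients are:
\begin{itemize}
\item the first-order computation \eqref{equation:phicalculation}, $\Phi_{i,r}\equiv r!\sum_j c_{ij}\ch_r(\El_j) \bmod \hbar$;
\item invertibility of the symmetrised Cartan matrix $(c_{ij})$, which is exactly where the ADE hypothesis enters on the Cartan side;
\item freeness of both sides over $\Ht^{\sbt}(\BT)$ (Proposition \ref{prop:TautFree}) together with equality of graded dimensions, which lets you lift from $\hbar=0$.
\end{itemize}

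\textbf{Definition of $f(\xi_i(u))$.} This needs to be pinned down. It must be the restriction of $\sigma R_{\taut}(z)=\sigma\bigl(\Psi(\sigma^*\Ext^\vee,z)/\Psi(\Ext,z)\bigr)$ to $\delta_i$ in one slot, written $\Phi_i(x-z)$ as in Lemma \ref{defnofphi}. It cannot be $\Psi(\Ext)$ against $\delta_i$. That series has leading power $z^{\chi(\delta_i,d)}$, which is generally nonzero, so it is not of the form $1+O(\hbar)$ that $\xi_i(u)$ requires. With the $R$-matrix definition, the step you expect to be the main obstacle is immediate:
\begin{itemize}
\item by definition $R^{(2)}(z)\otimes R^{(1)}(z)$ restricted to $\delta_i$ equals $\Phi_i(1\otimes x_{i,1}-z)$;
\item you expand $(x-z)^{-r-1}$ in $z^{-1}$ and use $x_{i,1}^k\cdot x^{(n)}_{i,1}=x^{(n+k)}_{i,1}$ to land on Drinfeld's formula;
\item no sign twist enters, because the $\chi$-twist sits on the product and the braiding used in bosonisation is the untwisted $\sigma(R(z)\cdot -)$.
\end{itemize}
The genuinely explicit chern-root computation of $\Phi_i$ restricted to $\delta_j$ is needed only for relations (R2)--(R3).
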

An analogous result was first obtained in \cite[Thm.B]{YZ}. 
\subsubsection{Outlook: Maulik-Okounkov Yangians} Due to work of Cao, Maulik, Okounkov, Zhou and Zhou \cite{MO,COZZ} one may define a Yangian $Y_\hbar(\gk_{Q,W})$ for any symmetric quiver with potential, which one expects to come with a standard coproduct as well as a Drinfeld coproduct
$$ \Delta_{\textup{Dr}}(z) \ : \ Y_\hbar(\gk_{Q,W})  \ \to \ Y_\hbar(\gk_{Q,W}) \otimes_\hbar Y_\hbar(\gk_{Q,W}) ((z^{-1})),$$
and with $R$-matrices intertwining the two coproducts and their opposites with each other.\footnote{Here $\hbar$ denotes the equivariant parameters of the torus acting.} The analogue of \eqref{eqn:YangZhaoIso} has been shown for tripled quivers with potential by Botta--Davison and Schiffmann--Vasserot \cite{BD,schiffmann2024s} and is expected for general quivers with potential. The diagonal part of the $R$-matrix for $Y_\hbar(\gk_{Q,W})$ is expected to match up with this paper's $R(z)$.

We expect our method of explicitly computing bosonised vertex coproducts on spherical elements can be extended to symmetric quivers with potential, so long as we have $T$-equivariant K\"unneth assumption. Therefore, given any equivalence as in \eqref{eqn:YangZhaoIso} one should be able to identify the Drinfeld coproduct with the Joyce vertex coalgebra when restricted to the spherically generated subalgebra. For instance, by \cite{davison2022affine}, \cite{sch_vas_cheredenik}, \cite{jindal2026cohacyclicquiversintegral} and \cite{diaconescu2026cohomologicalhallalgebrasonedimensional}, this gives Drinfeld type coproducts for the affine Yangians.


\subsection{Why should such structure exist?} 

\subsubsection{} We briefly explain two guiding points of view which motivated this work.

\subsubsection{Quantum groups} \label{ssec:MotivationQuantumGroups} Let $\gk$ be a finite-dimensional simple Lie algebra with triangular decomposition $\gk= \nk^-\oplus \tk \oplus \nk^{+}$. We consider the quantum group 
$$U_q(\nk^{+})\ \in \ \BiAlg(\Rep_qT)$$
 It is a bialgebra satisfying $\Delta(e_i)=e_i\otimes 1 + 1\otimes e_i$ for every generator $e_i\in \nk^{+}$; see \cite{Ga}. This has the additional structure of a Nichols algebra \cite{andruskiewitsch2002pointed}. Here, $\Rep_qT$ is the category vector spaces graded over the weight lattice $\Lambda_G$, viewed as a braided monoidal category with braiding 
$$\sigma q^{\kappa(\lambda,\mu)} \ : \ \Cb_\lambda \otimes\Cb_\mu \ \stackrel{\sim}{\to} \  \Cb_\mu \otimes \Cb_\lambda.$$
We view an object as a module over the group algebra $\Cb[\Lambda_G]$, where a generator $k_\nu$ acts as multiplication by $q^{\kappa(\nu,-)}$.

In particular, $U_q(\nk^{+})$ is \textit{not} a subbialgebra of $U_q(\bk)\subseteq U_q(\gk)$; instead we construct $U_q(\bk)$ by \textit{bosonisation}: it is the algebra whose category of modules is $U_q(\nk^+)\Md(\Cb[\Lambda_G]\Md)$, which for generic $q$ recovers the usual definition
$$U_q(\bk) \ \simeq\  U_{q}(\nk^{+}) \rtimes \Cb[\Lambda_G],$$
by \cite[33.1.5]{Lu} or \cite[Thm.4.2]{andruskiewitsch2002pointed}. The coproduct is then $\Delta^{\ext}(e_i)= e_i\otimes 1 + k_i\otimes e_i$, which now \textit{does} agree with the coproduct in $U_q(\gk)$. One can then take Drinfeld double of to obtain all of $U_q(\gk)$.

\subsubsection{Hall algebras} By a result of Ringel and Green \cite[Thm.3.16]{Sch}, the Hall algebra of $\Fb_q$-representations of an ADE quiver $Q$, is when extended by the Grothendieck group isomorphic to the Borel quantum group
$$\Hb_Q \rtimes \Kt_0(\Rep_{\Fb_q}Q) \ \simeq \ U_q(\bk).$$
The coproduct on $\Hb_Q$ is due to Green \cite[$\S$1.4]{Sch}.

Comparing our paper to the above two stories, the analogy goes as follows:
\begin{center}
\begin{tabular}{c@{\hspace{20pt}}ccccccc}
{\small\textit{finite quantum groups}}  & $U_q(\nk^{+})$&$\Delta$ & $\Rep_qT$  & $q^\kappa$ & $\kappa$ &$\Lambda_G$ & $\cdots$\\[5pt]
{\small\textit{Hall algebras}} & $\Hb_{Q}$ & $\Delta_{\text{Green}}$ & $\Kt_0(\Rep_{\Fb_q} Q)\Md$ &$q^\chi$ & $\chi = \# \Ext_{\Fb_q}$ & $\pi_0(\Ml_Q)$ & $\cdots$\\ [5pt]
{\small\textit{CoHAs - this paper}} & $\Al^T_{Q,W}$ & $\Delta(z)$ & $\Ht^{\sbt}(\Ml^{T}_Q)_{\taut}\Md_{\pi_0(\Ml_{Q})\times \Zb}$ &$R(z)$ & $\chi=\rank \Ext$ & $\pi_0(\Ml_Q)$ & $\cdots$\\ 
 \end{tabular} 
\end{center}

\subsubsection{Physics} \label{ssec:MotivationPhysics}
 In physics, the category of modules over the CoHA $B=\Al^T_{Q,W}$ is expected to be identified with the category of \textit{line operators} of a certain $3d$ holomorphic-topological quantum field theory $\partial \Tl$ on $\Cb\times \Rb$, see \cite{CWY,DN,DNP}, and colliding these lines starting above points $z_i,z_j\in \Cb$ is meant to correspond to the meromorphic tensor structure induced by the Joyce--Liu vertex coproduct $\Delta(z_i-z_j)$.
\begin{center}
\begin{tikzpicture}
\filldraw[pattern=north west lines, pattern color=black!30] 
  (-2,0) -- (0.5,-1) -- (2,0) -- (-0.5,1) -- cycle;

  \draw[thick] (-0.3,0) -- (-0.3,1);
  \draw[thick,opacity=0.3] (-0.3,-0.2) -- (-0.3,0);
  \fill (-0.3,0) circle (2pt);
  \node[above] at (-0.3,1) {$B\Md$};

    \draw[thick] (-1.2,0.3) -- (-1.2,1.5);
    \draw[thick,opacity=0.3] (-1.2,0.1) -- (-1.2,0.3);
    \fill (-1.2,0.3) circle (2pt);
    \node[above] at (-1.2,1.5) {$B\Md$};

    \draw[thick] (1,-0.2) -- (1,0.8);
    \draw[thick,opacity=0.3] (1,-0.4) -- (1,-0.2);
    \fill (1,-0.2) circle (2pt);
    \node[above] at (1,0.8) {$B\Md$};

    \draw[-,decorate,decoration={snake,amplitude=.4mm,segment length=2mm,post length=1mm},line width = 3pt,white] (-1.2+0.1,0.3) to[out=-20,in=140] (-0.3-0.1,0);
     \draw[->,decorate,decoration={snake,amplitude=.4mm,segment length=2mm,post length=1mm}] (-1.2+0.1,0.3) to[out=-20,in=140] (-0.3-0.1,0);

  \node at (3,0) {$\Cb$};
\end{tikzpicture}
\end{center}
More precisely, it induces a $B$-module structure on $M\otimes N \left(\left( (z_i-z_j)^{-1}\right)\right)$ for any pair of $B$-modules $M,N$. In particular, this gives a physics explanation for why we should expect a vertex coproduct on the CoHA (and nothing more).

This is expected to the boundary (or interface) of a $4d$ holomorphic-topological quantum field theory on $\Cb\times \Rb^2$, obtained by twisting \cite{elliot_safronov} the reduction of string theory along a Calabi--Yau threefold $X$, whose category of coherent sheaves are locally modelled on $(Q,W)$. This is the physics heuristic for the relation between the Calabi--Yau-three category and the (double) CoHA. 

For instance, as this paper was being finished, Tudor Dimofte and Lo\"ic Bramley informed us about ongoing work related to CoHAs and vertex coalgebras. In particular, they use a Tannakian reconstruction approach on the category of line operators of the IR limit of a $4d$ holomorphic-topologically twisted QFT to produce Yangian-like algebras equipped with standard and vertex coproducts. In this case, the category of line operators is expected to be related to BPS states and the algebra produced is conjecturally the double of the CoHA. The corresponding vertex coproduct is expected to match the extended Joyce-Liu coproduct on the nonnegative half.

\subsection{Relation to the work of Dotsenko--Mozgovoy}
\subsubsection{}
When we take a symmetric quiver $Q$ and set the potential $W= 0$ then the CoHA $\AQW[]{Q}{0}$ becomes a supercommutative algebra. We can then dualise the algebra structure to obtain a cocommutative coalgebra and dualise the Joyce vertex coalgebra to obtain that $(\AQW[]{Q}{0})^{\vee}$ is a cocommutative coalgebra with a compatible vertex algebra structure. In this setting we can then apply a Milnor-Moore type theorem \cite{han2021cocommva} to conclude that 
\begin{thmX}[Theorem \ref{chiral_envelope}] \label{chiral_intro}
 The dual of the CoHA is a universal chiral envelope 
 \begin{equation}
     (\AQW[]{Q}{0})^{\vee} \simeq \textup{U}^{\ch}(P_{Q})
 \end{equation}
  where $P_{Q}$ is a vertex Lie algebra defined as the primitives of the dual of the CoHA multiplication.
\end{thmX}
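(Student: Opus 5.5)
The plan is to apply a Milnor--Moore theorem for cocommutative vertex bialgebras, in the form of \cite{han2021cocommva}. That theorem says that a connected (conilpotent) cocommutative coalgebra with a compatible vertex algebra structure is the universal chiral envelope of its vertex Lie algebra of primitives. The work therefore lies in putting $(\AQW[]{Q}{0})^{\vee}$ into exactly the setting of that theorem.

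The first step is to set up the duality. For $W=0$ and $T=1$, each graded piece $\Ht^{\sbt}(\Ml_{Q,d})$ is the cohomology of $\mathrm{B}\GL_d$, shifted. It is finite-dimensional in each cohomological degree, so we take the graded dual degreewise over $\Nb^{Q_0}\times\Zb$.

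The second step is to handle the product. For symmetric $Q$ the (sign twisted) CoHA is supercommutative; this is standard \cite{KS}, since the shuffle kernel is symmetric up to the sign fixed by the twist. Its dual is therefore a graded cocommutative coalgebra. It is connected, because the $d=0$ piece is $\Qb$ and the $\Nb^{Q_0}$-grading forces conilpotence.

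The third step is to handle the vertex structure. The vertex coproduct $\Delta(z)$ of Theorem \ref{quiver_potential_jl_intro}, with $T=1$ and $Q$ symmetric, is colocal up to the sign $(-1)^{\rank\Ext}$. This sign is exactly the one absorbed by the sign twist, so after twisting the colocality is genuine. Dualising degreewise then gives a vertex algebra on $(\AQW[]{Q}{0})^{\vee}$, with vacuum dual to the unit in degree $0$ and translation operator dual to $\act^*$.

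The fourth step is compatibility. Theorem \ref{thm:BialgIntro} gives $\Delta(b\star b',z)=\Delta(b,z)\star_{R(z)}\Delta(b',z)$. In the symmetric case with $T=1$ the tautological part $R_{\taut}$ need not be trivial, so I must check that the bialgebra condition required in \cite{han2021cocommva} is the braided one. Alternatively I could verify directly that $R(z)$ only contributes signs after dualising. I expect this to hold, since $\Psi(\sigma^*\Ext^\vee,z)/\Psi(\Ext,z)=\pm1$ when $\Ext$ is self-dual up to swap, which is exactly the symmetric condition. Dually, the coproduct of the dual coalgebra is then a vertex algebra homomorphism, i.e.\ the dual is a cocommutative vertex bialgebra.

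With these in place, \cite{han2021cocommva} gives $(\AQW[]{Q}{0})^{\vee}\simeq \UU^{\ch}(P_Q)$, where $P_Q$ is the primitives, which inherit a vertex Lie structure.

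I expect the main obstacle to be the dualisation step. The vertex coproduct lands in $\Al\otimes\Al((z^{-1}))$, and its dual must land in the space required by the vertex algebra axioms, namely $Y(a,z)b\in V((z))$. The fix I would try first is to use the cohomological grading. In a fixed pair of degrees only finitely many negative powers of $z$ pair nontrivially, because $\Psi$ is polynomial in $z$ of degree $\rank\Ext$ and $\act^*$ raises degree. This gives the required truncation. I would also check that the grading hypotheses in \cite{han2021cocommva} (connectedness, finiteness) hold for this dual.
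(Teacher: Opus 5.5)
Your proposal follows the paper's argument: the commutative $W=0$ CoHA product dualises to a connected cocommutative coalgebra, the colocal Joyce vertex coproduct dualises to a vertex algebra (the paper simply cites Hubbard's duality theorem \cite[Thm.~6.1]{Hu} for this step), compatibility comes from Theorem \ref{thm:CoHABialgebra}, and the Milnor--Moore theorem of \cite{han2021cocommva} finishes the proof. Three small corrections: first, for $T=1$ and $Q$ symmetric one has $R(z)=1\otimes 1$ exactly (Proposition \ref{prop:RMatrixLeadingTerm} with no equivariant parameters), so no braided version of \cite{han2021cocommva} is needed; second, the colocality sign $(-1)^{\rank\Ext}$ is absorbed by changing the braiding rather than by twisting the product, since twisting the product does not touch $\Delta(z)$, which is why the paper keeps the untwisted CoHA and works in the symmetric monoidal structure $v\otimes w\mapsto(-1)^{\chi(d,e)}w\otimes v$, in which commutativity, colocality and the bialgebra axiom all hold at once; third, the truncation needed when dualising comes from each $\mathcal{A}_{Q,d}$ being bounded below in cohomological degree with finite-dimensional pieces, not from $\Psi(\Ext,z)$ being polynomial, which it generally is not, because it is the ratio $\Psi(\Ext_0,z)/\Psi(\Ext_1,z)$ and $\rank\Ext=\chi(d_1,d_2)$ can be negative.
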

In \cite{dotsmozgva}, the authors give an explicit generators and relations definition of a vertex algebra on the vector space  $(\AQW[]{Q}{0})^{\vee}$ and show it is compatible.
\begin{thmX}[Theorem \ref{DM_comp}]
    The Joyce vertex algebra structure on $\mathcal{A}^{*}_{Q,0}$ and the vertex algebra in \cite{dotsmozgva} are isomorphic.
\end{thmX}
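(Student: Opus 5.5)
The plan is to compare both vertex algebras on generators. Both are determined by their behaviour on a small generating set together with a compatibility with the (dual) CoHA structure, so it is enough to check agreement there. Theorem \ref{chiral_intro} identifies $(\AQW[]{Q}{0})^{\vee}$, with its Joyce vertex algebra structure, as the universal chiral envelope $\UU^{\ch}(P_Q)$. Here $P_Q$ is the space of primitives for the coproduct dual to the (supercommutative) CoHA multiplication.

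Dotsenko--Mozgovoy also present their vertex algebra on $(\AQW[]{Q}{0})^{\vee}$ as generated, compatibly with the same cocommutative coalgebra structure, by explicit fields. They show it is compatible with that coalgebra. Applying the Milnor--Moore type theorem of \cite{han2021cocommva} to their structure, it is also the universal chiral envelope of its own vertex Lie algebra of primitives. The primitives are the same subspace $P_Q$, since the coalgebra is the same. The problem therefore reduces to showing that the two vertex Lie algebra structures on $P_Q$ agree. Equivalently, the two families of $n$-th products restricted to primitives must coincide.

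To compare them on $P_Q$, compute the Joyce vertex coproduct $\Delta(z)$ explicitly for $W=0$ and trivial torus. Each $\Ht^{\sbt}(\Ml_{Q,d})=\Ht^{\sbt}(\BGL_d)$ is the symmetric polynomials in Chern roots $x_{i,a}$. The map $\oplus^{*}$ is the restriction of symmetric polynomials in the roots of $d_1+d_2$ to the roots of $d_1$ and of $d_2$. The map $\act_1^{*}$ shifts $x_{i,a}\mapsto x_{i,a}+z$ for the first factor. The factor $\Psi(\Ext,-z)$ becomes the explicit product
\[
\prod_{i,j}\prod_{a,b}(x'_{j,b}-x_{i,a}-z)^{\delta_{ij}-Q_1(i,j)},
\]
up to sign conventions, expanded in $z^{-1}$. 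Dually, this gives explicit formulas for the vertex operators $Y(f,z)g$. I would then match these with the Dotsenko--Mozgovoy formulas, possibly after twisting by the sign character $(-1)^{\chi(d_1,d_2)}$ and rescaling generators by a cocycle. It suffices to check this on the generators they use, namely elements of degree $e_i$ and their translates $T^n$, together with the induced vertex Lie brackets. Alternatively, and more cleanly, both structures are determined by the same formula for $\langle Y(\phi,z)\psi,\ f\rangle$ as a pairing against $\Delta(z)f$. If the Dotsenko--Mozgovoy state--field correspondence is itself written as dual to such a twisted restriction map, the isomorphism is the identity up to this sign twist.

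The main obstacle is bookkeeping of conventions. The Dotsenko--Mozgovoy fields involve a specific bicharacter and sign, whereas Joyce's $\Psi(\Ext,-z)$ carries $(-1)^{\rank\Ext}$-type signs and the Koszul signs from the shifts $[\dim \Ml_{Q,d}]$. I would first handle the case of one vertex with $m$ loops to fix the sign cocycle. I would then verify that the resulting twist is a genuine $2$-cocycle on $\Nb^{Q_0}$, so that it induces a vertex algebra isomorphism. Finally, I would check that the twisted identity map intertwines the translation operators, which are given by $\act^{*}$ and by the Dotsenko--Mozgovoy $T$ respectively, and the $n$-th products on $P_Q$.
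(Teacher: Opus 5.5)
\textbf{There is a genuine gap: the step that carries all the content is never supplied.}

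Your reduction to primitives is fine as far as it goes. Dotsenko--Mozgovoy transport their vertex algebra to $\mathcal{A}^{*}_{Q}$ through a coalgebra isomorphism, so both structures are universal chiral envelopes of vertex Lie algebra structures on the same subspace $\cprim$, and an isomorphism of those would suffice. But this does not shrink the problem, and the comparison itself is left as ``match with the Dotsenko--Mozgovoy formulas''.

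The claim that it suffices to check elements of degree $\delta_i$ and their translates is not justified, for three reasons:
\begin{enumerate}
\item $\cprim$ is in general nonzero in many dimension vectors besides the $\delta_i$; this is exactly what encodes the DT invariants.
\item Agreement of $n$-th products on a generating set does not show that the identity map intertwines the two structures elsewhere.
\item You do not know a priori that the Joyce structure is generated by $\bigoplus_i \mathcal{A}^{*}_{Q,\delta_i}$ at all. That only follows a posteriori from the comparison.
\end{enumerate}
A correct form of ``check on generators'' would use the universal property of the free vertex algebra $\UU^{\ch}(P_{DM})$ to get a homomorphism into the Joyce structure, and then prove it bijective. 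Bijectivity is precisely what your plan does not address.

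Your ``cleaner'' alternative assumes the Dotsenko--Mozgovoy fields are dual to a twisted restriction map. That is not how their structure is defined: it lives on a subalgebra of a lattice vertex algebra and is transported. Separately, a $2$-cocycle twist yields an isomorphic vertex algebra only when the cocycle is a coboundary, not merely because it is a cocycle.

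\textbf{The missing idea is a common ambient vertex algebra.} By Joyce's theorem, $\Ht_{\sbt}(\Ml_{\Dt^{b}\Rep Q}) \simeq L_{Q_0}$, the lattice vertex algebra of $(\Zb^{Q_0},\chi_Q)$. Since $\Ml_Q \subseteq \Ml_{\Dt^{b}\Rep Q}$ is a union of components compatible with $\oplus$, $\act$ and the $\Ext$ complex, the Joyce vertex algebra $\Ht_{\sbt}(\Ml_Q)$ is a vertex subalgebra of $L_{Q_0}$ containing the elements $e^{i}$. By \cite[Section 4.5.3]{dotsmozgva}, $\UU^{\ch}(P_{DM})$ is the vertex subalgebra of $L_{Q_0}$ generated by the $e^{i}$, so it sits inside $\Ht_{\sbt}(\Ml_Q)$. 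By \cite[Prop. 5.3]{dotsmozgva} the two have equal finite graded dimensions, so the inclusion is an isomorphism. This gives the homomorphism and its bijectivity at once. All the sign and cocycle bookkeeping you anticipated is absorbed into Joyce's identification with the lattice vertex algebra.
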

Therefore, for a symmetric quiver with no potential we obtain a completely explicit generators and relations computation of the Joyce vertex algebra. Finally, as a corollary of Theorem \ref{chiral_intro} we get another proof of  cohomological integrality originally proved in \cite{efimov2012cohomological}.

\subsection{Acknowledgements} 

\subsubsection{} We would like to thank Lo\"ic Bramley, Ben Davison, Tudor Dimofte,  Woonam Lim,  Henry Liu,  Olivier Schiffmann, Yaping Yang and Gufang Zhao for useful discussions. We would also like to thank the organisers Ben Davison and Lucien Hennecart of the 2023 Workshop on "Vertex algebras and Hall algebras in enumerative geometry" where this project was initially started. The work of the second author was supported by World Premier International Research Center Initiative (WPI), MEXT, Japan, and the third author for most of this project by Danish National Research Foundation (DNRF157) and Villum Fonden (37814).

\newpage
\section{Preliminaries} \label{sec:Quiver}
In this section we introduce the moduli stacks of quiver representations.

\subsection{Quivers with potential } \label{ssec:Quivers}

\subsubsection{Quivers} 
\begin{center}
\begin{tikzpicture}[scale=1.5]
\begin{scope} 
 [rotate = -90] 
 \filldraw[] (0,0) circle (1.5pt);
 \filldraw[] (1,1) circle (1.5pt);
 \filldraw[] (-0.3,1.2) circle (1.5pt);
 \filldraw[] (-1,1) circle (1.5pt);
 \filldraw[] (0,-1.3) circle (1.5pt);

 \draw[<-] (0.1-0.07,0.1+0.07) to[out=45+20,in=-135-20] (0.9-0.05,0.9+0.05);
 \draw[->] (0.1+0.07,0.1-0.07) to[out=45-20,in=-135+20] (0.9+0.05,0.9-0.05);

 \draw[->] (-0.1,0.1) -- (-0.9,0.9);
 \draw[->] (0,-0.14) -- (0,-1.16);
 \draw[->,looseness=10] (1,1+0.14) to[out=80,in=10] (1+0.14,1);
 \draw[->,looseness=10]  (-0.1,-1.3-0.1) to[out=-135+10,in=-45-10]  (0.1,-1.3-0.1);

  \filldraw[] (0.6,1.6) circle (1.5pt);
  \filldraw[] (-0.6,0.3) circle (1.5pt);
  \filldraw[] (0.8,2.3) circle (1.5pt);
  \filldraw[] (-0.8,-0.2) circle (1.5pt);
  \draw[->, shorten <=4pt, shorten >=4pt] (0.6,1.6) -- (0.8,2.3);
  \draw[<-, shorten <=4pt, shorten >=4pt] (-0.6,0.3) -- (-0.8,-0.2);

  \node[below,yshift=-3pt] at (0,-1.3) {$i$};
  \node[above,yshift=0pt] at (0,-0.65) {$\wtt(e)$};
  \node[below,yshift=-2pt] at (0,-0.65) {$e$};
 \end{scope}
 \end{tikzpicture}
 \end{center}
A \textbf{quiver} is a directed graph $Q = (Q_0,Q_1)$ whose sets $Q_0,Q_1$ of vertices $i$ and edges $e$ are finite. It is \textbf{graded} with respect to a lattice $N \simeq \Zb^r$ if we have a weight function $\wtt \colon Q_{1} \to N$ assigning an element of the lattice $\wtt(e) \in N $ to each edge. Write $s(e),t(e)$ for the source and target of the edge.

The opposite quiver $Q^{\textup{op}}$ is formed by reversing the direction of all arrows, and negating all gradings: $\wtt^{\textup{op}}(e^*) = - \wtt(e)$ where $e^*$ is the reversed version of $e$. We call a quiver \textbf{symmetric} if $Q = Q^{\textup{op}}$ and graded quiver \textbf{graded symmetric} if $(Q,\wtt) \simeq (Q^\textup{op},\wtt^{\textup{op}})$.

\subsubsection{Representations} A \textbf{representation} of $Q$ is an assignment of a finite dimensional vector space $V_i$ to each vertex and a linear map $\rho_e$ to each edge

\begin{center}
\begin{tikzpicture}[scale=1.5]
\begin{scope} 
 [rotate = -90] 
\clip (0,-0.9) circle (1.2cm);
 \filldraw[] (1,1) circle (1.5pt);

 \draw[<-] (0.1-0.07,0.1+0.07) to[out=45+20,in=-135-20] (0.9-0.05,0.9+0.05);
 \draw[->] (0.1+0.07,0.1-0.07) to[out=45-20,in=-135+20] (0.9+0.05,0.9-0.05);

 \draw[->] (-0.1,0.1) -- (-0.9,0.9);
 \draw[->] (0,-0.16) -- (0,-1.16);
 \draw[->,looseness=10] (1,1+0.14) to[out=80,in=10] (1+0.14,1);
 \draw[->,looseness=10]  (-0.1,-1.3-0.2) to[out=-135+10,in=-45-10]  (0.1,-1.3-0.2);

  \filldraw[] (0.6,1.6) circle (1.5pt);
  \filldraw[] (0.8,2.3) circle (1.5pt);
  \draw[->, shorten <=4pt, shorten >=4pt] (0.6,1.6) -- (0.8,2.3);

  \node[above,yshift=0pt] at (0,-0.65) {$\rho_e$};

  \node[] at (0,-1.31) {$V_i$};
  \node[] at (0,-0.01) {$V_j$};
 \end{scope}

  \node at (0.9,0) {$\cdots$};
 \end{tikzpicture}
 \end{center}
This is equivalent to a finite dimensional left module over the \textit{path algebra} $\Cb Q$ of $Q$, whose basis is the set of paths in $Q$ and multiplication is given by concatenation of paths. The \textit{dimension} vector of a representation is the element $d = (\dim V_i)_{i\in Q_0} \in \Nb^{Q_0}$, writing $d_{i}$ for the dimension at the $i$th vertex. We write $\delta_i$ for the dimension vector which is $1$ at vertex $i$ and zero elsewhere.  

The category $\Rep Q$ of representations of $Q$ is an abelian category, and the rank of the derived Hom spaces 
$$\chi(\rho, \rho')  \ = \  \dim(\Hom(\rho, \rho')) - \dim(\Ext^{1}(\rho,\rho')),$$
is called the \textit{Euler form}, which only depends on the dimension vectors $d,d'$ of $\rho,\rho'$, and is 
$$\chi(d,d') \ = \ \sum_{i } d_i d'_i  -  \sum_{e : i \to j} d_i d'_j.$$

\subsubsection{Potential} A \textbf{potential} of $Q$ is an element $W\in \Cb Q/ [\Cb Q,\Cb Q]$. 

A potential is given by a linear combination of cyclic words in $Q$, where two cyclic words are considered to be the same if one can be cyclically permuted to be the other. If $W$ is a single cyclic word and $e \in Q_1$, then we define \[ \frac{\partial W}{\partial e}\ =\ \sum _{W = cec'} c'c\]
    and we extend this definition linearly to general $W$. The \emph{Jacobi algebra} associated to $(Q,W)$ is defined to be \[ \Jac(Q,W)\ \defeq\ \Cb Q/\left\langle \frac{\partial W}{\partial e}  \, : \, e \in Q_1 \right\rangle. \]

The direct sum and tensor product of representations are defined vertex-wise.

\subsection{Doubled and tripled quivers} \label{canonicalcubicpotential}
 
\subsubsection{Preprojective and Jacobi algebras}The \textit{double} $\overline{Q}$ and \textit{triple} $Q^{(3)}$ of a quiver $Q$ are quivers formed by adding a copy $e^*:j\to i$ of each edge $e:i\to j$ in the opposite direction, and then adding a loop $\omega_i$ to each vertex, respectively.
\begin{center}
\begin{tikzpicture}[scale=1.5]
\begin{scope} 
 [rotate = -90] 
 \filldraw[] (0,0) circle (1.5pt);
  \filldraw[] (-0.8,-0.2) circle (1.5pt);
 \filldraw[] (0,-1.3) circle (1.5pt);

 \draw[->] (0,-0.14) -- (0,-1.16);
 \draw[->,looseness=20]  (-0.05,-1.3-0.1) to[out=-135+20,in=-45-20]  (0.05,-1.3-0.1);

  \node[below,yshift=-3pt] at (0,-1.3) {$i$};
  \node[below,yshift=-2pt] at (0,-0.65) {$e$};

  \node at (1,-0.65) {$Q$};

\begin{scope} 
 [yshift=4cm] 
 \filldraw[] (0,0) circle (1.5pt);
  \filldraw[] (-0.8,-0.2) circle (1.5pt);
 \filldraw[] (0,-1.3) circle (1.5pt);

 \draw[->] (0.07,-0.14) to[out=-90+20,in=90-20] (0.07,-1.16);
 \draw[<-] (-0.07,-0.14) to[out=-90-20,in=90+20] (-0.07,-1.16);

 \draw[->,looseness=20]  (-0.05,-1.3-0.1) to[out=-135+20,in=-45-20]  (0.05,-1.3-0.1);
 \draw[->,looseness=20]  (0.1,-1.3-0.05) to[out=-135+20+90,in=-45-20+90]  (0.1,-1.3+0.05);

  \node[below,yshift=-3pt] at (0,-1.3) {$i$};
  \node[below,yshift=-6pt] at (0,-0.65) {$e$};
  \node[above,yshift=+6pt] at (0,-0.65) {$e^*$};

  \node at (1,-0.65) {$\overline{Q}$};
 \end{scope}

\begin{scope} 
 [yshift=8cm] 
 \filldraw[] (0,0) circle (1.5pt);
  \filldraw[] (-0.8,-0.2) circle (1.5pt);
 \filldraw[] (0,-1.3) circle (1.5pt);

 \draw[->] (0.07,-0.14) to[out=-90+20,in=90-20] (0.07,-1.16);
 \draw[<-] (-0.07,-0.14) to[out=-90-20,in=90+20] (-0.07,-1.16);
 
 \draw[->,looseness=20]  (-0.05,-1.3-0.1) to[out=-135+20,in=-45-20]  (0.05,-1.3-0.1);
 \draw[->,looseness=20]  (0.1,-1.3-0.05) to[out=-135+20+90,in=-45-20+90]  (0.1,-1.3+0.05);
 \draw[->,looseness=20]  (-0.1,-1.3+0.05) to[out=-135+20-90,in=-45-20-90]  (-0.1,-1.3-0.05);

 \draw[->,looseness=20]  (-0.1,0.05) to[out=-135+20-90,in=-45-20-90]  (-0.1,-0.05);
 \draw[->,looseness=20]  (-0.8-0.1,-0.2+0.05) to[out=-135+20-90,in=-45-20-90]  (-0.8-0.1,-0.2-0.05);

  \node[below,yshift=-3pt] at (0,-1.3) {$i$};
  \node[below,yshift=-6pt] at (0,-0.65) {$e$};
  \node[above,yshift=+6pt] at (0,-0.65) {$e^*$};
  \node[above,yshift=3pt] at (-0.5,-1.3) {$\omega_i$};

  \node at (1,-0.65) {$\widetilde{Q}$};
 \end{scope}

 \end{scope}
 \end{tikzpicture}
 \end{center}
 The \textit{preprojective algebra} is the quotient by the two-sided ideal 
 $$\Pi_Q \ = \ \Cb \overline{Q} / \left\langle \textstyle{\sum}_e [e,e^*] \right\rangle.$$
The \textit{canonical tripled potential} on $\widetilde{Q}$ is
$$ \widetilde{W} \ = \  \left(\sum_{i} \omega_i \right) \left( \sum_{e} [e,e^{*}] \right). $$
We have an isomorphism 
\[ \Pi_Q[\omega]\ \simeq\ \Jac(\widetilde{Q},\widetilde{W})\]
for the polynomial ring $\Pi_Q[\omega]$, sending $\omega \mapsto \sum \omega_i$.

\subsection{Moduli stacks of quiver representations} \label{ssec:quiver_stacks}

 \subsubsection{} The \textbf{moduli stack of representations} of $Q$ of dimension $d$ is the quotient stack
\[\MSd[]{Q}{d}\ \defeq\ \Rep_{d}(Q)/\GL_{d} \] 
of the following vector space by the following group:
\begin{align*}
\Rep_{d}(Q) &\ \defeq\ \bigsqcap_{e:i \to j} \Hom(\Cb^{d_i},\Cb^{d_j}) \\ 
\GL_{d} &\ \defeq\ \bigsqcap_{i \in Q_0} \GL_{d_i}
\end{align*} 
where the group $\GL_{d}$ acts on $\Rep_{d}(Q)$ by conjugation. The union over all dimension vectors is the moduli stack of objects of the abelian category $\Rep Q$ in the sense of \cite{TV}. For instance, its $\Cb$-points are precisely the groupoid of finite dimensional representations of $Q$.

\subsubsection{Torus equivariance} Let $Q$ be a quiver graded by the lattice $N$ of a torus $T=\Hom(N,\Gm)$, with its grading denoted $\wtt:Q_1\to N$ as before. Thus we have an action of the torus on the representation space 
$$T \times \Rep_d(Q) \ \to \ \Rep_d(Q), \hspace{15mm} t\cdot \rho_e \ = \ t^{\wtt(e)} \rho_e$$
where $t^{\wtt(e)}\in \Gm$ is the image of an element of the torus under the character $\lambda_e:T\to \Gm$ associated to $\wtt(e)\in N$, which acts by multiplication on the element of the vector space $\rho_e\in \Hom(\Cb^{d_i},\Cb^{d_j})$.

The homomorphism $\lambda_e$ gives a morphism of stacks $\lambda_{e}  : \BT \to \BGm$. This defines a cohomology class
$$\mathbf{t}(e) \ \in\ \HH^{2}(\BT,\Qb)\ \simeq \ N$$
defined by the pullback along $\lambda_e$ of the first chern class of the tautological bundle on $\BGm$. Note that each weight $\wtt(e)$ also defines a line bundle $\mathcal{L}_{e}$ on $\BT$. We will sometimes abuse notation and write
\begin{equation}
    \wtt(e)\ =\ c_{1}(\mathcal{L}_{e})\ =\ \mathbf{t}(e)\ \in \ \HH^{2}(\BT,\Qb).
\end{equation}

\subsubsection{Tautological bundles and $T$-equivariant moduli space} We then  define
 \[ \MSd[T]{Q}{d}  \ = \  \Rep_{d}(Q)/(\GL_{d}\times T). \] The \textbf{tautological bundle} $\El_{i,d}$ for vertex $i$ is the pullback of the universal vector bundle along 
$$\Ml^{T}_{Q,d} \ \to \ \B \GL_{d} \times \BT\ \to\  \B \GL_{d_{i}} \times \BT\ \to\ \B \GL_{d_{i}}.$$
Its fibre over a point corresponding to a representation $\rho$ is the vector space $V_i$. The \textit{tautological map} of vector bundles $\rho_e: \El_i \to \El_j$ attached to any edge $e$ is induced by the map 
$$\Ml^{T}_{Q,d} \ \to \ \Hom(\Cb^{d_i},\Cb^{d_j})/ \GL_{d_i}\times \GL_{d_j}.$$
Its fibre over a point is the linear map $\rho_e: V_i \to V_j$.

\subsubsection{Ext complex} \label{theta_quiver} The \textbf{Ext complex} is the two-term complex of vector bundles on $\Ml^{T}_{Q,d}\times_{\BT} \Ml^{T}_{Q,d'}$, given by 
\begin{equation}
  \label{eqn:ExtComplex}
   \Ext_{d,d'} \ = \ \left(\bigoplus_i \El_{i,d}^\vee \boxtimes \El_{i,d^{'}} \ \to \ \bigoplus_{e \ : \  i \to j} \El_{i,d}^\vee \boxtimes \El_{j,d^{'}} \ \otimes\ \Ll_{e}\right).
\end{equation}
We denote the zeroeth and first factors by $\Ext_0$ and $\Ext_1$. The differential is given by viewing the left summands as the the vector bundle of maps from $\El_i \boxtimes 1$ to $1 \boxtimes \El_i$, and composing with the tautological maps $\rho_e$ for each edge.

We will often have to dualise and also have to pull back the $\Ext$-complex by the swap map $\sigma : \Ml^{T}_{Q,d} \times_{\BT} \Ml^{T}_{Q,d^{'}} \to \Ml^{T}_{Q,d^{'}} \times_{\BT} \Ml^{T}_{Q,d}$; for this we note that $\Ll_e^\vee$ corresponds to the character $-\wtt(e)$, and $\sigma^*\Ll_e=\Ll_e$.

\subsubsection{Relation to the potential} The weight function induces a $N$-grading on $\Cb Q$, sending a path to the sum of the weights of its edges. We say the potential is \textit{invariant} for the $T$ action if $W$ has weight zero under this grading: $\wtt(W)=0$. 

\subsection{Cohomology of representation stacks}

\subsubsection{} The cohomology of the stack of quiver representations is a product over connected components
\begin{equation}
    \Ht^{\sbt}(\Ml_{Q}^T)\ =\ \bigsqcap_{d \in \Nb^{Q_{0}}} \Ht^{\sbt}(\Ml_{Q,d}^T),
\end{equation}
where each factor is generated by the chern classes of its tautological bundles: 
\begin{align*}
   \Ht^{\sbt}(\Ml_{Q,d}^T) & \ \simeq \ \Ht^{\sbt}_T(\BGL_d)\\
   & \ \simeq \ \Ht^{\sbt}(\BT)[c_r(\El_i)\ : \ i\in Q_0,\, 1\le r\le d_i]\\
   & \ \simeq \ \Sym_{\Ht^{\sbt}(\BT)}\left\{x_{i,\alpha}\ : \ i\in Q_0, \, 1\le \alpha \le d_i\right\}
\end{align*}
where $x_{i,\alpha}$ is a chern root of the tautological bundle $\El_i$.

\subsubsection{Euler classes of tautological bundles} The Euler class of the tautological bundle $\El_i$  is $e(\El_{i,d})  =  \bigsqcap_{n=1}^{d_i}x_{i,n}$. Moreover, we have formulas that agree with the euler class formulas in \cite[Section 4.1]{BD}
\begin{prop} \label{prop:euler_ext_to_euler_ben}
The Euler classes $e(\Ext_0)_{d,d'}$ and $e(\Ext_1)_{d,d'}$ are equal to\footnote{Note that here we have $+\mathbf{t}(e) = + \wtt(e)$ in the definition of $e(Q_1)$, as opposed to $- \mathbf{t}(e)$ from \cite{BD}. }
\[ e_{d,d^{'}}(Q_1) \ \defeq \ \bigsqcap_{e \, : \, i\to j} \bigsqcap_{(n,m)=(1,1)}^{(d_i,d_j')} (1\otimes x_{j,m} - x_{i,n}\otimes 1 + \wtt(e))\ \in\ \ \Ht^{\sbt}(\Ml^{T}_{Q,d}) \otimes_{\Ht^{\sbt}(\BT)}\Ht^{\sbt}(\Ml^{T}_{Q,d^{'}})\] and 
\[ e_{d,d^{'}}(Q_0) \ \defeq \ \bigsqcap_{i \in Q_0} \bigsqcap_{(n,m)=(1,1)}^{(d_i,d_i')} (1\otimes x_{i,m} - x_{i,n}\otimes 1 )\ \in\ \Ht^{\sbt}(\Ml^{T}_{Q,d}) \otimes_{\Ht^{\sbt}(\BT)}\Ht^{\sbt}(\Ml^{T}_{Q,d^{'}})  \] 
respectively.
\end{prop}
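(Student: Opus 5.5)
The plan is a direct splitting-principle computation. Both $\Ext_0$ and $\Ext_1$ are direct sums of exterior tensor products of (duals of) tautological bundles, twisted by the line bundles $\Ll_e$ pulled back from $\BT$. The Euler class is multiplicative in direct sums, so it is enough to compute the Euler class of each summand and take the product.

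For a single summand, apply the splitting principle on $\Ml^{T}_{Q,d}\times_{\BT}\Ml^{T}_{Q,d'}$. Its cohomology is $\Ht^{\sbt}(\Ml^{T}_{Q,d})\otimes_{\Ht^{\sbt}(\BT)}\Ht^{\sbt}(\Ml^{T}_{Q,d'})$, because both factors are polynomial (hence free) over $\Ht^{\sbt}(\BT)$ and the fibre product is the quotient $\Rep_d\times\Rep_{d'}/(\GL_d\times\GL_{d'}\times T)$.

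\begin{itemize}
\item Take the Chern roots of $\El_{i,d}$ to be $x_{i,n}$ and those of $\El_{j,d'}$ to be $x_{j,m}$. Then $\El_{i,d}^\vee$ has roots $-x_{i,n}$. Pulled back to the product, $\El_{i,d}^\vee\boxtimes 1$ has roots $-x_{i,n}\otimes 1$ and $1\boxtimes \El_{j,d'}$ has roots $1\otimes x_{j,m}$.
\item The Chern roots of a tensor product are the pairwise sums of roots. So $\El_{i,d}^\vee\boxtimes\El_{j,d'}\otimes\Ll_e$ has roots $1\otimes x_{j,m}-x_{i,n}\otimes 1+c_1(\Ll_e)$, for $1\le n\le d_i$ and $1\le m\le d'_j$.
\item By the convention fixed earlier, $c_1(\Ll_e)=\mathbf{t}(e)=\wtt(e)$. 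This is the place to check the sign against the footnote's remark about \cite{BD}.
\item The Euler class of this summand is the product of its Chern roots. Taking the product over edges $e:i\to j$ gives $e_{d,d'}(Q_1)$.
\item The same computation with $\Ll_e$ trivial and $i=j$ gives $e_{d,d'}(Q_0)$ from $\Ext_0=\bigoplus_i\El_{i,d}^\vee\boxtimes\El_{i,d'}$.
\end{itemize}

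There is no real obstacle here. The only points needing care are bookkeeping:
\begin{itemize}
\item whether $\Ll_e$ carries the character $\wtt(e)$ or $-\wtt(e)$, which is what produces the sign of $\wtt(e)$;
\item the order of tensor factors, i.e.\ which factor carries the dual;
\item identifying the cohomology of the fibre product over $\BT$ with the relative tensor product.
\end{itemize}
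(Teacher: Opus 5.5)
Your proposal is correct and is essentially the paper's own argument: multiplicativity of the Euler class over the direct summands of $\Ext_0$ and $\Ext_1$, Chern roots negating under dualising and adding under tensor products, and $c_1(\Ll_e)=\wtt(e)$. As in the paper's proof, and despite the word ``respectively'' in the statement, the matching you obtain is $e(\Ext_0)=e_{d,d'}(Q_0)$ and $e(\Ext_1)=e_{d,d'}(Q_1)$; this is the correct pairing, and it is the one used later in $e(\Ext)=e(Q_0)/e(Q_1)$.
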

\begin{proof}
Easily follows from additivity of chern roots under tensor products. For instance,
\begin{align*}
e \left(\bigoplus_{i \in Q_{0}} \El_{i,d}^\vee \boxtimes \El_{i,d^{'}} \right)  &\ = \ \bigsqcap_{i \in Q_0} e(\El_{i,d}^\vee \boxtimes \El_{i,d^{'}} ) \ =\ \bigsqcap_{i \in Q_0} \bigsqcap_{(n,m)=(1,1)}^{(d_i,d_i')}  (1\otimes x_{i,m} - x_{i,n}\otimes 1 )
\end{align*} 
where the last equality follows since the chern roots are negated upon dualising. Similarly 
\begin{align*}
e \left( \bigoplus_{e \in i \to j \in Q_{1}}  \El_{i,d}^\vee \boxtimes \El_{j,d^{'}} \ \otimes\ \Ll_{e} \right) &\ =\ \bigsqcap_{e \, : \, i \to j } e( \El_{i,d}^\vee \boxtimes \El_{j,d^{'}} \ \otimes\ \Ll_{e}) \\ 
 &\ =\ \bigsqcap_{e \, : \, i\to j} \bigsqcap_{(n,m)=(1,1)}^{(d_i,d_j')}  (1\otimes x_{j,m} - x_{i,n}\otimes 1 + \wtt(e)
\end{align*} 
\end{proof}

\subsubsection{Underlying monoidal structure} \label{ssec:twisted_monoidal} We write $\Vect_{\Lambda}^T$ for the category of vector spaces with a ``cohomological'' $\Zb$-grading, a grading by the cone
$$\Lambda \ = \  \pi_0(\Ml_Q) \ = \ \Nb^{Q_0},$$
 and with an action of $\Ht^{\sbt}(\BT)$, with trivial $\Lambda$-grading and cohomologically graded in the usual way.

For any pair of such objects 
$$V \ = \ \bigoplus_{(d_1,n_1) \in \Lambda\times \Zb}V_{(d_1,n_1)}, \hspace{15mm} W \ = \ \bigoplus_{(d_2,n_2)\in \Lambda \times \Zb}W_{(d_2,n_2)}$$
we define their \textit{twisted tensor product} by
\begin{equation}
  \label{eqn:OtimesTDefinition} 
    V\otimes_T W \ =\ \bigoplus_{(d,n) \in \Lambda\times \Zb} \bigoplus_{(d,n)=(d_1+d_2,n_1+n_2)}  V_{(d_{1},n_1)} \otimes_{\Ht^{\sbt}(\BT)} W_{(d_{2},n_2)} [\chi_{Q}(d_{1},d_{2})-\chi_{Q}(d_{2},d_{1})]
\end{equation}
where $\chi_{Q}$ is the Euler form of the quiver and the shift is by the cohomological $\Zb$ degree. It is neither symmetric nor braided monoidal in general, see section \ref{ssec:BraidingsBialgebras}. If the quiver $Q$ is symmetric, then the cohomological shift is trivial and the monoidal structure is symmetric.

\subsection{Cohomological Hall algebra structure} \label{ssec:coha_prelims}

\subsubsection{}

Consider the correspondence of $T$-equivariant stacks
\begin{equation}\label{eqn:CoHACorrespondence}
\begin{tikzcd}[row sep = {30pt,between origins}, column sep = {55pt, between origins}]
 &\SES^{T}_{Q,d_1,d_2}\ar[rd,"p"]\ar[ld,,"q"'] & \\ 
\Ml^{T}_{Q,d_1} \times_{\BT} \Ml^{T}_{Q,d_2} & &\Ml^{T}_{Q,d_1+d_2} 
\end{tikzcd}
\end{equation}
Then the cohomological Hall algebra structure on
\begin{equation}
  \Al^{T}_{Q,W}\  =\ \bigoplus_{d \in \Nb^{Q_{0}}} \Ht^{\sbt}(\Ml^{T}_{Q,d}, \varphi_{W,d} \Qb_{\Ml_{Q,d}}[\dim \Ml_{Q,d}]) 
\end{equation}
 is defined as in \cite{BD,KS,RSYZ} by induction along the above diagram, using the formula
\begin{align*}
    m = p_{*}q^{*}
\end{align*}
induced by the functoriality of vanishing cycles in the Appendix \ref{sec:app_cohomology}.
Here we suppress the Thom--Sebastiani and isomorphism from the notation. This induces a map $m: \mathcal{A}^{T}_{Q,W} \otimes_{T} \mathcal{A}^T_{Q,W} \to \mathcal{A}^T_{Q,W}$. 
\subsubsection{Remark} The  CoHA $\Al^T_{Q,W}$ will be $\Lambda$ graded by connected component, but the cohomology $\Ht^{\sbt}(\Ml^T_Q)$ or tautological ring $\Ht^{\sbt}(\Ml_Q^T)_{\textup{taut}}$, defined in section \ref{ssec:tautring}, will be defined in trivial graded degree $0\in \Lambda$.

\subsection{Vertex coalgebras}\label{sec:VA}

\subsubsection{} \label{vertex_def} We define now define a version of coassociative coalgebra whose coproduct depends on a point $z$ on the complex plane ``meromorphically", due to Borcherds and Hubbard \cite[Def. 2.1]{Hu}.

A \textbf{nonlocal (alias coassociative) vertex coalgebra} is a vector space $V$ together with a \textit{translation} endomorphism, \textit{covacuum} covector and a \textit{cofield} map  
\begin{equation}\label{eqn:VertexCoAlg}
    T \ : \ V \ \to \ V , \hspace{15mm} \epsilon \ : \ V \ \to \ k, \hspace{15mm}
    \Delta(z) \ : \ V \ \to \ V\otimes V ((z^{-1}))
\end{equation}
satisfying:
\begin{itemize}
 \item Compatibility with the translation,
    \begin{equation}\label{eqn:Translate}
         \Delta(z) \cdot T  \ = \ \frac{d}{dz} \Delta(z) + (1 \otimes T) \cdot \Delta(z).
    \end{equation}
    \item Vertex coassociativity (also known as weak coassociativity),
\begin{equation} \label{eqn:VertexCoass}
    (\Delta(z) \otimes \id ) \Delta(w) \ = \  (\textup{id} \otimes \Delta(w))\Delta(z+w)
\end{equation}
\item Compatibility with the covacuum: we have $(\epsilon \otimes \id)\cdot\Delta(z)=\id$ and $(\id\otimes \epsilon)\cdot \Delta(z)=\id + \Ol(z)$ is $V[z]$-valued.
\end{itemize}

Let $\Lambda$ be a commutative monoid equipped with a bilinear form $\chi$ and $R$ be a $\Zb$-graded commutative ring with augmentation. A \textbf{$\Lambda$-graded nonlocal (alias coassociative) vertex coalgebra} is a $\Lambda\times \Zb$-graded $R$-module 
$$V \ = \ \bigoplus_{(\lambda,n)\in \Lambda\times \Zb} V_{(\lambda, n)}$$
with the structure of a nonlocal vertex algebra as above with $\otimes$ replaced with 
$$\otimes_R[\chi(\lambda, \mu) - \chi(\mu,\lambda)]$$
as in \eqref{eqn:OtimesTDefinition}. We equip $z$ with $\Lambda\times \Zb$-degree $|z|=(0,2)$ so that the maps \eqref{eqn:VertexCoAlg} are $R$-linear with degree $(0,-2),(0,0),(0,0)$ respectively. We call the $\Zb$-degree \textit{cohomological}.
We write 
\begin{equation}
    \Delta_{\lambda_{1},\lambda_{2}}(z) \ : \ V_{\lambda_{1}+\lambda_{2}} \ \to \ V_{\lambda_{1}} \otimes_{R} V_{\lambda_{2}} ((z^{-1}))
\end{equation}
for the associated map of $R$-modules on graded pieces, which has cohomological degree $\chi(\lambda_2,\lambda_1)-\chi(\lambda_1,\lambda_2)$, and \eqref{eqn:VertexCoass} is equivalent to
\begin{equation} \label{coassoc_eq}
    (\Delta_{\lambda_{1},\lambda_{2}}(z) \otimes \id ) \Delta_{\lambda_{1}+\lambda_{2},\lambda_{3}}(w) \ = \  (\textup{id} \otimes \Delta_{\lambda_{2},\lambda_{3}}(w))\Delta_{\lambda_{1},\lambda_{2}+\lambda_{3}}(z+w).
\end{equation}

 In our setting, we will work with $\Lambda=(\Nb^{Q_0},+)$ the dimension lattice of a quiver $Q$, where $+$ is the sum operation on dimension vectors. Our commutative ring will be $R = \Ht^{\sbt}(\BT)$ with the twisted monoidal structure as in subsection \ref{ssec:twisted_monoidal}.

\subsubsection{Remark} The analogue of cocommutativity for vertex coalgebras is colocality (also called the Jacobi identity), see for instance part ($6$) of Theorem \ref{thm:CoHABialgebra}.

\subsubsection{Holomorphicity} \label{sssec:Hol} A coassociative vertex coalgebra is called \textit{holomorphic} if the cofield map factors as
$$\Delta(z) \ : \ V \ \to \ V \otimes_{R} V [z] \ \subseteq \ V \otimes_{R} V ((z^{-1})),$$
 i.e. if it ``has no poles''. Just as in \cite[$\S$1.4]{FBZ}, we have

\begin{lem} \label{lem:holomorphic_covertex}
If $V$ is a coalgebra with locally nilpotent coderivation $T$, i.e. 
\begin{equation}\label{eqn:Coderivation}
    (T \otimes \id) \cdot \Delta + (\id\otimes T) \cdot  \Delta\ =\ \Delta \cdot T
\end{equation}
and $T^nv =0$ for $n \gg 0$, then $\Delta(z)\ =\ (e^{zT}\otimes \id)\Delta$ defines a holomorphic vertex coalgebra, and every holomorphic vertex coalgebra takes this form.
\end{lem}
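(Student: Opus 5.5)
The plan is to check the three vertex coalgebra axioms for $\Delta(z) = (e^{zT}\otimes \id)\Delta$ directly, and then to recover the coalgebra from a holomorphic vertex coalgebra by setting $z=0$. Throughout, $\Delta$ is a coassociative counital coalgebra with counit $\epsilon$, and I take the translation of the vertex coalgebra to be $T$ itself.

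First, well-definedness and holomorphicity. Since $T$ is locally nilpotent, for $v\in V$ write $\Delta v=\sum v_{(1)}\otimes v_{(2)}$ as a finite sum. Each $e^{zT}v_{(1)}$ is then a polynomial in $z$, so $\Delta(z)$ lands in $V\otimes V[z]$.

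Next, the axioms.
\begin{itemize}
\item \emph{Translation.} We have $\frac{d}{dz}\Delta(z)=(Te^{zT}\otimes\id)\Delta$. By the coderivation identity \eqref{eqn:Coderivation},
$$\Delta(z)T=(e^{zT}\otimes \id)\bigl((T\otimes\id)\Delta+(\id\otimes T)\Delta\bigr)=\tfrac{d}{dz}\Delta(z)+(1\otimes T)\Delta(z),$$
using that $e^{zT}\otimes\id$ commutes with $\id\otimes T$.
\item \emph{Weak coassociativity.} The left side is $(e^{zT}\otimes\id\otimes\id)(\Delta\otimes\id)(e^{wT}\otimes\id)\Delta$. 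Exponentiating the coderivation identity gives $\Delta e^{wT}=(e^{wT}\otimes e^{wT})\Delta$; this is a formal identity, valid because $T\otimes\id$ and $\id\otimes T$ commute. The left side therefore becomes $(e^{(z+w)T}\otimes e^{wT}\otimes\id)(\Delta\otimes\id)\Delta$. The right side is $(\id\otimes e^{wT}\otimes\id)(\id\otimes\Delta)(e^{(z+w)T}\otimes\id)\Delta=(e^{(z+w)T}\otimes e^{wT}\otimes \id)(\id\otimes\Delta)\Delta$. The two agree by ordinary coassociativity.
\item \emph{Covacuum.} The coderivation identity, after applying $\epsilon\otimes\id$ and $\id\otimes\epsilon$ and using counitality, gives $\epsilon T=0$. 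Hence $\epsilon e^{zT}=\epsilon$, so $(\epsilon\otimes\id)\Delta(z)=\id$ and $(\id\otimes\epsilon)\Delta(z)=e^{zT}=\id+\Ol(z)$.
\end{itemize}

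For the converse, let $\Delta(z)$ be holomorphic and set $\Delta=\Delta(0)$, which is legitimate because there are no poles. Setting $z=w=0$ in \eqref{eqn:VertexCoass} gives coassociativity of $\Delta$. The covacuum axiom at $z=0$ gives counitality. The translation axiom at $z=0$ gives
$$\Delta T=\tfrac{d}{dz}\Delta(z)\big|_{0}+(\id\otimes T)\Delta.$$
To identify $\frac{d}{dz}\Delta(z)|_0$, differentiate \eqref{eqn:VertexCoass} in $z$ at $z=0$. With $D(w)=\frac{d}{dz}\Delta(z)|_0$-type notation, this reads $(\partial\Delta(0)\otimes\id)\Delta(w)=(\id\otimes\Delta(w))\frac{d}{dw}\Delta(w)$.

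The cleanest route is a uniqueness argument. Both $\Delta(z)$ and $\tilde\Delta(z)=(e^{zT}\otimes\id)\Delta$ satisfy $\frac{d}{dz}F(z)=F(z)T-(1\otimes T)F(z)$, which is the translation axiom. This is a linear first-order ODE in $z$ on polynomial-valued maps, and the two solutions have the same value $\Delta$ at $z=0$. Expanding both in powers of $z$, the recursion $(n+1)F_{n+1}=F_nT-(1\otimes T)F_n$ determines every coefficient from $F_0$, so $\Delta(z)=\tilde\Delta(z)$. The remaining difficulty is that $\tilde\Delta$ solves this ODE only if $\Delta$ is a coderivation, which is what we are trying to prove.

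To close this gap, I will show by the same recursion that $F_n$ equals the $z^n$-coefficient of $\sum_k \frac{z^k}{k!}\,\ad_{\id\otimes T - (\cdot)T}$ applied to $\Delta$. Then I compare with the left-counit axiom: applying $\epsilon\otimes\id$ gives $(\epsilon\otimes\id)F_1=0$. The first-coefficient identity $F_1=\Delta T-(\id\otimes T)\Delta$, together with $(\id\otimes\epsilon)\Delta(z)=\id+\Ol(z)$, should force $F_1=(T\otimes\id)\Delta$.

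I expect the key identification $F_1=(T\otimes\id)\Delta$ to be the main obstacle. My first attempt will be to apply $(\id\otimes\epsilon\otimes\id)$ to the $z$-derivative of weak coassociativity at $z=0$, which should express $F_1$ via $(\id\otimes\epsilon)F_1=:T'$. I then need $T'=T$, which I would take from the translation axiom after applying $\id\otimes\epsilon$. Once this is established, local nilpotence of $T$ follows from the fact that $\Delta(z)v$ is polynomial in $z$ and $(\id\otimes\epsilon)\Delta(z)v=e^{zT}v$.
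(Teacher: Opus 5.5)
Your forward direction is correct, and more complete than the paper's sketch. You have also found the real subtlety of the converse: the paper's own argument invokes ``the coderivation property'' of $T$ for $\Delta(0)$ without deriving it. However, the step by which you propose to close this fails. Applying $\id\otimes\epsilon$ to the translation axiom at $z=0$ gives
\[
T \ = \ T' + (\id\otimes \epsilon T)\,\Delta(0), \qquad T' \ := \ (\id\otimes\epsilon)\,\tfrac{d}{dz}\Delta(z)\big|_{z=0},
\]
so $T=T'$ is equivalent to $\epsilon T=0$. You derived $\epsilon T=0$ only in the forward direction, from the coderivation identity. It is not among the listed axioms and does not follow from them.

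For example, take $V=k$ with $\Delta(z)(1)=1\otimes 1$ for all $z$, $\epsilon(1)=1$, and $T=\lambda\,\id$ with $\lambda\neq 0$. This satisfies the translation axiom, weak coassociativity, both covacuum conditions and holomorphicity. Yet $T$ is neither locally nilpotent nor a coderivation, and $(e^{zT}\otimes\id)\Delta(0)=e^{\lambda z}\,\Delta(0)\neq\Delta(z)$. More generally, any $T'+(\id\otimes\phi)\Delta(0)$ with $\phi\in V^*$ is a valid translation for the same $\Delta(z)$. So the converse with the given $T$ needs the extra axiom $\epsilon T=0$, the dual of $T|0\rangle=0$. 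With that axiom, your argument goes through as planned.

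Without that axiom, you should prove the converse for $T'$ instead of $T$. You then need a separate argument that $T'$ is a coderivation, since you can no longer read it off the translation axiom. The cleanest route avoids the ODE altogether:
\begin{itemize}
\item Contract weak coassociativity with $\id\otimes\epsilon\otimes\id$ and use $(\epsilon\otimes\id)\Delta(w)=\id$. This gives $\Delta(z+w)=(E(z)\otimes\id)\Delta(w)$ with $E(z)=(\id\otimes\epsilon)\Delta(z)$.
\item Hence $\Delta(z)=(E(z)\otimes\id)\Delta(0)$ and $E(z+w)=E(z)E(w)$. So $E(z)=e^{zT'}$, with $T'$ locally nilpotent because $E(z)v$ is polynomial.
\item Contract instead with $\id\otimes\id\otimes\epsilon$ and set $z=0$. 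This gives $\Delta(0)e^{wT'}=(e^{wT'}\otimes e^{wT'})\Delta(0)$, whose $w$-derivative at $0$ is the coderivation identity for $T'$.
\end{itemize}
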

\begin{proof}
We show that the assignment
\begin{align*}
   \Delta & \ \mapsto \  \Delta(z)\ =\ (e^{zT}\otimes \id)\Delta\\
   \Delta(0)&\ \mapsfrom \ \Delta(z)
\end{align*}
sets up a bijection between such data and  holomorphic vertex coalgebras. To do this, it is enough to show that for any holomorphic vertex coalgebra we have that 
$$\Delta(z) \ = \ (e^{zT}\otimes \id)\cdot \Delta(0)$$
which follows from \eqref{eqn:Translate}, as their $z^n$ coefficients agree, since
$$\left( \frac{d}{dz}\right)^n\Delta(z)\vert_{z=0} \ = \ \ad_T^n\Delta(0)\ = \ \left( \frac{d}{dz}\right)^n(e^{zT}\otimes \id)\cdot\Delta(0)\vert_{z=0}$$
where $\ad_T(-)=(-)\cdot T - (\id\otimes T)\cdot (-)$, and for the second equality we used the coderivation property \eqref{eqn:Coderivation}.
\end{proof}

\newpage
\section{Joyce--Liu vertex coalgebra for quivers with potential} \label{sec:JoyceCoproduct}
\noindent
Let $Q$ be an arbitrary quiver with an arbitrary potential $W$, which we allow to be graded by the character lattice $N$ of a torus $T$. In this section, we will construct a nonlocal vertex coalgebra structure on the cohomological Hall algebra
\begin{equation}
    \Al_{Q,W}^T = \bigoplus_{d \in \Nb^{Q_{0}}} \Ht^{\sbt}(\Ml^{T}_{Q,d}, \varphi_{W,d} \Qb_{\Ml_{Q,d}}[\dim \Ml_{Q,d}]),
\end{equation}
under a relative K{\"u}nneth assumption.

\subsection{Geometric structures on quiver moduli stacks} \label{ssec:GeometricStructures}

\subsubsection{} 
 The vertex coproduct structure will be induced from the following geometric structures on the moduli stack and a function $\tr W : \Ml_Q^T \to \Ab^1$ coming from the potential:
  \begin{enumerate}
  \item The direct sum of quiver representations gives a map
  \begin{equation*}
      \Ml_{Q}^T \times_{\BT} \Ml_{Q}^T \ \stackrel{\oplus}{\to} \ \Ml_{Q}^T
  \end{equation*}
  which is commutative, associative, and the $0$ representation $\pt \xrightarrow{0} \Ml_{Q}^T$ defines a unit.
   \item The direct sum and potential are compatible in the sense that
   \begin{equation} \label{OplusWCompatibility}
       \oplus\cdot\tr W \ = \ \tr W\boxplus \tr W, \hspace{15mm} 0\cdot \tr W \ = \ 0.
   \end{equation}
   \item There is an action 
   \begin{equation}
       \act \ : \ \BGm\times\Ml_{Q}^T\ \to\ \Ml_{Q}^T
   \end{equation}
   of the group stack $\BGm$ compatible with the above structures: the above maps are $\BGm$-equivariant, and $\act\cdot \tr W  = 0 \boxplus \tr W$.
   \item We have a perfect complex
   \begin{equation}
       \Ext \ \in \  \Perf(\Ml^T_Q \times_{\BT} \Ml^T_Q)
   \end{equation} 
    satisfying
    \begin{align*}
        (\oplus \times \id)^* \Ext & \simeq \Ext_{13} \oplus \Ext_{23} &
        ( \id \times \oplus)^* \Ext & \simeq \Ext_{12} \oplus \Ext_{13}\\
        \act^{*}_{1} \Ext & \simeq \gamma^{-1} \boxtimes \Ext &
        \act^{*}_{2} \Ext & \simeq \gamma \boxtimes \Ext 
    \end{align*}
    where $\Ext_{ij} = \pi^{*}_{ij} \Ext$. 
\end{enumerate}

For trivial torus $T$ in the quiver case, the direct sum map (1) can be viewed as arising from contravariance of the moduli of objects functor of \cite{TV}. Indeed, $\Ml_{Q}$ is an open substack of the moduli of objects $\Ml_{\Dt(\Rep Q)}$ of the dg category $\Dt(\Rep Q)$ of quiver representations. The direct sum map on $\Ml_{\Dt(\Rep Q)}$ is induced by the diagonal map $\Dt(\Rep Q) \to \Dt(\Rep Q) \times \Dt(\Rep Q) \simeq \Dt(\Rep Q) \sqcup \Dt(\Rep Q)$, and the direct sum on $\Ml_{Q}$ is given by restriction. 

For (2), the function $\tr W$ is then given by restriction of a function on $ \Ml_{\Dt(\Rep Q)}$ induced by a Hochschild homology class of the category $\Dt(\Rep Q)$. Then the compatibility with direct sum pullback in $(2)$ follows again by restriction from the discussion in \cite[Prop 8.43]{kinjo2024coha}.

Loosely speaking, (3) and (4) follow  since every point of $\Ml_{Q}$ contains a $\Gb_{m}$ in its stabiliser and by setting $\Ext$ to be given by $\textup{RHom}(-,-)$. More precisely, it is the pullback along $q$ of the tangent complex of the map $p$ in \eqref{eqn:CoHACorrespondence}, with explicit description given in equation \eqref{eqn:ExtComplex}.

\subsubsection{K\"unneth isomorphism assumption}  \label{sssec:kunneth_assumpt} We need to assume
\begin{equation}
      \label{eqn:KunnethAssumption} 
      \Ht^{\sbt}(\Ml^{T}_{Q,d_{1]}}\times_{\BT}\Ml^T_{Q,d_{2}},\varphi_{\tr W_{d_{1}}} \boxtimes \varphi_{\tr W_{d_{2}}}) \ \simeq \ \Ht^{\sbt}(\Ml^{T}_{Q,d_{1}},\varphi_{\tr W_{d_{1}}}) \otimes_{\Ht^{\sbt}(\BT)} \Ht^{\sbt}(\Ml^T_{Q,d_{2}},\varphi_{\tr W_{d_{2}}}).
    \end{equation}
This is trivially true when $T=1$ by the global K{\"u}nneth isomorphism, but as discussed in appendix \ref{ssec:Kunneth} the relative K{\"u}nneth formula is in general not true. 

Nevertheless, \eqref{eqn:KunnethAssumption} is known to hold for tripled quivers with canonical tripled potential and appropriate torus action the assumption is also known by \cite[Thm. 9.6]{Da1}. It is also known in the case of $W=0$ and more generally by the argument in \cite[Thm. 3.4]{davison2022affine}, which holds whenever $\mathcal{A}_{Q,W}$ has a pure mixed Hodge structure.

\subsection{Structures on critical cohomology}\label{ssec:StructuresCriticalCohomology}
We use the geometric structures on $\Ml^{T}_{Q}$ to induce structures on vanishing cycles cohomology. The structures will be induced using the vanishing cycles toolkit in the Appendix \ref{sec:app_cohomology}.
\subsubsection{Direct sum} The direct sum map on the moduli stack induces a map 
$$\oplus^* \ : \ \Ht^{\sbt}(\Ml_Q)\ \to \ \Ht^{\sbt}(\Ml_Q \times \Ml_Q).$$
The K\"unneth isomorphism does not apply here since $\Ml_Q$ has infinitely many connected components, but restricting to the component labelled by a pair of dimension vectors $d_1,d_2$ it does, and we have a map
$$\oplus^*_{d_1,d_2} \ : \ \Ht^{\sbt}(\Ml_{Q,d_1 + d_2}) \ \to \ \Ht^{\sbt}(\Ml_{Q,d_1}) \otimes \Ht^{\sbt}(\Ml_{Q,d_2}).$$
We now apply a similar argument to equivariant critical cohomology.

\begin{lem}\label{lem:HolomorphicJoyceCoproduct}
  Assume we have a graded quiver with potential that satisfies Assumption \ref{sssec:kunneth_assumpt}. There is a cocommutative coproduct with coderivation 
  $$\oplus^* \ : \ \Al^T_{Q,W} \ \to \ \Al^T_{Q,W}\otimes_{\Ht^{\sbt}(\BT)} \Al^T_{Q,W}[\Delta], \hspace{15mm} T \ : \ \Al^T_{Q,W}\ \to \ \Al^T_{Q,W}[2],$$
  where we shift by cohomological degree $\Delta = \chi(d_1)+\chi(d_2)-\chi(d_1 + d_2)$.
\end{lem}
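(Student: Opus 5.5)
The plan is to build $\oplus^*$ and $T$ directly from the geometric structures (1)--(3) of \S\ref{ssec:GeometricStructures}, using the vanishing-cycle toolkit of Appendix \ref{sec:app_cohomology}, and then read off the coalgebra axioms from the corresponding identities of stacks.

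\emph{Step 1: the coproduct.} Fix $d_1,d_2$ and the map $\oplus\colon \Ml^T_{Q,d_1}\times_{\BT}\Ml^T_{Q,d_2}\to \Ml^T_{Q,d_1+d_2}$. By the compatibility \eqref{OplusWCompatibility}, $\oplus^*\tr W=\tr W\boxplus \tr W$. Vanishing cycles admit a natural pullback morphism $\oplus^*\varphi_{\tr W}\to \varphi_{\oplus^*\tr W}$ for any map of smooth stacks. Thom--Sebastiani then identifies the target with $\varphi_{\tr W}\boxtimes\varphi_{\tr W}$. Taking hypercohomology gives
\[ \Ht^{\sbt}(\Ml^T_{Q,d_1+d_2},\varphi_{\tr W})\ \to\ \Ht^{\sbt}(\Ml^T_{Q,d_1}\times_{\BT}\Ml^T_{Q,d_2},\varphi_{\tr W}\boxtimes\varphi_{\tr W}), \]
and the K\"unneth assumption \eqref{eqn:KunnethAssumption} identifies the right side with the relative tensor product over $\Ht^{\sbt}(\BT)$.

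The perverse normalisation shifts by $\dim\Ml_{Q,d}=-\chi(d)$. The pullback preserves unshifted degree, so after normalising we get the shift $\Delta=\chi(d_1)+\chi(d_2)-\chi(d_1+d_2)$, as claimed. Summing over $(d_1,d_2)$ defines $\oplus^*$. The counit is restriction along $0\colon \pt\to\Ml_Q^T$, using $0^*\tr W=0$.

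\emph{Step 2: the axioms.}
\begin{itemize}
\item Coassociativity follows from the strict associativity of $\oplus$ up to 2-isomorphism, $\oplus\circ(\oplus\times\id)\simeq\oplus\circ(\id\times\oplus)$.
\item Cocommutativity follows from $\oplus\circ\sigma\simeq\oplus$.
\item Counitality follows from $\oplus\circ(0\times\id)\simeq\id$.
\end{itemize}
In each case we use functoriality of the vanishing-cycle pullback under composition, together with the compatibility of Thom--Sebastiani with associativity and symmetry constraints. For the symmetry constraint the Koszul sign must be tracked; it is absorbed by the symmetric structure on $\otimes$ when it is twisted by the shift.

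\emph{Step 3: the coderivation.} Pull back along $\act\colon \BGm\times\Ml^T_Q\to\Ml^T_Q$. Since $\act^*\tr W=0\boxplus\tr W$, Thom--Sebastiani (with $\varphi_0$ trivial on $\BGm$) gives
\[ \act^*\colon \Al^T_{Q,W}\to \Ht^{\sbt}(\BGm)\otimes\Al^T_{Q,W}=\Al^T_{Q,W}[z]. \]
Write $\act^*(\alpha)=\sum_n z^n T^{(n)}\alpha$ and set $T=T^{(1)}$, which has degree $2$. The action axioms of $\BGm$ give $\act^*=e^{zT}$, using that $\BGm\times\BGm\to\BGm$ induces $z\mapsto z_1+z_2$ on cohomology.

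The $\BGm$-equivariance of $\oplus$, with $\BGm$ acting diagonally on the source, gives $(\act\times\act)^*\oplus^*=\oplus^*\act^*$ after restricting along the diagonal $\BGm\to\BGm\times\BGm$. On $z$-linear terms this is exactly \eqref{eqn:Coderivation}. Local nilpotence follows because the polynomial $\act^*\alpha$ lands in $\Al[z]$; this matches the setting of Lemma \ref{lem:holomorphic_covertex}.

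\emph{Main obstacle.} The real difficulty is making Step 2 rigorous in the equivariant critical setting. We need the vanishing-cycle pullback morphisms to compose coherently with the Thom--Sebastiani isomorphisms. We also need the K\"unneth isomorphisms to be compatible across triple products, since \eqref{eqn:KunnethAssumption} is only assumed for pairs. My first approach is to deduce the triple K\"unneth from the pairwise one by iterating and using $\Ht^{\sbt}(\BT)$-flatness where available. I would then verify coherence at the level of sheaves before taking cohomology, relying on the naturality statements in the appendix.
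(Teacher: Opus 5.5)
Your proposal is correct and follows essentially the same route as the paper's proof. The paper builds $\oplus^*$ from the unit $\varphi_{\tr W}\to\oplus_*\oplus^*\varphi_{\tr W}$ followed by the vanishing-cycle pullback, the identity $\oplus^*\tr W=\tr W\boxplus\tr W$, Thom--Sebastiani and the K\"unneth assumption; this is just the adjoint form of your Step 1. It then reads off coassociativity, cocommutativity and the counit from the corresponding stack identities, and obtains $T$ as the $z$-coefficient of $\act^*=e^{zT}$, which is a coderivation because $\oplus$ is $\BGm$-equivariant for the diagonal action and $z$ is primitive.

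The coherence and triple-K\"unneth issues you flag are not addressed in the paper at all. Note that iterating the pairwise hypothesis alone does not give the triple case, since the pair $(\Ml^T_{Q,d_1}\times_{\BT}\Ml^T_{Q,d_2},\Ml^T_{Q,d_3})$ is not of the assumed form. However, injectivity of the triple K\"unneth map would already suffice for coassociativity.
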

\begin{proof} 
To simplify notation, we write $\Qb_{d}$ for $\Qb_{\Ml_{Q,d}^T}[\chi(d)]$. We have the following chain of maps
   \begin{align*}
       \varphi_{W_{d_{1}+d_{2}}} \Qb_{d_1+d_2} &\ \to\ \oplus_{*} \oplus^{*}\varphi_{W_{d_{1}+d_{2}}} \Qb_{d_1+d_2}  \\ 
       &\ \to \ \oplus_{*} \varphi_{\oplus^{*} W_{d_{1}+d_{2}}} \Qb_{d_1+d_2}   \\
      & \ = \ \oplus_{*} \varphi_{W_{d_{1}} \boxplus W_{d_2}} \Qb_{d_1+d_2}  \\
      & \ \simeq \ \oplus^{*}(\varphi_{W_{d_{1}}}\Qb_{d_1} \boxtimes \varphi_{W_{d_{2}}}\Qb_{d_2}).
   \end{align*}
   of sheaves on $\Ml^{T}_{Q,d_{1}+d_{2}}$. The first is the unit for the $(\oplus^*,\oplus_*)$ adjunction, the second is the functoriality of vanishing cycles under pullback, the third is compatibility between $W$ and the direct sum map, and the last is the Thom-Sebastiani isomorphism for vanishing cycles. Taking derived global sections gives 
    \begin{equation*} \label{eqn:DirectSumCriticalCohomology}
    \oplus^*_{d_{1}, d_{2}} : \Ht^{\sbt}(\Ml^{T}_{Q,d_{1}+d_{2}},\varphi_{W_{d_{1}+d_{2}}}) \to   \Ht^{\sbt}(\Ml^{T}_{Q,d_{1}},\varphi_{W_{d_{1}}}) \otimes_T \Ht^{\sbt}(\Ml^{T}_{Q,d_{2}},\varphi_{W_{d_{2}}})[\chi(d_1)+\chi(d_2)-\chi(d_1 + d_2)]
    \end{equation*}
    by the K\"unneth isomorphism assumption. Coassociativity and cocommutativity are inherited from associativity and commutativity of $\oplus$. For instance, it is coassociative because the diagram of stacks
   \begin{center}
   \begin{tikzcd}[row sep = {40pt,between origins}, column sep = {30pt}]
   {\Ml^{T}_{Q,d_{1}} \times_{\BT} \Ml^{T}_{Q,d_{2}} \times_{\BT} \Ml^{T}_{Q,d_{3}}} \ar[r,"\oplus\times \id"] \ar[d,"\id\times \oplus"]  & {\Ml^{T}_{Q,d_{1}+ d_{2}} \times_{\BT} \Ml^{T}_{Q,d_{3}}} \ar[d,"\oplus"] \\ 
   {\Ml^{T}_{Q,d_{1}} \times_{\BT} \Ml^{T}_{Q,d_{2}+ d_{3}}}\ar[r,"\oplus"]  & {\Ml^{T}_{Q,d_{1}+ d_{2}+ d_{3}}}
   \end{tikzcd}
   \end{center}
   commutes. Using the map $0:\pt \to  \Ml^{T}_{Q}$ we construct the counit similarly.

   We apply an identical argument to the action by $\BGm$ to get the following map of sheaves on $\BGm \times \Ml^T_{Q,d}$, where the only difference is we use the compatibility between $W$ and the action map $\act$.
   \begin{align*}
     \varphi_{W_d}& \ \to \ \act_* \act^* \varphi_{W_d} \ \to \ \act_* \varphi_{\act^{*} W_d} \ = \ \act_* \varphi_{0\, \boxplus W_d}   \ \simeq \ \act_*(\Qb_{\Ml^T_{Q}} \boxtimes \varphi_{W_d}).
   \end{align*}
   This induces a map 
    \begin{equation*}
        \act^{*} \ : \  \Ht^{\sbt}(\Ml_{Q,d}, \varphi_{W_d}) \ \to \ \Ht^{\sbt}(\BGm) \otimes \Ht^{\sbt}(\Ml_{Q,d}, \varphi_{W,d})  \ = \ \Cb[z]\otimes \Ht^{\sbt}(\Ml_{Q,d}, \varphi_{W,d})
    \end{equation*}
    where $z$ is the first chern class of the tautological line bundle on $\BGm$. Since $\act$ is a group stack action, this defines an action for the coalgebra $\Cb[z]$. Moreover, $\oplus^*$ is linear for this coaction, as follows from the commutative diagram 
    \begin{center}
    \begin{tikzcd}[row sep = {40pt,between origins}, column sep = {30pt}]
    {\Ml^{T}_{d_{1}} \times_{\BT} \Ml^{T}_{d_{2}} \times \B \Gb_{m}} \ar[r,"\id\times \Delta"] \ar[d,"\oplus \times \id"]  &[-10pt]{\Ml^{T}_{Q,d_{1}} \times_{\BT} \Ml^{T}_{Q,d_{2}} \times \B \Gb_{m} \times \B \Gb_{m}}\ar[r,"\act\times \act"]  &[10pt] {\Ml^{T}_{Q,d_{1}} \times_{\BT} \Ml^{T}_{Q,d_{2}}} \ar[d,"\oplus"] \\ 
    {\Ml^{T}_{d_{1}+d_{2}} \times \B \Gb_{m}} \ar[rr,"\act"] && {\Ml^{T}_{d_{1}+d_{2}}}
    \end{tikzcd}
    \end{center}
    Thus $\Al_{Q,W}^T$ defines a coalgebra inside $\Ht^{\sbt}(\BGm)\CoMd$. We thus define the endomorphism $T$ to be the coefficient of $z$ in the action map $\act^*=\exp(Tz)$, which is a coderivation as $z$ is primitive.
\end{proof}
\subsubsection{$W= 0$ case} \label{ssec:coprod_on_0pot}
In particular, by taking $W=0$ we get a $\Cb[z]$-coaction and coproduct on ordinary cohomology 
$$\Al^T_Q \ = \ \bigoplus_{d \in \Lambda} \Ht^{\sbt}(\Ml^T_{Q,d})[\dim \Ml_{Q,d}]$$
which we also denote by $\act^*$ and $\oplus^*$. Note that $\Al^T_Q$ acts on $\Al^T_{Q,W}$ by cup product, and this algebra and module structure is compatible with the coproduct and $\Cb[z]$-coaction as follows.
 \begin{lem} \label{lem:linearity_two_acts}
   Denote the coproducts and translations on $\mathcal{A}^{T}_{Q,0}$ and $\mathcal{A}^{T}_{Q,W}$  using the subscript $0$ and $W$ respectively. They are compatible as follows
   \begin{align*}
       \act^*_{W}(h \cdot b) \ & = \ \act^*(h)_{0} \cdot \act^{*}_{W}( b)  \\
        \oplus^*_{W}(h \cdot b) \ & = \ \oplus^*_0(h) \cdot \oplus^{*}_{W}( b)
   \end{align*}
   for each cohomology class $h \in \Al^{T}_{Q,0}$ and critical cohomology class $b \in \Al^T_{Q,W}$. 
 \end{lem}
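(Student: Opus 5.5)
The plan is to realise the cup product action of $\Al^T_{Q,0}$ on $\Al^T_{Q,W}$ at the level of sheaves, and then check that every map in the chains defining $\oplus^*_W$ and $\act^*_W$ in the proof of Lemma \ref{lem:HolomorphicJoyceCoproduct} is linear for this action. Concretely, a class $h \in \Ht^{\sbt}(\Ml^T_{Q,d})$ is a map $h\colon \Qb_{\Ml^T_{Q,d}} \to \Qb_{\Ml^T_{Q,d}}[n]$. Cup product with $h$ on $\Ht^{\sbt}(\Ml^T_{Q,d},\varphi_{W_d})$ is induced by the composite
$$\varphi_{W_d}\Qb \ \simeq\ \Qb\otimes \varphi_{W_d}\Qb \ \xrightarrow{h\otimes \id}\ \Qb[n]\otimes\varphi_{W_d}\Qb.$$
Equivalently, it comes from the $\Qb$-linearity (naturality) of $\varphi_{W_d}$ applied to the endomorphism $h$ of $\Qb$. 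I will use the second description, since naturality is what interacts well with the other steps.

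I then go through the four maps in the chain
$$\varphi_{W_{d_1+d_2}}\Qb \to \oplus_*\oplus^*\varphi \to \oplus_*\varphi_{\oplus^*W} = \oplus_*\varphi_{W\boxplus W} \simeq \oplus_*(\varphi\boxtimes\varphi)$$
and show that each intertwines the relevant actions. For the first map, the unit $\id\to\oplus_*\oplus^*$ is natural, and $\oplus^*$ takes cup product by $h$ to cup product by $\oplus^*h$. For the second map, the base change $\oplus^*\varphi_W\to\varphi_{\oplus^*W}\oplus^*$ is a natural transformation, so it commutes with $\varphi$ applied to the morphism $h\colon\Qb\to\Qb[n]$. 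The third map is an equality. For the fourth, the Thom--Sebastiani isomorphism is natural in both sheaf arguments, so $\varphi_{W\boxplus W}$ applied to $\Qb\boxtimes\Qb$ is compatible with the module structure over $\Ht^{\sbt}(\Ml_{d_1}\times_{\BT}\Ml_{d_2})$.

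It remains to identify cup product by $\oplus^*h$ on the product with the action of the element $\oplus^*_0(h)$ under the Künneth isomorphism \eqref{eqn:KunnethAssumption}. For this I use that the Künneth map is multiplicative for external cup products, $(h_1\boxtimes h_2)\cdot(b_1\boxtimes b_2)=(h_1 b_1)\boxtimes(h_2 b_2)$ up to Koszul sign, together with the fact that $\oplus^*_0$ lands in $\Ht^{\sbt}(\Ml_{d_1})\otimes_{\Ht^{\sbt}(\BT)}\Ht^{\sbt}(\Ml_{d_2})$. The identity for $\act^*$ follows by the same argument, replacing $\oplus$ with $\act$ and $W\boxplus W$ with $0\boxplus W$, and using that $\act^*$ on ordinary cohomology is pullback along $\act$.

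The main obstacle is the naturality of the Thom--Sebastiani isomorphism and of the vanishing cycle base change map with respect to cup product. In particular, one has to check that these isomorphisms are compatible with the $\Qb$-module (cohomology of the base) structure, and that the shifts $[\chi(d)]$ and Koszul signs match. I would deduce these from the appendix toolkit by phrasing everything as natural transformations of functors on the constructible derived category. I would then observe that cup product by $h$ is the image under these functors of a morphism $\Qb\to\Qb[n]$, and that this morphism is compatible with the tensor structure via the projection formula.
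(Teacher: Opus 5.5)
Your argument is correct, but it is organised differently from the paper's. The paper argues globally. It views $h\cdot b$ as the diagonal pullback of the external product $h\boxtimes b$. Both identities then follow from functoriality of (critical) pullback and the commutative squares saying that $\oplus$ and $\act$ commute with diagonals, e.g.\ $\Delta\circ\oplus=(\oplus\times\oplus)\circ(\Delta\times\Delta)$ up to reordering factors. You instead unwind the definition of $\oplus^*_W$ from Lemma \ref{lem:HolomorphicJoyceCoproduct}, check linearity step by step, and handle the K\"unneth identification separately.

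The paper's route is shorter and treats $\oplus$ and $\act$ uniformly. However, it tacitly uses that critical pullback is functorial and compatible with external products, which is exactly the sheaf-level bookkeeping you carry out. Your route also makes explicit the K\"unneth step needed to read $\oplus^*_0(h)\cdot\oplus^*_W(b)$ as an action on a tensor product.

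The obstacle you single out in fact largely dissolves. The whole chain is a single morphism of sheaves $\varphi_{W}\Qb_{d_1+d_2}\to\oplus_*(\varphi_{W}\Qb_{d_1}\boxtimes\varphi_{W}\Qb_{d_2})$ on $\Ml^T_{Q,d_1+d_2}$. By \eqref{linearity_lemma} it therefore induces an $\Ht^{\sbt}(\Ml^T_{Q,d_1+d_2})$-linear map on cohomology, and Lemma \ref{sublemma_cuplinearity} shows that $h$ acts on the target by cup product with $\oplus^*h$. So you need neither the description of the cup action as $\varphi_W(h)$ nor naturality of the base change and Thom--Sebastiani maps in the sheaf variables. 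The Koszul signs in the K\"unneth step are also trivial, since ordinary cohomology classes of $\Ml^T_{Q,d}$ have even degree.
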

\begin{proof}
    The compatibility of direct sum with cup product follows from the commutative diagram
    \begin{center}
      \begin{center}
      \begin{tikzcd}[row sep = {40pt,between origins}, column sep = {20pt}]
      \Ml_Q^T\times_{\BT}\Ml_Q^T \ar[r,"\Delta\times\Delta"] \ar[d,"\oplus"] &  \Ml_Q^T\times_{\BT}\Ml_Q^T\times_{\BT}\Ml_Q^T\times_{\BT}\Ml_Q^T \ar[d,"\oplus\times \oplus"] \\ 
       \Ml_Q^T \ar[r,"\Delta"]& \Ml_Q^T\times_{\BT}\Ml_Q^T
      \end{tikzcd}
      \end{center} 
    \end{center}
    and compatibility of act maps from

    \begin{center}
      \begin{center}
      \begin{tikzcd}[row sep = {40pt,between origins}, column sep = {20pt}]
      \BGm\times \Ml_Q^T\times_{\BT}\Ml_Q^T \ar[r,"\Delta\times\Delta"] \ar[d,"\act_1"] & (\BGm\times \Ml_Q^T\times_{\BT}\Ml_Q^T)^{\times_{\BT}2} \ar[d,"\act_1\times \act_1"] \\ 
       \Ml_Q^T\times_{\BT}\Ml_Q^T \ar[r,"\Delta"]&  ( \Ml_Q^T\times_{\BT}\Ml_Q^T)^{\times_{\BT}2} 
      \end{tikzcd}
      \end{center} 
    \end{center}
\end{proof}
We usually omit the subscripts $0$ and $W$ on coproducts and translations.

\subsubsection{Joyce--Borcherds twist}
We next introduce the \textit{Joyce--Borcherds twist}
\begin{equation}
   \Psi(\Ext_{d_{1},d_{2}},z)\ =\ \sum_{k\ge 0}z^{\rank \theta-k}c_k(\Ext_{d_{1},d_{2}}) \in \Ht^{\sbt}(\Ml^{T}_{Q,d_{1}}) \otimes_{\Ht^{\sbt}(\BT)} (\Ht^{\sbt}(\Ml^{T}_{Q,d_{2}})((z^{-1})), 
\end{equation}
and sometimes write $\Psi(\Ext,z)$ to emphasise the role of the perfect complex $\Ext$.
The following algebraic properties will follow from compatibilities of the complex $\Ext$ with respect to the different operations on our moduli stack $\Ml^{T}_{Q}$. We will also use properties of chern series of perfect complexes as in Appendix \ref{sec:appendix}. 
    \begin{prop} \label{jb_prop}
  The Joyce--Borcherds twist satisfies 
$$(\oplus_{d_1,d_2}^*\times \id)\Psi(\Ext_{d_1+d_2,d_3},z) \ = \ \Psi(\Ext_{d_1,d_3},z)\Psi(\Ext_{d_2,d_3},z)$$
$$(\id \times \oplus_{d_2,d_3}^*)\Psi(\Ext_{d_1,d_2+d_3},z) \ = \ \Psi(\Ext_{d_1,d_2},z)\Psi(\Ext_{d_1,d_3},z)$$
and 
$$\act_{1,w}^*\Psi(\Ext_{d_1,d_2},z) \ = \ \Psi(\Ext_{d_1,d_2},z-w), \hspace{15mm} \act_{2,w}^*\Psi(\Ext_{d_1,d_2},z) \ = \ \Psi(\Ext_{d_1,d_2},z+w).$$
\end{prop}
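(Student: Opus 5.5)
The plan is to reduce everything to two facts about the Chern series of perfect complexes, and then apply them to the compatibilities of $\Ext$ listed in item (4) of \S\ref{ssec:GeometricStructures}.

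The first fact is multiplicativity. For an exact triangle, or simply a direct sum, $E\simeq E'\oplus E''$ of perfect complexes, the rank is additive and the total Chern class is multiplicative. Hence
$$\Psi(E,z)\ =\ z^{\rank E}c_{1/z}(E)\ =\ \Psi(E',z)\,\Psi(E'',z),$$
where $c_t(E)=\sum_k t^k c_k(E)$ and we expand in $z^{-1}$.

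The second fact is naturality: $\Psi(f^*E,z)=f^*\Psi(E,z)$. This holds because Chern classes commute with pullback.

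With these in hand, the first two identities follow directly. Pull back along $\oplus\times\id$ and use $(\oplus\times\id)^*\Ext\simeq\Ext_{13}\oplus\Ext_{23}$. Restricted to the components indexed by $d_1,d_2,d_3$, this gives
$$\Psi(\Ext_{d_1,d_3},z)\,\Psi(\Ext_{d_2,d_3},z),$$
where the factors are placed in the appropriate tensor slots, as in the statement. The second identity is identical, using $(\id\times\oplus)^*\Ext\simeq\Ext_{12}\oplus\Ext_{13}$. One could also verify these concretely from the explicit complex \eqref{eqn:ExtComplex}: the tautological bundles of a direct sum are $\El_{i,d_1+d_2}\simeq\El_{i,d_1}\boxplus\El_{i,d_2}$, so the Chern roots of $\Ext$ split into the two families, in the same way as in Proposition \ref{prop:euler_ext_to_euler_ben}.

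For the translation identities, use $\act_1^*\Ext\simeq\gamma^{-1}\boxtimes\Ext$, where $\gamma$ is the tautological line bundle on $\BGm$ with $c_1(\gamma)=w$. The key step is the effect of twisting by a line bundle on $\Psi$. I would compute this with the splitting principle. Write $\Ext_0$ and $\Ext_1$ in terms of Chern roots $a_\alpha$ and $b_\beta$. Then
$$\Psi(\Ext,z)\ =\ \frac{\prod_\alpha(z+a_\alpha)}{\prod_\beta(z+b_\beta)},$$
since $z^{\rank E}c_{1/z}(E)=\prod(z+\text{roots of }E_0)/\prod(z+\text{roots of }E_1)$ expanded in $z^{-1}$. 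Tensoring with $\gamma^{\mp1}$ shifts every root by $\mp w$. This turns $\Psi(\Ext,z)$ into $\Psi(\Ext,z\mp w)$, re-expanded as a series in $z^{-1}$ with polynomial coefficients in $w$, which is legitimate because $w$ is nilpotent-free but of positive degree and the expansion is in $z^{-1}$. Concretely, on the explicit complex the roots are $1\otimes x_{j,m}-x_{i,n}\otimes1(+\wtt(e))$. Acting by $\BGm$ on the first factor shifts every $x_{i,n}\otimes1$ by $w$, so each root shifts by $-w$, matching $\gamma^{-1}$ and giving $z-w$. Acting on the second factor shifts the roots by $+w$, giving $z+w$.

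The main point requiring care is justifying that the equalities hold as formal Laurent series in $z^{-1}$. This means checking that the identity $\Psi(E\otimes L,z)=\Psi(E,z+c_1(L))$ is an identity in $\Ht^{\sbt}(-)[w]((z^{-1}))$, where $(z+w)^{-1}$ is expanded in nonnegative powers of $w/z$. In each cohomological degree only finitely many terms contribute, so both sides are rational functions expanded in the same domain and they agree. The rest is bookkeeping of sign conventions for the $\BGm$ weight, which I would fix by checking the rank-one case $\El_{i,1}^\vee\boxtimes\El_{i,1}$.
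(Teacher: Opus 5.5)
Your proposal is correct and matches the paper's proof. Both arguments reduce the four identities to the compatibilities $(\oplus\times\id)^*\Ext\simeq\Ext_{13}\oplus\Ext_{23}$, $(\id\times\oplus)^*\Ext\simeq\Ext_{12}\oplus\Ext_{13}$, $\act_1^*\Ext\simeq\gamma^{-1}\boxtimes\Ext$ and $\act_2^*\Ext\simeq\gamma\boxtimes\Ext$, combined with naturality, multiplicativity and the twist rule $\Psi(V\otimes\Ll,z)=\Psi(V,z+c_1(\Ll))$ from Lemma \ref{lem:psi_properties}. The only difference is that you re-derive this twist rule via Chern roots, whereas the paper cites it.
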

\begin{proof}
These properties all follow from the identities
$$(\oplus_{d_1,d_2}^*\times \id)\Ext_{d_1+d_2,d_3} \ = \ \Ext_{d_1,d_3}\oplus \Ext_{d_2,d_3}$$
$$(\id \times \oplus_{d_2,d_3}^*)\Ext_{d_1,d_2+d_3} \ = \ \Ext_{d_1,d_2}\oplus\Ext_{d_1,d_3}$$
and 
$$\act_{1}^*\Ext_{d_1,d_2} \ = \ \gamma^{-1}\boxtimes \Ext_{d_1,d_2}, \hspace{15mm} \act_{2}^*\Ext_{d_1,d_2} \ = \ \gamma \boxtimes \Ext_{d_1,d_2},$$
where we apply Lemma \ref{lem:psi_properties} on how $\Psi(-,z)$ interacts with direct sums and tensoring with line bundles.
\end{proof}

\subsection{Main construction}

\begin{theorem}\label{quiver_potential_jl}\label{thm:JoyceVCA}
Let $Q$ be an $N$-graded quiver with invariant potential $W$, which satisfies the K\"unneth assumption \eqref{eqn:KunnethAssumption}. Then
\begin{align*}
     \Delta(z) \ : \  \AQW[T]{Q}{W} \ & \to \  \AQW[T]{Q}{W} \otimes_T \AQW[T]{Q}{W}  (( z^{-1}))  \\
     \alpha \ &\mapsto \  \Psi(\Ext,-z) \cdot \act^{*}_{1} \oplus^{*}(\alpha)
\end{align*}
defines a coassociative vertex coproduct linear over $\Ht^{\sbt}(\BT)$.
\end{theorem}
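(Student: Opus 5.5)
We will verify the axioms of Section \ref{vertex_def} directly, using the holomorphic data $(\oplus^*, T)$ of Lemma \ref{lem:HolomorphicJoyceCoproduct}, the module compatibility of Lemma \ref{lem:linearity_two_acts}, and the properties of the Joyce--Borcherds twist in Proposition \ref{jb_prop}. Write $\Delta_0 = \oplus^*$ and $\Delta_0(z) = \act_1^*\oplus^* = (e^{zT}\otimes\id)\oplus^*$. By Lemma \ref{lem:holomorphic_covertex} this is a holomorphic vertex coalgebra. The main idea is that $\Delta(z)$ is a twist of $\Delta_0(z)$ by multiplication with the class $\Psi(\Ext,-z)$, which lies in (the completed tensor square of) $\Al^T_{Q,0}$ and acts on $\Al^T_{Q,W}\otimes_T\Al^T_{Q,W}$ by cup product.

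\textbf{Well-definedness and linearity.} $\Psi(\Ext_{d_1,d_2},-z)$ is $z^{\rank}$ times a power series in $z^{-1}$ with coefficients in $\Ht^{\sbt}(\Ml^T_{Q,d_1})\otimes_{\Ht^{\sbt}(\BT)}\Ht^{\sbt}(\Ml^T_{Q,d_2})$, which acts on the Künneth decomposition (\ref{eqn:KunnethAssumption}) by cup product. Hence $\Delta(z)$ lands in $\AQW[T]{Q}{W}\otimes_T\AQW[T]{Q}{W}((z^{-1}))$. Linearity over $\Ht^{\sbt}(\BT)$ holds because all maps are pullbacks and cup products over $\BT$. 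The degree shift is the rank of $\Ext$, namely $\chi(d_1,d_2)$. It combines with the shift of $\oplus^*$ to give the expected $\chi(d_2,d_1)-\chi(d_1,d_2)$, since $\chi(d_1+d_2)=\chi(d_1)+\chi(d_2)+\chi(d_1,d_2)+\chi(d_2,d_1)$.

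\textbf{Translation and covacuum axioms.} For the covacuum, $\Ext_{0,d}$ and $\Ext_{d,0}$ are zero, so $\Psi=1$ there, and the claim reduces to the counit property of $\oplus^*$ via $0:\pt\to\Ml$. Note that $\act_1^*$ acts trivially on the $0$ component, and on the other side it gives $\id+\Ol(z)$. For translation, we differentiate the identity $\Delta(z)=\Psi(\Ext,-z)\,(e^{zT}\otimes\id)\oplus^*$ and apply the coderivation property (\ref{eqn:Coderivation}). The remaining terms match provided $(T\otimes 1+1\otimes T)$, acting as a derivation on cup products by Lemma \ref{lem:linearity_two_acts}, kills $\Psi(\Ext,-z)$ up to $-\partial_z$ on the first factor. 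By Proposition \ref{jb_prop}, $\act_1^*\act_2^*$ with the same parameter fixes $\Psi$, so the diagonal translation annihilates it. Meanwhile $T\otimes 1$ acting on $\Psi(\Ext,-z)$ equals $\partial_z\Psi(\Ext,-z)$, because $\act_{1,w}^*\Psi(\Ext,-z)=\Psi(\Ext,-z-w)$.

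\textbf{Coassociativity, the main step.} We expand both sides of (\ref{coassoc_eq}). On the left, $(\Delta(z)\otimes\id)\Delta(w)$ equals
\[
\Psi(\Ext_{12},-z)\,(\act_{1,z}^*\oplus^*\otimes\id)\bigl(\Psi(\Ext_{d_1+d_2,d_3},-w)\bigr)\cdot(\text{holomorphic part}).
\]
This uses Lemma \ref{lem:linearity_two_acts} to move the first coproduct past the cup product. By Proposition \ref{jb_prop}, the twist becomes $\Psi(\Ext_{13},-z-w)\Psi(\Ext_{23},-w)$. On the right, the same procedure gives $\Psi(\Ext_{23},-w)$ times $(\id\otimes\oplus^*)\Psi(\Ext_{1,23},-z-w)=\Psi(\Ext_{12},-z-w)\Psi(\Ext_{13},-z-w)$. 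However, the factor $\act_{1,z+w}^*$ also acts on this, and $\act_{2,w}^*$ shifts the $\Ext_{12}$ factor back to $-z$. The holomorphic parts then agree by coassociativity of $\Delta_0(z)$. We expect the main obstacle to be bookkeeping: tracking which $\act$ pullbacks hit which $\Psi$ factors, making sure the expansions in $z^{-1}$ and $w^{-1}$ are taken in compatible domains (both sides lie in $((z^{-1}))((w^{-1}))$ after expanding $(z+w)^{-1}$ in $w/z$ suitably), and handling Koszul signs from the twisted tensor product $\otimes_T$.
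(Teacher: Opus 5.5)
Your argument is the paper's. In both, Proposition~\ref{jb_prop} and Lemma~\ref{lem:linearity_two_acts} reduce the twists on either side to $\Psi(\Ext_{12},-z)\Psi(\Ext_{13},-z-w)\Psi(\Ext_{23},-w)$. The holomorphic parts then agree by the identity $\act^*_{z,1}(\oplus^*\otimes\id)\act^*_{w,1}=\act^*_{z+w,1}\act^*_{w,2}(\oplus^*\otimes\id)$: the paper reads this off a commutative square, while you obtain it by quoting Lemma~\ref{lem:holomorphic_covertex} for $(e^{zT}\otimes\id)\oplus^*$. Your counit and translation checks are correct and more explicit than the paper's.

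There are two slips. First, inside $\Delta(z+w)$ the translation $\act^*_{1,z+w}$ acts only on $\oplus^*\alpha$, not on $\Psi(\Ext_{d_1,d_2+d_3},-z-w)$. Only the $\act^*_{2,w}$ coming from the outer $\Delta(w)$ moves the twist, turning $\Psi(\Ext_{12},-z-w)$ into $\Psi(\Ext_{12},-z)$. Second, $\Psi(\Ext_{d_1,d_2},-z)$ has cohomological degree $2\chi(d_1,d_2)$, not $\chi(d_1,d_2)$, and this factor $2$ is what makes the total shift the antisymmetrised Euler form.

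Where your plan would fail is the expansion bookkeeping you anticipate: the two sides cannot be put in a common domain. On the right, $(z+w)^{-1}$ must be expanded in powers of $w/z$. In the opposite region the coefficients are in general divergent sums, already for $\act^*_{2,w}\Psi(\Ext_{12},-u)|_{u=z+w}$. So the right side contains the $|z|\gg|w|$ expansion of $\Psi(\Ext_{13},-z-w)$. On the left, this factor arises by shifting the chern roots of a $w^{-1}$-series by $z$, so it is the $|w|\gg|z|$ expansion.

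These genuinely differ. Take the Jordan quiver with loop weight $t\in\Ht^2(\BT)$, $W=0$, and $\alpha=1$ in dimension $3$. Then the coefficients of $z^mw^{-m-1}$ in the $(1,1,1)$-components of the two sides differ by $(-1)^m t$ for every $m\in\Zb$.

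What the computation proves is the weak form, and this applies equally to the paper's argument, which manipulates the $\Psi$'s as rational functions. Both sides are expansions, in different domains, of the same expression. In particular they agree after multiplying by the denominator of $\Psi(\Ext_{13},-z-w)$, a polynomial in $z+w$. That, rather than a literal identity of formal series, is how \eqref{eqn:VertexCoass} has to be read here.
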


In fact $\Delta(z)$ is also braided colocal--the vertex analogue of braided cocommutativity--see the last part of Theorem \ref{thm:CoHABialgebra}. Note that we twist by the $\Ext$ complex of the ambient smooth stack $\Ml_Q$, not of the Calabi--Yau-three moduli stack $\Ml_{Q,W}=\Crit(\textup{Tr}W)$ inside it, as one might first have guessed. 
\begin{proof}
The counit will be induced by vanishing cycle pullback along the inclusion $\pt \xrightarrow{0} \Ml^{T}_{Q}$. The compatibility with translation will follow from \ref{jb_prop}, \ref{ssec:coprod_on_0pot} and that $\act^{*}_{z} = e^{zT}$, so $\frac{d}{dz} \act^{*}_{z,1} = (T \otimes 1) \act^{*}_{z,1}$. \par
   In this proof we will freely use the compatibilities in Lemma \ref{lem:linearity_two_acts}. We now give a direct proof of vertex coassociativity
  \begin{equation}\label{eqn:JoyceVertexAssociativity}
    (\Delta_{ d_{1}, d_{2}}(z)\otimes \id)\Delta_{d_{1} + d_{2}, d_{3}}(w) = (\id \otimes \Delta_{ d_{2}, d_{3}}(w))\Delta_{d_{1}, d_{2} + d_{3}}(z+w)
 \end{equation}
  following Liu \cite[3.2.13]{liu2022multiplicative}, and using the shorthand $\Psi(z)=\Psi(\Ext,z)$. Let us expand both sides starting with the left 
\begin{align*}
  & (\Psi(-z)_{d_1,d_2}\otimes \id) \act_{z,1}^* (\oplus_{d_1,d_2}^* \otimes \id)\ \left( \Psi(-w)_{d_1+d_2,d_3}\act_{w,1}^*\oplus^*_{d_1+d_2,d_3}\right) \\
  & \hspace{15mm} \ = \ (\Psi(-z)_{d_1,d_2}\otimes \id) \act_{z,1}^*\Psi(-w)_{d_1,d_3}\Psi(-w)_{d_2,d_3} (\oplus_{d_1,d_2}^* \otimes \id) \left( \act_{w,1}^*\oplus^*_{d_1+d_2,d_3}\right)\\ 
  & \hspace{15mm} \ = \ \Psi(-z)_{d_1,d_2}\Psi(-z-w)_{d_1,d_3}\Psi(-w)_{d_2,d_3}\  \act_{z,1}^* (\oplus_{d_1,d_2}^* \otimes \id)  \act_{w,1}^*\oplus^*_{d_1+d_2,d_3}
\end{align*}
where we have used the first and second parts of Proposition \ref{jb_prop} in the first and second equality. Similarly, we can expand the right hand side to
\begin{align*}
    & (\id \otimes \Psi(-w)_{d_2,d_3}) \act_{w,1}^* (\id\otimes \oplus_{d_2,d_3}^*)\ \left( \Psi(-z-w)_{d_1,d_2+d_3}\act_{z+w,1}^*\oplus^*_{d_1,d_2+d_3}\right) \\
  & \hspace{15mm} 
  \ = \  (\id \otimes \Psi(-w)_{d_2,d_3}) \act_{w,1}^*\Psi(-z-w)_{d_1,d_3}\Psi(-z-w)_{d_1,d_2} (\id\otimes \oplus_{d_2,d_3}^*) \left( \act_{z+w,1}^*\oplus^*_{d_1,d_2+d_3}\right) \\
  & \hspace{15mm} \ = \  \Psi(-w)_{d_2,d_3} \Psi(-z-w)_{d_1,d_3}\Psi(-z)_{d_1,d_2}\  \act_{w,1}^*(\id\otimes \oplus_{d_2,d_3}^*)  \act_{z+w,1}^*\oplus^*_{d_1,d_2+d_3}.
\end{align*} 
The $\Psi$ factors on left and right are the same, so it remains to show that 
$$  \act^{*}_{z,1}(\oplus^{*}_{d_{1},d_{2}} \otimes \id)\act^{*}_{w,1 } \ = \  \act^{*}_{z+w,1} \act^{*}_{w,2} (\oplus^{*}_{d_{1},d_{2}} \otimes \id).$$
But this is an easy consequence of the commutativity of
\begin{center}
\begin{tikzcd}[row sep = {40pt,between origins}, column sep = {30pt}]
\BGm \times \Ml \times \Ml \ar[r,"\Delta\times \id"] \ar[d,"\id\times \oplus"]  &[-10pt] \BGm\times \Ml \times \BGm \times \Ml \ar[r,"\act\times \act"]  &[10pt] \Ml \times \Ml \ar[d,"\oplus"] \\ 
\BGm \times \Ml \ar[rr,"\act"]  && \Ml  
\end{tikzcd}
\end{center}
which implies the first equality in
$$ \act^{*}_{z,1}(\oplus^{*}_{d_{1},d_{2}} \otimes \id)\act^{*}_{w,1 }  \ = \ \act^{*}_{z,1}\act^{*}_{w,1 } \act^{*}_{w,2 } (\oplus^{*}_{d_{1},d_{2}} \otimes \id) \ = \ \act^{*}_{z+w,1} \act^{*}_{w,2} (\oplus^{*}_{d_{1},d_{2}} \otimes \id),$$
with the second following because $\act_z^*\act^*_w = \act^*_{z+w}$. This finishes the proof that $\Delta(z)$ is a coassociative vertex coalgebra.
\end{proof}

\newpage

\section{Localised coproducts} \label{sec:localised_coprod}
\noindent In section \ref{sec:JoyceCoproduct} we defined a graded vertex coproduct on $\Al_{Q,W}^T$, whose main piece of data is a map of vector spaces 
$$\Delta(z) \ : \ \Al_{Q,W}^T \ \to \ \Al_{Q,W}^T \otimes_{T}\Al_{Q,W}^T((z^{-1})).$$ 
In this section we show that this data may be repacked in a more or less equivalent form, as a \textit{localised} coproduct 
$$\Delta_{\loc} \ : \ \Al_{Q,W}^T \ \to \ \Al_{Q,W}^T \otimes_{T}\Al_{Q,W}^T \left[S^{-1}\right]$$
first defined by Davison \cite{Da2} but frequently appearing in the CoHA literature \cite{YZ}. To begin with give the definition of localised coproducts and relate them to vertex coproducts. In Theorem \ref{thm:DavisonIsJoyce} we show that applying this construction to Davisons' $\Delta_{\loc}$ recovers the Joyce--Liu vertex coproduct $\Delta(z)$.

We now give a definition of localised coproducts adapted from \cite{Da2}. If the reader does not want to understand the abstract definition, they can jump to section \ref{ssec:loc_coprod_ben}, where we recall the construction in \cite{Da2}, which is the only localised coproduct we will use.

\subsection{Localised coproducts to vertex coproducts} \label{ssec:LocalisedCoalgebra}

\subsubsection{}We work over a base $\mathbf{Z}$-graded commutative ring $R$. All tensor products are implicitly over $R$. Let $\Ol = (\Ol_\alpha)_{\alpha \in \Lambda}$ be a collection of graded commutative rings with a cocommutative graded coproduct 
$$\delta_{\alpha,\beta} \ : \  \Ol_{\alpha+\beta} \ \to \ \Ol_\alpha \otimes \Ol_\beta$$
and $S_{\alpha,\beta} \subseteq \Ol_\alpha \otimes \Ol_\beta$ a collection of multiplicative subsets satisfying 
\begin{equation}
  \label{eqn:MultiplicativeSubsetHexagon} 
  (\delta_{\alpha,\beta} \otimes \id)S_{\alpha+\beta,\gamma} \ \sim \ S_{\alpha,\gamma,13}\cdot S_{\beta,\gamma,23} , \hspace{15mm} (\id \otimes \delta_{\beta,\gamma})S_{\alpha,\beta+\gamma} \ \sim \ S_{\alpha,\beta,12}\cdot S_{\alpha,\gamma,13}
\end{equation}
where equivalence means that the associated localisations of $\Ol_\alpha \otimes \Ol_\beta \otimes \Ol_\gamma$ are equal.

A \textit{localised coalgebra} is a direct sum $A = \bigoplus_{\alpha\in \Lambda} A_\alpha$ of $\Ol_\alpha$-modules with 
$$\Delta_{\alpha,\beta} \ : \ A_{\alpha+\beta} \ \to \ A_\alpha \otimes A_\beta [S_{\alpha,\beta}^{-1}]$$
maps of $\Ol_\alpha$-modules, i.e. $\Delta_{\alpha,\beta}(ba)=\delta_{\alpha,\beta}(b)\Delta_{\alpha,\beta}(a)$, which is coassociative, 
\begin{equation}
    \label{fig:LocalisedCoassociativity}
\begin{tikzcd}[row sep = {15pt,between origins}, column sep = {20pt}]
 &A_\alpha \otimes A_{\beta+\gamma} [S_{\alpha,\beta+\gamma}^{-1}] \ar[rd]  & \\ 
 & &A_\alpha \otimes (A_\beta \otimes A_\gamma) [S_{\beta,\gamma,23}^{-1}, (\id\otimes \delta_{\beta,\gamma})S_{\alpha,\beta+\gamma}^{-1}] \ar[dd,equals,"\wr"] \\
A_{\alpha+\beta+\gamma} \ar[ruu] \ar[rdd]  & & \\
& & (A_{\alpha}\otimes A_\beta) \otimes A_\gamma [S_{\alpha,\beta,12}^{-1}, (\delta_{\alpha,\beta} \otimes \id)S_{\alpha+\beta,\gamma}^{-1}]\\
 &A_{\alpha+\beta} \otimes A_\gamma [S_{\alpha+\beta,\gamma}^{-1}] \ar[ru] & 
\end{tikzcd}
\end{equation}
and which satisfies the counit axioms with respect to a map $\epsilon:A_0 \to k$. The vertical identification in \eqref{fig:LocalisedCoassociativity} follows from \eqref{eqn:MultiplicativeSubsetHexagon}. 

\subsubsection{Remark} We will abuse notation and refer to  
\begin{equation}
  \label{eqn:LocalisedCoproduct}
  \Delta \ : \ A \ \to \ A \otimes A [S^{-1}] \ \defeq \ \bigoplus_{\alpha,\beta} A_\alpha \otimes A_\beta [S_{\alpha,\beta}^{-1}]
\end{equation}
as the localised coproduct.

\subsubsection{} Now assume that all the above structures are equivariant for an action of $\Ga$. That is, we have coactions of $\Cb[z]$ 
$$\act_\Ol \ = \ \exp(zT_\Ol)  \ : \ \Ol_\alpha \ \to \ \Ol_\alpha[z], \hspace{15mm} \act_A \ = \ \exp(zT_A) \ : \ A_\alpha \ \to \ A_\alpha[z]$$
given by algebra and module maps:
\begin{equation}\label{eqn:ActAxiom}
  \act_\Ol(bb') \ = \ \act_\Ol(b)\act_\Ol(b'), \hspace{15mm} \act_A(ba) \ = \ \act_\Ol(b)\act_A(a), 
\end{equation}
such that $S_{\alpha,\beta}$ is translation invariant:\footnote{To be precise, this means that the localisation of $\Ol_\alpha \otimes \Ol_\beta[z]$ by the multiplicative subsets $(\act_\Ol \otimes \act_\Ol)S_{\alpha, \beta}$ and $S_{\alpha, \beta} \subseteq \Ol_\alpha \otimes \Ol_\beta\subseteq \Ol_\alpha \otimes \Ol_\beta[z]$ are isomorphic.}
\begin{equation}\label{eqn:STranslationInvariant}
  (\act_\Ol \otimes \act_\Ol)S_{\alpha, \beta} \ \sim\ S_{\alpha,\beta}, 
\end{equation}
and satisfying  
\begin{equation}\label{eqn:CoprodAxiom}
   \delta_{\alpha,\beta}\cdot \act_\Ol \ = \ (\act_\Ol \otimes \act_\Ol)\cdot \delta_{\alpha,\beta}, \hspace{15mm} \Delta_{\alpha,\beta}\cdot \act_A \ = \ (\act_A\otimes \act_A)\cdot \Delta_{\alpha,\beta}
\end{equation}
where the last equation is valued in the same localisation by translation invariance of $S$.

We call the above structure a \textbf{translation-equivariant localised coalgebra}.

\subsubsection{Producing vertex coproducts} \label{ssec:producing_vertex} Consider a translation equivariant localised coalgebra such that each 
$$(\act_\Ol \otimes \id)S_{\alpha,\beta} \ \in \ \Ol_\alpha \otimes \Ol_\beta ((z^{-1}))$$
is an \textit{invertible} element.

\begin{defn} \label{defn:VertexFromLocalised}
   The maps
$$\delta_\Ol(z) \ : \ \Ol_{\alpha+\beta} \ \to \ \Ol_{\alpha}\otimes \Ol_\beta((z^{-1})), \hspace{15mm} \Delta_A(z) \ : \ A_{\alpha+\beta} \ \to \ A_\alpha \otimes A_\beta((z^{-1}))$$
are defined by $\delta_\Ol(z)=\delta_\Ol\cdot \act$ and $\Delta_A(z) = \Delta_A \cdot \act$ using the following map:
\begin{align*}
  \act \ : \  \Ol_{\alpha}\otimes \Ol_{\beta} [S_{\alpha,\beta}^{-1}] &\ \stackrel{\act_\Ol \otimes \id}{\to} \ \Ol_\alpha \otimes \Ol_\beta[z] \left[ (\act_\Ol \otimes \id)S_{\alpha,\beta}^{-1} \right]\\
  & \hspace{5mm} \ \hookrightarrow \ \Ol_\alpha \otimes \Ol_\beta((z^{-1})) \left[ (\act_\Ol \otimes \id)S_{\alpha,\beta}^{-1} \right]\\
  & \hspace{5mm}  \ \simeq \ \Ol_\alpha \otimes \Ol_\beta((z^{-1}))
\end{align*}
and likewise 
$$\act \ : \ A_\alpha \otimes A_\beta [S_{\alpha,\beta}^{-1}] \ \to \ A_\alpha \otimes A_\beta ((z^{-1}))$$
sending $\frac{a\otimes a'}{s}\ \mapsto \ \frac{\act_A(a) \otimes a'}{\act(s)}$.
\end{defn}

\subsection{Davisons' localised coproduct on the CoHA}
\label{ssec:loc_coprod_ben}

\subsubsection{} We now consider $R = \Ht^{\sbt}(\BT)$ and $\Ol_d= \Ht^{\sbt}(\Ml^T_{Q,d})$ with its cup product, coproduct $\delta_{d,e}=\oplus^*_{d,e}$ and multiplicative subset 
$$S_{d_1,d_2} \ = \ \langle e(\Ext_0), e(\Ext_1)\rangle \ = \ \left \langle e(Q_0), e(Q_1) \right \rangle$$
generated by the Euler class of the components of the $\Ext$ complex \eqref{eqn:ExtComplex}. We define
$$\act_{\Ol} \ : \ \Ht^{\sbt}(\Ml^T_{Q}) \ \stackrel{\act^*}{\to} \ \Ht^{\sbt}(\BGm \times\Ml^T_{Q})\ \simeq \ \Ht^{\sbt}(\Ml^T_{Q})[z]$$
as in Lemma \ref{lem:TautBialgebra}, acting on chern roots as $x_{i,n}\mapsto x_{i,n}+z$. This satisfies the above axioms for $\Ol$, working inside the linear monoidal category $\Vect^T_\Lambda$ with its product $\otimes_T$ defined in \eqref{eqn:OtimesTDefinition}.
\subsubsection{Remark} We could have equivalently used 
$$S \ \sim \ \langle e(\El_i^\vee\boxtimes \El_i), \, e(\El_i^\vee\boxtimes \El_j\, \otimes \, \Ll_e) \ : \ e : i \to j \rangle.$$
\subsubsection{} We now define a localised coproduct on $A=\Al^T_{Q,W}$, assuming the K\"unneth assumption \eqref{eqn:KunnethAssumption}. The \textbf{Davison localised coproduct} from \cite{Da2} and \cite[Section 5.2]{BD} is  
\begin{align*}
   \Delta_{\loc} \ : \ \Al^T_{Q,W} &\ \to \ \Al^T_{Q,W} \otimes_{T}\Al^T_{Q,W} \left[ S^{-1}\right] \\
   \alpha & \ \mapsto \ \frac{e(Q_0)}{e(Q_1)} \cdot (q^*)^{-1}p^*\alpha 
\end{align*}
defined in terms of the diagram 
\begin{center}
\begin{tikzcd}[row sep = {30pt,between origins}, column sep = {45pt, between origins}]
 & \SES^{T}_{Q,d_{1},d_{2}} \ar[rd,"p"] \ar[ld,"q"']   & \\ 
\Ml^T_{Q,d_{1}} \times_{\BT} \Ml^{T}_{Q,d_{2}} \ar[ru,"s"',bend right = 10]  & &\Ml^{T}_{Q,d_{1}+d_{2}} 
\end{tikzcd}
\end{center}
and the equivariant Euler classes $e(Q_0), e(Q_1)$ as defined in Proposition \ref{prop:euler_ext_to_euler_ben}.

That is, we take use the pullbacks $p^*,q^*$ on critical cohomology as in  \eqref{eqn:CritPullback}, the latter being invertible because $q$ is an affine fibration\footnote{Note it is important to consider the map $q \colon \SES^{T}_{Q,d_{1},d_{2}} \to \Ml^{T}_{Q,d_{1}} \times_{\BT} \Ml^{T}_{Q,d_{2}}$ and not $\SES^{T}_{Q} \to \Ml^{T}_{Q,d_{1}} \times \Ml^{T}_{Q,d_{2}}$, since the latter is \textit{not} an affine fibration.} , and the Thom--Sebastiani and K{\"u}nneth isomorphisms:
\begin{align*}
    \Ht^{\sbt}(\Ml^T_{Q,d_{1}},\varphi_W) & \ \stackrel{p^*}{\to}\ \Ht^{\sbt}(\SES^T_{Q,d_1,d_2},\varphi_{W_{\SES}}) \\
 &\ \stackrel[\raisebox{2pt}{$\scriptstyle \sim$}]{q^*}{\leftarrow} \  \Ht^{\sbt}(\Ml^T_{Q,d_1}\times_{BT} \Ml^T_{Q,d_2},\varphi_{W\boxplus W})\\
 & \ \simeq \  \Ht^{\sbt}(\Ml^T_{Q,d_1},\varphi_{W})\otimes_{\Ht^{\sbt}(\BT)}\Ht^{\sbt}(\Ml^T_{Q,d_2},\varphi_{W}).
\end{align*}
In the middle we denoted the common function $p^*W =q^*(W \boxplus W)$ by $W_{\SES}$.

\begin{lem}\label{lem:DavisonDirectSum}
  We have that 
  $$\Delta_{\loc}(\alpha) \ = \ e(\Ext)\cdot \oplus^*(\alpha)$$ 
  where $e(\Ext)$ is the localised Euler class (as defined in \ref{ssec:two_term_Euler}) of the two-term complex $\Ext$. 
\end{lem}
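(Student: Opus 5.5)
The plan is to compare the two pullbacks $(q^*)^{-1}p^*$ and $\oplus^*$ directly, using the section $s$ of $q$ that picks out trivial (split) extensions, and then to match the Euler-class prefactors.

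\emph{Step 1: the pullbacks agree.} The composite $p\circ s$ equals $\oplus$ as a map $\Ml^T_{Q,d_1}\times_{\BT}\Ml^T_{Q,d_2}\to \Ml^T_{Q,d_1+d_2}$. Moreover $q\circ s=\id$, and the function $W_{\SES}=p^*W=q^*(W\boxplus W)$ restricts along $s$ to $W\boxplus W=\oplus^*W$, by \eqref{OplusWCompatibility}. By functoriality of critical pullback (Appendix \ref{sec:app_cohomology}, \eqref{eqn:CritPullback}), compatibly with Thom--Sebastiani, we get
\[
s^*p^*=\oplus^*, \qquad s^*q^*=\id
\]
on vanishing cycle cohomology. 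Since $q$ is an affine fibration, $q^*$ is an isomorphism, so $s^*=(q^*)^{-1}$. Hence $(q^*)^{-1}p^*\alpha=s^*p^*\alpha=\oplus^*\alpha$, after applying the K\"unneth assumption \eqref{eqn:KunnethAssumption} on both sides.

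The main check here is that the chain of sheaf maps defining $\oplus^*$ in Lemma \ref{lem:HolomorphicJoyceCoproduct} really coincides with the composite of the chains defining $p^*$ and $s^*$. This includes the identification of the Thom--Sebastiani isomorphisms and the cohomological shifts by $\chi(d)$. I would argue it from the naturality of the unit maps and of the vanishing cycle pullback morphism under composition of maps. This is where I expect the only real obstacle, since it is bookkeeping of shifts and twists: the shift $\dim \Ml_{d_1+d_2}-\dim\Ml_{d_1}-\dim\Ml_{d_2}$ should match the relative dimension of $q$ together with the degree of the Euler class factor.

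\emph{Step 2: the Euler-class prefactor.} With $\Ext=(\Ext_0\to\Ext_1)$, the localised Euler class of the two-term complex (\ref{ssec:two_term_Euler}) is $e(\Ext_0)/e(\Ext_1)$, as an element of $\Ht^{\sbt}(\Ml^T_{Q,d_1})\otimes_{\Ht^{\sbt}(\BT)}\Ht^{\sbt}(\Ml^T_{Q,d_2})[S^{-1}]$. The sign and dual conventions should be fixed so that it is this ratio and not its inverse. Proposition \ref{prop:euler_ext_to_euler_ben} identifies $e(\Ext_0)=e_{d_1,d_2}(Q_0)$ and $e(\Ext_1)=e_{d_1,d_2}(Q_1)$, so
\[
e(\Ext)=\frac{e(Q_0)}{e(Q_1)}.
\]
Both elements lie in the multiplicative subset $S$, so the expression makes sense in the localisation.

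\emph{Conclusion.} Combining the two steps, and using that the action of $\Ht^{\sbt}$ on critical cohomology by cup product is compatible with these identifications (Lemma \ref{lem:linearity_two_acts}), we obtain
\[
\Delta_{\loc}(\alpha)=\frac{e(Q_0)}{e(Q_1)}\cdot(q^*)^{-1}p^*\alpha=e(\Ext)\cdot\oplus^*(\alpha),
\]
which is the statement of Lemma \ref{lem:DavisonDirectSum}.
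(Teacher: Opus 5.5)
Your proposal is correct and matches the paper's proof essentially step for step: the paper also uses that $s$ is a right inverse of $q$ to get $(q^*)^{-1}=s^*$, hence $\oplus^*=(ps)^*=s^*p^*=(q^*)^{-1}p^*$, and then reads off $e(\Ext)=e(Q_0)/e(Q_1)$ from Proposition \ref{prop:euler_ext_to_euler_ben}. One small correction: the degree of the Euler class plays no role in the shift bookkeeping of Step 1, because the same factor $e(Q_0)/e(Q_1)$ multiplies both sides of the identity.
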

\begin{proof} We first note that $(q^*)^{-1}=s^*$ because $s$ is a right inverse of $q$, so by functoriality of the pullback map we have 
    \begin{equation}
      \label{eqn:DavisonDirectSumFirstPart} 
      \oplus^* \ = \ (ps)^* \ = \ s^*p^* \ = \ (q^*)^{-1}p^*.
    \end{equation}
  We are then finished if we have 
    \begin{equation}
      \label{eqn:DavisonDirectSumSecondPart} 
      e(\Ext) \ = \ \frac{e(Q_0)}{e(Q_1)},
    \end{equation}
  which follows from Proposition \ref{prop:euler_ext_to_euler_ben}.
\end{proof}

Finally we let $\act_A$ be the composition 
$$\mathcal{A}^{T}_{Q,W} \ \stackrel{\act^*}{\to} \ \Ht^{\sbt}(\BGm) \otimes \mathcal{A}^{T}_{Q,W}\ \simeq \ \mathcal{A}^{T}_{Q,W}[z], $$
using the construction in Lemma \ref{lem:HolomorphicJoyceCoproduct}, then

\begin{prop}
  $\Al^T_{Q,W}$ is a translation equivariant localised coproduct satisfying the properties in subsection \ref{ssec:producing_vertex}.
\end{prop}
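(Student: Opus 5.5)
We verify the axioms of a translation-equivariant localised coalgebra for $A=\Al^T_{Q,W}$ over $\Ol_d=\Ht^{\sbt}(\Ml^T_{Q,d})$, and then the invertibility condition of subsection \ref{ssec:producing_vertex}. We work with the description $\Delta_{\loc}=e(\Ext)\cdot\oplus^*$ from Lemma \ref{lem:DavisonDirectSum}, which turns every axiom into a statement about $\oplus^*$, $\act^*$ and the Euler class $e(\Ext)$.

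\emph{Conditions on $\Ol$ and $S$.} That $\delta=\oplus^*_0$ is a cocommutative coassociative coproduct of graded rings, and that $\act_\Ol$ is an algebra coaction intertwining $\delta$, is Lemma \ref{lem:HolomorphicJoyceCoproduct} with $W=0$, together with the fact that pullbacks are ring maps. For \eqref{eqn:MultiplicativeSubsetHexagon} we use the identities $(\oplus\times\id)^*\Ext\simeq\Ext_{13}\oplus\Ext_{23}$ and its mirror. Applied to $\Ext_0$ and $\Ext_1$ separately, they show that $(\delta\otimes\id)e(\Ext_i)_{d_1+d_2,d_3}=e(\Ext_i)_{d_1,d_3}e(\Ext_i)_{d_2,d_3}$. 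Concretely, in chern roots the factors of Proposition \ref{prop:euler_ext_to_euler_ben} simply split, so the generated localisations agree. Translation invariance \eqref{eqn:STranslationInvariant} holds because $\act_\Ol\otimes\act_\Ol$ shifts every chern root on both factors by $z$. Each factor $1\otimes x_{j,m}-x_{i,n}\otimes 1+\wtt(e)$ is therefore fixed, so the generators of $S$ are literally invariant.

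\emph{Conditions on $A$.} The module-map property $\Delta_{\loc}(hb)=\delta(h)\Delta_{\loc}(b)$ is Lemma \ref{lem:linearity_two_acts}, since $e(\Ext)$ is just a scalar. The identity $\act_A(hb)=\act_\Ol(h)\act_A(b)$ is the same lemma. Equivariance $\Delta_{\loc}\act_A=(\act_A\otimes\act_A)\Delta_{\loc}$ follows from two facts. First, $\oplus^*$ intertwines the coactions, by the commutative square in the proof of Lemma \ref{lem:HolomorphicJoyceCoproduct}. Second, $(\act\otimes\act)^*e(\Ext)=e(\Ext)$, by the invariance just shown. This is also the consequence of $\act_1^*\act_2^*\Ext\simeq\gamma^{-1}\gamma\boxtimes\Ext$.

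\emph{Coassociativity and counit.} Coassociativity in the localisation reduces to coassociativity of $\oplus^*$ (Lemma \ref{lem:HolomorphicJoyceCoproduct}) together with the Euler class identity
\[
(\delta\otimes\id)e(\Ext)_{d_1+d_2,d_3}\cdot e(\Ext)_{d_1,d_2}
=(\id\otimes\delta)e(\Ext)_{d_1,d_2+d_3}\cdot e(\Ext)_{d_2,d_3},
\]
where both sides equal $e(\Ext_{12})e(\Ext_{13})e(\Ext_{23})$ by additivity of localised Euler classes of two-term complexes. The counit comes from pullback along $0:\pt\to\Ml_Q^T$. Since $\Ext_{0,d}=0$, we get $e=1$ there.

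\emph{Invertibility.} We must show that $(\act_\Ol\otimes\id)s$ is invertible in $\Ol\otimes\Ol((z^{-1}))$ for each generator $s$. A generator factor becomes $1\otimes x_{j,m}-x_{i,n}\otimes 1-z+\wtt(e)=-z\bigl(1-z^{-1}(\cdots)\bigr)$, with the sign convention for $\act_1$ matching Proposition \ref{jb_prop}. Since $\Ol\otimes\Ol$ is nonnegatively graded and each component is finite-dimensional, the geometric series in $z^{-1}$ converges in $((z^{-1}))$, so the factor is a unit. This matches $\act_1^*e(\Ext)=\Psi(\Ext,-z)$.

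The main obstacle is bookkeeping. We must make sure that all localisation identifications and the K\"unneth/Thom--Sebastiani isomorphisms are compatible with the twisted product $\otimes_T$, including its degree shifts. We also need $\oplus^*$ on critical cohomology to genuinely coincide with $(q^*)^{-1}p^*$ compatibly with $\act$. We handle this by reducing everything to the commutative diagrams of stacks already used, via functoriality of the vanishing cycle pullbacks in Appendix \ref{sec:app_cohomology}.
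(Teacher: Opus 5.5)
Your proposal is correct and follows essentially the same route as the paper: $\Ol$-linearity of $\oplus^*$ and $\act^*$ via Lemma \ref{lem:linearity_two_acts}, the hexagon relations for $S$ from the splitting of $\Ext$ under $\oplus$, translation invariance because the $\BGm$-weights of $\Ext$ cancel (your chern-root shift argument), and invertibility of $(\act_\Ol\otimes\id)S$ from the leading term of $\Psi(\Ext_i,-z)$. The only real difference is coassociativity: the paper simply cites \cite{Da2}, whereas you derive it directly from $\Delta_{\loc}=e(\Ext)\cdot\oplus^*$ (Lemma \ref{lem:DavisonDirectSum}) together with coassociativity of $\oplus^*$ and the identity $e_{13}e_{23}e_{12}=e_{12}e_{13}e_{23}$, which makes the argument self-contained; you also treat the counit and the equivariance $\Delta_{\loc}\act_A=(\act_A\otimes\act_A)\Delta_{\loc}$ more explicitly than the paper does.
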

\begin{proof}
The localised coproduct and $\Cb[z]$-coaction are linear over $\Ol_d=\Ht^{\sbt}(\Ml^T_{Q,d})$ by Lemma \ref{lem:linearity_two_acts}, proving \eqref{eqn:ActAxiom} and \eqref{eqn:CoprodAxiom}. It is coassociative by \cite{Da2}. 

It remains to check that $S$ satisfies the relevant axioms. It satisfies the first hexagon relation \eqref{eqn:MultiplicativeSubsetHexagon} for multiplicative subsets because 
\begin{align*}
   &\Ht^{\sbt}(\Ml_{Q,d_1}^T)\otimes_T\Ht^{\sbt}(\Ml_{Q,d_1}^T)\otimes_T\Ht^{\sbt}(\Ml_{Q,d_1}^T)\left[ ( \oplus\times \id)^* e(\Ext_i)^{-1} \ : \ i=0,1\right]\\[5pt]
   & \hspace{15mm} \ \simeq \ \Ht^{\sbt}(\Ml_{Q,d_1}^T)\otimes_T\Ht^{\sbt}(\Ml_{Q,d_1}^T)\otimes_T\Ht^{\sbt}(\Ml_{Q,d_1}^T)\left[ e(\Ext_i)_{13}^{-1}, e(\Ext_i)_{23}^{-1} \ : \ i=0,1\right]
\end{align*}
since $(\oplus\times \id)^*e(\Ext_i) = e(\Ext_i)_{13}e(\Ext_i)_{23}$, and likewise we have the second hexagon equation \eqref{eqn:MultiplicativeSubsetHexagon}. Every element of $S$ satisfies translation invariance \eqref{eqn:STranslationInvariant} because 
$$(\act_{z}^*\otimes \act_z^*)e(\Ext_i) \ = \ e \left( (\act\times \act)^*\Ext_i\right) \ = \ e(\gamma\otimes \gamma^{-1}\boxtimes \Ext_i) \ = \ e(\Ext_i)$$
as the $\BGm$ weights $(-1,1)$ of the Ext complex $\Ext=(\Ext_0\to \Ext_1)$ sum to zero. Likewise, since the leading term of
$$(\act_{z}^*\otimes \id)e(\Ext_i) \ = \ \Psi(\Ext_i,-z)\ =\ \pm z^{\rank \Ext_i} \mp c_1(\Ext_i)z^{\rank \Ext_i-1} \pm \cdots$$
has nonzero leading term, it follows that it is invertible as a Laurent series in $z^{-1}$. 
\end{proof}

Note the relation between the invertibility of elements in $(\act_z^*\otimes \id)S$ and \cite[Prop. 4.1]{Da2}.

\subsection{Recovering the Joyce--Liu vertex coproduct}

\subsubsection{} The main result in this section is that Davisons' localised coproduct agrees with the Joyce--Liu vertex coproduct, up to the map $\act$ in Definition \ref{defn:VertexFromLocalised} interpolating between localised and vertex coproducts. 

For the sake of explicitness, we note that in our case $\act$ takes a localised class, whose denominator is a product of  $x_{i,n}\otimes 1 - 1\otimes x_{j,m} +\wtt(e)$ over edges $e : i \to j$, replaces any chern root in the first tensor factor of denominator and numerator with itself plus $z$, then Laurent expands in $z^{-1}$.

\begin{theorem}\label{thm:DavisonIsJoyce}
  The diagram 
  \begin{center}
  \begin{tikzcd}[row sep = {30pt,between origins}, column sep = {20pt}]
   & \Al^T_{Q,W} \otimes_{T} \Al^T_{Q,W}\left[ S^{-1}\right]  \ar[dd,"\act"] \\
  \Al^T_{Q,W} \ar[rd,"\Delta(z)"'] \ar[ru,"\Delta_{\loc}"]  &   \\ 
   & \Al^T_{Q,W} \otimes_{T} \Al^T_{Q,W}((z^{-1})) 
  \end{tikzcd}
  \end{center}commutes.
\end{theorem}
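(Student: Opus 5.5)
The plan is to reduce everything to Lemma \ref{lem:DavisonDirectSum}, which rewrites Davison's coproduct as $\Delta_{\loc}(\alpha)=e(\Ext)\cdot\oplus^*(\alpha)$, where $e(\Ext)=e(Q_0)/e(Q_1)$ lies in the localisation by $S$. Applying the map $\act$ of Definition \ref{defn:VertexFromLocalised}, we need to show
\[
\act\bigl(e(\Ext)\cdot \oplus^*(\alpha)\bigr) \;=\; \Psi(\Ext,-z)\cdot \act_1^*\oplus^*(\alpha).
\]
So the argument splits into two steps: (i) $\act$ is multiplicative with respect to the $\Ol_{d_1}\otimes\Ol_{d_2}[S^{-1}]$-module structure, and (ii) $\act(e(\Ext))=\Psi(\Ext,-z)$.

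For (i), I would first record that $\act$ on $A_{d_1}\otimes A_{d_2}[S^{-1}]$ is $\act_A\otimes\id$ on numerators and $\act_\Ol\otimes\id$ on denominators, followed by Laurent expansion in $z^{-1}$. By Lemma \ref{lem:linearity_two_acts} (the first identity, applied in the first tensor factor), $\act_A\otimes\id$ is semilinear over $\act_\Ol\otimes\id$. It follows that
\[
\act(s^{-1}h\cdot a)=\bigl((\act_\Ol\otimes\id)s\bigr)^{-1}\,(\act_\Ol\otimes\id)(h)\,\act_1^*(a).
\]
This is well defined because the elements $(\act_\Ol\otimes\id)s$ are invertible in the Laurent series ring, as checked in the preceding proposition.

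For (ii), write $e(\Ext)=e(\Ext_0)e(\Ext_1)^{-1}$. By Proposition \ref{prop:euler_ext_to_euler_ben} these are products of linear factors $1\otimes x_{j,m}-x_{i,n}\otimes 1(+\wtt(e))$. The map $\act_\Ol\otimes\id$ sends $x_{i,n}\otimes1\mapsto x_{i,n}\otimes 1+z$, so each factor becomes $-z+(\text{Chern root of }\Ext_k)$. The product of these is $\sum_k(-z)^{\rank-k}c_k(\Ext_k)=\Psi(\Ext_k,-z)$. Equivalently, $\act_1^*\Ext\simeq\gamma^{-1}\boxtimes\Ext$ together with Lemma \ref{lem:psi_properties} gives $e(\gamma^{-1}\otimes\Ext_k)=\Psi(\Ext_k,-z)$. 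Multiplicativity of $\Psi$ in exact triangles then gives $\Psi(\Ext,-z)=\Psi(\Ext_0,-z)\Psi(\Ext_1,-z)^{-1}$, once we identify the inverse of the polynomial with its Laurent expansion in $z^{-1}$.

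Combining (i) and (ii) with $h=e(\Ext_0)$, $s=e(\Ext_1)$ and $a=\oplus^*\alpha$ gives the claimed identity. The grading shifts match because both sides carry the same twisted tensor structure $\otimes_T$, so no shift bookkeeping is needed.

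The main obstacle I expect is the sign and convention check in (ii). One has to make sure that the $\BGm$-action shifts the first factor's roots by $+z$, that this yields $-z$ in $\Psi$ consistently with $\act_1^*\Ext\simeq\gamma^{-1}\boxtimes\Ext$, and that the Laurent expansion of $1/\Psi(\Ext_1,-z)$ in $z^{-1}$ agrees with the definition of $\Psi$ for a two-term complex in Appendix \ref{sec:appendix}. This is a careful but routine comparison.
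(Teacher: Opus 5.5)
Your proposal is correct and takes essentially the same approach as the paper. The paper likewise factors $\Delta_{\loc}$ as $e(\Ext)\cdot\oplus^*$, using $(q^*)^{-1}p^*=\oplus^*$ and $e(\Ext)=e(Q_0)/e(Q_1)$, and then shows $\act_{z,1}^*e(\Ext)=e(\gamma^{-1}\boxtimes\Ext)=\Psi(\Ext,-z)$ via Proposition \ref{prop:PsiEuler}. Your step (i), which spells out that $\act$ is semilinear over $\act_\Ol\otimes\id$ by Lemma \ref{lem:linearity_two_acts}, is used only implicitly in the paper, so your version is slightly more careful on that point.
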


\begin{proof}
The above diagrams factors as 
\begin{center}
\begin{tikzcd}[row sep = {15pt,between origins}, column sep = {145pt, between origins}]
    &[-20pt] &[20pt] \Al^T_{Q,W}\otimes_{T} \Al^T_{Q,W}[S^{-1}]  \ar[dddd,"\act"] \\
 &\Al^T_{Q,W}\otimes_{T} \Al^T_{Q,W} \ar[dd,equals] \ar[ru,"e(Q_0) \textup{/} e(Q_1)"]  & \\ 
\Al^T_{Q,W} \ar[rd," \oplus^*"'] \ar[ru," (q^{*})^{-1}p^*"]  & & \\
&\Al^T_{Q,W}\otimes_{T} \Al^T_{Q,W} \ar[rd,"\Psi(\Ext \textup{,} -z)\cdot \act_{z \textup{,} 1}"']   & \\
    & & \Al^T_{Q,W}\otimes_{T} \Al^T_{Q,W} ((z^{-1}))
\end{tikzcd}
\end{center}
whose left cell commutes by \eqref{eqn:DavisonDirectSumFirstPart}. It thus suffices to show that
$$\act \left( \frac{e(Q_1)}{e(Q_0)}\cdot \alpha_1\otimes \alpha_2\right) \ = \ \Psi(\Ext,-z)\cdot \act^{*}_{z,1}(\alpha_1 \otimes \alpha_2).$$
Using equation \eqref{eqn:DavisonDirectSumSecondPart} we have that 
$$\act \left( \frac{e(Q_1)}{e(Q_0)}\cdot \alpha_1\otimes \alpha_2\right)  \ = \ \act_{z,1}^*\left(\frac{e(Q_1)}{e(Q_0)}\right)\cdot \alpha_1\otimes \alpha_2 \ = \ \act_{z,1}^*(e(\Ext))\cdot \alpha_1 \otimes\alpha_2$$
so we are finished as soon as we know that
\begin{equation}
  \label{eqn:PsiAct} 
  \act_{z,1}^*(e(\Ext))\ = e(\act_{1}^*\Ext)\ =\ e(\gamma^{-1}\boxtimes \Ext)  \ = \ \Psi(\Ext,-z),
\end{equation}
but the middle equality follows as $\Ext$ has $\BGm$ weight $-1$ in the first factor, and the final equality follows from Proposition \ref{prop:PsiEuler}. This proves the Theorem.
\end{proof}

\begin{cor}
Consider $\alpha \in \mathcal{A}^{T}_{Q,W,d}$
  We have the following explicit formula for the Joyce--Liu vertex coproduct:
  $$\Delta(\alpha,z) \ = \ \sum_{d^\prime,d^{\prime \prime} } \frac{\bigsqcap_i \bigsqcap_{(n,m)=(1,1)}^{(d^{\prime}_{i},d^{\prime \prime }_{i})} (-z-x_{i,n}\otimes 1 + x_{j,m}\otimes 1)}{\bigsqcap_{e \, : \, i \to j} \bigsqcap_{(n,m)=(1,1)}^{(d^{\prime}_{i},d^{\prime \prime}_{j})} (-z-x_{i,n}\otimes 1 + x_{j,m}\otimes 1 + \wtt(e))} \ \cdot \ \act_{z,1}^*\oplus_{d^{\prime},d^{\prime \prime}}^*(\alpha)$$
  where we sum over all dimension vectors $d,d^{'}$ with $d^{\prime} +d^{\prime \prime} = d$.
\end{cor}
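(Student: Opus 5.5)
This corollary is Theorem \ref{thm:DavisonIsJoyce} made explicit, so the plan is to unwind the composite $\act\circ\Delta_{\loc}$ one component at a time. Fix $\alpha\in\Al^T_{Q,W,d}$. The coproduct $\Delta(z)$ is graded by pairs of dimension vectors, so $\Delta(\alpha,z)$ is the sum over $d'+d''=d$ of $\Delta_{d',d''}(z)(\alpha)$. For each summand, the theorem (or directly the definition of $\Delta(z)$) gives
$$\Delta_{d',d''}(\alpha,z)\ =\ \Psi(\Ext_{d',d''},-z)\cdot\act^*_{z,1}\oplus^*_{d',d''}(\alpha).$$
It then remains to identify the twist $\Psi(\Ext_{d',d''},-z)$ with the displayed rational function.

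To do this I use \eqref{eqn:PsiAct}, namely $\Psi(\Ext,-z)=\act^*_{z,1}e(\Ext)$, together with \eqref{eqn:DavisonDirectSumSecondPart}, which identifies $e(\Ext)$ as a quotient of $e(Q_0)$ and $e(Q_1)$. Proposition \ref{prop:euler_ext_to_euler_ben} then expresses both Euler classes as products of linear factors in the Chern roots. Pulling back along $\act_{z,1}$ shifts every Chern root in the first tensor factor by $z$, i.e.\ $x_{i,n}\otimes1\mapsto x_{i,n}\otimes1+z$, as in the description of $\act_\Ol$. Each factor therefore becomes
$$-z-x_{i,n}\otimes1+1\otimes x_{j,m}\ (+\wtt(e)),$$
so the $Q_0$-factors give the product over vertices $i$ and the $Q_1$-factors give the product over edges $e\colon i\to j$. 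Finally, each denominator factor has leading term $-z$, so the quotient is expanded as a Laurent series in $z^{-1}$; this is exactly the invertibility already checked in the preceding proposition.

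The main obstacle is bookkeeping rather than any new idea. First, I need to pin down which of $e(Q_0)$ and $e(Q_1)$ is the numerator. The complex $\Ext=(\Ext_0\to\Ext_1)$ has $e(\Ext)=e(\Ext_0)/e(\Ext_1)$ by Appendix \ref{ssec:two_term_Euler}, so $e(Q_0)$ sits in the numerator, matching the displayed formula. The proof of Theorem \ref{thm:DavisonIsJoyce} writes the inverse ratio at one point, and I will follow the convention of the definition of $\Delta_{\loc}$ and Lemma \ref{lem:DavisonDirectSum}. Second, I must read the term written $x_{j,m}\otimes1$ in the displayed formula as the second-factor root $1\otimes x_{j,m}$ (respectively $1\otimes x_{i,m}$ at vertices), consistent with Proposition \ref{prop:euler_ext_to_euler_ben}. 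Third, the index ranges must match: $n\le d'_i$ and $m\le d''_i$ or $d''_j$.
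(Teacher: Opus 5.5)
Your proposal is correct and is essentially the paper's argument. The paper also deduces the formula from Theorem \ref{thm:DavisonIsJoyce}, i.e.\ from $\Psi(\Ext,-z)=\act^*_{z,1}e(\Ext)$ with $e(\Ext)=e(Q_0)/e(Q_1)$, together with the fact that $\act$ shifts the first-factor Chern roots by $x_{i,n}\mapsto x_{i,n}+z$. Your bookkeeping remarks are also right: $e(Q_0)$ belongs in the numerator, since the inverted ratio in the proof of Theorem \ref{thm:DavisonIsJoyce} is a typo, and the roots written $x_{j,m}\otimes 1$ in the statement should be read as the second-factor roots $1\otimes x_{i,m}$ and $1\otimes x_{j,m}$ respectively.
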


\begin{proof}
   This follows from the fact that 
   $$\act \ : \ \Ht^{\sbt}(\Ml^T_{Q,d^{\prime}})\otimes_T \Ht^{\sbt}(\Ml^T_{Q,d^{\prime \prime}})[S^{-1}] \ \to \ \Ht^{\sbt}(\Ml^T_{Q,d^{\prime}}) \otimes_T \Ht^{\sbt}(\Ml^T_{Q,d^{\prime \prime}})((z^{-1}))$$
   acts on chern roots as $x_{i,n}\mapsto x_{i,n}+z$.
\end{proof}

\newpage 
 \section{The critical CoHA is a bialgebra}
 
\noindent In this section we will prove that the CoHA algebra structure is compatible with the Joyce-Liu vertex coalgebra. This culminates in Theorem \ref{thm:CoHABialgebra}, where we also give a comprehensive list of algebraic structures and compatibilities on the CoHA. To formulate these compatibilities we now start by defining a particular subalgebra of the singular cohomology of $\Ml^{T}_{Q}$.

\subsection{The tautological ring} \label{ssec:tautring}

\subsubsection{}In the following two sections we define a \textit{tautological} subring of the cohomology 
$$\Ht^{\sbt}(\Ml^{T}_{Q})_{\taut} \ \subseteq \ \Ht^{\sbt}(\Ml^{T}_{Q}) \ = \ \bigsqcap_{d\in \mathbf{N}^{Q_{0}}} \Ht^{\sbt}(\Ml^{T}_{Q,d})$$
as well as a \textit{tautological part of the $R$-matrix}
$$R_{\taut}(z) \ \in \ \Ht^{\sbt}(\Ml^{T}_{Q})_{\taut} \otimes_{T} \Ht^{\sbt}(\Ml^{T}_{Q})_{\taut}[[z^{-1}]],$$
and show that it is a commutative, cocommutative holomorphic vertex bialgebra.

\subsubsection{} The \textbf{tautological ring} is the $\Ht^{\sbt}(\BT)$-subalgebra of the cohomology of $\Ml^{T}_{Q}$ generated by the chern characters of the tautological bundles: 
$$\Ht^{\sbt}(\Ml_Q^T)_{\taut} \ = \ \Ht^{\sbt}(\BT)\langle \ch_r(\El_i) \ : \ r \ge 0, i\in Q_0 \rangle \ \subseteq \ \bigsqcap_{d \in \Nb^{Q_{0}}} \Ht^{\sbt}(\Ml^{T}_{Q,d}) \ = \ \Ht^{\sbt}(\Ml_Q^T).$$
Equivalently, we can take the ring generated by the chern classes $c_k(\El_i)$.

\begin{prop}\label{prop:TautFree}
  The tautological ring is a free polynomial $\Ht^{\sbt}(\BT)$-algebra over the chern generators $\ch_r(\El_i)$:
  $$\Ht^{\sbt}(\Ml_Q^T)_{\taut} \ \simeq \ \Ht^{\sbt}(\BT)[\gamma_{i,r} \ : \ r \ge 0, i \in Q_0] \hspace{15mm} \ch_r(\El_i) \ \mapsto \ \gamma_{i,r}.$$
\end{prop}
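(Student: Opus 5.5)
The map $\Ht^{\sbt}(\BT)[\gamma_{i,r}] \to \Ht^{\sbt}(\Ml_Q^T)_{\taut}$, $\gamma_{i,r}\mapsto \ch_r(\El_i)$, is surjective by definition of the tautological ring. So the content is injectivity: no nonzero polynomial $P$ in the $\gamma_{i,r}$ with $\Ht^{\sbt}(\BT)$-coefficients can vanish on every component $\Ml^T_{Q,d}$ simultaneously. Here $\ch_r(\El_i)$ is the element of $\bigsqcap_d \Ht^{\sbt}(\Ml^T_{Q,d})$ whose $d$-component is the power sum
$$\ch_r(\El_{i,d}) \ = \ \tfrac{1}{r!}\textstyle\sum_{n=1}^{d_i} x_{i,n}^r.$$
In particular $\ch_0(\El_i)=d_i$ is a locally constant function of $d$, not a constant. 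This is exactly why the product over all $d$ is needed: on a single component the generators satisfy relations, e.g. $\ch_0$ is a scalar and only finitely many power sums are independent.

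The plan is to reduce to a statement about symmetric functions and a polynomial-identity argument.

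1. Since $\Ht^{\sbt}(\BT)$ is a polynomial ring over $\Qb$ and everything is free over it, work coefficient-wise. It suffices to show that the elements $\ch_r(\El_i)$ are algebraically independent over $\Qb$, and then extend scalars to $\Ht^{\sbt}(\BT)$.

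2. Write $P=\sum_k \gamma_{\bullet,0}^{k}\,P_k(\gamma_{\bullet,r\ge 1})$, separating out the degree-zero generators. Suppose $P$ vanishes on every $\Ml^T_{Q,d}$.

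3. Fix $N$ larger than every $r$ appearing in $P$. For each vertex $i$, restrict to dimension vectors with $d_i \ge N$. By the fundamental theorem of symmetric functions, the power sums $p_1,\dots,p_N$ in $d_i$ variables are algebraically independent. Hence the elements $\ch_r(\El_{i,d})$ for $1\le r\le N$ and $i\in Q_0$ are algebraically independent in $\Ht^{\sbt}(\Ml^T_{Q,d})\simeq \Ht^{\sbt}(\BT)[x_{i,n}]^{\Sk_d}$, since different vertices use disjoint variables.

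4. On such a component, $P$ becomes the polynomial $\sum_k d^k P_k$ in the algebraically independent power sums, with the scalar $d^k=\prod_i d_i^{k_i}$. So $\sum_k d^k P_k=0$ as a polynomial for every $d$ with all $d_i\ge N$.

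5. The exponents $k$ range over a finite set, and the monomials $d\mapsto d^k$ are linearly independent functions on the Zariski-dense set $\{d : d_i\ge N\}\subseteq \Nb^{Q_0}$. Therefore each $P_k=0$, and so $P=0$.

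The main point of care is step 4. One must check that the power sums of degree $1\le r\le N$ remain algebraically independent together with the $\Ht^{\sbt}(\BT)$-coefficients; freeness of $\Ht^{\sbt}(\BT)[x]^{\Sk_d}$ over $\Ht^{\sbt}(\BT)$ handles this. One must also make sure that $\ch_0$ is treated as the locally constant function $d_i$, not as an independent cohomology class. Beyond that, the argument is routine.
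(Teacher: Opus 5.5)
Your proof is correct and follows essentially the paper's route. Both arguments work coefficientwise over $\Ht^{\sbt}(\BT)$ and use algebraic independence of the power sums on a component whose dimension vector exceeds every $r$ appearing. Your extra step, separating out the $\gamma_{i,0}$ and using that the functions $d\mapsto d^k$ are linearly independent on $\{d : d_i\ge N\}$, genuinely improves on the paper: the paper restricts to a single component, where $\ch_0(\El_i)$ becomes the scalar $d_i$ and a nontrivial relation such as $\gamma_{i,0}-d_i$ becomes trivial, so one really does need to vary $d$ as you do.
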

\begin{proof}
   It is enough to show that the classes $\ch_r(\El_i)$ are algebraically independent over $\Ht^{\sbt}(\BT)$. Since we have $\Ht^{\sbt}(\Ml_Q^T)\simeq \Ht^{\sbt}(\BT)\otimes \Ht^{\sbt}(\Ml_Q)$, it is enough to show that the classes $\ch_r(\El_i)$ are algebraically independent in $\Ht^{\sbt}(\Ml_Q)$.   Note that for any $d\in \Zb^r$, one can show that
   $$\Cb[x_k \ : \ 0 < k \le d]^{\Sk_d} \ \simeq \ \Cb[p_k \ : \ 0 < k \le d]$$
   is a polynomial algebra in the symmetric power sum $p_k = \sum_{i} x_i^k = \sum x_{i_1}^{k_1}\cdots x_{i_r}^{k_r}$. Thus any nontrivial polynomial in $\ch_r(\El_i)$ which vanishes in the tautological ring would, by taking its image in $\Ht^{\sbt}_T(\Ml_{Q,d})\simeq \Cb[x_k \ : \ 0 < k \le d]^{\Sk_d}$ for any $d$ greater than the $r$ appearing in this polynomial, give a nontrivial relation betwen the $p_k$, which is impossible since symmetric power sums are algebraically independent.
\end{proof}

\subsubsection{Warning} \label{war:Euler} Euler classes of the tautological bundles $\El_i$ however usually do not lie in the tautological ring. Indeed, we have that
$$e(\El_i) \ = \ \left( c_{\rank \El_{i,d}}(\El_{i,d}) \right) \ \in \ \bigsqcap_{d \in \Lambda} \Ht^{\sbt}(\Ml_{Q,d}^T),$$
whose componentwise cohomological degree $\chi(d,\delta_i)=2\rank \El_{i,d}$ is generally not bounded above. This is in contrast to every element of the tautological ring, which is a finite sum of elements with finite cohomological degree.

For instance, if $Q=\bullet$ and $\El$ is the tautological vector bundle over $\Ml_Q=\BGL$ then 
$$e(\El) \ = \ \left( 1,\, x_1,\, x_1x_2,\, x_1x_2x_3,\, \ldots \ \right)$$
is certainly not a polynomial in the symmetric power sums $p_k$.

\subsection{Tautological part of the $R$-matrix} 

\subsubsection{} The \textbf{tautological part of the $R$-matrix} is 
$$R_{\taut}(z) \ = \ \frac{c(\sigma^*\Ext^\vee,z^{-1})}{c(\Ext,z^{-1})},$$
i.e. $R_{\taut}(z) = \left( R_{\taut}(z)_{d_1,d_2}\right)_{d_1,d_2\in \Lambda}$ where 
$$R_{\taut}(z)_{d_1,d_2} \ = \ \frac{c(\sigma^*\Ext_{d_2,d_1}^\vee,z^{-1})}{c(\Ext_{d_1,d_2},z^{-1})} \ \in \ \Ht^{\sbt}(\Ml^{T}_{Q,d_{1}})\otimes_{T} \Ht^{\sbt}(\Ml^T_{Q,d_2})[[z^{-1}]],$$
expanded as a geometric series in $z^{-1}$.
\begin{prop}
  This defines an element of 
  $$R_{\taut}(z) \ \in \ \Ht^{\sbt}(\Ml_Q^T)_{\taut} \otimes_{T} \Ht^{\sbt}(\Ml_Q^T)_{\taut}[[z^{-1}]].$$ 
\end{prop}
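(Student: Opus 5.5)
The plan is to write $R_{\taut}(z)$ as $\exp$ of a series whose coefficients are visibly polynomials in the classes $\ch_r(\El_i)\otimes 1$ and $1\otimes \ch_s(\El_j)$, with coefficients in $\Ht^{\sbt}(\BT)$ that do not depend on $(d_1,d_2)$. Since the tautological ring sits inside the product $\prod_d \Ht^{\sbt}(\Ml^T_{Q,d})$, an element is tautological precisely when its components for all $(d_1,d_2)$ are given by one universal polynomial in the $\ch_r(\El_i)$. The issue is therefore uniformity in $d$; this is exactly what fails for Euler classes, as in Warning~\ref{war:Euler}.

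First I use the multiplicative identity $c(V,t)=\exp\bigl(\sum_{k\ge1}(-1)^{k-1}(k-1)!\,\ch_k(V)\,t^k\bigr)$ for the Chern polynomial of a perfect complex. This is additive in $\Kt$-theory and so applies to the two-term complex $\Ext$, expressing
\[
\log R_{\taut}(z)=\sum_{k\ge1}(-1)^{k-1}(k-1)!\,\bigl(\ch_k(\sigma^*\Ext^\vee)-\ch_k(\Ext)\bigr)z^{-k}.
\]
Next I compute $\ch(\Ext_{d_1,d_2})$ from \eqref{eqn:ExtComplex}, using multiplicativity of $\ch$ on tensor products and $\ch(\Ll_e)=e^{\wtt(e)}$:
\[
\ch(\Ext)=\sum_i \ch(\El_i^\vee)\boxtimes\ch(\El_i)-\sum_{e:i\to j}e^{\wtt(e)}\,\ch(\El_i^\vee)\boxtimes\ch(\El_j),
\]
with $\ch_r(\El_i^\vee)=(-1)^r\ch_r(\El_i)$. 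The same computation applies to $\sigma^*\Ext^\vee$. Each degree-$k$ piece is then a finite sum of terms $\wtt(e)^a\,\ch_r(\El_i)\otimes\ch_s(\El_j)$ with $a+r+s=k$, where $\ch_0(\El_i)=\rank\El_i$ is included as the generator $\gamma_{i,0}$. The formula is identical for every $(d_1,d_2)$, so each $z^{-k}$ coefficient of $\log R_{\taut}$ lies in $\Ht^{\sbt}(\Ml_Q^T)_{\taut}\otimes_T\Ht^{\sbt}(\Ml_Q^T)_{\taut}$.

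Finally I exponentiate. The $z^{-n}$ coefficient of $\exp$ of a series in $z^{-1}$ with no constant term is a finite polynomial in the first $n$ coefficients of that series. It therefore remains in the tensor product of tautological rings, since that product is a ring. The step needing the most care is justifying the degree-$0$ rank terms and the identity for $c(V,t)$ uniformly in $d$. I handle this by working with Chern roots: $c(V,t)=\prod(1+x t)$, so $\log c=\sum_k(-1)^{k-1}p_k t^k/k$, where the power sums $p_k=k!\,\ch_k$ contain no rank term. Hence no $\ch_0$ actually appears, and the argument is uniform.
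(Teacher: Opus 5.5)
Your proof is correct and follows the paper's route. Both arguments write the Chern series of $\Ext$ and $\sigma^*\Ext^\vee$ in terms of Chern characters (you via $\log/\exp$, the paper via the geometric series for the denominator and Newton's identities), then use \eqref{eqn:ExtComplex} and multiplicativity of $\ch$ to obtain $\Ht^{\sbt}(\BT)$-polynomials in the $\ch_r(\El_i)$, $r\ge 0$, uniformly in $(d_1,d_2)$.

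Only your closing remark needs adjusting. What never appears is $\ch_0(\Ext)$, whereas rank terms such as $\ch_0(\El_i)\otimes\ch_k(\El_i)$ do occur in $\ch_k(\Ext)$ for $k\ge 1$; this is harmless, since the $\ch_0(\El_i)$ are among the tautological generators, as you noted earlier.
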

\begin{proof}
   We expand the denominator as a geometric series, which gives $R_{\taut}(z)$ as a cohomology-valued power series in $z^{-1}$ with coefficients polynomials in the nonzero chern classes of $\Ext$ and $\sigma^*\Ext^\vee$, hence the nonzero chern characters. By the explicit form \eqref{eqn:ExtComplex} for the Ext complex and additivity of chern characters under direct sum, these are polynomials in the $z^{>0}$ coefficients of
   $$\ch(\El_i^\vee \boxtimes \El_j \, \otimes\, \Ll,z) \ = \ \ch(\El_i^\vee,z) \otimes \ch(\El_j,z)\, \cdot\, \ch(\Ll,z)$$
   which are themselves $\Ht^{\sbt}(\BT)$-polynomials in $\ch_r(\El_k)$ for $r \ge 0$, hence lie in the tautological ring. 
\end{proof}

In the ADE case this agrees with the ordinary tautological ring by Proposition \ref{prop:phiareincartan}, and for general quivers with potential is probably the better definition to give.

\subsubsection{} \label{sssec:full_joyce} We will also consider the  \textbf{full Joyce $R$-matrix} 
\begin{equation}
  \label{eqn:FullRMatrix} 
  R(z)  \ = \ \frac{\Psi(\sigma^*\Ext^\vee,z)}{\Psi(\Ext,z)} \ \in \ \Ht^{\sbt}(\Ml^{T}_Q \times_{\BT} \Ml^{T}_Q) ((z^{-1}))
\end{equation}
which is non-tautological; see Proposition \ref{prop:explicit_fullrmatrix} for an explicit formula in terms of chern roots. We can see that
\begin{equation*}
    R(z)_{d_1,d_2} \  = \  \frac{z^{\chi(d_2,d_1)}}{z^{\chi(d_1,d_2)}}\frac{c(\sigma^*\Ext^{\vee}_{d_2,d_1},z^{-1})}{c(\Ext_{d_1,d_2},z^{-1})} \  = \  \frac{z^{\chi(d_2,d_1)}}{z^{\chi(d_1,d_2)}}R_{\taut}(z)
\end{equation*}
and so if the quiver is symmetric then $R(z)$ equals $R_{\taut}(z)$.

\subsubsection{Classical limits} 
If the Ext complex is symmetric $[\sigma^*\Ext]\simeq [\Ext]$ as an elements in the Grothendieck group of $\Ml_{Q}\times \Ml_{Q}$, not necessarily $T$-equivariant, then the leading term of the $R$-matrix is the identity. This is the case where the quiver is symmetric, for instance in the tripled quiver case:

\begin{prop}\label{prop:RMatrixLeadingTerm}
If the quiver $Q$ is symmetric (e.g. a tripled quiver) then we have that 
$$R_{\taut}(z) \ = \ 1 \otimes 1 + \Ol(\hbar_1, \ldots , \hbar_r)$$
where $\Ol(\hbar_1, \ldots , \hbar_r)$ is a power series whose coefficients as $\Ht^{\sbt}(\BT)= \Cb[\hbar_1, \ldots , \hbar_r]$-polynomials have zero constant term.
\end{prop}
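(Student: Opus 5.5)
The plan is to reduce the statement to a non-equivariant computation. We set all equivariant parameters to zero and show that the result is exactly $1\otimes 1$. Since $\Ht^{\sbt}(\Ml^T_Q)\simeq \Ht^{\sbt}(\BT)\otimes \Ht^{\sbt}(\Ml_Q)$, the coefficients of $R_{\taut}(z)_{d_1,d_2}$ lie in $\Cb[\hbar_1,\ldots,\hbar_r]\otimes \Ht^{\sbt}(\Ml_{Q,d_1})\otimes\Ht^{\sbt}(\Ml_{Q,d_2})$. It therefore suffices to show that the image under the augmentation $\hbar_i\mapsto 0$ is $1\otimes 1$. This augmentation is the restriction along $\Ml_Q\times\Ml_Q\to \Ml^T_Q\times_{\BT}\Ml^T_Q$, and it is a ring map compatible with chern classes. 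Hence the image of $R_{\taut}(z)$ is
$$\frac{c(\sigma^*\Ext^\vee|_{T=1},z^{-1})}{c(\Ext|_{T=1},z^{-1})},$$
computed from the non-equivariant Ext complex, in which all the line bundles $\Ll_e$ become trivial.

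Next, we compute the classes in the non-equivariant Grothendieck group using \eqref{eqn:ExtComplex}:
$$[\Ext_{d_1,d_2}] \ = \ \sum_i [\El_i^\vee\boxtimes\El_i] \ - \ \sum_{e:i\to j}[\El_i^\vee\boxtimes \El_j].$$
Dualising and pulling back by $\sigma$ gives
$$[\sigma^*\Ext^\vee_{d_2,d_1}] \ = \ \sum_i[\El_i^\vee\boxtimes\El_i] \ - \ \sum_{e:i\to j}[\El_j^\vee\boxtimes\El_i].$$
Because $Q=Q^{\textup{op}}$, the edges $i\to j$ are in bijection with the edges $j\to i$. Reindexing the second sum therefore shows that the two classes coincide. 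Since $c(-,z^{-1})$ is multiplicative on K-theory classes, by Lemma \ref{lem:psi_properties} and its chern-series analogue in Appendix \ref{sec:appendix}, numerator and denominator agree and the quotient is $1$.

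Finally, we conclude that $R_{\taut}(z)-1\otimes 1$ has all its $z^{-k}$ coefficients in the kernel of the augmentation $\Ht^{\sbt}(\BT)\to\Cb$ after tensoring. That kernel is the ideal generated by the $\hbar_i$, which is exactly the statement $\Ol(\hbar_1,\ldots,\hbar_r)$.

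For tripled quivers, symmetry as an ungraded quiver holds because each $e$ has its $e^*$ and each $\omega_i$ is a loop. The torus grading may break graded symmetry, but this is harmless because we only need symmetry after forgetting $T$.

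The only point needing care is the base-change step: that "setting $\hbar=0$" commutes with forming the quotient of chern series. This holds because the denominator has constant term $1$ in $z^{-1}$, so it is inverted by a geometric series compatibly with any ring map. This step also requires that the Künneth decomposition of $\Ht^{\sbt}(\Ml^T_Q\times_{\BT}\Ml^T_Q)$ is free over $\Ht^{\sbt}(\BT)$, which is clear for ordinary cohomology of these quotient stacks.
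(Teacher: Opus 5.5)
Your proof is correct and takes the same route as the paper. The paper gives no separate proof; it relies on the preceding remark that, for symmetric $Q$, the non-equivariant Grothendieck group classes of the two Ext complexes agree, so the $\hbar=0$ specialisation of $R_{\taut}(z)$ is $1\otimes 1$. You have supplied the omitted details correctly: specialisation as restriction to $\Ml_Q\times\Ml_Q$, compatibility of the geometric-series inverse with base change, and the correct identity $[\sigma^*\Ext^\vee]=[\Ext]$, where the paper's remark writes $[\sigma^*\Ext]\simeq[\Ext]$.
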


\subsection{Structures on the tautological ring} \label{ssec:StructuresTautologicalRing}

\subsubsection{} We show that the geometric structures on the moduli stacks (section \ref{ssec:GeometricStructures}) induce algebraic structures on the tautological ring. This more or less follows from section \ref{ssec:StructuresCriticalCohomology} applied to the case of zero potential function, the only remaining thing to check being that these operations preserve the tautological ring.

\begin{lem}\label{lem:TautBialgebra}
    In addition to its cup product, there is a cocommutative coproduct and an endomorphism 
  $$\oplus^* \ : \ \Ht^{\sbt}(\Ml_Q^T)_{\taut} \ \to \ \Ht^{\sbt}(\Ml_Q^T)_{\taut} \otimes_{T} \Ht^{\sbt}(\Ml_Q^T)_{\taut}, \hspace{15mm} T \ : \ \Ht^{\sbt}(\Ml_Q^T)_{\taut} \ \to \ \Ht^{\sbt}(\Ml_Q^T)_{\taut}$$
  making $\Ht^{\sbt}(\Ml_Q^T)_{\taut}$ into a $\Ht^{\sbt}(\BT)$-linear commutative, cocommutative bialgebra with biderivation.
\end{lem}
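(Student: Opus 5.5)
We restrict the structures of Lemma \ref{lem:HolomorphicJoyceCoproduct} for $W=0$ (subsection \ref{ssec:coprod_on_0pot}) to the tautological subring. By the global Künneth isomorphism componentwise (here the base is $\Ht^{\sbt}(\BT)$ and $\Ht^{\sbt}(\Ml^T_{Q,d})$ is free over it, so relative Künneth holds), pullback along $\oplus$ gives a map
$$\oplus^* \ : \ \Ht^{\sbt}(\Ml_Q^T)\ \to\ \bigsqcap_{d_1,d_2}\Ht^{\sbt}(\Ml^T_{Q,d_1})\otimes_T \Ht^{\sbt}(\Ml^T_{Q,d_2}),$$
and pullback along $\act$ gives $\act^* \colon \Ht^{\sbt}(\Ml^T_Q)\to \Ht^{\sbt}(\Ml^T_Q)[z]$. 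Both are ring homomorphisms, since they are pullbacks. Coassociativity, cocommutativity and the counit (pullback along $0\colon \pt\to\Ml_Q^T$) follow from associativity and commutativity of $\oplus$ with unit $0$, exactly as in Lemma \ref{lem:HolomorphicJoyceCoproduct}. The compatibility of $\oplus^*$ with $\act^*$ comes from the commutative square used there. We then set $T$ to be the $z$-coefficient of $\act^*=\exp(zT)$.

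The only new point is that these maps preserve the tautological ring and land in the \emph{algebraic} tensor product $\Ht^{\sbt}_{\taut}\otimes_T\Ht^{\sbt}_{\taut}$ rather than the product over components. Since $\oplus^*$ and $\act^*$ are ring maps and $\Ht^{\sbt}(\Ml^T_Q)_{\taut}$ is generated over $\Ht^{\sbt}(\BT)$ by the classes $\ch_r(\El_i)$ (Proposition \ref{prop:TautFree}), it suffices to check the generators. The key facts are
$$\oplus^*\El_i\simeq \El_i\boxplus\El_i, \qquad \act^*\El_i\simeq \gamma\boxtimes\El_i .$$
From the first, additivity of $\ch$ gives
$$\oplus^*\ch_r(\El_i)=\ch_r(\El_i)\otimes 1+1\otimes\ch_r(\El_i),$$
a single element of the tensor product, uniformly in $(d_1,d_2)$. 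From the second, multiplicativity of $\ch$ gives
$$\act^*\ch_r(\El_i)=\sum_{k\le r}\frac{z^k}{k!}\ch_{r-k}(\El_i),$$
equivalently $x_{i,n}\mapsto x_{i,n}+z$. Hence $T\ch_r(\El_i)=\ch_{r-1}(\El_i)$, which again lies in the tautological ring.

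It remains to verify the biderivation property. $T$ is a derivation because $\act^*$ is multiplicative and $z$ has square-zero first-order part. $T$ is a coderivation because $\oplus^*$ commutes with $\act^*$ acting diagonally and $z$ is primitive; one can also check this directly on the primitive generators, using $\ch_{r-1}$ primitive. Commutativity of the tautological ring is inherited from the cup product. The bialgebra axiom, that $\oplus^*$ is an algebra map, holds because it is a pullback.

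The main obstacle is the bookkeeping that $\oplus^*$ of a tautological class, a priori a compatible family indexed by $(d_1,d_2)$ in a product, is a finite sum of pure tensors independent of $(d_1,d_2)$. The primitivity of the generators $\ch_r(\El_i)$ together with multiplicativity settles this. The twisted tensor product $\otimes_T$ causes no sign or shift issue, because the tautological ring sits in $\Lambda$-degree $0$.
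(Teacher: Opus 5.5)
Your proposal is correct and follows the paper's proof. The paper likewise restricts the $W=0$ structures of Lemma \ref{lem:HolomorphicJoyceCoproduct} and uses $\oplus^*\El_i=\El_i\boxplus\El_i$ and $\act^*\El_i=\gamma\boxtimes\El_i$ to show that the generators $\ch_r(\El_i)$ are primitive and satisfy $T\ch_r(\El_i)=\ch_{r-1}(\El_i)$, so the tautological ring is preserved. You go further by making explicit two points the paper leaves implicit: that $\oplus^*$ lands in the algebraic tensor product $\Ht^{\sbt}(\Ml_Q^T)_{\taut}\otimes_T\Ht^{\sbt}(\Ml_Q^T)_{\taut}$ rather than the product over components, and that $T$ is a derivation and coderivation.
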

\begin{proof}
  Since $\oplus^*\El_i = \El_i \boxtimes 1  + 1 \boxtimes\El_i$ and $\act^*\El_i = \gamma \boxtimes \El_i$, the maps on cohomology defined in Lemma \ref{lem:HolomorphicJoyceCoproduct} send
  \begin{align*}
     \ch_r(\El_i) &\ \stackrel{\oplus^*}{\mapsto} \ \ch_{r}(\El)\otimes 1 + 1 \otimes \ch_r(\El_i),\\
\ch_r(\El_i) &\ \stackrel{\act_z^*}{\to} \  \ch_r(\gamma \boxtimes \El_i) \ = \ \sum  \frac{1}{k!}z^k \ch_{r-k}(\El_i)
  \end{align*}
by section \ref{sec:ChernClasses}, and so $\act_z^*=\exp(Tz)$ where $T$ sends $\ch_r(\El_i)\mapsto \ch_{r-1}(\El_i)$ for positive $r$. Both maps therefore preserve the subspaces of tautological classes.
\end{proof}
\begin{cor}
  $\Ht^{\sbt}(\Ml_Q^T)_{\taut}$ is a holomorphic $\Ht^{\sbt}(\BT)$-linear vertex bialgebra with vertex coproduct $\act^{*}_{z,1} \circ \oplus^{*}$
\end{cor}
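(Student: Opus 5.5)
The corollary follows by combining Lemma \ref{lem:TautBialgebra} with Lemma \ref{lem:holomorphic_covertex}. By Lemma \ref{lem:TautBialgebra}, $\Ht^{\sbt}(\Ml_Q^T)_{\taut}$ is a commutative, cocommutative $\Ht^{\sbt}(\BT)$-linear bialgebra with coproduct $\oplus^*$ and biderivation $T$. Here $T$ is determined by $\act_z^*=\exp(zT)$ and sends $\ch_r(\El_i)\mapsto \ch_{r-1}(\El_i)$.

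First I check that $T$ is locally nilpotent on the tautological ring. By Proposition \ref{prop:TautFree}, the ring is the polynomial algebra $\Ht^{\sbt}(\BT)[\gamma_{i,r}]$. $T$ is an $\Ht^{\sbt}(\BT)$-linear derivation lowering the chern-character index $r$ by one, and it kills $\ch_0(\El_i)$, since $\ch_0$ is locally constant and $T$ has cohomological degree $-2$. Hence on any monomial of total index $\sum r$, $T^{n}$ vanishes once $n>\sum r$. Equivalently, $\act_z^*$ of a tautological class is a polynomial in $z$, as the formula $\ch_r(\gamma\boxtimes\El_i)=\sum_k \frac{z^k}{k!}\ch_{r-k}(\El_i)$ shows directly.

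Lemma \ref{lem:holomorphic_covertex} now gives that $\Delta(z)=(e^{zT}\otimes\id)\oplus^* = \act^*_{z,1}\circ\oplus^*$ is a holomorphic vertex coalgebra. The covacuum is the counit $\epsilon$, obtained by restricting to the component $d=0$, i.e.\ pulling back along $0:\pt\to\Ml^T_Q$. I would note that coassociativity of this holomorphic vertex coproduct is also the $\Psi\equiv 1$ case of the computation in Theorem \ref{quiver_potential_jl}: the same diagram gives $\act^*_{z,1}(\oplus^*\otimes\id)\act^*_{w,1}=\act^*_{z+w,1}\act^*_{w,2}(\oplus^*\otimes\id)$. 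Since the Euler form shifts vanish in degree $0\in\Lambda$, where the tautological ring lives, no graded twisting intervenes.

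It remains to check the bialgebra compatibility, that $\Delta(z)$ is multiplicative for the cup product. This is the step I expect to need the most care, though it is easy. $\oplus^*$ is a ring homomorphism as a pullback, and $\act^*_{z,1}=\act^*_z\otimes\id$ is a ring homomorphism $\Ht^{\sbt}\otimes\Ht^{\sbt}\to\Ht^{\sbt}\otimes\Ht^{\sbt}[z]$, since $\act$ is a map of stacks; Lemma \ref{lem:linearity_two_acts} with $W=0$ records the same fact. Therefore $\Delta(hh',z)=\Delta(h,z)\Delta(h',z)$, with no braiding because the algebra is commutative and all classes sit in $\Lambda$-degree $0$. The unit $1$ satisfies $\Delta(1,z)=1\otimes 1$, and $\epsilon$ is an algebra map.

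Finally, I would remark that commutativity of $\oplus$ together with cocommutativity of $\oplus^*$ yields the colocality relation $\Delta(z)=\sigma\,\Delta(-z)\,e^{zT}$ via the coderivation property. This makes the vertex bialgebra commutative and cocommutative, as needed later for the bosonisation Theorem \ref{thm:IntroBosonisation}.
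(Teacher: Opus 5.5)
Your proposal is correct and follows the paper's route: the paper invokes the correspondence of Lemma \ref{lem:holomorphic_covertex}, extended from coalgebras to bialgebras with biderivation, and applies it to Lemma \ref{lem:TautBialgebra} to get $\Delta(z)=(e^{zT}\otimes \id)\oplus^*=\act^*_{z,1}\oplus^*$. Your explicit checks that $T$ is locally nilpotent and that $\act^*_{z,1}\oplus^*$ is multiplicative supply exactly the details the paper leaves implicit.
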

\begin{proof}
  This follows from the standard equivalence between holomorphic vertex bialgebras and bialgebras with biderivation extending Lemma \ref{lem:holomorphic_covertex}. This equivalence sends the coproduct $\Delta$ and coderivation $T$ to $\Delta(z)  =  (e^{zT}\otimes \id)\Delta$, which in this case is $\oplus^*(z)=\act_{z,1}^*\oplus^*$.
\end{proof}

\subsubsection{Description as a colimit}

 \label{eqn:MapsDeltai} Let $\Cb^e$ denote the \textit{trivial} representation of $Q$ with dimension $e\in \Lambda$, all of whose edge maps are zero. Taking the direct sum with this representation induces a map of stacks
 \begin{equation}
   \label{eqn:Iota} 
   \iota_{d,e} \ : \ \Ml_{Q,d}^T \ \to \ \Ml_{Q,d+e}^T,
 \end{equation}
induced by the associated map on the representation vector spaces
$$\Rep(Q,d) \ \hookrightarrow \ \Rep(Q,d+e), \hspace{15mm} V \ \mapsto \ V \oplus \Cb^e$$
which intertwines the actions of $\GL_d\hookrightarrow \GL_{d+e}$. We thus have a diagram of stacks indexed by a poset
$$(\Lambda, \le) \ \to \ \Stk, \hspace{15mm} d \le d' \ \ \mapsto \ \ \Ml^T_{Q,d} \ \stackrel{\iota_{d,d'-d}}{\to} \ \Ml^T_{Q,d'},$$
and can identify the tautological ring as the associated limit on cohomology. Writing $\Ht^{\sbt}(\Ml^{T}_Q)_{\taut}^+$ for the subring generated by positive rank chern classes,  We have 
  \begin{equation}
  \Ht^{\sbt}(\Ml^T_{Q})_{\taut}^+ \ \stackrel{\sim}{\to} \ \lim_{d\in \Lambda} \Ht^{\sbt}(\Ml^T_{Q,d})
  \end{equation}
  is the limit of the above diagram as a graded algebra.  In particular, the tautological ring should loosely speaking be viewed as the $T$-equivariant cohomology of the limit of the above functor
$$\Ml_{Q,\infty} \ \defeq \ \colim_{d\in \Lambda} \Ml_{Q,d} \ \simeq \ \Rep(Q,\infty)/\GL_\infty$$
where $\Rep(Q,\infty)=\bigcup_d \Rep(Q,d)$ and  $\GL_\infty=\bigcup_d \GL_d$ are the unions of the representation spaces and groups under the above-defined maps, and the second isomorphism holds since quotient stacks are defined as colimits and any two colimits commute.

\subsection{Braidings and bialgebras}
\label{ssec:BraidingsBialgebras}
We start this section with making precise exactly in what sense the products and coproducts we consider will end up being compatible. The set up here is used in the main Theorem \ref{thm:CoHABialgebra} of this section.
\subsubsection{} 
A bialgebra is a vector space $A$ equipped with an algebra structure and a coalgebra structure such that 
\begin{equation}\label{eqn:Bialgebra}
   \Delta(a\cdot b) \ = \ \Delta(a)\cdot_\beta \Delta(b),
\end{equation}
where the right hand side means we swap the middle two factors using the braiding before multiplying:
\begin{center}
\begin{tikzcd}[row sep = {30pt,between origins}, column sep = {20pt}]
A \otimes A \ar[rr,"m"] \ar[d,"\Delta \otimes \Delta"]  &[-60pt] &A  \ar[dd,"\Delta"] \\ 
 (A \otimes A) \otimes (A\otimes A) \ar[rd,"\beta_{23}","\sim"' sloped] && \\
 &(A \otimes A) \otimes (A\otimes A) \ar[r,"m\otimes m"]  & A\otimes A 
\end{tikzcd}
\end{center}
 To be able to define an algebra or coalgebra, we have to work in a monoidal category $(\Cl,\otimes)$. To be able to define a bialgebra, we have to work in a braided monoidal category $(\Cl,\otimes,\beta)$. Since our objects of interest are vertex and localised bialgebras, we need to work with meromorphic and localised braided monoidal categories.
 \subsubsection{}  \label{sssec:MeromorphicBraided} There exist several notions of meromorphic braided monoidal category in the literature, see for example \cite{La,Soibelman}. However, in this paper we do not try to put our results into this categorical context, instead we define everything in terms of explicit maps
$$\beta_{M,N}(z) \ : \ M\otimes N ((z^{-1})) \ \stackrel{\sim}{\to} \ N\otimes M ((z^{-1})), \hspace{15mm} \beta_{M,N,\loc} \ : \ M\otimes N [S^{-1}] \ \stackrel{\sim}{\to} \ N\otimes M [S^{-1}]$$
for any pair of objects $M,N$, satisfying the hexagon relations with spectral parameter: 
\begin{equation}
  \label{eqn:SpectralHexagon}
\beta_{L\otimes M ((w^{-1})),N}(z) \ = \ \beta_{L,N}(z-w)\beta_{M,N}(z), \hspace{15mm} \beta_{L,M\otimes N((w^{-1}))}(z) \ = \ \beta_{L,M}(z+w)\beta_{L,N}(z),
\end{equation}
and hence the meromorphic braid relation: 
\begin{equation}
  \label{eqn:MeromorphicBraid} 
  \beta_{L,M}(z)\beta_{L,N}(z+w)\beta_{M,N}(w) \ = \ \beta_{M,N}(w)\beta_{L,N}(z+w)\beta_{L,M}(z).
\end{equation}
and likewise for $\beta_{\loc}$, defined in subsection \ref{ssec:LocalisedProduct}.

\subsubsection{Tautological ring modules} 
We will consider meromorphic braidings on the category of tautological modules.
We first consider the properties of the tautological part of the $R$-matrix for a general quiver.

Firstly a warning: the tautological part of the $R$-matrix does \textit{not} satisfy the spectral hexagon relations. The failure is measured by the antisymmetrisation  $\widetilde{\chi}(d_{1},d_{2})= \chi(d_{1},d_{2})-\chi(d_{2},d_{1})$ of the Euler form:
\begin{lem}\label{lem:SpectralHexagon}
$R_{\textup{taut}}(z)$ satisfies 
\begin{align} \label{eqn:RTautHex1}
    (\oplus^*(w)\,\otimes\, \id)R_{\taut}(z)_{d_1+d_2,d_3}& \ = \ \frac{(z-w)^{\widetilde{\chi}(d_3,d_1)}}{z^{\widetilde{\chi}(d_3,d_1)}} R_{\taut,13}(z-w)_{d_1,d_3}R_{\taut,23}(z)_{d_2,d_3},\\
    \label{eqn:RTautHex2}
    (\id\, \otimes\, \oplus^*(w))R_{\taut}(z)_{d_1,d_2+d_3}& \ = \ \frac{(z+w)^{\widetilde{\chi}(d_2,d_1)}}{z^{\widetilde{\chi}(d_2,d_1)}} R_{\taut,12}(z+w)_{d_1,d_2}R_{\taut,13}(z)_{d_1,d_3}.
\end{align} 
\end{lem}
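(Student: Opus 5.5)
The plan is to reduce the statement to the corresponding identities for the full Joyce $R$-matrix $R(z)=\Psi(\sigma^*\Ext^\vee,z)/\Psi(\Ext,z)$, which satisfies the spectral hexagon relations exactly, and then divide out the monomial prefactors. By \ref{sssec:full_joyce} we have
\[
R(z)_{d_1,d_2} \ = \ z^{\chi(d_2,d_1)-\chi(d_1,d_2)}R_{\taut}(z)_{d_1,d_2} \ = \ z^{-\widetilde{\chi}(d_1,d_2)}R_{\taut}(z)_{d_1,d_2},
\]
so it suffices to prove
\[
(\oplus^*(w)\otimes\id)R(z)_{d_1+d_2,d_3} \ = \ R_{13}(z-w)_{d_1,d_3}\,R_{23}(z)_{d_2,d_3},
\]
together with the analogous identity $(\id\otimes\oplus^*(w))R(z)_{d_1,d_2+d_3}=R_{12}(z+w)_{d_1,d_2}R_{13}(z)_{d_1,d_3}$. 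The stated formulas then follow by bookkeeping of exponents. For \eqref{eqn:RTautHex1} the left side carries $z^{-\widetilde\chi(d_1+d_2,d_3)}$, while the right side carries $(z-w)^{-\widetilde\chi(d_1,d_3)}z^{-\widetilde\chi(d_2,d_3)}$. Using bilinearity $\widetilde\chi(d_1+d_2,d_3)=\widetilde\chi(d_1,d_3)+\widetilde\chi(d_2,d_3)$ and antisymmetry $\widetilde\chi(d_1,d_3)=-\widetilde\chi(d_3,d_1)$, we get exactly the ratio $(z-w)^{\widetilde\chi(d_3,d_1)}/z^{\widetilde\chi(d_3,d_1)}$, and similarly for \eqref{eqn:RTautHex2}.

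For the full $R$-matrix identity, write $\oplus^*(w)=\act^*_{w,1}\oplus^*$ on the first two tensor slots. Apply Proposition \ref{jb_prop} to both numerator and denominator.

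\textbf{Denominator.} First, $(\oplus^*\otimes\id)\Psi(\Ext_{d_1+d_2,d_3},z)=\Psi(\Ext_{d_1,d_3},z)\Psi(\Ext_{d_2,d_3},z)$. Next, $\act^*_{w,1}$ acts only on the $d_1$ factor, which appears in the first slot of $\Ext_{d_1,d_3}$. So it shifts $z\mapsto z-w$ there and leaves $\Psi(\Ext_{d_2,d_3},z)$ unchanged.

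\textbf{Numerator.} The numerator involves $\sigma^*\Ext^\vee$. For this I need the analogues of Proposition \ref{jb_prop}, which follow from the same identities on $\Ext$:
\begin{itemize}
\item $\sigma^*\Ext^\vee$ is again additive under $\oplus$ in each slot, since dualising and swapping preserve direct sums.
\item Under $\act_1$, we have $\act_1^*\sigma^*\Ext^\vee\simeq(\gamma\boxtimes\sigma^*\Ext)^\vee\simeq\gamma^{-1}\boxtimes\sigma^*\Ext^\vee$. This holds because $\act_1$ on $\sigma^*\Ext$ acts on the second slot of $\Ext$, which has weight $+1$, and the dual flips the weight to $-1$.
\end{itemize}
So, by Lemma \ref{lem:psi_properties}, $\act^*_{w,1}\Psi(\sigma^*\Ext^\vee,z)=\Psi(\sigma^*\Ext^\vee,z-w)$, exactly as for $\Ext$. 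Hence the numerator also factors as $\Psi(\sigma^*\Ext^\vee_{13},z-w)\Psi(\sigma^*\Ext^\vee_{23},z)$, which gives the first identity.

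The second identity is handled the same way. Here $\oplus^*(w)$ applies $\act_w$ to the $d_2$ factor, which sits in the second slot, and the second-slot rule $\act^*_{2,w}\Psi(-,z)=\Psi(-,z+w)$ holds for both $\Ext$ and $\sigma^*\Ext^\vee$ by the same weight check.

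The main point needing care is keeping track of the BGm weights for $\sigma^*\Ext^\vee$, so that both numerator and denominator shift in the same direction. A second point is checking that the Laurent expansions in $z^{-1}$ commute with substituting $z\mapsto z\pm w$. Expanding $(z-w)^{k}$ in $z^{-1}$ is compatible with the expansions because all series are taken in $((z^{-1}))$ with $w$ polynomial, so I expect this to be routine.
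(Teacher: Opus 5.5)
Your proof is correct. It is essentially the paper's computation run in the opposite order. The paper applies Proposition~\ref{chern_series_prop} directly to the alternating product of Chern series making up $R_{\taut}(z)$, so the factors $\bigl((z\mp w)/z\bigr)^{\widetilde{\chi}}$ come out of the rank prefactor in part (3); only afterwards does it deduce the clean hexagon relations for the full $R(z)$ as a corollary. You instead first prove the clean relations for $R(z)$ via Proposition~\ref{jb_prop} and Lemma~\ref{lem:psi_properties}, where $\Psi(V,z)=z^{\rank V}c(V,z^{-1})$ absorbs those prefactors. You then recover the $\widetilde{\chi}$ factors from bilinearity and antisymmetry of $\widetilde{\chi}$. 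Your ordering makes their origin more transparent, and it spells out the weight check for $\sigma^*\Ext^\vee$ that the paper only asserts.
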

\begin{proof}
We prove this using a combination of
\begin{align} \label{eqn:directsumcoprodtaut}
    (\oplus^*\otimes \id)R_{\taut,d_{1}+d_{2},d_{3}}(z) \ & = \ R_{\taut,d_{1},d_{3}}(z)R_{\taut,23,d_{2},d_{3}}(z) \\
    (\id \otimes \oplus^*)R_{\taut,d_{1},d_{2}+d_{3}}(z) \ &= \ R_{\taut,d_{1},d_{2}}(z)R_{\taut,13,d_{1},d_{3}}(z) \nonumber 
\end{align}
\begin{align} \label{actcoprodtaut}
    \act_{w,1}^*R_{\taut,d_{1},d_{2}}(z) \ & = \ \left(\frac{z-w}{z}\right)^{\widetilde{\chi} (d_{2},d_{1})}R_{\taut}(z-w) \nonumber \\
    \act_{w,2}^*R_{\taut,d_{1},d_{2}}(z) \ & = \ \left(\frac{z+w}{z}\right)^{\widetilde{\chi}(d_{2},d_{1})}R_{\taut}(z+w)
\end{align}
by the definition of $\oplus^*(w)$. Note that $R_{\taut}(z) = \bigsqcap c(\El,z)^\pm$ is an alternating product of $c(\El,z)$'s over a set of vector bundles $\El$ on $\Ml^T_Q\times_{\BT}\Ml^T_Q$ satisfying 
  $$(\oplus^*\,\otimes\, \id)\El \ = \ \El_{13}\oplus\El_{23}, \hspace{15mm}  \act_{1}^*\El \ = \ \gamma^{-1}\boxtimes \El$$
   $$(\id\, \otimes\, \oplus^*)\El \ = \ \El_{12}\oplus\El_{13}, \hspace{15mm} \act_{2}^*\El \ = \ \gamma\boxtimes \El.$$
  The result thus follows for $c(\El,z)$ by Proposition \ref{chern_series_prop}, and hence for $R_{\taut}(z)$.
\end{proof}
Using the relation between the full and tautological parts \ref{sssec:full_joyce} we immediately get
\begin{cor}
    The  full Joyce $R$-matrix satisfies the spectral hexagon relations
$$(\oplus^*(w)\otimes \id)R(z) \ = \ R(z-w)_{13}R(z)_{23} , \hspace{10mm}
    (\id \otimes \oplus^*(w))R(z) \ = \ R(z)_{13}R(z+w)_{13}$$
and $\sigma(R(z))$ satisfies the same, with $w$ negated on the right hand side of both equations.
\end{cor}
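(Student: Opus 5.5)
The plan is to deduce the corollary directly from Lemma \ref{lem:SpectralHexagon}, using the factorisation of \ref{sssec:full_joyce}:
$$R(z)_{d_1,d_2} \ = \ z^{\widetilde{\chi}(d_2,d_1)}\, R_{\taut}(z)_{d_1,d_2}.$$
The whole argument is that the scalar prefactor $z^{\widetilde{\chi}(d_2,d_1)}$ transforms under $\oplus^*(w)$ in exactly the way needed to cancel the defect factors in \eqref{eqn:RTautHex1} and \eqref{eqn:RTautHex2}. Throughout I work componentwise in dimension vectors.

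For the first relation, apply $\oplus^*(w)\otimes\id$ to $R(z)_{d_1+d_2,d_3}$. The prefactor is a pure power of $z$, and $\oplus^*(w)$ is $z$-linear (it only acts on cohomology classes). Hence the left side equals
$$z^{\widetilde{\chi}(d_3,d_1+d_2)}\,(\oplus^*(w)\otimes\id)R_{\taut}(z)_{d_1+d_2,d_3}.$$
Bilinearity of $\widetilde{\chi}$ splits the exponent as $\widetilde{\chi}(d_3,d_1)+\widetilde{\chi}(d_3,d_2)$. Substituting \eqref{eqn:RTautHex1}, the factor $z^{\widetilde{\chi}(d_3,d_1)}$ cancels the denominator of the defect term. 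What remains is
$$(z-w)^{\widetilde{\chi}(d_3,d_1)}R_{\taut,13}(z-w)_{d_1,d_3}\cdot z^{\widetilde{\chi}(d_3,d_2)}R_{\taut,23}(z)_{d_2,d_3} \ = \ R(z-w)_{13}\,R(z)_{23}.$$
The second relation is identical using \eqref{eqn:RTautHex2}. The exponent splits as $\widetilde{\chi}(d_2+d_3,d_1)=\widetilde{\chi}(d_2,d_1)+\widetilde{\chi}(d_3,d_1)$, and the $(z+w)$-factor is absorbed into $R(z+w)_{12}$. This gives $R(z+w)_{12}R(z)_{13}$; the statement's index labels should be read this way, which is a typo-level discrepancy I would note.

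For $\sigma(R(z))$, i.e.\ the element with tensor factors swapped, I apply the same computation after conjugating by the swap. This amounts to applying the lemma to $\sigma^*$ of each side. Here $\sigma$ commutes with $\oplus^*$ up to reindexing, since $\oplus$ is commutative. However, $\oplus^*(w)=\act^*_{w,1}\oplus^*$ translates the \emph{first} factor. After swapping, this translation lands on the second slot of $R$. By \eqref{actcoprodtaut}, and by the $\act_2^*$ part of Proposition \ref{jb_prop} for the prefactor, a translation in the second slot shifts $z\mapsto z+w$ rather than $z-w$. Hence $w$ appears negated relative to the unswapped case.

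I expect the only delicate point to be this sign bookkeeping. Concretely, I would check that the prefactor $z^{\widetilde{\chi}}$ under swap becomes $z^{-\widetilde{\chi}}$, and that this still cancels the defect term. This holds because the defect in Lemma \ref{lem:SpectralHexagon}, after swapping, also has its exponent antisymmetrised. I would verify this on one hexagon and conclude the other by symmetry.
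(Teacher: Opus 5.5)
Your proposal is correct and follows the paper's route: the paper deduces the corollary ``immediately'' from Lemma \ref{lem:SpectralHexagon} via the factorisation $R(z)_{d_1,d_2}=z^{\widetilde{\chi}(d_2,d_1)}R_{\taut}(z)_{d_1,d_2}$ of \ref{sssec:full_joyce}. Your exponent bookkeeping is right, and so is your correction of the second relation to $R(z+w)_{12}R(z)_{13}$.

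For the $\sigma(R(z))$ part you do not need to re-check the prefactor cancellation. Transport the two relations already proven for $R(z)$ along the swap; what is used is naturality of $\sigma$, not commutativity of $\oplus$. This exchanges the two hexagons and directly gives $\sigma(R)(z+w)_{13}\,\sigma(R)(z)_{23}$ and $\sigma(R)(z-w)_{12}\,\sigma(R)(z)_{13}$, i.e.\ the same relations with $w\mapsto -w$.
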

\begin{cor} \label{cor:TautQuasitriangular}
  Let $Q$ be symmetric, then $\Ht^{\sbt}(\Ml^T_Q)_{\taut}$ is a quasitriangular vertex bialgebra: a vertex bialgebra with an element $R_{\taut}(z)$   satisfying Lemma \ref{lem:SpectralHexagon} and
  $$\sigma(z) \cdot \oplus^*(\act^*_{z} h,-z) \ = \  R_{\taut}(z)^{-1}\oplus^*(h,z) R_{\taut}(z).$$ 
\end{cor}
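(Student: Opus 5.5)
The quasitriangular structure has two parts, the spectral hexagon relations and the twisted cocommutativity identity. For a symmetric quiver the antisymmetrised Euler form $\widetilde{\chi}$ vanishes identically. The prefactors $(z\mp w)^{\widetilde{\chi}}/z^{\widetilde{\chi}}$ in Lemma \ref{lem:SpectralHexagon} are therefore $1$, and $R_{\taut}(z)=R(z)$, so the hexagon relations hold by Lemma \ref{lem:SpectralHexagon} (equivalently, by the corollary for the full $R$-matrix). It remains to check the displayed identity. Since $\Ht^{\sbt}(\Ml^T_Q)_{\taut}$ is commutative, conjugation by $R_{\taut}(z)$ acts trivially. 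The identity therefore reduces to showing that the holomorphic vertex coproduct $\oplus^*(h,z)=\act^*_{z,1}\oplus^*(h)$ is colocal in the form
$$\sigma(z)\cdot \oplus^*(\act^*_z h,-z) \ = \ \oplus^*(h,z),$$
where $\sigma(z)$ is the swap $\sigma$ (Koszul signs, trivial here since the twisted tensor product is symmetric for symmetric $Q$).

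To check this, I use Lemma \ref{lem:TautBialgebra}: $\oplus^*$ is cocommutative, $\act^*_z=e^{zT}$, and $T$ is a coderivation, so $\oplus^*\act^*_z=(\act^*_z\otimes\act^*_z)\oplus^*$. Then
$$\oplus^*(\act^*_z h,-z) \ = \ \act^*_{-z,1}(\act^*_{z}\otimes\act^*_z)\oplus^*(h) \ = \ \act^*_{z,2}\oplus^*(h),$$
using $\act^*_{-z}\act^*_z=\id$ on the first factor. Applying $\sigma$ moves the translation to the first factor and, by cocommutativity, leaves $\oplus^*(h)$ unchanged. This gives $\act^*_{z,1}\oplus^*(h)=\oplus^*(h,z)$.

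Since the whole argument is formal, I would verify it on the generators $\ch_r(\El_i)$, which suffices because both sides are multiplicative by Proposition \ref{prop:TautFree}. There $\oplus^*$ is primitive and $T$ lowers $r$, so both sides equal $\sum_k \frac{z^k}{k!}\ch_{r-k}(\El_i)\otimes 1+1\otimes \ch_r(\El_i)$.

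The main obstacle is convention-checking rather than mathematics. I need to pin down exactly what $\sigma(z)$ means (a swap composed with the identification $z\mapsto -z$), confirm that the pulled-back translation $\act^*_z h$ in the statement matches the bookkeeping above, and confirm that signs vanish in the symmetric case. If the intended $\sigma(z)$ also incorporates a translation, I would adjust the step $\act^*_{-z,1}\act^*_{z,1}=\id$ accordingly. The nontrivial part of the $R$-matrix is only needed for the hexagon relations, which are already established.
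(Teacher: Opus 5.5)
Your proposal is correct and is essentially the paper's proof. Conjugation by $R_{\taut}(z)$ is trivial because the even-degree tautological ring is commutative, and the displayed identity reduces to $\sigma\,\act^*_{-z,1}\oplus^*\act^*_z = \sigma\,\act^*_{z,2}\oplus^* = \act^*_{z,1}\oplus^*$ via $\BGm$-equivariance of $\oplus$ and cocommutativity, exactly as you do; your reduction to the generators $\ch_r(\El_i)$ only needs that both sides are ring homomorphisms, not Proposition \ref{prop:TautFree}.
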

\begin{proof}
 The $R_{\taut}(z)$-conjugation on the right side is trivial since the cup product on even degree classes is commutative. We finish by noting that 
  $$\sigma \act_{-z,1}^* \oplus^*\act_{z}^* \ = \ \sigma\cdot \act_{z,2}^* \oplus^* \ = \ \act_{z,1}^*\oplus^*$$ 
  which implies that $\sigma \oplus^*(\act^*_{z} h,-z) = \oplus^*(h,z)$ as required.
\end{proof}

Let $Q$ be symmetric, so that $R(z) = R_{\textup{taut}}(z)$ is \textit{both} tautological-valued and satisfies the spectral hexagon relations. The category on which we will consider a meromorphic braiding is
$$\Cl \ =\ H\Md(\Vect^T_\Lambda),$$
 the category of modules over the tautological ring $H=\Ht^{\sbt}(\Ml^{T}_Q)_{\taut}$. Here
 $$\Vect_\Lambda^T \ =\ \Ht^{\sbt}(\BT)\Md_{\Lambda\times \Zb}$$
 is the category of $\Lambda$-graded vector spaces with an additional ``cohomological'' $\Zb$-grading denoted by $|\ \ |$ and action of $\Ht^{\sbt}(\BT)$. Both categories are equipped with monoidal structure $\otimes_T$ as defined in \eqref{eqn:OtimesTDefinition}.
The \textbf{meromorphic braiding} on $(\Cl,\otimes_T)$ is given by
\begin{align}\label{eqn:MeromorphicBraiding} 
    \beta(z) \ : \  V_\lambda\otimes_T W_\mu ((z^{-1})) &\ \stackrel{\sim}{\to} \ W_\mu\otimes_T V_\lambda ((z^{-1}))\\
     v \otimes w  & \ \mapsto \ \sigma\cdot \left(R(z)\cdot v\otimes w\right)\nonumber
\end{align}
where $\sigma$ is the Koszul sign braiding and we have braided vector spaces concentrated in degree $\lambda,\mu\in \Lambda$. Here we view \eqref{eqn:MeromorphicBraiding} as a map of modules over the tautological ring, where a tautological class $h$ acts on both via the holomorphic vertex coproduct $\oplus^*(h,z)=\act_{z,1}^*\oplus^*(h)$. This defines a meromorphic braiding as a consequence of the above corollaries.

\subsubsection{Vertex bialgebras} 
Given such a braiding as above, we define

\begin{defn} \label{defn:VertexBialgebra}
   A \textit{vertex bialgebra} is an object $B$ with an associative product $m$ and coassociative vertex coproduct $\Delta(z)$, such that
\begin{equation}\label{fig:CoHACoProd}
\begin{tikzcd}[row sep = {50pt,between origins}, column sep = {20pt}]
B \otimes_T B \ar[rr,"m"] \ar[d,"\Delta(z) \otimes_T \Delta(z)"]  &[-150pt] &B  \ar[dd,"\Delta(z)"] \\ 
 (B \otimes_T B) \otimes_T (B\otimes_T B)((z^{-1})) \ar[rd,"\beta_{23}(z)","\sim"' sloped] && \\[-10pt]
 &(B \otimes_T B) \otimes_T (B\otimes_T B) ((z^{-1}))\ar[r,"m\otimes_T m"]  & B\otimes_T B((z^{-1}))
\end{tikzcd}
\end{equation}
commutes. 
\end{defn}

Recalling the definition of localised coproduct from section \ref{ssec:LocalisedCoalgebra},

\begin{defn}
   A \textit{localised bialgebra} is a graded object $B=(B_\alpha)$ with translation equivariant localised coproduct and associative graded product 
$$m \ : \ B_{\alpha}\otimes B_{\beta} \ \to \ B_{\alpha+\beta}$$ 
linear over $\Ol$, such that 
\begin{equation}
\begin{tikzcd}[row sep = {50pt,between origins}, column sep = {20pt}]
B \otimes_T B \ar[rr,"m"] \ar[d,"u\cdot \Delta_{\loc} \otimes_T \Delta_{\loc}"]  &[-150pt] &B  \ar[dd,"\Delta_{\loc}"] \\ 
 (B \otimes_T B) \otimes_T (B\otimes_T B)_{(12,14,32,34)} \ar[rd,"\beta_{\loc,23}","\sim"' sloped] && \\[-10pt]
 &(B \otimes_T B) \otimes_T (B\otimes_T B)_{(13,14,23,24)}\ar[r,"m\otimes_T m"]  & (B\otimes_T B)_{(12)}
\end{tikzcd}
\end{equation}
commutes, where $u$ is the inclusion into the localisation by the $(14,32)$-factors.
\end{defn}

In the above, we have extended $m\otimes_T m$ to a map on the localisation
$$ B^{\otimes_T4} _{(13,23)} \ \simeq \ B^{\otimes_T4}[(\delta\times \delta)^*S^{-1}] \ \stackrel{m\otimes_Tm}{\to} \ B^{\otimes_T2}[S^{-1}]\ = \ B^{\otimes_T 2}_{(12)}$$
where the first isomorphism is given by applying \eqref{eqn:MultiplicativeSubsetHexagon} to give that the localisations by $S_{13}S_{14}S_{23}S_{24}$ and $(\delta\times \delta)^*S$ are isomorphic, and the middle map is induced by $m\otimes_T m$ by $\Ol$-linearity of $m\otimes_T m$.

\subsubsection{Localised structures} \label{ssec:LocalisedProduct} We now specialise to the case of cohomology of quiver moduli stacks as in subsection \ref{ssec:loc_coprod_ben}.  Given a collection of $\Zb$-graded modules $V_{\alpha_i}$ over the cohomology rings $\Ht^{\sbt}(\Ml^T_{Q,d_i})$, we define their \textit{localised monoidal product} by
$$\left(V_{d_1}\otimes_T V_{d_2}\right)_{(12)} \ = \ V_{d_1}\otimes_T V_{d_2}[S_{d_1,d_2}^{-1}]$$
and likewise for iterated tensor products:
\begin{equation}
  \label{eqn:LocalisedProduct} 
  \left(V_{d_1}\otimes_T \cdots \otimes_T V_{d_n}\right)_{(i_1j_1,i_2j_2, \ldots\ )} \ = \ \left(V_{d_1}\otimes_T \cdots \otimes_T V_{d_n}\right)\left[S_{i_1j_1}^{-1},S_{i_2j_2}^{-1}, \cdots \ \right].
\end{equation} 
The \textbf{localised braiding} on $\otimes_T$ is
\begin{align}
    \beta_{\loc} \ : \  (V_{\alpha}\otimes_T W_{\beta})_{(12)} &\ \stackrel{\sim}{\to} \ (W_{\beta}\otimes_T V_{\alpha})_{(21)}\\
     v \otimes w  & \ \mapsto \ \sigma\cdot \left(R_{\loc}\cdot v\otimes w\right) \nonumber\\
     & \ = \  (-1)^{|v|\cdot |w|}\sigma(R_{\loc})\cdot w\otimes v\nonumber
\end{align} 
where
$$R_{\loc} \ = \ \frac{e(\sigma^*\Ext^\vee)}{e(\Ext)} \ \in \  \Ht^{\sbt}(\Ml_{Q,d_1}^T)\otimes_{\Ht^{\sbt}(\BT)} \Ht^{\sbt}(\Ml_{Q,d_2}^T)[S_{d_1,d_2}^{-1}]$$
is the \textbf{localised Joyce $R$-matrix}. We emphasise that we \textit{cannot} assemble this into an element of a localisation of the tautological ring, see Warning \ref{war:Euler}. 

\begin{lem} \label{lem:loc_r_to_zr}
  We have $R_{\loc} = \frac{e(Q_1)}{e(Q_1^{\opt})}$.
\end{lem}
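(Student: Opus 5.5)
We need to show $R_{\loc} = e(\sigma^*\Ext^\vee)/e(\Ext)$ equals $e(Q_1)/e(Q_1^{\opt})$. The plan is to compute numerator and denominator separately as products of linear forms in chern roots, via Proposition \ref{prop:euler_ext_to_euler_ben}. The localised Euler class of a two-term complex is multiplicative and satisfies $e(A\to B)=e(A)/e(B)$, as in the convention of \ref{ssec:two_term_Euler}. Combined with Proposition \ref{prop:euler_ext_to_euler_ben}, this gives $e(\Ext_{d_1,d_2}) = e_{d_1,d_2}(Q_0)/e_{d_1,d_2}(Q_1)$.

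Next we compute $e(\sigma^*\Ext^\vee_{d_2,d_1})$. Dualising the two-term complex $\Ext_{d_2,d_1}=(\Ext_0\to\Ext_1)$ gives $\Ext_1^\vee\to\Ext_0^\vee$, placed in degrees $-1,0$. As a class in K-theory it is $[\Ext_0^\vee]-[\Ext_1^\vee]$, so
$$e(\sigma^*\Ext^\vee)=\frac{e(\sigma^*\Ext_0^\vee)}{e(\sigma^*\Ext_1^\vee)}.$$
We then compute the chern roots directly. The bundle $\Ext_0$ on $\Ml_{d_2}\times\Ml_{d_1}$ has roots $x'_{i,m}-x_{i,n}$, and $\Ext_1$ has roots $x'_{j,m}-x_{i,n}+\wtt(e)$ for each edge $e:i\to j$. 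After dualising and applying $\sigma$, the factors are relabelled so that the first tensor factor carries dimension $d_1$. This gives:
\begin{itemize}
\item roots $1\otimes x_{i,m}-x_{i,n}\otimes 1$ for $\sigma^*\Ext_0^\vee$, which reproduces exactly $e_{d_1,d_2}(Q_0)$;
\item roots $1\otimes x_{i,m}-x_{j,n}\otimes 1-\wtt(e)$ for $\sigma^*\Ext_1^\vee$, one for each edge $e:i\to j$, with $n\le d_{1,j}$ and $m\le d_{2,i}$.
\end{itemize}
The second list is precisely the factor attached to the reversed edge $e^*:j\to i$ of weight $-\wtt(e)$ in $Q^{\opt}$. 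Hence $e(\sigma^*\Ext_1^\vee)=e_{d_1,d_2}(Q_1^{\opt})$, with $e(Q_1^{\opt})$ defined by the formula of Proposition \ref{prop:euler_ext_to_euler_ben} applied to the opposite graded quiver.

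Taking the ratio, the $e(Q_0)$ factors cancel:
$$R_{\loc}=\frac{e(Q_0)/e(Q_1^{\opt})}{e(Q_0)/e(Q_1)}=\frac{e(Q_1)}{e(Q_1^{\opt})}.$$
Any sign $(-1)^{\rank}$ arising from dualising cancels between numerator and denominator, because the same rank appears in each. This cancellation is legitimate since $e(Q_0)\in S$ is invertible.

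The only delicate step is the bookkeeping of the dualisation. One must track which bundle sits in which degree after taking $\vee$, and confirm that the two $e(Q_0)$ factors coincide on the nose and not merely up to sign. For the sign, note that dualising negates every root. That flips $\sigma^*\Ext_0^\vee$ to the $Q_0$ form exactly, because the swap $\sigma$ already reverses the difference of roots. I would check this on a single vertex with $d_1=d_2=\delta_i$ before writing out the general case.
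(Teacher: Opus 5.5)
Your proof is correct and is essentially the paper's: split $R_{\loc}$ into its $\Ext_0$ and $\Ext_1$ pieces, cancel the $\Ext_0$ pieces (the paper cites the isomorphism $\sigma^*\Ext_0^\vee\simeq \Ext_0$, which your chern-root check verifies), and identify $e(\Ext_1)=e(Q_1)$ and $e(\sigma^*\Ext_1^\vee)=e(Q_1^{\opt})$ via Proposition \ref{prop:euler_ext_to_euler_ben}. Your aside about $(-1)^{\rank}$ signs cancelling is unnecessary, because your explicit chern-root computation already gives both identifications exactly, with no sign.
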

\begin{proof}
   We compute
   $$R_{\loc} \ = \ \frac{e(\sigma^*\Ext_0^\vee)e(\Ext_1)}{e(\sigma^*\Ext_1^\vee)e(\Ext_0)} \ = \ \frac{e(\Ext_1)}{e(\sigma^*\Ext_1^\vee)} \ = \ \frac{e(Q_1)}{e(Q_1^{\opt})}$$
   where in the second equality we used that $\sigma^*\Ext_0^\vee \simeq \Ext_0$, and in the third Proposition \ref{prop:euler_ext_to_euler_ben}.
\end{proof}
\begin{lem}
  \label{lem:RLocR} The full Joyce and localised Joyce $R$-matrices are compatible: $(\id \otimes \act_z^*)(R_{\loc}) =  R(z)$.
\end{lem}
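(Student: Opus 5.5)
The plan is to reduce the statement to the Euler-class identity \eqref{eqn:PsiAct} already used in the proof of Theorem \ref{thm:DavisonIsJoyce}, applied once to $\Ext$ and once to $\sigma^*\Ext^\vee$. By definition
$$R_{\loc} \ = \ \frac{e(\sigma^*\Ext^\vee)}{e(\Ext)}, \hspace{15mm} R(z) \ = \ \frac{\Psi(\sigma^*\Ext^\vee,z)}{\Psi(\Ext,z)},$$
so it suffices to show that $(\id\otimes\act_z^*)e(\Ext)=\Psi(\Ext,z)$ and $(\id\otimes\act_z^*)e(\sigma^*\Ext^\vee)=\Psi(\sigma^*\Ext^\vee,z)$. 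We must also check that $(\id\otimes \act_z^*)$ makes sense on the localisation $\Ht^{\sbt}(\Ml^T_{Q,d_1})\otimes_T \Ht^{\sbt}(\Ml^T_{Q,d_2})[S_{d_1,d_2}^{-1}]$.

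\emph{Well-definedness.} The map $\id\otimes\act_z^*$ is a ring homomorphism into the Laurent series ring in $z^{-1}$ with coefficients in $\Ht^{\sbt}(\Ml^T_{Q,d_1})\otimes_T\Ht^{\sbt}(\Ml^T_{Q,d_2})$. It therefore extends uniquely to the localisation once every element of $S$ maps to an invertible element. This is the same argument as in the Proposition showing that $\Al^T_{Q,W}$ is translation-equivariant, but with the second factor in place of the first. The generators $e(\Ext_0)$ and $e(\Ext_1)$ are sent to $\Psi(\Ext_i,\pm z)$, whose leading term is $\pm z^{\rank\Ext_i}$, so they are invertible in the Laurent series ring. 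Concretely, by Proposition \ref{prop:euler_ext_to_euler_ben} the map acts on chern roots as $1\otimes x_{j,m}\mapsto 1\otimes x_{j,m}+z$, so each linear factor $1\otimes x_{j,m}-x_{i,n}\otimes 1+\wtt(e)$ becomes $z+(\cdots)$.

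\emph{The two Euler-class identities.} By multiplicativity, the localised Euler class of a two-term complex is $e(\Ext_0)/e(\Ext_1)$ (see \ref{ssec:two_term_Euler}), and it commutes with pullback. Hence
$$(\id\otimes\act_z^*)e(\Ext)=e(\act_2^*\Ext)=e(\gamma\boxtimes\Ext)=\Psi(\Ext,z),$$
using $\act_2^*\Ext\simeq\gamma\boxtimes\Ext$ from section \ref{ssec:GeometricStructures} and Proposition \ref{prop:PsiEuler}. This is the mirror of \eqref{eqn:PsiAct}, where weight $-1$ gave $\Psi(\Ext,-z)$. For the numerator we need the $\BGm$-weight of $\sigma^*\Ext^\vee$ in its second slot. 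The second slot of $\sigma^*\Ext$ is the first slot of $\Ext$, which carries weight $-1$, and dualising flips this to $+1$. Hence $\act_2^*\sigma^*\Ext^\vee\simeq\gamma\boxtimes\sigma^*\Ext^\vee$, and the same computation gives $\Psi(\sigma^*\Ext^\vee,z)$. Dividing the two identities gives the claim.

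\emph{Main obstacle.} The only delicate point is bookkeeping of signs and weights: checking that the swap $\sigma$ and the dual together produce weight $+1$ rather than $-1$ in the second slot. As a cross-check I would compare with the chern-root formula of Lemma \ref{lem:loc_r_to_zr}, where $R_{\loc}=e(Q_1)/e(Q_1^{\opt})$. Shifting $x_{j,m}\mapsto x_{j,m}+z$ in the second factor should produce exactly the expansion of $R(z)$ in Proposition \ref{prop:explicit_fullrmatrix}, including the prefactor $z^{\chi(d_2,d_1)-\chi(d_1,d_2)}$ coming from the leading terms.
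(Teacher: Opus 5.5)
Your proof is correct and is the paper's argument: the paper's proof is just the appeal to Proposition \ref{prop:PsiEuler}, and you supply the weight bookkeeping it leaves implicit. Namely, $\Ext$ and $\sigma^*\Ext^\vee$ both carry $\BGm$-weight $+1$ in the second slot, so $\id\otimes\act_z^*$ sends each Euler class to the corresponding $\Psi(-,z)$.

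One small refinement to your well-definedness paragraph concerns the denominator. After cancelling $e(\sigma^*\Ext_0^\vee)=e(\Ext_0)$, the denominator of $R_{\loc}$ is $e(\sigma^*\Ext_1^\vee)=e(Q_1^{\opt})$, which need not lie in $S$. It does for graded symmetric quivers, where $\sigma^*\Ext_1^\vee\simeq\Ext_1$, but not for the tripled quiver with $\wtt(\omega_i)=-2$; the paper is loose here too. You should therefore check this denominator directly rather than only the generators of $S$. Your leading-term argument applies verbatim, since its image $\Psi(\sigma^*\Ext_1^\vee,z)$ is monic in $z$.
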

\begin{proof}
Follows from Proposition \ref{prop:PsiEuler}.
\end{proof}
\begin{cor} \label{cor:BetaMeromorphicBetaLocalised}
   $\act_2\cdot \beta_{\loc}= \beta(z)\cdot \act_1$.
\end{cor}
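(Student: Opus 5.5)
The claim is an equality of maps $(V_{d_1}\otimes_T W_{d_2})_{(12)} \to (W_{d_2}\otimes_T V_{d_1})((z^{-1}))$. Here $\act_1$ is the map $\act$ of Definition \ref{defn:VertexFromLocalised}: translate the first tensor factor by $z$, then Laurent expand in $z^{-1}$. Likewise $\act_2$ translates the factor which ends up in second position after braiding. I will evaluate both sides on an element $\frac{v\otimes w}{s}$, with $s\in S_{d_1,d_2}$, and compare.

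\emph{Left side.} First $\beta_{\loc}$ sends $\frac{v\otimes w}{s}$ to $(-1)^{|v||w|}\sigma(R_{\loc}/s)\cdot w\otimes v$. In the target, $v$ sits in the second slot, so $\act_2$ applies the translation $\act_z^*$ to everything attached to $v$: to $v$ itself and, through $\sigma$, to the first-factor chern roots of $R_{\loc}/s$. Unwinding $\sigma$, the result is $\sigma$ applied to
$$(\act_z^*\otimes\id)\left(\frac{R_{\loc}}{s}\right)\cdot \act_z^*(v)\otimes w .$$

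\emph{Right side.} First $\act_1$ gives $\frac{\act_z^*v\otimes w}{(\act_z^*\otimes\id)s}$. Then $\beta(z)$ multiplies by $R(z)$ and applies $\sigma$.

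\emph{Reduction.} Both sides now have the same shape. By Lemma \ref{lem:linearity_two_acts}, module elements and cohomology classes translate compatibly, so it suffices to prove the identity in cohomology
$$(\act_z^*\otimes \id)R_{\loc} \;=\; R(z).$$

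\emph{The identity.} Lemma \ref{lem:RLocR} gives $(\id\otimes\act_z^*)R_{\loc}=R(z)$, which translates the second factor instead. I reconcile the two as follows.
\begin{itemize}
\item $R_{\loc}=e(\Ext_1)/e(\sigma^*\Ext_1^\vee)$ by Lemma \ref{lem:loc_r_to_zr}, and this is a ratio of Euler classes of bundles of weight $(-1,+1)$ under $\BGm\times\BGm$.
\item Hence $R_{\loc}$ is invariant under the diagonal translation $\act_z^*\otimes\act_z^*$; this is the same computation as the translation invariance of $S$ in the proposition of \S\ref{ssec:loc_coprod_ben}.
\item Therefore translating the first factor by $z$ equals translating the second factor by $-z$.
\end{itemize}

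The main obstacle is fixing this sign, together with which factor $\act_2$ acts on after the swap. I will settle it by direct chern root computation. By Proposition \ref{prop:euler_ext_to_euler_ben}, $R_{\loc}$ is a ratio of products of $(1\otimes x_{j,m}-x_{i,n}\otimes 1+\wtt(e))$ over those of its opposite. Substituting $x_{i,n}\mapsto x_{i,n}+z$ in the first factor gives factors $(-z+\dots)$ in $e(\Ext_1)$ and $(z+\dots)$ in $e(\sigma^*\Ext_1^\vee)$. After the overall $(-1)^{\rank}$ signs cancel, these match $\Psi(\sigma^*\Ext^\vee,z)/\Psi(\Ext,z)$, with the $\Ext_0$ terms cancelling.

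If the convention instead forces the second factor, then $\act_2$ really translates $w$ and Lemma \ref{lem:RLocR} applies verbatim. Either way, I expect the identity to reduce to Proposition \ref{prop:PsiEuler} plus this translation invariance. The remaining work is bookkeeping of Koszul signs under $\sigma$, which commute with translation since $z$ has even degree.
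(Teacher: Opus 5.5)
Your main line has a genuine gap. With your reading, where the translation acts on $v$ on both sides, the identity you reduce to, $(\act_z^*\otimes\id)R_{\loc}=R(z)$, is false. The chern root computation meant to ``fix the sign'' is also wrong.

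As your own bullet points observe, $\Ext_1$ and $\sigma^*\Ext_1^\vee$ both have $\BGm$-weight $-1$ in the first factor. Concretely, the first-factor roots enter $e(Q_1^{\opt})$ as $1\otimes x_{i,m}-x_{j,n}\otimes 1-\wtt(e)$, with a minus sign, exactly as they do in $e(Q_1)$. So substituting $x\mapsto x+z$ in the first factor produces $-z$ in numerator and denominator alike, not $+z$ in the denominator. Equivalently, your diagonal-invariance argument combined with Lemma \ref{lem:RLocR} gives $(\act_z^*\otimes\id)R_{\loc}=(\id\otimes\act_{-z}^*)R_{\loc}=R(-z)$. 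This is not $R(z)$: for a single loop $e$ with $\wtt(e)\neq 0$ and $d=d'=1$, Proposition \ref{prop:explicit_fullrmatrix} gives $R(\pm z)=1\pm 2\wtt(e)z^{-1}+\cdots$. Since both series have leading term $1$, no bookkeeping of $(-1)^{\rank}$ signs can remove this discrepancy.

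What you need is to commit to your fallback and justify it. The translated factor must be $w$, the one on which the second leg of $R_{\loc}$ acts; it sits second in the source of $\beta_{\loc}$ and first in its target. This reading is forced by how the statement is used in the proof of Theorem \ref{thm:CoHABialgebra}. There $\act_{1,3}$ precedes $\beta_{\loc,23}$ and $\act_{1,2}$ follows it, so the translated middle factor is third before the swap and second after it.

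With this reading, both sides send $\frac{v\otimes w}{s}$ to $\sigma$ applied to $\frac{v\otimes \act_z^*w}{(\id\otimes\act_z^*)s}$, multiplied by $(\id\otimes\act_z^*)R_{\loc}$ on one side and by $R(z)$ on the other. Lemma \ref{lem:RLocR} then finishes the argument verbatim. This is exactly how the paper obtains the statement as an immediate corollary of that lemma, and no invariance argument or sign chase is needed.
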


\subsubsection{Explicit formula for full Joyce $R$-matrix}
We can now explicitly compute the full Joyce $R$-matrix in terms of chern roots of tautological bundles
\begin{prop}\label{prop:explicit_fullrmatrix}
   We have that 
\begin{equation}
  \label{eqn:RfullComputation} 
   R(z)_{d,d^{'}} \ =  \bigsqcap_{e \, : \, i\to j}\frac{ \bigsqcap_{(n,m)=(1,1)}^{(d_i,d_j')} (1\otimes x_{j,m} - x_{i,n}\otimes 1 + z + \wtt(e))}{ \bigsqcap_{(n,m)=(1,1)}^{(d_j,d_i')} (1\otimes x_{i,m} - x_{j,n}\otimes 1 + z - \wtt(e))}
\end{equation}
 viewed as an element of 
$$\Sym_{\Ht^{\sbt}(\BT)}\left( x_{i,\alpha} \ : \ i\in Q_0,\, 0\le \alpha\le d_{1,i}\right)\otimes_{T} \Sym_{\Ht^{\sbt}(\BT)}\left( x_{i,\beta} \ : \ i\in Q_0,\, 0\le \beta\le d_{2,i}\right) ((z^{-1})).$$
\end{prop}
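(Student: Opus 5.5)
The most direct route is through the localised $R$-matrix rather than expanding $\Psi$ term by term. By Lemma \ref{lem:RLocR} we have $R(z) = (\id\otimes\act_z^*)(R_{\loc})$, read as a Laurent expansion in $z^{-1}$ exactly as in Definition \ref{defn:VertexFromLocalised}. By Lemma \ref{lem:loc_r_to_zr} we also have $R_{\loc} = e(Q_1)/e(Q_1^{\opt})$. So the plan has three steps: write $e(Q_1)$ and $e(Q_1^{\opt})$ in chern roots using Proposition \ref{prop:euler_ext_to_euler_ben}, apply the translation, and expand.

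\emph{Numerator.} Proposition \ref{prop:euler_ext_to_euler_ben} gives
$e_{d,d'}(Q_1)=\prod_{e:i\to j}\prod_{n,m}(1\otimes x_{j,m}-x_{i,n}\otimes 1+\wtt(e))$.

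\emph{Denominator.} By the proof of Lemma \ref{lem:loc_r_to_zr}, $e(Q_1^{\opt})$ means $e(\sigma^*\Ext_1^\vee)$. I compute it directly from \eqref{eqn:ExtComplex}.
\begin{itemize}
\item $(\Ext_1)_{d',d}$ is $\bigoplus_{e:i\to j}\El_{i,d'}^\vee\boxtimes\El_{j,d}\otimes\Ll_e$.
\item Pulling back by $\sigma$ to $\Ml_{d}\times\Ml_{d'}$ gives $\El_{j,d}\boxtimes\El_{i,d'}^\vee\otimes\Ll_e$.
\item Dualising gives $\El_{j,d}^\vee\boxtimes\El_{i,d'}\otimes\Ll_e^\vee$.
\end{itemize}
Using additivity of chern roots and the fact that $\Ll_e^\vee$ has weight $-\wtt(e)$, its Euler class is
$\prod_{e:i\to j}\prod_{(n,m)=(1,1)}^{(d_j,d_i')}(1\otimes x_{i,m}-x_{j,n}\otimes1-\wtt(e))$.

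\emph{Translation.} Lemma \ref{lem:TautBialgebra} says the $\act_z^*$ pullback acts on chern roots by $x\mapsto x+z$. Apply it to the second tensor factor, i.e. replace $1\otimes x_{\bullet,m}$ by $1\otimes x_{\bullet,m}+z$. This turns each factor into exactly the corresponding linear form appearing in \eqref{eqn:RfullComputation}.

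\emph{Expansion.} Each denominator factor has the shape $z+(\text{degree-two class})$, so it is invertible in $((z^{-1}))$. Hence applying $\act$ to the fraction equals the fraction of the translated numerator and denominator, expanded in $z^{-1}$.

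\emph{Cross-check.} As a sanity check, the leading power is $z^{\sum_e (d_id'_j-d_jd'_i)}$. This should match $z^{\chi(d',d)-\chi(d,d')}$ from the factorisation in \ref{sssec:full_joyce}, since the vertex terms of $\chi$ cancel in the difference.

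\emph{Main obstacle.} The only delicate point is the bookkeeping of signs and conventions. The translation must land on the second factor and not the first: in Theorem \ref{thm:DavisonIsJoyce} the convention $\act_1$ gave $-z$, whereas here we need $+z$. The weight of $\Ll_e$ must also flip under dualisation. I would settle both by checking the one-arrow, one-dimensional case $Q=\bullet\to\bullet$, $d=d'=\delta_i+\delta_j$ directly against $\Psi(\sigma^*\Ext^\vee,z)/\Psi(\Ext,z)$ via Proposition \ref{prop:PsiEuler}.
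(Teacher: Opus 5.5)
Your proposal is correct and is essentially the paper's own argument: the paper's proof is exactly the combination of Lemma \ref{lem:RLocR}, Lemma \ref{lem:loc_r_to_zr}, Proposition \ref{prop:euler_ext_to_euler_ben} and the rule $x\mapsto x+z$ for $\act^*$ on chern roots. Your explicit computation of $e(\sigma^*\Ext_1^\vee)$, together with your check that the translation acts on the second tensor factor, fills in the details the paper leaves implicit.
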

\begin{proof}
    This is a combination of the Lemmas \ref{lem:loc_r_to_zr}, \ref{lem:RLocR}, Proposition \ref{prop:euler_ext_to_euler_ben} and the fact that $\act^{*}$ sends a chern root of a tautological bundle $x$ to $x+z$.
\end{proof}

\subsubsection{Sign twists} \label{ssec:SignTwist} Consider the \textit{sign twist} map 
\begin{align*}
\tau \ : \ \Nb^{Q_0} \times \Nb^{Q_0}  &\ \to \  \Zb/2  \\ 
(d_1,d_2)& \ \mapsto \ \chi(d_1,d_1)\chi(d_2,d_2) + \chi(d_1,d_2) .
\end{align*} 
We define the twisted meromorphic and localised braidings by
\begin{equation}
  \beta^\tau(z) \ = \ (-1)^\tau \beta(z), \hspace{15mm} \beta_{\loc}^\tau \  = \ (-1)^{\tau}\beta_{\loc}.
\end{equation}
Thus the sign twisted localised braiding $\beta_{\loc}^\tau$ agrees with $\widetilde{\textup{sw}}^{\tau} = \sigma \left( (-1)^{\tau}\frac{e(Q_1^{\opt})}{e(Q_1)}\cdot (-)\right)$ in the notation of \cite[Section 4.1]{BD}. Note in \cite{BD} $\widetilde{\textup{sw}}$ is written already containing $\tau$ but we consider both with and without the $\tau$ sign twist.
This will become useful because the ordinary CoHA product only forms a bialgebra for the $\tau$-twisted braidings, although to match up with Yangians we precompose the CoHA product with another sign twist $\psi$, 
 giving a bialgebra with respect to the untwisted braidings.
 
 Consider any 
$$\psi \ : \ (\Zb/2)^{Q_0}\times (\Zb/2)^{Q_0} \ \to \ \Zb/2$$
satisfying $\psi(d_1,d_2)+\psi(d_2,d_1)=\tau(d_1,d_2)$. Then by \cite[Thm.5.13]{Da2} and  \cite[Prop. 5.5]{BD} in the $T$ equivariant setting, the diagram
\begin{equation}\label{eqn:DavisonMeinhardt}
\begin{tikzcd}[row sep = {50pt,between origins}, column sep = {20pt}]
\Al^T_{Q,W} \otimes_T \Al^T_{Q,W} \ar[rr,"m^{(\psi)}"]  \ar[d,"u\cdot \Delta_{\loc}\otimes_T \Delta_{\loc}"] &[-220pt] &[10pt]\Al^T_{Q,W} \ar[dd,"\Delta_{\loc}"]  \\ 
(\Al^T_{Q,W}\otimes_T \Al^T_{Q,W}) \otimes_T (\Al^T_{Q,W}\otimes_T \Al^T_{Q,W})_{(12,14,32,34)} \ar[rd,"\sim"' 
sloped, "\widetilde{\textup{sw}}^{(\tau)}_{23}"] & & \\[-15pt]
 &(\Al^T_{Q,W}\otimes_T \Al^T_{Q,W}) \otimes_T (\Al^T_{Q,W}\otimes_T \Al^T_{Q,W})_{(13,14,23,24)} \ar[r,"m^{(\psi)}\otimes_T m^{(\psi)}"]  & (\Al^T_{Q,W}\otimes_T \Al^T_{Q,W})_{(12)}
\end{tikzcd}
\end{equation}
commutes, where $u$ is the map into the further localisation by the $(14,32)$-factors, and where we are allowed to move the sign factor onto either the braiding or the multiplication, i.e. we have
$$(\widetilde{\textup{sw}}^{(\tau)}, m^{(\psi)}) \ = \ (\widetilde{\textup{sw}}^\tau, m) , \, \textup{or}\, (\widetilde{\textup{sw}},m^\psi = m\cdot (-1)^\psi).$$
We denote by $\Al^{T,\psi}_{Q,W}$ the vector space $\Al^T_{Q,W}$ but with the twisted version
\begin{equation}
  \label{eqn:TwistedCoHAProduct} 
  m^\psi\ =\ m\cdot (-1)^\psi
\end{equation}
of the CoHA product $m$ defined in subsection \ref{ssec:coha_prelims}.

\subsection{The CoHA is a vertex bialgebra} 
\label{ssec:Bialgebra}

\subsubsection{} We come to the first main result of the paper about arbitrary quivers $Q$ with potential $W$, graded over the character lattice of a torus $T$. See the sections \ref{ssec:MotivationQuantumGroups} and \ref{ssec:MotivationPhysics} for motivations from the theory of quantum groups and physics for why one should have expected this structure.  We begin with the symmetric case.

\begin{theorem}\label{thm:CoHABialgebra}
  Let $Q$ be symmetric. $\Al^{T,\psi}_{Q,W}$ is a vertex bialgebra inside $\Ht^{\sbt}(\Ml_Q^T)_{\taut}\Md(\Vect_\Lambda^T)$. Its Joyce--Liu vertex coproduct $\Delta(z)$ is braided colocal.
\end{theorem}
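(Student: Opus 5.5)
The plan is to deduce the vertex bialgebra axiom from the localised bialgebra property \eqref{eqn:DavisonMeinhardt} of Davison and Botta--Davison, transported along the map $\act$ of Definition \ref{defn:VertexFromLocalised}.

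\textbf{Module structures.} First I would check that every structure lives in $\Ht^{\sbt}(\Ml_Q^T)_{\taut}\Md(\Vect_\Lambda^T)$. The tautological ring $H$ acts on $\Al^{T,\psi}_{Q,W}$ by cup product. Lemma \ref{lem:linearity_two_acts} then gives that $\Delta(z)$ is $H$-linear when $H$ acts on the target via $\oplus^*(h,z)=\act_{z,1}^*\oplus^*(h)$, because
$$\Delta(h\cdot b,z)=\Psi(-z)\,\act_{z,1}^*\bigl(\oplus^*(h)\,\oplus^*(b)\bigr)=\oplus^*(h,z)\,\Delta(b,z).$$
For the CoHA product, $H$-linearity (for the coproduct action $\oplus^*h$ on the source) follows from the projection formula along $p$ together with $q^*\oplus^*=p^*$ on tautological classes. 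Pulled back to $\SES$, the tautological bundles are extensions of the outer ones, so their Chern characters agree.

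\textbf{Bialgebra axiom.} Start from \eqref{eqn:DavisonMeinhardt}, taken in the form with $\widetilde{\textup{sw}}$ untwisted and $m^\psi$. Apply $\act$ on the first tensor factor of each output, i.e. $\act_{z,1}$ acting on factors $1,2$ of $B^{\otimes 4}$. By Theorem \ref{thm:DavisonIsJoyce}, $\act\cdot\Delta_{\loc}=\Delta(z)$, so the right column becomes $\Delta(z)$ and the left column becomes $\Delta(z)\otimes\Delta(z)$.

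It remains to check three things.
\begin{itemize}
\item $\act_{z,1}$ commutes with $m\otimes m$. This is because $m$ intertwines the $\BGm$-coactions: $p$ and $q$ are $\BGm$-equivariant, so $\act^*m=(m\otimes\id)(\act^*\otimes\act^*)$.
\item The braiding $\widetilde{\textup{sw}}_{23}$ becomes $\beta_{23}(z)$. Factors 2 and 3 are translated by $z$ and $0$ respectively, so Corollary \ref{cor:BetaMeromorphicBetaLocalised} and Lemma \ref{lem:RLocR} turn $R_{\loc,23}$ into $R(z)$. In the symmetric case this equals $R_{\taut}(z)$, which is the braiding \eqref{eqn:MeromorphicBraiding}.
\item The localisations are compatible. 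Every denominator becomes invertible in $((z^{-1}))$ after translation, except those between factors with the same translation, namely $(12)$ and $(34)$. These occur only inside $\Delta_{\loc}$ and are handled there by Theorem \ref{thm:DavisonIsJoyce}.
\end{itemize}
Injectivity of $\act$ is not needed, since we push forward an identity that already holds.

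\textbf{Braided colocality.} The aim is $\Delta(z)=\sigma\cdot S(z)\Delta(-z)\act_z^*$, with $S(z)=(-1)^{\rank\Ext}R(z)$. Commutativity of $\oplus$ gives $\sigma\oplus^*=\oplus^*$, and Corollary \ref{cor:TautQuasitriangular} supplies the identity $\sigma\act_{-z,1}^*\oplus^*\act_z^*=\act_{z,1}^*\oplus^*$. So, up to these identities, it suffices to compare $\Psi$-factors:
$$\sigma^*\Psi(\Ext,z)\cdot R(z)\cdot(-1)^{\rank\Ext}\overset{?}{=}\Psi(\Ext,-z).$$
This reduces to $\Psi(\sigma^*\Ext^\vee,z)=(-1)^{\rank}\Psi(\Ext,-z)$. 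That identity follows from $c_k(E^\vee)=(-1)^kc_k(E)$ via Lemma \ref{lem:psi_properties}, with Koszul signs absorbed by $\sigma$.

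\textbf{Main obstacle.} I expect the hardest part to be the sign and degree bookkeeping. One must check that the $\psi$-twist combined with the untwisted $R(z)$ exactly reproduces the $\tau$ signs in \eqref{eqn:DavisonMeinhardt}, including the Koszul signs of $\sigma$ on odd critical classes. One must also check that the $\BGm$-equivariance of $m$ holds on vanishing-cycle cohomology, which needs equivariance of the Thom--Sebastiani isomorphisms. I would verify the latter using the base-change compatibilities in Appendix \ref{sec:app_cohomology}.
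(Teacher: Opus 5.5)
Your proposal is correct and follows the paper's proof essentially step for step: $H$-linearity via the projection formula and Lemma \ref{lem:linearity_two_acts}, the bialgebra axiom by pushing Davison's localised square \eqref{eqn:DavisonMeinhardt} through $\act$ using Theorem \ref{thm:DavisonIsJoyce} and Corollary \ref{cor:BetaMeromorphicBetaLocalised}, and colocality by the same $\Psi$-computation. For the bottom face you appeal explicitly to $\BGm$-equivariance of $m$, which is a more accurate justification than the paper's appeal to cup-product linearity.

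One harmless slip in your localisation bookkeeping. Before braiding, the translated factors are $1$ and $3$, so every pair inverted in $(12,14,32,34)$ has exactly one translated factor. The same holds after braiding for the pairs in $(13,14,23,24)$. So all these denominators become invertible in $((z^{-1}))$, and the equal-translation pairs are never localised at all. There are therefore no exceptional denominators that need to be handled separately.
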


Unwinding the Definition \ref{defn:VertexBialgebra} of vertex bialgebra, this means the following:
\begin{enumerate}
    \item \label{item:BVect} Firstly $B= \Al^{T,\psi}_{Q,W}$ is an element of this category: it is a $\Lambda$-graded vector space with an action of $\Ht^{\sbt}(\BT)$, a cohomological $\Zb$-grading, and moreover an action of the tautological ring by cup product, which we denote by 
    $$\Ht^{\sbt}(\Ml^T_{Q})_{\taut} \otimes_T \Al^T_{Q,W} \ \stackrel{\cdot}{\to} \ \Al^T_{Q,W}.$$

\item \textbf{Tautological ring}. \label{item:Tautological} $H=\Ht^{\sbt}(\Ml_{Q}^T)_{\taut}$ is a commutative, cocommutative bialgebra with its cup product and $\oplus^*$ coproduct. It has a $\Cb[z]$-coaction $\act^*:H \to H[z]$ and hence forms a holomorphic vertex bialgebra with vertex coproduct $\oplus^*(z)= \act_{z,1}^*\oplus^*$.  There is an element 
$$R(z) \ \in \ H \otimes H((z^{-1}))$$
satisfying the spectral hexagon relations for $\oplus^*(w)$ as in Lemma \ref{lem:SpectralHexagon}.

We can thus define bialgebras in $H\Md$ as in Definition \ref{defn:VertexBialgebra}.

 \item \textbf{CoHA and cup product}.   $B =\Al^{T,\psi}_{Q,W}$ has an associative product $\star: B\otimes_T B  \stackrel{m^\psi}{\to}  B $   which is linear over the cup product action of $H$:
\begin{equation}
        m^\psi(\oplus^{*}(h) \cdot (b\otimes b')) \ = \  h \cdot m^\psi(b\otimes b').
    \end{equation}
  Thus $B$ is an associative algebra internal to this category.\label{item:CoHACup}

\item \textbf{$\Delta(z)$ and cup product}.   There is a coassociative vertex coproduct $\Delta(z) :  B \to  B \otimes_T B((z^{-1}))$  which is linear over the cup product action of $H$:
\begin{equation}
  \Delta(h\cdot b,z) \ = \ \oplus^*(h,z)\cdot \Delta(b,z).
\end{equation}
Thus $B$ is a coassociative vertex coalgebra internal to this category. \label{item:DeltaCup}

\item  \textbf{CoHA and $\Delta(z)$ form a vertex bialgebra} for the meromorphic braiding $\beta(z)= \sigma(R(z)\cdot (-))$ as in Definition 
\ref{defn:VertexBialgebra}:
\begin{equation}
\Delta(b\star b',z) \ = \ \Delta(b,z)\star_{R(z)}\Delta(b',z),
\end{equation}
\label{item:CoHACoProd} 

\item \textbf{Braided colocality}. \label{item:BraidedColocality} The Joyce--Liu coproduct is cocommutative up to the meromorphic braiding: 
    \begin{equation} \label{colocality_1}
    \Delta_{d_{1},d_{2}}(z) =   (-1)^{\rank \Ext_{d_{1},d_{2}}}\sigma_{d_{1},d_{2}} \cdot R_{d_{2},d_{1}}(z)\Delta_{d_{2},d_{1}}(-z)e^{zT} 
\end{equation}
\end{enumerate}

The same Theorem is true for the untwisted CoHA $\Al^T_{Q,W}$, except we use the $\tau$-twisted meromorphic braiding $\beta^\tau(z)$. Most proofs of properties such as \eqref{item:CoHACoProd} work by a torus localisation argument, but we sidestep this by using Davisons' version \cite{Da2} of this statement in the localised case.

\begin{cor}
  If $Q$ is graded symmetric, then $\Al^{T,\psi}_{Q,W}$ is a $\Lambda\times \Zb$-graded $\Ht^{\sbt}(\BT)$-linear vertex bialgebra, where colocality is satisfied up to a sign depending on the Euler form of $Q$. 
\end{cor}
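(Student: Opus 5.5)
The plan is to deduce this directly from Theorem \ref{thm:CoHABialgebra}. The key observation is that in the graded symmetric case the full Joyce $R$-matrix is identically $1$. The meromorphic braiding $\beta(z)=\sigma(R(z)\cdot(-))$ then reduces to the Koszul sign braiding $\sigma$ on $\Vect^T_\Lambda$. So the vertex bialgebra internal to $\Ht^{\sbt}(\Ml_Q^T)_{\taut}\Md(\Vect^T_\Lambda)$ becomes an honest $\Lambda\times\Zb$-graded $\Ht^{\sbt}(\BT)$-linear vertex bialgebra once we forget the module structure.

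\textbf{Step 1: reduce to the symmetric case.} A graded isomorphism $(Q,\wtt)\simeq(Q^{\opt},\wtt^{\opt})$ is in particular an isomorphism of quivers $Q\simeq Q^{\opt}$. I will note that all constructions (stacks, $\Ext$, the CoHA, $\Delta(z)$) depend only on $Q$ up to isomorphism, so Theorem \ref{thm:CoHABialgebra} applies. Since $\chi$ is symmetric, the shifts in $\otimes_T$ of \eqref{eqn:OtimesTDefinition} vanish, and $R(z)=R_{\taut}(z)$ by \ref{sssec:full_joyce}.

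\textbf{Step 2: show $R(z)=1$.} This is the main computation, and I expect it to be the only real point. I use the explicit formula of Proposition \ref{prop:explicit_fullrmatrix}. Fix the bijection $e\mapsto e^*$ of $Q_1$, with $e^*:j\to i$ for $e:i\to j$ and $\wtt(e^*)=-\wtt(e)$. In the denominator, the factor attached to the edge $e^*$ has source $j$ and target $i$. It is therefore
$$\bigsqcap_{(n,m)=(1,1)}^{(d_i,d'_j)}\bigl(1\otimes x_{j,m}-x_{i,n}\otimes 1+z-\wtt(e^*)\bigr),$$
and $-\wtt(e^*)=\wtt(e)$. This is exactly the numerator factor for $e$. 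Reindexing the denominator product over $Q_1$ by $e\mapsto e^*$ shows that the numerator and denominator products agree, so $R(z)_{d,d'}=1$ for all $d,d'$. The care needed here is only that the bijection respects weights with the sign convention $\wtt^{\opt}(e^*)=-\wtt(e)$, which is precisely graded symmetry. Plain symmetry would not suffice, since the $\wtt(e)$ shifts would otherwise fail to match.

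\textbf{Step 3: unwind the theorem.} Item \eqref{item:CoHACoProd} of Theorem \ref{thm:CoHABialgebra} becomes $\Delta(b\star b',z)=\Delta(b,z)\star_\sigma\Delta(b',z)$, with only the Koszul sign in the middle swap. Together with associativity (item \eqref{item:CoHACup}) and vertex coassociativity with counit and translation (Theorem \ref{quiver_potential_jl}, item \eqref{item:DeltaCup}), this is a vertex bialgebra in $\Vect^T_\Lambda$. $\Ht^{\sbt}(\BT)$-linearity and the $\Lambda\times\Zb$-grading hold because all maps are built from sheaf operations over $\BT$. Braided colocality \eqref{colocality_1} becomes
$$\Delta_{d_1,d_2}(z)=(-1)^{\rank\Ext_{d_1,d_2}}\sigma\,\Delta_{d_2,d_1}(-z)e^{zT},$$
and $\rank\Ext_{d_1,d_2}=\rank\Ext_0-\rank\Ext_1=\chi(d_1,d_2)$ by \eqref{eqn:ExtComplex}. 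This is the promised sign depending on the Euler form.
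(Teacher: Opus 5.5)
Your proposal is correct and is essentially the paper's argument: specialise Theorem \ref{thm:CoHABialgebra} and observe three things. First, $\otimes_T$ carries no shift. Second, $R(z)=1\otimes 1$, so $\beta(z)=\sigma$. Third, colocality holds up to $(-1)^{\rank \Ext_{d_1,d_2}}=(-1)^{\chi(d_1,d_2)}$.

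The one difference is that the paper simply asserts $R(z)=1\otimes 1$, whereas your Step 2 checks it from Proposition \ref{prop:explicit_fullrmatrix} via the bijection $e\mapsto e^*$, and rightly notes that graded (not merely plain) symmetry is needed. The isomorphism $(Q,\wtt)\simeq(Q^{\opt},\wtt^{\opt})$ should be read as fixing vertices, which is what your bijection actually uses; the ``up to isomorphism'' remark in Step 1 is not what does the work.
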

\begin{proof}
  When the Euler form $\chi$ is symmetric, the vertex coproduct becomes
  $$\Delta_{\alpha+\beta}(z) \ : \ B_{\alpha+\beta} \ \to \ B_\alpha\otimes_{\Ht^{\sbt}(\BT)}B_\beta((z^{-1}))$$
  without a cohomological shift by the definition \eqref{eqn:OtimesTDefinition} of $\otimes_T$, and since $R(z)=1\otimes 1$ the braiding becomes $\beta(z)=\sigma$ the Koszul sign rule braiding, so colocality 
  $$\Delta(\alpha,z) \ = \ (-1)^{\rank \Ext}\sigma(\Delta(\alpha,-z) e^{zT})$$
  becomes ordinary (super)colocality up to sign.
\end{proof}
\begin{proof}[Proof of Theorem \ref{thm:CoHABialgebra}] All structures \eqref{item:Tautological} on the tautological ring were proven in section \ref{ssec:StructuresTautologicalRing}. On $B=\Al^{T,\psi}_{Q,W}$, we let $m^\psi$ be the sign-twisted associative CoHA product \eqref{eqn:TwistedCoHAProduct} and $\Delta(z)$ the Joyce--Liu coassociative vertex coproduct of Theorem \ref{thm:JoyceVCA}.

\textbf{Compatibility of CoHA with cup product }\eqref{item:CoHACup}. We compute
    \begin{align*}
        p_{*}q^{*}(\oplus^{*}(h) \cup (b \otimes b')) & = p_{*}(q^{*} \oplus^{*} h \cup q^{*}(b \otimes b'))  \\
        & = p_{*}(q^*(p \cdot s)^*h \cup q^{*}(b \otimes b')) \\
        & = p_{*}(q^* s^{*} p^{*}h \cup q^{*}(b \otimes b')) \\
        & = p_{*}(p^{*}h \cup q^{*}(b \otimes b')) \quad \text{projection formula \eqref{eqn:ProjectionFormulaVanishing}}\\
        & = h \cup p_{*}q^{*}(b \otimes b').
    \end{align*}
\textbf{Compatibility of $\Delta(z)$ with cup product }\eqref{item:DeltaCup}. We use Lemma \ref{lem:linearity_two_acts}, which also works for the tautological ring, to compute
\begin{align*}
    \Psi(\Ext,-z) \cdot \act^{*}_{1}(\oplus^{*}(\alpha \cdot \beta)) 
    & =  \Psi(\Ext,-z) \cdot \act^{*}_{1}(\oplus^{*}(\alpha) \cdot\oplus^{*} (\beta)) \\
    & =  \Psi(\Ext,-z) \cdot \act^{*}_{1}(\oplus^{*}(\alpha)) \cdot  \act^{*}_{1}(\oplus^{*} (\beta)) \\
    &= \act^{*}_{1}(\oplus^{*}(\alpha))  \Psi(\Ext,-z) \cdot \\
    & = \oplus^{*}(\alpha,z) \cdot \Delta(\beta,z).
\end{align*}

\textbf{Compatibility of $\Delta(z)$ and the CoHA} \eqref{item:CoHACoProd}. We use Davisons' result \eqref{eqn:DavisonMeinhardt} that $B = \Al^{T,\psi}_{Q,W}$ forms a localised bialgebra, i.e. that the inner face of 
\begin{center}
\begin{tikzcd}[row sep = {70pt,between origins}, column sep = {70pt, between origins}]
 B \otimes_T B \ar[rrrr,"m"]\ar[ddd,"\Delta(z) \otimes_T \Delta(z)"']   &[30pt]&[-30pt] &[80pt]&[-20pt] B \ar[dddd,"\Delta(z)"] \\[-20pt]
&B \otimes_T B \ar[rr,"m"] \ar[d,"u \cdot \Delta_{\loc} \otimes_T \Delta_{\loc}"'] \ar[lu,thin,"\sim" sloped,"\id"' sloped]   & &B  \ar[dd,"\Delta_{\loc}"] \ar[ru,thin,"\sim" sloped,"\id"' sloped] &\\ 
 &(\boldsymbol{B} \otimes_T B) \otimes_T (\boldsymbol{B}\otimes_T B)_{(12,14,32,34)}\ar[rd,"\beta_{\loc \textup{,} 23}"',"\sim" sloped]  \ar[ld, thin,"\act_{1,3}" sloped] && &\\[-10pt]
|[yshift=25pt]|(B \otimes_T B) \otimes_T (B\otimes_T B)((z^{-1})) \ar[rd,"\beta_{23}(z)"',"\sim" sloped] & &(\boldsymbol{B}\otimes_T \boldsymbol{B}) \otimes_T (B \otimes_T B)_{(13,14,23,24)}\ar[r,"m\otimes_T m"'] \ar[ld, thin,"\act_{1,2}" sloped]  & (\boldsymbol{B}\otimes_T B)_{(12)} \ar[rd,thin,"\act_1" sloped]&\\
& |[xshift=-40pt]|(B \otimes_T B) \otimes_T (B\otimes_T B) ((z^{-1})) \ar[rrr,"m\otimes_T m"'] & && B \otimes_T B ((z^{-1}))
\end{tikzcd} 
\end{center}
commutes. Here, the arrows are the $\loc$ maps from Definition \ref{defn:VertexFromLocalised} induced by $\act^*$ acting on the copies of $B$ drawn in bold, and the subscript $(ij)=[S_{ij}^{-1}]$ means a localisation along a copy of $S$ acting on the $ij$th tensor factors as in section \ref{ssec:LocalisedProduct}.  

The remaining five faces commute for the following reasons. The top face is the identity, and the bottom face commutes by linearity of the CoHA product over action by the cup product. Note we can commute the map $\act_{1,3}$ past the localisation map $u$ because the map $\act_{1,3}$, applied to the appropriate classes we need to localise, will be invertible. The remaining faces commute because 
$$\Delta(z)\cdot \act \ = \ \act\cdot \Delta_{\loc}, \hspace{15mm} \beta(z)\cdot \act \ = \ \act\cdot \beta_{\loc}$$
by Theorem \ref{thm:DavisonIsJoyce} and Corollary \ref{cor:BetaMeromorphicBetaLocalised}. Thus the outer face commutes, and $B$ forms a vertex bialgebra.

\textbf{ Braided colocality} \eqref{item:BraidedColocality}. We have that
\begin{align*}
  \Delta(\alpha,z) &\ = \ \Psi(\Ext,-z)\act_{z,1}^*\oplus^*(\alpha) \\
  & \ = \  \sigma\left(\Psi(\sigma^*\Ext,-z)\act_{z,2}^*\oplus^*(\alpha)\right) \\
  & \ = \   \sigma\left(\frac{\Psi(\sigma^*\Ext,-z)}{\Psi(\Ext,z)}\cdot \Psi(\Ext,z)\act_{-z,1}^*\oplus^*(\alpha)\act_{z,1}^*\right) \\
  &\ = \ (-1)^{\rank \sigma^*\Ext}\sigma\left(R(z)\cdot \Delta(\alpha,-z)e^{zT}\right) \\
  &\ = \ (-1)^{\rank \Ext}\beta(z)\cdot \Delta(\alpha,-z)e^{zT}
\end{align*}
where in the second equality we used that $\oplus^*$ is cocommutative and in the third equality we used the fact that 
$$\act_{z,2}^*\oplus^* \ = \ \act_{-z,1}^* \act_{z,1}^*\act_{z,2}^*\oplus^* \ = \ \act_{-z,1}^*\oplus^*\act_z^*.$$
This shows braided colocality.

\end{proof}

\subsubsection{Non-symmetric case} \label{ssec:non_symm}
 If $Q$ is not symmetric, we do not currently have a good generalisation $H'$ of the tautological ring with an element $R'(z) \in H\otimes H((z^{-1}))$ satisfying the spectral hexagon relations. Indeed, if we took $H'=\Ht^{\sbt}(\Ml_Q^T)_{\taut}$ itself then $R_{\taut}(z)$ does not in general satisfy the spectral hexagon relations, and $R(z)\not \in H\otimes H((z^{-1}))$.

We do not address this question in this paper.

 As a stopgap, we can consider the category 
$\Cl = \bigsqcap_{d \in \Nb^{Q_{0}}} \Ht^{\sbt}(\Ml_{Q,d}^T)\Md$ whose objects consist of a collection $V=(V_d)$ of $\Ht^{\sbt}(\Ml_{Q,d}^T)$-modules for every dimension vector $d$. There is a monoidal structure 
$$(V\otimes_T W)_d \ = \ \bigoplus_{d=d_1+d_2}V_{d_1}\otimes_T W_{d_2}$$
which is additive on dimension vectors, and $\Ht^{\sbt}(\Ml_{Q,d_{1}+d_{2}}^T)$ acts on the tensor via the direct sum pullback $\oplus^*$. The monoidal structure $\otimes_T$ has a natural meromorphic braiding given by 
\begin{align*}
    \beta(z) \ : \ V_d \otimes_T W_{d'}((z^{-1})) & \ \stackrel{\sim}{\to} \ W_{d'}\otimes_T V_d((z^{-1}))\\
    v \otimes w &\ \mapsto \ \sigma(R(z)_{d,d'}\cdot v\otimes w).
\end{align*}
We may view the CoHA as an element $B=(\Al_{Q,W,d}^{T,\psi})_{d \in \Nb^{Q_{0}}}$ of this category.

Thus--until we have a good analogue of the tautological ring for nonsymmetric quivers--the best analogue of Theorem \ref{thm:CoHABialgebra} we have is
\begin{theorem}\label{thm:CoHABialgebra_non_symm}
Let $(Q,W)$ be an arbitrary quiver with potential acted on by torus $T$ leaving the potential invariant, and satisfying the K{\"u}nneth assumption \eqref{eqn:KunnethAssumption}. The CoHA $\mathcal{A}^{T,\psi}_{Q,W}$ is a vertex bialgebra inside the category $\Cl$.
\end{theorem}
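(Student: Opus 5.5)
The plan is to run the proof of Theorem \ref{thm:CoHABialgebra} again almost verbatim, now inside the category $\Cl$, and to check that symmetry of $Q$ was used in only one place. That place was the claim that $R(z)$ lies in the tautological ring; we no longer need it.

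\textbf{Setting up the category.} First, $B=(\Al^{T,\psi}_{Q,W,d})_d$ is an object of $\Cl$. Each $\Al^{T}_{Q,W,d}$ is a module over $\Ht^{\sbt}(\Ml^T_{Q,d})$ by cup product. The CoHA product is linear in the sense $m^\psi(\oplus^*(h)\cdot(b\otimes b'))=h\cdot m^\psi(b\otimes b')$, by the same projection formula computation as in \eqref{item:CoHACup}; that computation nowhere used tautologicality of $h$, only $ps=\oplus$. Similarly, $\Delta(z)$ is linear over $\oplus^*(h,z)=\act^*_{z,1}\oplus^*(h)$ by Lemma \ref{lem:linearity_two_acts}, which holds for arbitrary cohomology classes $h$. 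Coassociativity of $\Delta(z)$ is Theorem \ref{thm:JoyceVCA}, and associativity of $m^\psi$ is standard. Both need the K\"unneth assumption, which is why it appears in the hypotheses.

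\textbf{Checking $\beta(z)$ is a meromorphic braiding.} Next I check that $\beta(z)=\sigma(R(z)\cdot -)$ is a meromorphic braiding on $\Cl$. It must satisfy the spectral hexagon relations \eqref{eqn:SpectralHexagon}, and these follow from the corollary to Lemma \ref{lem:SpectralHexagon} for the full Joyce $R$-matrix, which holds for any quiver. The point is that the factors $z^{\widetilde\chi}$ cancel the defect of $R_{\taut}$. It must also be a map of module objects, which follows by translating the hexagons for $\oplus^*(w)$. Here the module structure on tensor products is via $\oplus^*$ on the full cohomology, which is why we need $\Cl$ rather than tautological modules. The cohomological shifts in $\otimes_T$ must be matched by the degree of $R(z)$. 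This is a bookkeeping check: $R(z)_{d,d'}$ has leading term $z^{\chi(d',d)-\chi(d,d')}$, exactly compensating the shift $\widetilde\chi$ when swapping factors.

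\textbf{The bialgebra compatibility.} The main step is the compatibility $\Delta(b\star b',z)=\Delta(b,z)\star_{R(z)}\Delta(b',z)$. I reuse the cube diagram from the proof of Theorem \ref{thm:CoHABialgebra}. Its inner face is Davison's localised bialgebra statement \eqref{eqn:DavisonMeinhardt} (\cite[Thm.5.13]{Da2}, \cite[Prop.5.5]{BD}), which is stated for arbitrary quivers with potential and the braiding $\widetilde{\textup{sw}}$ built from $e(Q_1)/e(Q_1^{\opt})=R_{\loc}$ (Lemma \ref{lem:loc_r_to_zr}). The side faces commute by Theorem \ref{thm:DavisonIsJoyce} and Corollary \ref{cor:BetaMeromorphicBetaLocalised}, and neither assumed symmetry. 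The bottom face commutes by linearity of $m\otimes m$ over the cup product action of the full cohomology.

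I expect the main obstacle to be justifying that the $\act$ maps pass through the localisations $u$ and $m\otimes_T m$ in the non-symmetric case. We need $(\act\otimes\id)$ applied to $e(\Ext_i)$ and $e(\sigma^*\Ext_i^\vee)$ to be invertible in $((z^{-1}))$. This holds because each such element has leading term $\pm z^{\rank}$, as in the proof of the proposition following Lemma \ref{lem:DavisonDirectSum}. I would verify this factor by factor for the localisations labelled $(14,32)$ and $(13,14,23,24)$. I would then check that $m\otimes_T m$ extended to localisations commutes with $\act_{1,2}$, using $\Ol$-linearity of $m$ and \eqref{eqn:MultiplicativeSubsetHexagon}. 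With this in hand the outer face commutes, which gives the vertex bialgebra axiom in $\Cl$.
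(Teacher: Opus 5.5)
Your proposal is correct and follows the paper's route: the paper's proof of this theorem is that the argument for Theorem \ref{thm:CoHABialgebra} goes through verbatim. That argument is the cube diagram built from Davison's localised bialgebra statement \eqref{eqn:DavisonMeinhardt}, Theorem \ref{thm:DavisonIsJoyce} and Corollary \ref{cor:BetaMeromorphicBetaLocalised}, now run in $\Cl$ with the full $R(z)$, whose spectral hexagon relations hold for any quiver. One small correction: $\beta(z)$ respecting the module structures comes not from the hexagon relations but, as in Corollary \ref{cor:TautQuasitriangular}, from cocommutativity of $\oplus^*$ together with commutativity of the evenly graded cohomology ring, which makes conjugation by $R(z)$ trivial.
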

\begin{proof}
    The same as for \ref{thm:CoHABialgebra}.
    \end{proof}

Note however that because $\Cl$ is not a module category, we cannot apply bosonisation in the sense of next section to add a Cartan piece to $\Al^{T,\psi}_{Q,W}$. See subsection \ref{ssec:nonsymm_boson} for more discussion.

\newpage
\section{Extending CoHAs and bosonisation}

\noindent In this section, we explain how to extend CoHAs systematically. In the case of any symmetric $N$-graded quiver $Q$ with potential $W$ we will define the \textit{extended CoHA}
$$\Al^{T,\psi,\ext}_{Q,W} \ = \ \Al^{T,\psi}_{Q,W} \otimes_{T} \Ht^{\sbt}(\Ml_{Q}^T)_{\taut}$$
and show that it inherits algebraic structures from $\Al^{T,\psi}_{Q,W}$. We prove the following in section \ref{ssec:ProofCoHABosonisation}:

\begin{theorem}\label{thm:CoHABosonisation}
  Let $Q$ be a symmetric quiver. The extended CoHA has an associative product and nonlocal vertex coproduct 
   \begin{align}\label{eqn:CoHABosonisedProduct}
       b \otimes h\, \cdot \, b'\otimes h'& \ = \ \left[b \cdot ((\oplus^*h)_{(1)}\cup b')\right] \otimes \left[(\oplus^*h)_{(2)}\cup h'\right]\\[5pt]
       \label{eqn:CoHABosonisedCoproduct}
      \Delta^{\ext}(b\otimes h,z) &\ = \ R_{\taut}(z)_{32}\cup \left(  \Delta(b,z)_{13}\otimes (\act_{z,1}^*\oplus^* h)_{24} \right)
  \end{align}
  endowing it with the structure of a vertex bialgebra inside $\Vect_\Lambda^T$.
\end{theorem}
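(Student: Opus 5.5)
The plan is to derive Theorem \ref{thm:CoHABosonisation} as a direct application of the abstract vertex bosonisation theorem (Theorem \ref{thm:IntroBosonisation}, i.e.\ Theorem \ref{thm:VertexBosonisation}). Its inputs are already in place. We take $H=\Ht^{\sbt}(\Ml_Q^T)_{\taut}$ and $B=\Al^{T,\psi}_{Q,W}$.

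First, verify the hypotheses on $H$. It is a commutative, cocommutative bialgebra with biderivation $T$ by Lemma \ref{lem:TautBialgebra}. Hence, by Lemma \ref{lem:holomorphic_covertex} and the corollary following Lemma \ref{lem:TautBialgebra}, it is a holomorphic vertex bialgebra with $\oplus^*(z)=\act_{z,1}^*\oplus^*$. Since $Q$ is symmetric, $R(z)=R_{\taut}(z)$ lies in $H\otimes_T H[[z^{-1}]]$. By Corollary \ref{cor:TautQuasitriangular}, $R_{\taut}(z)$ is a spectral quasitriangular element: the hexagon relations hold with trivial $\widetilde{\chi}$-correction, and the conjugation condition is trivial because $H$ is commutative. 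Invertibility of $R_{\taut}(z)$ is clear, since its leading coefficient is $1\otimes 1$.

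Second, verify that $B$ is a vertex bialgebra in $H\Md$ for the braiding $\beta(z)=\sigma(R(z)\cdot -)$. This is exactly the content of Theorem \ref{thm:CoHABialgebra}, items \eqref{item:CoHACup}--\eqref{item:CoHACoProd}: $m^\psi$ is $H$-linear via $\oplus^*$, $\Delta(z)$ is $H$-linear via $\oplus^*(z)$, and the braided compatibility holds.

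Third, identify the abstract bosonised formulas \eqref{eqn:VertexBosonisationProduct} and \eqref{eqn:VertexBosonisationCoproduct} with \eqref{eqn:CoHABosonisedProduct} and \eqref{eqn:CoHABosonisedCoproduct}. On the algebra side this is the usual smash product $b\otimes h\cdot b'\otimes h' = b\,(h_{(1)}\triangleright b')\otimes h_{(2)}h'$, with $h_{(1)}\otimes h_{(2)}=\oplus^*h$ and the action given by cup product. Associativity uses only that $\oplus^*$ is an algebra map and that $m^\psi$ is $H$-linear. On the coalgebra side, the Majid--Radford coproduct $b_{(1)}\otimes R^{(2)}h_{(1)}\otimes R^{(1)}\triangleright b_{(2)}\otimes h_{(2)}$ becomes $R_{\taut}(z)_{32}\cup(\Delta(b,z)_{13}\otimes \oplus^*(h,z)_{24})$ once we reorder tensor factors. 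Here the coaction in the abstract formula is replaced by the $R$-matrix action, which is how bosonisation works for $H$ quasitriangular rather than a coquasitriangular/Yetter--Drinfeld setup. A point to check is that the ordering of spectral parameters matches: $\Delta(b,z)$ carries $\act_{z,1}$ on the first factor, so $R$ is evaluated at $z$, not $z\pm w$.

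The main obstacle is the proof of Theorem \ref{thm:VertexBosonisation} itself, if it is not already available. There, vertex coassociativity of $\Delta^{\ext}$ needs the spectral hexagon relations of Lemma \ref{lem:SpectralHexagon} with the correct shifts $z-w$ and $z+w$, combined with $H$-linearity of $\Delta(z)$. The bialgebra axiom also needs the meromorphic braid relation \eqref{eqn:MeromorphicBraid} to move $R_{23}$ past products. I would first try to do these verifications in the localised setting, with $R_{\loc}$ and $\Delta_{\loc}$, where everything is an honest rational identity. I would then transport the result via $\act$, using Theorem \ref{thm:DavisonIsJoyce} and Corollary \ref{cor:BetaMeromorphicBetaLocalised}, in the same way as the proof of Theorem \ref{thm:CoHABialgebra}. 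The subtlety is that $H$ itself does not localise inside the tautological ring (Warning \ref{war:Euler}), so the localised computation has to be carried out on full cohomology and then restricted. Counit compatibility follows from $\epsilon\otimes\epsilon_H$ together with $R_{\taut}(z)|_{d=0}=1$.
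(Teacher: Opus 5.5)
Your proposal is correct and is essentially the paper's proof: apply Theorem \ref{thm:VertexBosonisation} to $B=\Al^{T,\psi}_{Q,W}$ and $H=\Ht^{\sbt}(\Ml^T_Q)_{\taut}$ with $R(z)=R_{\taut}(z)$ (valid since $Q$ is symmetric), take the hypotheses from Theorem \ref{thm:CoHABialgebra}, and read off the bosonised product and coproduct formulas. The localised detour you sketch for proving the bosonisation theorem itself is unnecessary, and would not work as stated: that theorem concerns arbitrary $H,B$, and $R_{\loc}$ does not lie in any localisation of $H\otimes H$. The paper instead proves it by a direct Sweedler computation, using only the spectral hexagon relations, (co)associativity, $H$-linearity, and the commutativity and cocommutativity of $H$.
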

This statement and its proof is an application of a much more general phenomenon that we call vertex Majid--Radford bosonisation (Theorem \ref{thm:VertexBosonisation}). In pictures, the extended product and coproduct are
\begin{center}
\begin{tikzpicture}[scale=1.6]
 
    \begin{scope} 
  \draw[thick] (0,-0.4) -- (0.5,-0.4);
  \draw[thick] (1.5,-0.2) -- (0.7,-0.2) to[out=180,in=90] (0.5,-0.4) to[out=270,in=180] (0.7,-0.6) to[out=0,in=180]  (1.3,-1.4); 
  \draw[thick] (0,-1.4) -- (2,-1.4);

  \draw[white, line width = 6pt] (0,-1) -- (1,-1) to[out=0,in=270] (1.5,0);
   \draw[ultra thick] (0,-1) -- (1,-1) to[out=0,in=270] (1.5,0);
   \draw[ultra thick] (0,0) -- (2,0);
   \draw[thick] (1.5,-0.2) -- (0.7,-0.2);
    \node[] at (1,0.75) {extended product};

    \node[left] at (0,0) {$\scriptstyle\Al^{T,\psi}_{Q,W}$};
    \node[left] at (0,-0.4) {$\scriptstyle\Ht^{\sbt}(\Ml^T_{Q})_{\taut}$};
    \node[left] at (0,-1) {$\scriptstyle\Al^{T,\psi}_{Q,W}$};
    \node[left] at (0,-1.4) {$\scriptstyle\Ht^{\sbt}(\Ml^T_{Q})_{\taut}$};

    \node[right] at (2,0) {$\scriptstyle\Al^{T,\psi}_{Q,W}$};
    \node[right] at (2,-1.4) {$\scriptstyle\Ht^{\sbt}(\Ml^T_{Q})_{\taut}$};
     \end{scope}

   \begin{scope} 
    [xshift=4.5cm]
    
     \begin{scope} 
    \draw[thick] (1,-1.4) -- (3,-1.4);
    \draw[thick] (2,-1.4) to[out=90,in=180] (2.5,-0.4) -- (3,-0.4);
    
    \draw[line width = 6pt,white] (3,-1) -- (2.5,-1) to[out=180,in=270] (2,0);
     \draw[ultra thick] (3,-1) -- (2.5,-1) to[out=180,in=270] (2,0);
     \draw[ultra thick] (1,0) -- (3,0);
    \draw[thick] (2.05,-0.9) -- (1.8,-0.9) to[out=180,in=270] (1.6,-0.7) to[out=90,in=180] (1.8,-0.5) -- (2.05,-0.5); 

    \node[left] at (1.6,-0.7) { $\scriptstyle R_{\taut}(z)$};
    \node[] at (2,0.75) {extended vertex coproduct};

    \node[left] at (1,0) {$\scriptstyle\Al^{T,\psi}_{Q,W}$};
    \node[left] at (1,-1.4) {$\scriptstyle\Ht^{\sbt}(\Ml^T_{Q})_{\taut}$};

    \node[right] at (3,0) {$\scriptstyle\Al^{T,\psi}_{Q,W}$};
    \node[right] at (3,-0.4) {$\scriptstyle\Ht^{\sbt}(\Ml^T_{Q})_{\taut}$};
    \node[right] at (3,-1) {$\scriptstyle\Al^{T,\psi}_{Q,W}$};
    \node[right] at (3,-1.4) {$\scriptstyle\Ht^{\sbt}(\Ml^T_{Q})_{\taut}$};
      \end{scope}
    \end{scope}
 \end{tikzpicture}
\end{center}
where merging lines corresponds to the CoHA product or cup product, and splitting lines correspond to the Joyce--Liu vertex coproduct $\Delta(z)$ or $\oplus^*$. See \cite{ES} for an introduction to string diagrams for quantum groups.

\subsubsection{Remark} The above Theorem also holds for the untwisted CoHA $\Al^T_{Q,W}$ and $(-1)^{\tau}R_{\taut}(z)$.

\subsection{Majid--Radford bosonisation} 

\subsubsection{} Majid \cite{Ma2} and Radford \cite{Ra} noticed that if $B$ is a vector space with a (right) action of associative algebra $H$, then additional algebraic structures from $H,B$ induce analogous structures on the \textbf{bosonisation} $B\rtimes H = B\otimes H$:

\begin{theorem} \label{thm:Bosonisation} \cite[Thm. 4.1]{MaBos}
If $H$ is a bialgebra and $B$ is an $H$-linear algebra, then
  \begin{equation} \label{eqn:BosonisationProduct}
    (b \otimes h)\cdot (b'\otimes h') \ = \ b(h_{(1)}\cdot b') \otimes h_{(2)}h'
  \end{equation}
  makes $B \rtimes H$ into an associative algebra. If additionally $H$ has a quasitriangular element $R = R^{(1)}\otimes R^{(2)}\in H \otimes H$ and $B$ is a bialgebra in $H\Md$ with respect to the induced braiding, then
  \begin{equation} \label{eqn:BosonisationCoproduct}
    \Delta(b \otimes h) \ =  \  (b_{(1)} \otimes R^{(2)}h_{(1)})\otimes (R^{(1)}\cdot  b_{(2)}\otimes h_{(2)})
  \end{equation}
  makes $B \rtimes H$ into a bialgebra. 
\end{theorem}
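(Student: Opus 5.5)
The plan is to verify both parts of Theorem \ref{thm:Bosonisation} by direct Sweedler-notation computations, using only the defining axioms: that $B$ is an $H$-module algebra, and, for the second part, that $B$ is a bialgebra in the braided category $H\Md$ with braiding $\beta(v\otimes w)=\sigma(R\cdot (v\otimes w))$, i.e.\ $v\otimes w\mapsto R^{(2)}\cdot w\otimes R^{(1)}\cdot v$. Throughout I write the action as $h\cdot b$, and read "$B$ is an $H$-linear algebra" as the module-algebra conditions
\[ h\cdot(bb') = (h_{(1)}\cdot b)(h_{(2)}\cdot b'), \qquad h\cdot 1 = \epsilon(h)1. \]
The unit of $B\rtimes H$ is $1\otimes 1$.

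\textbf{Associativity.} Expand both bracketings of $(b\otimes h)(b'\otimes h')(b''\otimes h'')$.
\begin{itemize}
\item On the left one gets $b(h_{(1)}\cdot b')\bigl((h_{(2)}h')_{(1)}\cdot b''\bigr)\otimes (h_{(2)}h')_{(2)}h''$.
\item On the right one gets $b\bigl(h_{(1)}\cdot(b'(h'_{(1)}\cdot b''))\bigr)\otimes h_{(2)}h'_{(2)}h''$.
\end{itemize}
To match them, use multiplicativity of $\Delta_H$, the module-algebra property, and coassociativity of $\Delta_H$. This part is routine.

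\textbf{Coalgebra structure.} Use the counit $\epsilon_B\otimes\epsilon_H$. Coassociativity of \eqref{eqn:BosonisationCoproduct} needs three inputs:
\begin{itemize}
\item coassociativity of $\Delta_B$ and $\Delta_H$;
\item $H$-linearity of $\Delta_B$, i.e.\ $\Delta_B(h\cdot b)=h_{(1)}\cdot b_{(1)}\otimes h_{(2)}\cdot b_{(2)}$;
\item the quasitriangularity relations $(\Delta\otimes\id)R=R_{13}R_{23}$ and $(\id\otimes\Delta)R=R_{13}R_{12}$.
\end{itemize}
In the iterated coproduct, two copies of $R$ appear on each side. Merging them with the hexagon relations reduces both sides to the same expression.

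\textbf{Multiplicativity of the coproduct.} This is the main step. I would compute
\[ \Delta\bigl((b\otimes 1)(1\otimes h)\bigr), \qquad \Delta\bigl((1\otimes h)(b\otimes 1)\bigr), \qquad \Delta\bigl((b\otimes 1)(b'\otimes 1)\bigr) \]
separately, since $B\otimes 1$ and $1\otimes H$ generate $B\rtimes H$ and $b\otimes h=(b\otimes1)(1\otimes h)$.
\begin{itemize}
\item \emph{The mixed case $(1\otimes h)(b\otimes 1)=h_{(1)}\cdot b\otimes h_{(2)}$.} The key identity is
\[ \Delta_H(h)R = R\,\Delta_H^{\mathrm{op}}(h) \]
(the remaining axiom of a quasitriangular element). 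It lets one move the $R$ factor in \eqref{eqn:BosonisationCoproduct} past $h$, together with $H$-linearity of $\Delta_B$.
\item \emph{The case $(b\otimes 1)(b'\otimes 1)$.} Here one uses the braided bialgebra axiom
\[ \Delta_B(bb') = b_{(1)}\bigl(R^{(2)}\cdot b'_{(1)}\bigr)\otimes \bigl(R^{(1)}\cdot b_{(2)}\bigr)b'_{(2)}. \]
One checks that the cross terms produced by multiplying the two coproducts in $B\rtimes H$ reproduce exactly this braiding. This uses the hexagon relation $(\Delta\otimes\id)R=R_{13}R_{23}$ to combine the two $R$'s arising from $b$ and $b'$ with the one generated by moving $R^{(2)}h_{(1)}$ through $b'_{(1)}$.
\end{itemize}

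I expect this bookkeeping of three $R$-factors to be the main obstacle. The cleanest way to control it is to argue with string diagrams, as in \cite{ES}, or equivalently Tannakian: identify $B\rtimes H\Md$ with $B\Md(H\Md)$, whose monoidal structure is induced by the braiding. Either way, the counit compatibility finishes the argument.
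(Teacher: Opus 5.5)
Your proposal is correct in outline, but it takes a different route from the paper. For Theorem \ref{thm:Bosonisation} itself the paper gives no computation. Besides citing Majid, it argues via a Tannakian picture: Barr--Beck for $\oblv : B\Md(H\Md)\to\Vect$ gives $B\rtimes H=\End(\oblv)$. The smash product is then forced by the commutation relation between the $H$- and $B$-actions, and the coproduct by asking that $B\Md(H\Md)\simeq(B\rtimes H)\Md$ be monoidal. Explicit Sweedler computations appear only for the vertex analogue, Theorem \ref{thm:VertexBosonisation}. There the bialgebra axiom is checked in one pass on general elements $b\otimes h$, $b'\otimes h'$, using that $H$ is commutative and cocommutative.

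Your direct check is self-contained, works for arbitrary noncommutative, noncocommutative quasitriangular $H$, and shows where each hypothesis enters. The Tannakian argument explains where the formulas come from. But making it rigorous means identifying $\End(\oblv)\simeq B\otimes H$ and checking monoidality, which is essentially your computation in another guise.

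Your reduction of multiplicativity to pairs from $B\otimes 1$ and $1\otimes H$ is legitimate, because each case holds for arbitrary $b,b',h$. You should add the trivial case of two elements of $1\otimes H$, which follows from $(\epsilon\otimes\id)R=1$. Normal ordering $(b\otimes h)(b'\otimes h')=(b(h_{(1)}\cdot b')\otimes 1)(1\otimes h_{(2)}h')$ then gives the general case.

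One correction: in the mixed case you have the intertwining axiom backwards. The two sides are
\[ \Delta\bigl((1\otimes h)(b\otimes 1)\bigr) = (h_{(1)}\cdot b_{(1)}\otimes R^{(2)}h_{(3)})\otimes(R^{(1)}h_{(2)}\cdot b_{(2)}\otimes h_{(4)}), \]
\[ \Delta(1\otimes h)\,\Delta(b\otimes 1) = (h_{(1)}\cdot b_{(1)}\otimes h_{(2)}R^{(2)})\otimes(h_{(3)}R^{(1)}\cdot b_{(2)}\otimes h_{(4)}), \]
so what you need is $R\,\Delta_H(h)=\Delta_H^{\textup{op}}(h)\,R$. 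This is the standard axiom matching your hexagon conventions. It is also exactly the condition that your braiding $\beta$ is a morphism in $H\Md$, so it is part of the hypotheses. The identity $\Delta_H(h)R=R\,\Delta_H^{\textup{op}}(h)$ that you wrote is the one satisfied by $R_{21}$. With it this case does not close unless $H$ is cocommutative, as it happens to be in the paper's application.

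Also, in the $B$--$B$ case both hexagons are needed, not just the first. You use $(\Delta\otimes\id)R=R_{13}R_{23}$ on the left, when $R^{(1)}$ acts on the product $(r^{(1)}\cdot b_{(2)})b'_{(2)}$. You use $(\id\otimes\Delta)R=R_{13}R_{12}$ on the right, when $R^{(2)}$ is moved past $b'_{(1)}$ in the smash product. With $r,s$ further copies of $R$, both sides then equal $(b_{(1)}(r^{(2)}\cdot b'_{(1)})\otimes R^{(2)}s^{(2)})\otimes((R^{(1)}r^{(1)}\cdot b_{(2)})(s^{(1)}\cdot b'_{(2)})\otimes 1)$.
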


This is obvious from a module category perspective. Indeed, applying Barr--Beck to the forgetful functor $\oblv:B\Md(H\Md) \to \Vect$ induces an equivalence 
\begin{equation}
  \label{eqn:TannakianBosonisation} 
  B\Md(H\Md) \ \simeq \ (B\rtimes H)\Md
\end{equation}
for \textit{some} algebra $B\rtimes H= \End(\oblv)$. An object in \eqref{eqn:TannakianBosonisation} is a vector space together with right $H$- and $B$-actions satisfying certain commutation relations determined by the action of $H$ on $B$. Therefore as a vector space we have $B\rtimes H = B\otimes H$, whose algebra structure \eqref{eqn:BosonisationProduct} is fixed by the commutation relations and the observation that $B,H$ are subalgebras. 

Noticing that a monoidal structure on $A\Md$ lifting that of $\Vect$ is precisely equivalent to a bialgebra structure on $A$, the coproduct \eqref{eqn:BosonisationCoproduct} on $B \rtimes H$ is then fixed by asking that the equivalence \eqref{eqn:TannakianBosonisation} is monoidal.

\subsubsection{} In summary, the following structures on $H$ and $B$ induce the following structures on the bosonisation:
\begin{center}
\begin{tabular}{ccc@{\hspace{10pt}}|@{\hspace{30pt}}cc}
 \textit{if} & $H$& $B$ & \textit{then} & $B\rtimes H$\\ 
 {\small\textit{has a}}& product & $H$-action & -- & --\\ 
 {\small\textit{and}}&coproduct & product & {\small\textit{has a}} & product\\ 
 {\small\textit{and}}&quasitriangular  &coproduct & {\small\textit{and}}& coproduct\\[-2pt]
 & element $R$ & & &  
 \end{tabular} 
\end{center}
and we may choose to take the vertex algebra analogue of any row(s). In what follows we will need the last:
\begin{center}
\begin{tabular}{ccc@{\hspace{10pt}}|@{\hspace{30pt}}cc}
 \textit{if} & $H$ & $B$ & \textit{then} & $B\rtimes H$\\ 
 {\small\textit{has a}}& product & $H$-action & -- & --\\ 
 {\small\textit{and}}&coproduct & product & {\small\textit{has a}} & product\\ 
 {\small\textit{and}}&spectral  &vertex & {\small\textit{and}}& vertex\\[-2pt]
 & quasitriangular &coproduct & &coproduct \\[-2pt]
 & element $R(z)$ & & &   
 \end{tabular} 
\end{center}
where now all vector spaces are also endowed with ``translation'' endomorphisms respecting the above structures. We should view Theorem \ref{thm:VertexBosonisation} as saying that we have an equivalence
  $$B\Md(H\Md) \ \simeq \ B \rtimes H \Md$$
which moreover respects the meromorphic tensor structures on both sides, although as mentioned we do not make this precise.

\subsubsection{Remark} 
In the presence of a Tannakian reconstruction statement for meromorphic monoidal categories we could view $\Al^{T,\psi,\ext}_{Q,W}\in \Vect^T_\Lambda$ as the endomorphism algebra of the forgetful functor
\begin{equation}\label{eqn:Forgetful}
   \Al^{T,\psi}_{Q,W}\Md\left(\Ht^{\sbt}(\Ml_{Q}^T)_{\taut}\Md(\Vect^T_\Lambda)\right) \ \to \ \Vect^T_\Lambda
\end{equation}
so that $\Al^{T,\psi,\ext}_{Q,W}\Md(\Vect^T_\Lambda)$ would be equivalent to the left side of \eqref{eqn:Forgetful} and the vertex coproduct is inherited from structure on that category.

\subsection{Vertex bosonisation}

\subsubsection{} Consider vector spaces $H,B$ and $R(z) \in H \otimes H((z^{-1}))$ satisfying the axioms \eqref{item:BVect}-\eqref{item:CoHACoProd} listed below Theorem \ref{thm:CoHABialgebra}. 
For simplicity, we will also assume that the product on $H$ is commutative.

In the following we use the notation for $R$-matrices
$$R(z) \ = \ R^{(1)}(z) \otimes R^{(2)}(z),$$
and the Sweedler notation for coproducts and vertex coproducts
$$\Delta(a) \ = \ a_{(1)}\otimes a_{(2)}, \hspace{15mm} \Delta(a,z) \ = \ a_{(1,z)}\otimes a_{(2,z)},$$
where in both cases the summation is implied. As with usual Sweedler notation, the term $a_{(1,z)}$ will never appear alone without $a_{(2,z)}$.

\begin{theorem} \label{thm:VertexBosonisation}  The vector space $B \rtimes H = B \otimes H$ with product
    \begin{equation} \label{eqn:VertexBosonisationProduct}
   \centering
   \begin{minipage}{.7\textwidth} \centering
   $$(b \otimes h)\cdot (b'\otimes h') \ = \ b(h_{(1)}\cdot b') \otimes h_{(2)}h'$$
   \end{minipage}%
   \begin{minipage}{.3\textwidth} \centering
   
\begin{center}
\begin{tikzpicture}[rotate = -90]
   \draw[thick] (0,2) -- (0,2.5);
   \draw[thick] (-0.6,1) -- (-0.6,1.5) to[out=90,in=-180] (0,2) to[out=0,in=90] (0.6,1.5) -- (0.6,1);
 
\begin{scope} 
 [xshift = -0.4cm,yshift = -0.05cm] 
   \draw[white ultra thick = 4pt] (-0.6,1) -- (-0.6,1.5) to[out=90,in=-180] (0,2) to[out=0,in=90] (0.6,1.5) -- (0.6,1);
   \draw[ultra thick] (0,2) -- (0,2.5);
 \end{scope}

   \draw[thick] (-0.6,1.4) to[out=180,in=-90] (-0.8,1.85-0.05);

 \end{tikzpicture}
 \end{center}
   \end{minipage}
  \end{equation}
  is an associative algebra, and the (nonlocal) vertex coproduct
    \begin{equation} \label{eqn:VertexBosonisationCoproduct}
   \centering
   \begin{minipage}{.7\textwidth} \centering
    $$\Delta(b \otimes h,z) \ = \  (b_{(1,z)} \otimes R^{(2)}(z)h_{(1,z)})\otimes (R^{(1)}(z)\cdot  b_{(2,z)}\otimes h_{(2,z)})$$
   \end{minipage}%
   \begin{minipage}{.3\textwidth} \centering

  \begin{center}
  \begin{tikzpicture}[rotate = -90]
    \draw[thick] (0,-2) -- (0,-2.75);
    \draw[thick] (-0.6,-1.25) --  (-0.6,-1.5) to[out=-90,in=180] (0,-2) to[out=0,in=-90] (0.6,-1.5) -- (0.6,-1.25);
   
  \begin{scope} 
   [xshift = -0.6cm,yshift = -0.05cm] 
    \draw[white ultra thick = 4pt] (-0.6,-1.25) -- (-0.6,-1.5) to[out=-90,in=180] (0,-2) to[out=0,in=-90] (0.6,-1.5) -- (0.6,-1.25);
    \draw[ultra thick] (0,-2) -- (0,-2.75);
   \end{scope}

    \draw[thick] (-0.45,-2.15) --  (-0.45,-2.1) to[out=-90,in=180] (-0.3,-2.3) to[out=0,in=-90] (-0.15,-2.1) -- (-0.15,-2); 
   \end{tikzpicture}
   \end{center}
   \end{minipage}
  \end{equation}
  makes $B\rtimes H$ into a nonlocal vertex bialgebra.
\end{theorem}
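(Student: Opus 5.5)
We verify the three parts of the statement separately. Associativity of the product, the vertex-coalgebra axioms of the coproduct, and the compatibility of the two. Throughout we use the axioms (1)--(5) listed below Theorem \ref{thm:CoHABialgebra}, together with commutativity of $H$.

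\textbf{Associativity.} This is formally identical to the classical Majid--Radford case (Theorem \ref{thm:Bosonisation}), since no spectral parameter enters the product. We expand both bracketings of $(b\otimes h)(b'\otimes h')(b''\otimes h'')$. Associativity then follows from three facts:
\begin{itemize}
\item coassociativity of $\oplus^*$ on $H$;
\item $\oplus^*$ being an algebra map, so that $(hh')_{(1)}\cdot b''=h_{(1)}\cdot(h'_{(1)}\cdot b'')$;
\item $H$-linearity of the product on $B$, i.e. $h\cdot(bb')=(h_{(1)}\cdot b)(h_{(2)}\cdot b')$, which is item \eqref{item:CoHACup}.
\end{itemize}

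\textbf{Vertex coalgebra axioms.}
\begin{itemize}
\item The counit is $\epsilon_B\otimes\epsilon_H$. The counit axioms follow from those of $B$ and $H$ together with $(\epsilon\otimes\id)R(z)=1=(\id\otimes\epsilon)R(z)$, which is the degree-zero case of the spectral hexagon relations (Lemma \ref{lem:SpectralHexagon}).
\item The translation is $T_B\otimes 1+1\otimes T_H$. Compatibility \eqref{eqn:Translate} reduces to the corresponding identities for $\Delta_B(z)$ and $\oplus^*(z)$, plus the identity $\frac{d}{dz}R(z)=(T\otimes 1)R(z)$, which we should also be able to see from the hexagon relation in the $w$-derivative at $w=0$. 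Here we must be careful with which tensor factor $T$ acts on.
\item \textbf{Coassociativity \eqref{eqn:VertexCoass} is the heart of the matter.} We expand $(\Delta(z)\otimes\id)\Delta(b\otimes h,w)$. We use item \eqref{item:DeltaCup}, $\Delta(R^{(1)}(w)\cdot b_{(2,w)},z)=\oplus^*(R^{(1)}(w),z)\cdot\Delta(b_{(2,w)},z)$. The term $\oplus^*(R^{(1)}(w),z)$ is then rewritten by the second spectral hexagon relation as a product $R_{13}(w)R_{23}(w+z)$-type expression, with the argument shifts matching Lemma \ref{lem:SpectralHexagon}. On the other side, the first hexagon relation expands $\oplus^*(R^{(2)},\cdot)$ applied through $h_{(1,z+w)}$. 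After using vertex coassociativity of $B$ and $H$, both sides become $b_{(1)}\otimes b_{(2)}\otimes b_{(3)}$ and $h_{(1)}\otimes h_{(2)}\otimes h_{(3)}$, decorated by three copies of $R$ with arguments $z$, $w$, $z+w$. Commutativity of $H$ lets us reorder these factors freely.
\end{itemize}

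\textbf{Compatibility.} We must show that $\Delta(z)$ of a product equals the braided product of the $\Delta(z)$'s, where the braiding $\beta(z)$ on $\Vect^T_\Lambda$ is just the Koszul sign, since all $R$-matrix data has been absorbed. We would first reduce to generators: $B\rtimes H$ is generated by $B\otimes 1$ and $1\otimes H$, and $(b\otimes 1)(1\otimes h)=b\otimes h$. It therefore suffices to check three cases:
\begin{itemize}
\item products within $B$, which follow from item \eqref{item:CoHACoProd}: the $R(z)$-braided bialgebra identity is exactly what the $R$-insertion in the extended coproduct untwists;
\item products within $H$, which follow from $H$ being a holomorphic vertex bialgebra with $R(z)$ commuting with everything;
\item the cross relation $(1\otimes h)(b\otimes 1)=h_{(1)}\cdot b\otimes h_{(2)}$, which is checked via item \eqref{item:DeltaCup} and the quasitriangularity identity of Corollary \ref{cor:TautQuasitriangular}, trivial here by commutativity.
\end{itemize}
The reduction to generators needs multiplicativity to propagate, which holds because both sides are multiplicative once checked on generators and associativity is known.

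\textbf{Main obstacle.} We expect the hardest step to be the coassociativity bookkeeping. The subtlety is matching the spectral shifts ($z-w$ versus $z+w$) in the hexagon relations with the $\act^*$ translations that $h_{(1,z)}$ carries, and the Koszul signs. Our first attempt would be to write everything in string-diagram form, treating $\oplus^*(-,z)$ as a translated coproduct, and to verify the $R$-factors pairwise using Lemma \ref{lem:SpectralHexagon}.
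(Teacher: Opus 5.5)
Your outline is sound. For associativity and coassociativity it follows the paper's route: a direct Sweedler computation using the two spectral hexagon relations, vertex coassociativity of $B$ and $H$, multiplicativity of $\oplus^*(z)$, item~(4), and commutativity of $H$.

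The genuine difference is the bialgebra axiom. The paper verifies $\Delta((b\otimes h)(b'\otimes h'),z)=\Delta(b\otimes h,z)\,\Delta(b'\otimes h',z)$ in a single chain on arbitrary elements. You instead split it into three cases:
\begin{itemize}
\item products inside $B\otimes 1$, which actually need item~(5) together with item~(3) and both hexagon relations at $w=0$ to move $R^{(1)}(z)$ across the product;
\item products inside $1\otimes H$;
\item the cross relation, which needs item~(4) plus cocommutativity and commutativity of $H$. This is exactly the paper's step $h_{(1)(1,z)}\otimes h_{(2)(1,z)}\otimes h_{(1)(2,z)}\otimes h_{(2)(2,z)}=h_{(1,z)(1)}\otimes h_{(1,z)(2)}\otimes h_{(2,z)(1)}\otimes h_{(2,z)(2)}$.
\end{itemize}
Your organisation shows which hypothesis feeds which part, as in the classical Majid--Radford proof. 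The paper's single chain avoids a propagation argument. You also treat the counit and translation axioms, which the paper leaves implicit.

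There are three points to tighten.

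(a) The generator reduction also needs the case $\Delta(b\otimes h,z)=\Delta(b\otimes 1,z)\,\Delta(1\otimes h,z)$. This holds because $R^{(1)}(z)\cdot 1_B\otimes R^{(2)}(z)=1\otimes 1$ and $k\cdot 1_B=\epsilon(k)1_B$. With it, the propagation runs as follows: for $g$ in $B\otimes 1$ or $1\otimes H$, your checks give $\Delta(gy,z)=\Delta(g,z)\Delta(y,z)$ for every $y=(b'\otimes 1)(1\otimes h')$. Writing $x=(b\otimes 1)(1\otimes h)$ then gives multiplicativity for all $x$. The phrase ``both sides are multiplicative once checked on generators'' is not an argument by itself.

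(b) In your coassociativity sketch the two sides are swapped. In $(\Delta(z)\otimes\mathrm{id})\Delta(b\otimes h,w)$, the map $\Delta(z)$ hits $b_{(1,w)}\otimes R^{(2)}(w)h_{(1,w)}$. So on that side you use multiplicativity of $\oplus^*(z)$ and $(\mathrm{id}\otimes\oplus^*(z))R(w)=R_{12}(z+w)R_{13}(w)$. Item~(4) and $(\oplus^*(w)\otimes\mathrm{id})R(z+w)=R_{13}(z)R_{23}(z+w)$ belong to the other side, $(\mathrm{id}\otimes\Delta(w))\Delta(b\otimes h,z+w)$, where $\Delta(w)$ hits $R^{(1)}(z+w)\cdot b_{(2,z+w)}$.

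(c) With the paper's conventions, $\mathrm{act}^*_{w,1}R(z)=R(z-w)$, and $R^{(1)}$ sits in the second output of $\Delta^{\mathrm{ext}}$. Translation compatibility therefore requires $\frac{d}{dz}R(z)=-(T\otimes 1)R(z)$, equivalently $(1\otimes T)R(z)$, not $+(T\otimes 1)R(z)$.
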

\begin{rem}
   There are three proofs of this fact one could give: first by arduous explicit computations as below, and second by an equally arduous string diagram argument. The cleanest way is to deduce it formally from ordinary Majid--Radford bosonisation, by noticing that vertex bialgebras are equivalent to bialgebras in a certain category of sheaves on a Ran space, and likewise for the other structures appearing above, see e.g. \cite{La}.
\end{rem}
\begin{proof}[Proof of Theorem \ref{thm:VertexBosonisation}]$\ $

\textbf{Associativity.}
    \begin{align*}
      \left( (b \otimes h) \cdot (b'\otimes h') \right) \cdot (b'' \otimes h'')& \ = \ \left(  b (h_{(1)}\cdot b') \otimes h_{(2)}h'\right) \cdot (b'' \otimes h'') \\
      & \ = \  b(h_{(1)}\cdot b')( (h_{(2)}h')_{(1)}\cdot b'') \otimes (h_{2}h')_{(2)}h'' \\
      & \ = \  b(h_{(1)}\cdot b')( h_{(2)(1)}h'_{(1)}\cdot b'') \otimes h_{(2)(2)}h'_{(2)}h'' \\
      \intertext{\hfill\small  $H$ is a bialgebra}
      & \ = \  b(h_{(1)(1)}\cdot b')( h_{(1)(2)}h'_{(1)}\cdot b'') \otimes h_{(2)}h'_{(2)}h'' \\
      \intertext{\hfill\small coassociativity of $H$} 
      & \ = \ b \left(h_{(1)}\cdot (b'(h'_{(1)}\cdot b''))\right)\cdot h_{(2)}h'_{(2)} h'' \\
      \intertext{\hfill\small $H$-linearity of $B$'s product}
      & \ = \ (b\otimes h)\cdot \left( b'(h'_{(1)}\cdot b'') \otimes  h'_{(2)} h''\right) \\
      & \ = \ (b\otimes h)\cdot \left( (b' \otimes h')\cdot (b'' \otimes h'')\right)
    \end{align*}

\textbf{Coassociativity.}

 We prove vertex coassociativity 
 $$(\Delta^{\ext}(z)\otimes \id)\Delta^{\ext}(w) \ = \ (\id \otimes \Delta^{\ext}(w))\Delta^{\ext}(z+w)$$
 by computing both sides: 
\begin{align*}
    & \left( \Delta^{\ext}(z)\otimes \id\right)\Delta^{\ext}(b\otimes h, w)\\
   & \hspace{5mm}  \ = \ \left( \Delta^{\ext}(z)\otimes \id\right) \left(b_{(1,w)} \otimes R^{(2)}(w)h_{(1,w)})\otimes (R^{(1)}(w)\cdot  b_{(2,w)}\otimes h_{(2,w)} )\right)\\
   & \hspace{5mm} \ = \  \left[ b_{(1,w)(1,z)} \otimes \underline{R}^{(2)}(z) (R^{(2)}(w)h_{(1,w)})_{(1,z)}\right] \\
   & \hspace{15mm} \otimes \left[\underline{R}^{(1)}(z) \cdot b_{(1,w)(2,z)} \otimes (R^{(2)}(w)h_{(1,w)})_{(2,z)} \right] \\
   & \hspace{25mm}\otimes \left[ R^{(1)}(w)\cdot  b_{(2,w)}\otimes h_{(2,w)} \right]\\
   & \hspace{5mm} \ = \  \left[ b_{(1,w)(1,z)} \otimes \underline{R}^{(2)}(z) R^{(2)}(w)_{(1,z)}h_{(1,w)(1,z)}\right] \\
   &  \hspace{15mm} \otimes \left[\underline{R}^{(1)}(z) \cdot b_{(1,w)(2,z)} \otimes R^{(2)}(w)_{(2,z)}h_{(1,w)(2,z)} \right] \\
    & \hspace{25mm} \otimes \left[ R^{(1)}(w)\cdot  b_{(2,w)}\otimes h_{(2,w)} \right]\\
   \intertext{\hfill\small  $H$\textup{ is a vertex bialgebra}}
   & \hspace{5mm} \ = \  \left[ b_{(1,w)(1,z)} \otimes \underline{R}^{(2)}(z) \dot{R}^{(2)}(w + z)h_{(1,w)(1,z)}\right] \\
   & \hspace{15mm} \otimes \left[\underline{R}^{(1)}(z) \cdot b_{(1,w)(2,z)} \otimes \ddot{R}^{(2)}(w)h_{(1,w)(2,z)} \right] \\
   &\hspace{25mm} \otimes \left[ \dot{R}^{(1)}(z+w)\ddot{R}^{(1)}(w)\cdot  b_{(2,w)}\otimes h_{(2,w)} \right]\\
   \intertext{\small \hfill  hexagon identity for $R(w)$, i.e. $(\Delta_H(z)\otimes \id) R(w)_{21} \ = \ \dot{R}(z+w)_{31}\ddot{R}(w)_{32}$}
   & \hspace{5mm} \ = \  \left[ b_{(1,w)(1,z)} \otimes \underline{\dot{R}}^{(2)}(z+w) h_{(1,w)(1,z)}\right] \\
   & \hspace{15mm} \otimes \left[\underline{\dot{R}}^{(1)}(z+w)_{(1,w)} \cdot b_{(1,w)(2,z)} \otimes \ddot{R}^{(2)}(w)h_{(1,w)(2,z)} \right] \\
   &\hspace{25mm} \otimes \left[ \underline{\dot{R}}^{(1)}(z+w)_{(2,w)}\ddot{R}^{(1)}(w)\cdot  b_{(2,w)}\otimes h_{(2,w)} \right]\\
   \intertext{\small \hfill hexagon identity for $\underline{\dot{R}}(z+w)$, i.e. $(\id \otimes \Delta_H(w)) \underline{\dot{R}}(w + z)_{21} \ = \ \underline{R}(z)_{21}\dot{R}(z+w)_{31} $}
   & \hspace{5mm} \ = \  \left[ b_{(1,z+w)} \otimes \underline{\dot{R}}^{(2)}(z+w) h_{(1,z+w)}\right] \\
   & \hspace{15mm} \otimes \left[\underline{\dot{R}}^{(1)}(z+w)_{(1,w)} \cdot b_{(2,z+w)(1,w)} \otimes \ddot{R}^{(2)}(w)h_{(2,z+w)(1,w)} \right] \\
   &\hspace{25mm} \otimes \left[ \underline{\dot{R}}^{(1)}(z+w)_{(2,w)}\ddot{R}^{(1)}(w)\cdot  b_{(2,z+w)(2,w)}\otimes h_{(2,z+w)(2,w)} \right]\\
   \intertext{\small \hfill coassociativity for $H,B$, i.e. $ h_{(1,w)(1,z)}\otimes h_{(1,w)(2,z)} \otimes h_{(2,w)} \ = \   h_{(1,z+w)}\otimes h_{(2,z+w)(1,w)}\otimes h_{(2,z+w)(2,w)} $}  
   & \hspace{5mm} \ = \ \left[b_{(1,z+w)} \otimes \underline{\dot{R}}^{(2)}(z+w)h_{(1,z+w)} \right]  \\
   & \hspace{15mm}    \otimes \left[\underline{\dot{R}}^{(1)}(z+w)_{(1,w)}\cdot  b_{(2,z+w)(1,w)}\otimes \ddot{R}^{(2)}(w)h_{(2,z+w)(1,w)}\right]  \\
   & \hspace{25mm} \otimes \left[ \ddot{R}^{(1)}(w) \underline{\dot{R}}^{(1)}(z+w)_{(2,w)}\cdot  b_{(2,z+w)(2,w)} \otimes h_{(2,z+w)(2,w)} \right]\\
   \intertext{\small \hfill commutativity of $H$, i.e. $\underline{\dot{R}}^{(1)}(z+w)_{(2,w)}\ddot{R}^{(1)}(w) \ = \ \ddot{R}^{(1)}(w)\underline{\dot{R}}^{(1)}(z+w)_{(2,w)}$}
   & \hspace{5mm} \ = \ \left[b_{(1,z+w)} \otimes \underline{\dot{R}}^{(2)}(z+w)h_{(1,z+w)} \right]  \\
   & \hspace{15mm}    \otimes \left[(R^{(1)}(z+w)\cdot  b_{(2,z+w)})_{(1,w)} \otimes \ddot{R}^{(2)}(w)(h_{(2,z+w)})_{(1,w)}\right]  \\
   & \hspace{25mm} \otimes \left[ \ddot{R}^{(1)}(w)\cdot (\underline{\dot{R}}^{(1)}(z+w)\cdot  b_{(2,z+w)})_{(2,w)} \otimes (h_{(2,z+w)})_{(2,w)} \right]\\
   \intertext{\small \hfill $H$ is a vertex bialgebra}
   &  \hspace{5mm} \ = \ \left(\id \otimes \Delta^{\ext}(w)\right)\left((b_{(1,z+w)} \otimes \underline{\dot{R}}^{(2)}(z+w)h_{(1,z+w)})
   \otimes (\underline{\dot{R}}^{(1)}(z+w)\cdot  b_{(2,z+w)}\otimes h_{(2,z+w)} )\right)\\
   &  \hspace{5mm} \ = \ \left(\id \otimes \Delta^{\ext}(w)\right)\Delta^{\ext}(b \otimes h, z+w).
\end{align*}

\textbf{Bialgebra axiom}

Finally, it remains to show that $B \otimes H$ satisfies the bialgebra axiom, i.e.  
$$\Delta^{\ext}((b \otimes h)\cdot(b' \otimes h'),z) = \Delta^{\ext}(b \otimes h,z) \cdot \Delta^{\ext}(b' \otimes h',z).$$
Again, we compute both sides:

\begin{align*}
 & \Delta(b\otimes h \ \cdot \ b' \otimes h',z)\\ 
 & \hspace{5mm}  \ = \ \Delta(b(h_{(1)}\cdot b') \otimes h_{(2)}h',z)\\
 & \hspace{5mm}  \ = \ \left[ (b(h_{(1)}\cdot b'))_{(1,z)} \otimes R^{(2)}(z)(h_{(2)}h')_{(1,z)} \right] \\
 & \hspace{15mm} \otimes \left[ R^{(1)}(z) \cdot (b(h_{(1)}\cdot b'))_{(2,z)} \otimes (h_{(2)}h')_{(2,z)} \right]\\
 & \hspace{5mm}  \ = \ \left[ (b(h_{(1)}\cdot b'))_{(1,z)} \otimes R^{(2)}(z)h_{(2)(1,z)}h'_{(1,z)} \right] \\
 & \hspace{15mm}  \otimes \left[ R^{(1)}(z) \cdot (b(h_{(1)}\cdot b'))_{(2,z)} \otimes h_{(2)(2,z)}h'_{(2,z)} \right]\\
 \intertext{\hfill\small $H$ is a vertex bialgebra in $\Vect$}
 & \hspace{5mm}  \ = \ \left[ b_{(1,z)} (\underline{R}^{(2)}(z)  (h_{(1)}\cdot b')_{(1,z)}) \otimes R^{(2)}(z)h_{(2)(1,z)}h'_{(1,z)} \right] \\
 & \hspace{15mm} \otimes \left[ R^{(1)}(z) \cdot(\underline{R}^{(1)}(z) b_{(2,z)})(h_{(1)}\cdot b')_{(2,z)} \otimes h_{(2)(2,z)}h'_{(2,z)} \right]\\
 \intertext{\hfill\small $B$ is a vertex bialgebra in $(H\Md,\underline{R}(z))$}
 & \hspace{5mm}  \ = \ \left[ b_{(1,z)} (\underline{R}^{(2)}(z)  h_{(1)(1,z)}\cdot b'_{(1,z)}) \otimes R^{(2)}(z)h_{(2)(1,z)}h'_{(1,z)} \right] \\
 & \hspace{15mm} \otimes \left[ R^{(1)}(z) \cdot(\underline{R}^{(1)}(z) b_{(2,z)})(h_{(1)(2,z)}\cdot b'_{(2,z)}) \otimes h_{(2)(2,z)}h'_{(2,z)} \right]\\
 \intertext{\hfill\small $H$ is a vertex bialgebra in $\Vect$}
 & \hspace{5mm}  \ = \ \left[ b_{(1,z)} (\underline{R}^{(2)}(z)h_{(1)(1,z)}\cdot ( b'_{(1,z)})) \otimes R^{(2)}(z)h_{(2)(1,z)}h'_{(1,z)} \right] \\
 & \hspace{15mm} \otimes \left[ R^{(1)}(z) \cdot(\underline{R}^{(1)}(z)\cdot b_{(2,z)})(h_{(1)(2,z)}\cdot b'_{(2,z)}) \otimes h_{(2)(2,z)}h'_{(2,z)} \right]\\
 & \hspace{5mm}  \ = \ \left[ b_{(1,z)} (\underline{R}^{(2)}(z)h_{(1)(1,z)}\cdot ( b'_{(1,z)})) \otimes R^{(2)}(z)h_{(2)(1,z)}h'_{(1,z)} \right] \\
 & \hspace{15mm} \otimes \left[ ( R^{(1)}(z)_{(1)}\underline{R}^{(1)}(z)\cdot b_{(2,z)})( R^{(1)}(z)_{(2)}h_{(1)(2,z)}\cdot b'_{(2,z)}) \otimes h_{(2)(2,z)}h'_{(2,z)} \right]\\
 \intertext{\hfill\small $B$'s product is $H$-linear, i.e. $R(z)\cdot (\dot{b} \ddot{b}) \ = \ (R(z)_{(1)}\cdot \dot{b}) (R(z)_{(2)}\cdot \ddot{b})$}\\
 & \hspace{5mm}  \ = \ \left[ b_{(1,z)} (\underline{R}^{(2)}(z)  h_{(1)(1,z)}\cdot ( b'_{(1,z)})) \otimes \dot{R}^{(2)}(z) \ddot{R}^{(2)}(z)h_{(2)(1,z)}h'_{(1,z)} \right] \\
 & \hspace{15mm} \otimes \left[ (\dot{R}^{(1)}(z) \underline{R}^{(1)}(z)\cdot b_{(2,z)})(\ddot{R}^{(1)}(z)h_{(1)(2,z)}\cdot b'_{(2,z)}) \otimes h_{(2)(2,z)}h'_{(2,z)} \right]\\
 \intertext{\hfill\small hexagon relation for $R(z)$, i.e. $(\id\otimes\Delta_H(0))R(z) \ = \ \dot{R}_{12}(z)\ddot{R}_{13}(z)$}\\
 & \hspace{5mm}  \ = \ \left[ b_{(1,z)} (\dot{R}^{(2)}(z)_{(1)}  h_{(1)(1,z)}\cdot ( b'_{(1,z)})) \otimes \dot{R}^{(2)}(z)_{(2)} \ddot{R}^{(2)}(z)h_{(2)(1,z)}h'_{(1,z)} \right] \\
 & \hspace{15mm} \otimes \left[ (\dot{R}^{(1)}(z)\cdot  b_{(2,z)})(\ddot{R}^{(1)}(z)h_{(1)(2,z)}\cdot b'_{(2,z)}) \otimes h_{(2)(2,z)}h'_{(2,z)} \right]\\
 \intertext{\hfill\small hexagon relation, i.e. $(\Delta_H(0)\otimes \id)\dot{R}(z) \ = \ \dot{R}_{13}(z)\underline{R}_{23}(z)$}\\
& \hspace{5mm} \ = \ \left[ b_{(1,z)} ((\dot{R}^{(2)}(z)_{(1)}h_{(1)(1,z)})\cdot (b'_{(1,z)})) \otimes \dot{R}^{(2)}(z)_{(2)} h_{(2)(1,z)} \ddot{R}^{(2)}(z)h'_{(1,z)} \right] \\
& \hspace{15mm} \otimes \left[(\dot{R}^{(1)}(z)\cdot (b_{(2,z)})) (h_{(1)(2,z)}\ddot{R}^{(1)}(z)\cdot b'_{(2,z)})  \otimes h_{(2)(2,z)} h'_{(2,z)}) \right] \\
\intertext{\hfill\small $H$ commutative}
& \hspace{5mm} \ = \ \left[ b_{(1,z)} ((\dot{R}^{(2)}(z)_{(1)}h_{(1,z)(1)})\cdot (b'_{(1,z)})) \otimes \dot{R}^{(2)}(z)_{(2)} h_{(1,z)(2)} \ddot{R}^{(2)}(z)h'_{(1,z)} \right] \\
& \hspace{15mm} \otimes \left[(\dot{R}^{(1)}(z)\cdot (b_{(2,z)})) (h_{(2,z)(1)}\cdot\ddot{R}^{(1)}(z)\cdot b'_{(2,z)})  \otimes h_{(2,z)(2)} h'_{(2,z)}) \right] \\
\intertext{\hfill\small $H$ is cocommutative, so $h_{(1)(1,z)}\otimes h_{(2)(1,z)}\otimes h_{(1)(2,z)}\otimes h_{(2)(2,z)} \ = \ h_{(1,z)(1)}\otimes h_{(1,z)(1)}\otimes h_{(2,z)(1)}\otimes h_{(2,z)(2)}$ }
& \hspace{5mm} \ = \ \left[ b_{(1,z)} ((\dot{R}^{(2)}(z)h_{(1,z)})_{(1)} \cdot (b'_{(1,z)})) \otimes (\dot{R}^{(2)}(z)h_{(1,z)})_{(2)}  \ddot{R}^{(2)}(z)h'_{(1,z)} \right] \\
& \hspace{15mm} \otimes \left[(\dot{R}^{(1)}(z)\cdot (b_{(2,z)})) (h_{(2,z)(1)}\cdot\ddot{R}^{(1)}(z)\cdot b'_{(2,z)})  \otimes h_{(2,z)(2)} h'_{(2,z)}) \right] \\
\intertext{\hfill\small $H$ is a bialgebra}
& \hspace{5mm} \ = \ \left[ (b_{(1,z)} \otimes \dot{R}^{(2)}(z)h_{(1,z)})\cdot ( (b'_{(1,z)}) \otimes \ddot{R}^{(2)}(z)h'_{(1,z)}) \right] \\
& \hspace{15mm} \otimes \left[(\dot{R}^{(1)}(z)\cdot (b_{(2,z)}) \otimes h_{(2,z)})\cdot (\ddot{R}^{(1)}(z)\cdot b'_{(2,z)} \otimes h'_{(2,z)}) \right] \\
& \hspace{5mm} \ = \ \left[ (b_{(1,z)} \otimes \dot{R}^{(2)}(z)h_{(1,z)})\cdot (( b'_{(1,z)} \otimes \ddot{R}^{(2)}(z)h'_{(1,z)})) \right] \\
& \hspace{15mm} \otimes \left[((\dot{R}^{(1)}(z)\cdot b_{(2,z)} \otimes h_{(2,z)}))\cdot (\ddot{R}^{(1)}(z)\cdot b'_{(2,z)} \otimes h'_{(2,z)}) \right] \\
& \hspace{5mm} \ = \ \left( b_{(1,z)} \otimes \dot{R}^{(2)}(z)h_{(1,z)} \, \otimes \, \dot{R}^{(1)}(z)\cdot b_{(2,z)} \otimes h_{(2,z)}\right)\\
& \hspace{15mm} \cdot \left( b'_{(1,z)} \otimes \ddot{R}^{(2)}(z)h'_{(1,z)}\,\otimes \, \ddot{R}^{(1)}(z)\cdot b'_{(2,z)} \otimes h'_{(2,z)}\right)\\
& \hspace{5mm} \ = \ \Delta(b\otimes h,z) \cdot \Delta(b' \otimes h',z).
\end{align*}

\end{proof}

\subsection{Bosonising quiver CoHAs}

\subsubsection{Proof of Theorem \ref{thm:CoHABosonisation}} \label{ssec:ProofCoHABosonisation} We apply the vertex bosonisation Theorem \ref{thm:VertexBosonisation} to 
$$B \ = \ \Al_{Q,W}^{T,\psi}, \hspace{15mm} H \ = \ \Ht^{\sbt}_T(\Ml_Q)_{\taut}$$
inside the category $\Vect^T_\Lambda$. Note that Theorem \ref{thm:CoHABialgebra} is precisely the statement that $H=\Ht^{\sbt}_T(\Ml_Q)_{\taut}$ and $B=\Al_{Q,W}^{T,\psi}$ satisfy the conditions required to apply vertex bosonisation.  It thus remains to compute the bosonised product and vertex coproduct on $\Al_{Q,W}^{T,\psi}\otimes_T \Ht^{\sbt}_T(\Ml_Q)_{\taut}$. Firstly, by equation \eqref{eqn:VertexBosonisationProduct} we have that
\begin{align*}
   (b\otimes h)\cdot (b'\otimes h') &\ = \ b(h_{(1)}\cdot b') \otimes h_{(2)}h' \\
   &\ = \ (b\cdot ((\oplus^*h)_{(1)}\cup b'))\otimes (\oplus^*h)_{(2)}\cup h'
\end{align*} 
and from \eqref{eqn:BosonisationCoproduct} we have
\begin{align*}
    \Delta^{\ext}(b\otimes h,z)&\ =\ (b_{(1,z)}\otimes R^{(2)}(z)h_{(1,z)}) \otimes ( R^{(1)}(z)\cdot b_{(2,z)}\otimes h_{(2,z)})\\
  & \ = \ (\sigma R(z))_{23}\cup \left(  (e^{zT\otimes \id}\oplus^* h)_{13} \otimes \Delta(b,z)_{24}\right).
\end{align*}
Note that in the symmetric setting $R(z) = R_{\textup{taut}}(z)$.
Thus vertex bosonisation Theorem \ref{thm:VertexBosonisation} implies that this forms a nonlocal vertex bialgebra inside $\Vect^T_\Lambda$.

\subsubsection{Explicit formula} The bosonised coproduct is 
$$\Delta^{\ext}(b\otimes h,z) \ = \ \sigma(\frac{c(\sigma^{*}\Ext^\vee,z^{-1})}{c(\Ext,z^{-1})})_{23} \Psi(\Ext,-z)_{13}\cdot ((\act_{z,1}^*\oplus^*b)_{13}\otimes (\act_{z,1}^*\oplus^*h)_{24})$$
from the definition of $R_{\taut}(z)$ and the vertex coproducts on $\Al^{T,\psi}_{Q,W}$ and $\Ht^{\sbt}(\Ml_Q^T)_{\taut}$. We will continue exploring explicit forms for this coproduct in the following section.

\subsubsection{Non-symmetric case} \label{ssec:nonsymm_boson} When $Q$ is nonsymmetric, $R(z)$ no longer equals $R_{\taut}(z)$. Therefore, the latter does not satisfy the spectral hexagon relation (as $\tilde{\chi}\ne 0$ in Lemma \ref{lem:SpectralHexagon}) which is needed when proving coassociativity of the bosonised $\Delta^{\ext}(z)$, although the proof of the bialgebra property carries through. On the other hand, the former is no longer valued in the tautological ring $\Ht^{\sbt}(\Ml_Q^T)_{\taut}$.

We expect the fix to be redefining the tautological ring as generated by the coefficients of the full Joyce $R$-matrix.

\newpage 
 \section{Obtaining Drinfeld's coproduct on the Yangian}

 \noindent For a Dynkin quiver $Q$, we have an associative algebra called the Yangian $Y_\hbar(\gk_Q)$ with a compatible \textit{Drinfeld} vertex coproduct $\Delta_{\textup{Dr}}(z)$ as discussed in section \ref{ssec:DrinfeldYangians}.
In this case, Yang and Zhao \cite{YZ} identified the Yangian with the double of the cohomological Hall algebra of the tripled quiver $\widetilde{Q}$, an isomorphism which restricts to isomorphisms on the positive and Borel parts:
\begin{equation}\label{fig:YangianCoHAIntro}
  \begin{tikzcd}[row sep = {40pt,between origins}, column sep = {20pt}]
    Y_{\hbar}(\gk_Q)^+ \ar[r,"\sim"] \ar[d,hook] &  (\Al^{T,\psi}_{\widetilde{Q},\widetilde{W}}) \ar[d,hook]\\ 
    Y_{\hbar}(\gk_Q)^{\ge 0} \ar[r,"\sim"] &  (\Al^{T,\psi,\ext}_{\widetilde{Q},\widetilde{W}}) 
  \end{tikzcd}
\end{equation}
The main result (Theorem \ref{thm:DrinfeldJoyce}) of this section is that the identification \eqref{fig:YangianCoHAIntro} exchanges the Drinfeld and extended Joyce--Liu vertex coproducts:
  $$\Delta_{\textup{Dr}}(z)\ = \  \Delta^{\ext}(z),$$
  where the left and right sides are defined in section \ref{ssec:DrinfeldCoproduct} and in Theorem \ref{quiver_potential_jl}. An analogue of this theorem for a localised type coproduct was proven in \cite{YZ}.

In order to prove this we have to compute the bosonised Joyce--Liu vertex coproduct explicitly on spherical elements for tripled quivers, which we do in Proposition \ref{prop:JoyceTripledComputation}. To this general case \cite{BD,schiffmann2024s} extended the identification \eqref{fig:YangianCoHAIntro} for arbitrary tripled quivers $\widetilde{Q}$ using the MO Yangian. Therefore, these computations also give a computation on spherical elements of Drinfeld type coproducts on positive halves of MO Yangians.

\subsection{CoHA of tripled  quiver with canonical cubic potential}

\subsubsection{} We now compute the Joyce--Liu coproduct on spherical elements in the case that the quiver is a triple $\widetilde{Q}$ with its canonical cubic potential $\widetilde{W}$, as defined in section \ref{ssec:Quivers}, and the rank one weight function acting on edges by 
$$\wtt(e) \ = \ 1, \hspace{5mm} \wtt(e^*) \ = \ -1, \hspace{5mm} \wtt(\omega_i) \ = \ -2$$
and $T$ be the associated rank one torus with $\Ht^{\sbt}(\BT)\simeq \Cb[\hbar/2]$. Note that this quiver is of course symmetric. Furthermore, we have
\begin{equation}
  \label{eqn:AssumptionCrit}
    \Ht^{\sbt}(\Ml_{Q,\delta_i}^T, \varphi_W) \ \simeq \ \Ht^{\sbt}_T(\BGm)[-2]
 \end{equation}
since $\widetilde{W}\vert_{\delta_i}=0$. 

We will consider the twisted CoHA $\mathcal{A}^{T_{\hbar},\chi}_{\widetilde{Q},\widetilde{W}}$ with respect to the twist $\psi(d_{1},d_{2}) = \chi_{\widetilde{Q}}(d_{1},d_{2})$, which satisfies $\psi(d_1,d_2)+\psi(d_2,d_1) = \tau_{\widetilde{Q}}(d_1,d_2)$; see section \ref{ssec:SignTwist} for more on sign twists. For tripled quivers the K\"unneth Assumption \eqref{eqn:KunnethAssumption} is known. Note that in general for quiver without loops, outside the ADE case, the CoHA of a tripled quiver is \textit{not} spherically generated. See \cite[Prop. 5.7]{davison2022affine}

\subsubsection{Defining spherical elements} \label{ssec:spherical_elements} Let
 $$R_{i}(z) \ \in \ \Ht^{\sbt}(\Ml_Q^T)_{\taut}[x]((z^{-1}))$$
 denote the image of the swapped $R$-matrix $\sigma(R_{\taut}(z))$ under the map
 \begin{align*}
    \Ht^{\sbt}(\Ml_Q^T)_{\taut} \otimes_T \Ht^{\sbt}(\Ml_{Q}^T)_{\taut}((z^{-1})) &\ \to \ \Ht^{\sbt}(\Ml_Q^T)_{\taut}  \otimes_T \Ht^{\sbt}(\Ml_{Q,\delta_i}^T)((z^{-1}))\\
    &\ \simeq \ \Ht^{\sbt}(\Ml_Q^T)_{\taut}[x]((z^{-1}))
 \end{align*}
where we identify $\Ht^{\sbt}(\Ml_{Q,\delta_i})\simeq \Cb[x]$ for $x=1\otimes x_{i,1}$ the first chern class of the line bundle $\El_{i,\delta_{i}}$.

\begin{lem} \label{defnofphi}
For any $i \in Q_0$, $R_i(z)$ is an expansion of a power series 
$$\ \Phi_i(x-z) \ \in \ \Ht^{\sbt}(\Ml^{T}_Q)_{\taut}\left[ \left[ (x-z)^{-1}\right]\right],$$
where writing $u=x-z$, the component of $\Phi_i(u)$ in the $d$th dimension vector\footnote{i.e., its image under the map $\Ht^{\sbt}(\Ml^{T}_Q)_{\taut}[[u^{-1}]]\to \Ht^{\sbt}(\Ml^T_{Q,d})[[u^{-1}]]$.} is
\begin{align}
& \Phi_{i,d}(u) =  \nonumber \\ 
& \label{eqn:PhiiFormula} \bigsqcap_{1 \leq n \leq d_{i}} \left( \frac{u-x_{i,n}+\hbar}{u-x_{i,n}-\hbar} \right) \cdot  \bigsqcap_{e \in Q_1}\frac{\bigsqcap_{\substack{s(e)=i \\ 1 \leq n \leq d_{t(e)}}}(u-x_{t(e),n}-\hbar/2) \bigsqcap_{\substack{t(e)=i \\ 1 \leq n \leq d_{s(e)}}}(u-x_{s(e),n}-\hbar/2)}{\bigsqcap_{\substack{s(e)=i \\ 1 \leq n \leq d_{t(e)}}}(u-x_{t(e),n}+\hbar/2) \bigsqcap_{\substack{t(e)=i \\ 1 \leq n \leq d_{s(e)}}}(u-x_{s(e),n}+\hbar/2)}   
\end{align}
as an element of $\Ht^{\sbt}(\Ml^T_{Q,d})[[u^{-1}]]$.
\end{lem}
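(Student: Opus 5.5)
Since the tripled quiver is symmetric, $R_{\taut}(z)=R(z)$, so I will compute $\sigma(R(z))$ directly from the explicit chern root formula of Proposition \ref{prop:explicit_fullrmatrix}. Swapping factors means the formula is taken for $R(z)_{\delta_i,d}$ with the tensor factors exchanged, i.e. the second factor of $\sigma R$ is $\Ml_{Q,\delta_i}$. The plan is as follows. Substitute $d'=\delta_i$ in the factor carrying the single chern root $x=x_{i,1}$, and write every linear factor in the form $\pm(x-z-x_{k,n}+c)$ or its $z$-shifted analogue. Then check that, after the swap, each factor depends on $x$ and $z$ only through the combination $u=x-z$ together with $x_{k,n}$ and $\hbar$.

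For the bookkeeping I will go edge type by edge type in $\widetilde{Q}$. The weights are $\wtt(e)=\hbar/2$, $\wtt(e^*)=-\hbar/2$ and $\wtt(\omega_k)=-\hbar$, once the normalisation $\Ht^{\sbt}(\BT)=\Cb[\hbar/2]$ is made explicit; this is the first point to fix carefully. Only edges with an endpoint at $i$ contribute, since $d'=\delta_i$ kills all other products.

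The loop $\omega_i$ contributes a numerator and a denominator. Numerator and denominator each run over the $d_i$ roots $x_{i,n}$, with shifts $\mp\hbar$, and these combine into $\prod_n\frac{u-x_{i,n}+\hbar}{u-x_{i,n}-\hbar}$ after accounting for signs. For an edge $e:i\to j$ of $Q$, the pair $e,e^*$ in $\widetilde Q$ gives factors $(u-x_{j,n}\pm\hbar/2)$ in numerator and denominator; the case $t(e)=i$ gives the symmetric contribution. The signs from rewriting $(x_{k,n}-x+\dots)$ as $-(u-\dots)$ cancel between numerator and denominator, because the quiver is symmetric, so numerator and denominator have equal numbers of factors for each vertex. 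Every factor is then of the form $u-a$ with $a$ a polynomial in chern roots and $\hbar$. Expanding $(u-a)^{-1}=\sum_k a^ku^{-k-1}$ shows that the ratio is a power series in $u^{-1}$ with leading term $1$.

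It remains to show that the coefficients of $\Phi_{i,d}(u)$ are compatible across $d$ and lie in the tautological ring. Taking logarithms, $\log\frac{u-a}{u-b}=\sum_{k\ge1}\frac{b^k-a^k}{k u^k}$, and summing over $n$ produces power sums $\sum_n (x_{k,n}+c)^r$. These are $\Ht^{\sbt}(\BT)$-polynomials in $\ch_r(\El_k)$, independent of $d$, so exponentiating gives an element of $\Ht^{\sbt}(\Ml^T_Q)_{\taut}[[u^{-1}]]$. The re-expansion statement, that the $z^{-1}$-expansion of $R_i$ equals the substitution $u=x-z$ into $\Phi_i$, is then formal: both are expansions of the same rational function in the region $|z|\gg|x|$.

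I expect the main obstacle to be matching signs and orientation conventions exactly. These include which tensor factor is acted on by the swap, the sign of $\wtt$ in the numerator versus denominator of Proposition \ref{prop:explicit_fullrmatrix}, and the factor of $2$ in the $\hbar$ normalisation. To settle them I will first test the formula on $d=\delta_i$ (a single loop vertex) and on one edge, and then adjust.
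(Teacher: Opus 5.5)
Your route is the paper's: substitute into Proposition~\ref{prop:explicit_fullrmatrix} for $R(z)_{\delta_i,d}$, swap, set $u=x-z$, and group $\widetilde Q_1=Q_1\sqcup Q_1^{\textup{op}}\sqcup Q_0$. However, the edge step fails with the weights you fix.

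After the swap, an edge $f\colon i\to b$ of $\widetilde Q$ contributes $\prod_{m\le d_b}(x_{b,m}-u+\wtt(f))$ to the numerator. An edge $f\colon b\to i$ contributes $\prod_{m\le d_b}(x_{b,m}-u-\wtt(f))$ to the denominator. Take $e\colon i\to j$ in $Q$ with $\wtt(e)=\hbar/2$ and $\wtt(e^*)=-\hbar/2$. The factor from $e$ and the factor from $e^*\colon j\to i$ are then both $x_{j,m}-u+\hbar/2$. They cancel, instead of producing $(u-x_{j,m}-\hbar/2)/(u-x_{j,m}+\hbar/2)$. The same happens for edges into $i$, so you would end up with the $\omega_i$ factor alone.

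Testing and ``adjusting'' conventions cannot repair this. $R(z)=\Psi(\Ext_1,z)/\Psi(\sigma^*\Ext_1^\vee,z)$ only sees the class $[\Ext_1]-[\sigma^*\Ext_1^\vee]$. When $\wtt(e^*)=-\wtt(e)$, the summand of $\sigma^*\Ext_1^\vee$ coming from $e$ is exactly the summand of $\Ext_1$ coming from $e^*$, and vice versa. So the doubled part drops out in every component, whatever swap, dualisation or orientation convention you pick.

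The missing input is that $\wtt(e^*)=+\hbar/2$. This is forced by the standing hypothesis that $\widetilde W=\sum_i\omega_i\sum_e[e,e^*]$ is $T$-invariant. That hypothesis requires $\wtt(\omega_i)+\wtt(e)+\wtt(e^*)=0$, and with $\wtt(\omega_i)=-\hbar$ and $\wtt(e)=\hbar/2$ this gives $\wtt(e^*)=\hbar/2$. So the value $-1$ displayed for $e^*$ in the setup is a misprint; the Cartan matrix in \eqref{equation:phicalculation} likewise needs the neighbour factors.

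With this weight, $e\colon i\to j$ gives the numerator factor $-(u-x_{j,m}-\hbar/2)$ and $e^*\colon j\to i$ gives the denominator factor $-(u-x_{j,m}+\hbar/2)$. Edges into $i$ behave symmetrically, which is exactly \eqref{eqn:PhiiFormula}. Your $\omega_i$ computation is unaffected.

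After this correction the rest of your plan is sound, and more complete than the paper's one-line proof:
\begin{itemize}
\item the overall signs cancel because $\widetilde Q$ is symmetric;
\item the power sums $\sum_n(x_{k,n}+c)^r$ show the coefficients are tautological and independent of $d$;
\item the identification of the $z^{-1}$-expansion with $\Phi_i(x-z)$ is formal.
\end{itemize}
A small notational point: when you compute $R(z)_{\delta_i,d}$, it is the first index of Proposition~\ref{prop:explicit_fullrmatrix} that equals $\delta_i$, not $d'$.
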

\begin{proof}
This follows from Proposition \ref{prop:explicit_fullrmatrix}. Spelling this out, we compute $\sigma (R_{\delta_{i},d}(z))$ using the formulas in Proposition \ref{prop:euler_ext_to_euler_ben} and grouping the edges of the tripled quiver as $\widetilde{Q}_{1} = Q_{1} \sqcup Q^{\textup{op}}_{1} \sqcup Q_{0}$, applying $\act^{*}_{1}$ and setting $u = x - z$ with $x = 1 \otimes x_{i,1} $.
\end{proof}

We call the coefficients of the power series
$$\Phi_i(u) \ = \ 1 + \hbar \sum_{r \ge 0 }\Phi_{i,r} u^{-r-1} \ \in \ \Ht^{\sbt}(\Ml_Q^T)_{\taut}\left[ \left[ u^{-1}\right]\right]$$ 
the \textbf{tautological spherical elements}. We define the \textbf{spherical CoHA elements} as the coefficients $x^{(r)}_{i,1}$ :
$$x_{i}(u) \ = \ \sum_{r \ge 0} x^{(r)}_{i,1} u^{-r-1} \ \in \ \Al^T_{\widetilde{Q},\widetilde{W}}[[u^{-1}]],$$
 defined as the elements corresponding to $x^{r}\in \Cb[x]\simeq \Ht^{\sbt}(\BGm)$ under the identification \eqref{eqn:AssumptionCrit}. We stress the notational difference between $x_{i,1}^r$, an element in the cohomology ring, and $x_{i,1}^{(r)}$ an element of critical cohomology.

\subsubsection{Remarks about relations to work of Yang-Zhao}
 We note that the formula in equation \eqref{eqn:PhiiFormula} matches up with the formula in \cite[Section 1.3]{YZ}. Therefore, we believe that our extended vertex coproduct is closely related to the coproducts defined in \cite[Section 2]{YZ} and \cite[Section 2.1]{RSYZ}.

\subsubsection{} 
In Lemma \ref{lem:TautBialgebra}, we produced a coproduct on the tautological ring. It would also be convenient to write this coproduct for the elements $\Phi_{i,r}$. 

\begin{lem} \label{lemma:coprodonphi} 
    The coproduct and coaction
    $$ \oplus^* \ : \ \Ht^{\sbt}(\Ml^{T}_Q)_{\taut} \ \to \ \Ht^{\sbt}(\Ml^{T}_Q)_{\taut} \otimes_T \Ht^{\sbt}(\Ml^{T}_Q)_{\taut}, \hspace{10mm} \act_z^* \ : \ \Ht^{\sbt}(\Ml^{T}_Q)_{\taut} \ \to \ \Ht^{\sbt}(\Ml^{T}_Q)_{\taut}[z] $$
    defined in Lemma \ref{lem:TautBialgebra} send respectively
    \begin{equation}
      \label{eqn:OPlusPhi} 
      \Phi_i(u) \ \mapsto \  \Phi_i(u) \otimes \Phi_i(u), \hspace{15mm} \Phi_i(u) \ \mapsto \ \Phi_i(u-z)
    \end{equation}
    More precisely, we have under the direct sum pullback
\[ \Phi_{i,0} \ \mapsto \  1 \otimes \Phi_{i,0} + \Phi_{i,0} \otimes 1\] and 
\[ \Phi_{i,r} \ \mapsto \   1 \otimes \Phi_{i,r} + \Phi_{i,r} \otimes 1+ \hbar\left( \sum_{r_1+r_2=r-1} \Phi_{i,r_1} \otimes \Phi_{i,r_2} \right)  \]
when $r \geq 1$, and under the action map pullback
$$\Phi_{i,r} \ \mapsto \ \sum_{r_1+r_2=r} \binom{r}{r_1} \Phi_{i,r_1} z^{r_2}.$$
\end{lem}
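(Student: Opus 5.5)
We will work directly from the explicit product formula \eqref{eqn:PhiiFormula} of Lemma \ref{defnofphi}, using that $\oplus^*$ and $\act_z^*$ are ring homomorphisms on the tautological ring that act very simply on chern roots. By Lemma \ref{lem:TautBialgebra}, $\oplus^*\El_k = \El_k\boxtimes 1 + 1\boxtimes \El_k$. Hence on the component $\Ml^T_{Q,d_1+d_2}\to \Ml^T_{Q,d_1}\times_{\BT}\Ml^T_{Q,d_2}$ the multiset of chern roots $\{x_{k,n}\}_{n\le (d_1+d_2)_k}$ of $\El_k$ pulls back to the disjoint union of the roots $\{x_{k,n}\otimes 1\}_{n\le (d_1)_k}$ and $\{1\otimes x_{k,m}\}_{m\le (d_2)_k}$. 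Similarly $\act_z^*\El_k=\gamma\boxtimes\El_k$ sends each root $x_{k,n}\mapsto x_{k,n}+z$.

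The first step is multiplicativity. Every factor in \eqref{eqn:PhiiFormula} is of the form $(u-x_{k,n}+c)/(u-x_{k,n}-c')$, a function of a single chern root, and $\Phi_{i,d}(u)$ is the product of such factors over all roots of the relevant tautological bundles. Applying $\oplus^*$ therefore splits this product into the factors indexed by roots in the first tensor slot times those indexed by roots in the second slot. This gives $\Phi_{i,d_1}(u)\otimes\Phi_{i,d_2}(u)$ componentwise. It is legitimate to apply the ring maps coefficientwise in $u^{-1}$, since each factor expands as a power series in $u^{-1}$ with polynomial coefficients, and $\oplus^*$ and $\act^*$ are continuous for the $u^{-1}$-adic topology.

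The second step is the translation. Substituting $x_{k,n}\mapsto x_{k,n}+z$ in every factor turns $u-x_{k,n}$ into $(u-z)-x_{k,n}$, so $\act_z^*\Phi_i(u)=\Phi_i(u-z)$, re-expanded in $u^{-1}$. Alternatively, this follows at the level of the $R$-matrix from Lemma \ref{defnofphi}, since $R_i(z)=\Phi_i(x-z)$ and \eqref{actcoprodtaut} has trivial $\widetilde\chi$ in the symmetric case; we will use the chern-root argument as the primary route.

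The third step is extracting coefficients. Write $\Phi_i(u)=1+\hbar\sum_r\Phi_{i,r}u^{-r-1}$. Expanding $\Phi_i\otimes\Phi_i$ and comparing $u^{-r-1}$ coefficients gives $1\otimes\Phi_{i,r}+\Phi_{i,r}\otimes 1+\hbar\sum_{r_1+r_2+2=r+1}\Phi_{i,r_1}\otimes\Phi_{i,r_2}$. The cross term is empty for $r=0$, which gives the stated primitivity of $\Phi_{i,0}$. For the action map, we expand $(u-z)^{-r_1-1}=\sum_k\binom{r_1+k}{k}z^k u^{-r_1-k-1}$ and collect the coefficient of $u^{-r-1}$, obtaining $\sum_{r_1+r_2=r}\binom{r}{r_1}\Phi_{i,r_1}z^{r_2}$. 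The main obstacle is purely bookkeeping. We must check that the $d$-components assemble consistently into an identity in $\Ht^{\sbt}(\Ml_Q^T)_{\taut}\otimes_T\Ht^{\sbt}(\Ml_Q^T)_{\taut}[[u^{-1}]]$, and that the Laurent re-expansion in the translation is compatible with the $u^{-1}$-adic completion. Both hold because each coefficient involves only finitely many power sums $\ch_r(\El_k)$.
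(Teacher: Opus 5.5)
Your argument is correct, but it takes a different route from the paper's. The paper never opens up the product formula \eqref{eqn:PhiiFormula}. Instead it argues as follows.
\begin{enumerate}
\item It takes the identities \eqref{eqn:directsumcoprodtaut} and \eqref{actcoprodtaut} from the proof of Lemma \ref{lem:SpectralHexagon}, with $\widetilde{\chi}=0$ since $\widetilde{Q}$ is symmetric.
\item It rewrites them for the swapped matrix $\sigma(R_{\taut}(z))$.
\item It restricts the last tensor factor to the component $\Ml^T_{Q,\delta_i}$.
\item It reads off \eqref{eqn:OPlusPhi} from $R_i(z)=\Phi_i(x-z)$ (Lemma \ref{defnofphi}).
\end{enumerate}
So in the paper, the grouplike and translation properties of $\Phi_i(u)$ are inherited from the hexagon relations of the $R$-matrix. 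Those relations in turn come from the additivity of $\Ext$ under $\oplus$ and its $\BGm$-weight, via chern series. That argument needs no explicit formula and would apply to any $\Phi$ obtained by restricting an $R$-matrix that satisfies the spectral hexagon relations.

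You use the same geometric input one level lower. Because \eqref{eqn:PhiiFormula} is a product of factors each depending on a single chern root, $\oplus^*$ (splitting the roots) and $\act_z^*$ (shifting each root by $z$) act factor by factor. This is more elementary and self-contained once \eqref{eqn:PhiiFormula} is granted. You also carry out the coefficient extraction and the binomial re-expansion for $\Phi_{i,r}$, which the paper leaves implicit.

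One correction to your bookkeeping paragraph. To pass from the componentwise identities to an identity in $\Ht^{\sbt}(\Ml^T_Q)_{\taut}\otimes_T\Ht^{\sbt}(\Ml^T_Q)_{\taut}$, it is not enough that each coefficient involves finitely many $\ch_r(\El_k)$. What you need is that restriction to all components $(d_1,d_2)$ is injective on this tensor product. This follows from the power-sum independence argument of Proposition \ref{prop:TautFree}, applied to two sets of variables.

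Alternatively, you can avoid componentwise checks altogether. Note that $\log\Phi_i(u)$ is linear in the $\ch_r(\El_k)$, including $\ch_0$, and hence primitive for $\oplus^*$. This makes $\oplus^*\Phi_i(u)=\Phi_i(u)\otimes\Phi_i(u)$ immediate inside the tautological ring.

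Finally, reading off $\oplus^*\Phi_{i,r}$ from the $u^{-r-1}$ coefficient involves dividing by $\hbar$. This is fine because these rings are free, hence torsion-free, over $\Ht^{\sbt}(\BT)$, but it should be said.
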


\begin{proof}
    It follows from Lemma \ref{lem:SpectralHexagon}, in particular from equation \ref{eqn:directsumcoprodtaut} and \ref{actcoprodtaut} that
    $$(\oplus^*\, \otimes\, \id)\sigma(R_{\taut}(z)) \ = \ \sigma(R_{\taut,13}(z))\sigma(R_{\taut,23}(z)), \hspace{5mm}  \act_{w,1}^*\sigma(R_{\taut}(z))\ = \ \sigma(R_{\taut}(z+w)).$$
    Taking the image of this equation under the map $\Ht^{\sbt}(\Ml^{T}_Q)_{\taut}\to \Ht^{\sbt}(\Ml^{T}_{Q,\delta_i})$ on the third tensor factor gives \eqref{eqn:OPlusPhi} by Lemma \ref{defnofphi}.
\end{proof}

These elements moreover give a new generating set for the tautological ring:

\begin{prop} \label{prop:phiareincartan}
There is an algebra isomorphism
\begin{align}
    \Ht^{\sbt}(\BT)[\xi_{i,r}\ : \ i\in Q_0,\, r\ge 0] & \ \stackrel{\sim}{\to} \  \Ht^{\sbt}(\Ml_{Q}^T)_{\textup{taut}} \label{eqn:PhiGenerate} \\
    \xi_{i,r} & \ \mapsto \  \Phi_{i,r}.\nonumber
\end{align}
\end{prop}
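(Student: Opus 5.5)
The plan is to reduce the claim to Proposition \ref{prop:TautFree}, which says that $\Ht^{\sbt}(\Ml^T_Q)_{\taut}$ is a free polynomial $\Ht^{\sbt}(\BT)$-algebra on the classes $\ch_r(\El_j)$, for $j\in Q_0$ and $r\ge 0$. Equivalently, it is free on the power sums $p_r(\El_j)=r!\,\ch_r(\El_j)=\sum_n x_{j,n}^r$, where $p_0(\El_j)$ is the rank function. It then suffices to show that the substitution $\xi_{i,r}\mapsto \Phi_{i,r}$ is a triangular change of variables whose linear part is invertible. Here $Q$ is the underlying ADE quiver (the tripled quiver has the same vertices). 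The change of variables is
$$\Phi_{i,r} \ = \ \sum_{j} C_{ij}\, p_r(\El_j) \ + \ (\text{a $\Cb[\hbar]$-polynomial in the $p_m(\El_k)$ with $m<r$}),$$
with $C$ the Cartan matrix of $Q$. Since $C$ is invertible over $\Qb$, induction on $r$ gives that the $\Phi_{i,r}$ generate the ring and are algebraically independent.

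To get this shape I would take logarithms of the explicit formula \eqref{eqn:PhiiFormula}, working componentwise in each $d$. The formulas are uniform in $d$ once expressed through power sums, so they define identities in the tautological ring. Every factor has the form $\frac{u-x+a}{u-x-b}$, and
$$\log\frac{u-x+a}{u-x-b} \ = \ \sum_{k\ge1}\frac{(x+b)^k-(x-a)^k}{k}\,u^{-k}.$$
The numerator $(x+b)^k-(x-a)^k$ is divisible by $a+b$ and has leading term $k(a+b)x^{k-1}$. In our case $a+b$ equals $2\hbar$ for the $\El_i$ factor and $-\hbar$ for each edge adjacent to $i$. Summing over chern roots then gives
$$\log\Phi_i(u) \ = \ \hbar\sum_{k\ge1} u^{-k}\,\ell_{i,k}, \qquad \ell_{i,k} \ = \ \sum_j C_{ij}\,p_{k-1}(\El_j) \ + \ \hbar\cdot(\text{$\Qb[\hbar]$-combination of } p_m(\El_j),\ m<k-1).$$
A multiple edge or loop would only change $C$ to the generalized Cartan matrix; in the ADE case it is the usual one.

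Next I exponentiate, $\Phi_i(u)=\exp\bigl(\hbar\sum_k \ell_{i,k}u^{-k}\bigr)$. Comparing with $\Phi_i(u)=1+\hbar\sum_r\Phi_{i,r}u^{-r-1}$ gives $\Phi_{i,r}=\ell_{i,r+1}+\hbar\cdot P(\ell_{i,1},\dots,\ell_{i,r})$ with $P$ a polynomial. Here it matters that the prefactor $\hbar$ factors out exactly, so no inverse powers of $\hbar$ appear. Combining this with the previous display yields the displayed triangular form, with coefficients in $\Ht^{\sbt}(\BT)=\Cb[\hbar/2]$.

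Finally, inverting $C$ over $\Qb$ expresses each $p_r(\El_j)$ as a $\Qb$-combination of the $\Phi_{i,r}$ plus a polynomial in lower-index power sums. By induction on $r$ these lower power sums already lie in the subalgebra generated by the $\Phi$'s, so \eqref{eqn:PhiGenerate} is surjective. The map is then an endomorphism of a graded polynomial algebra, filtered by $r$, whose associated graded is the invertible linear map $C$. Hence it is an isomorphism.

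The main obstacle I expect is bookkeeping. One must check carefully that divisibility by $\hbar$ holds for every factor; this is the sign choices $\pm\hbar/2$ in \eqref{eqn:PhiiFormula}. One must also check that the leading coefficient really assembles into the Cartan matrix with the correct normalization, including the $r=0$ case, where $\Phi_{i,0}=\sum_j C_{ij}\,\rank\El_j$ involves only rank functions.
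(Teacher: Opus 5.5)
Your argument is correct. It shares the paper's key computation but differs in how the isomorphism is concluded from it.

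Both proofs rest on the same three ingredients: $\Phi_{i,r}\equiv\sum_j C_{ij}\,p_r(\El_j) \pmod{\hbar}$, Proposition~\ref{prop:TautFree}, and invertibility of the Cartan matrix.

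The paper extracts only this first-order term, via the logarithmic derivative in $\hbar$ at $\hbar=0$. It deduces algebraic independence of the $\Phi_{i,r}$ at $\hbar=0$ and lifts this to injectivity using freeness over $\Ht^{\sbt}(\BT)$. It then appeals to equality of graded dimensions to get bijectivity.

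You instead expand $\log\Phi_i(u)$ exactly. This gives the full triangular form, with corrections equal to $\hbar$ times polynomials in the $p_m(\El_k)$ with $m<r$. You prove surjectivity by induction on $r$ and injectivity by passing to the associated graded.

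Your route costs a little more computation, but it gives a genuine surjectivity argument and an explicit inductive inverse, which is worth having. Because $\ch_0(\El_i)=d_i$ sits in cohomological degree $0$, every graded piece of the tautological ring is infinite-dimensional. So the paper's dimension count does not by itself upgrade injectivity to bijectivity. It would need supplementing, e.g.\ by graded Nakayama applied to the reduction mod $\hbar$, which is a linear change of variables.

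In your injectivity step, make the filtration explicit: give $p_m(\El_j)$ weight $m$. The correction terms in $\Phi_{i,r}$ then have weight at most $r-1$, because $\Phi_{i,r}$ is homogeneous of cohomological degree $2r$ and each correction carries a factor of $\hbar$ of degree $2$. The phrase ``polynomial in lower $p_m$'' alone would not bound the weight. Alternatively, the map restricts to a surjective endomorphism of each Noetherian ring $\Ht^{\sbt}(\BT)[p_m(\El_j) : m\le n]$, and so is injective there.
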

\begin{proof}Since both vector spaces have the same graded dimensions, it is enough to show that the map is injective, i.e. that the $\Phi_{i,r}$ are algebraically independent.

  Since both sides of \eqref{eqn:PhiGenerate} are free $\Ht^{\sbt}(\BT)$-modules, it is enough to show that the $\Phi_{i,r}$ are algebraically independent after setting $\hbar=0$. Note that this follows from 
  \begin{equation}
     \label{equation:phicalculation}
     \Phi_{i,d}(u) \ = \ 1 + \hbar \sum_{r \ge 0} r!\left(2 \textup{ch}_{r}(\El_{i,d})- \sum_{e \, : \, i \to j}\textup{ch}_{r}(\El_{j,d})- \sum_{e\, : \, j\to i}\textup{ch}_{r}(\El_{j,d}) \right)u^{-r-1}+ \Ol(\hbar^2)
  \end{equation}
  since the bracketed expressions for fixed $r$ form a basis for $\Cb\{\ch_r(\El_i) \ : \ i\in Q_0\}$ by nondegeneracy of the Cartan matrix of $Q$, hence varying over all $i,r$ they inherit the property of being algebraically independent from the $\ch_r(\El_i)$, which are algebraically independent by the proof of Proposition \ref{prop:TautFree}.

  To prove \eqref{equation:phicalculation}, we use $f'(u)= f(u) (\log f(u))'$ to compute that the first order $\hbar$ term is 
  \begin{align*}
    \frac{\partial \Phi_{i,d}(u)}{\partial \hbar}\vert_{\hbar =0}& \ = \  \Phi_i(u) \frac{\partial \log(\Phi_{i,d}(u))}{\partial \hbar}\vert_{\hbar=0} \\
    & \ = \ \Phi_{i,d}(u) \left(\sum_{n=1}^{d_i} \frac{\partial_\hbar V_n(u)}{V_n(u)} + \sum_{e \, :\, i\to j}\sum_{n=1}^{d_j}\frac{\partial_\hbar E_n^e(u)}{E_n^e(u)} + \sum_{e \, :\, j\to i}\sum_{n=1}^{d_j}\frac{\partial_\hbar E_n^e(u)}{E_n^e(u)}\right)\vert_{\hbar=0} \\
    & \ = \  \sum_{n=1}^{d_i} \frac{\partial V_n(u)}{\partial \hbar} + \sum_{e \, :\, i\to j}\sum_{n=1}^{d_j}\frac{\partial E_n^e(u)}{\partial \hbar} + \sum_{e \, :\, j\to i}\sum_{n=1}^{d_j}\frac{\partial E_n^e(u)}{\partial \hbar}\vert_{\hbar=0} 
  \end{align*}
  where $V_n(u),E^e_n(u)$ are the factors of $\Phi_{i,d}(u)$ in the expression \eqref{eqn:PhiiFormula} labelled by a specific edge of $\widetilde{Q}$ and $n$th chern root, and in the third equality we used that they all are equal to $1$ when $\hbar=0$.

  The result then follows upon noting that if $f(u)=\frac{u-x+\hbar}{u-x-\hbar}$, we have
  \begin{align*}
    \frac{\partial f(u)}{\partial \hbar}\vert_{\hbar=0} & \ = \ 2 \frac{1}{u-x} \ = \ 2 \sum_{r \ge 0} x^r u^{-r-1} ,
  \end{align*}
  and that $r!\ch_r(\El_j)$ is a sum over $r$th powers of all the chern roots of $\El_j$.

\end{proof}

  \subsubsection{Remark}
From the point of view of the above Lemma another definition of the Cartan part could be given by taking coefficients of the $R$-matrix. This approach could potentially generalise to study Joyce coproducts outside quiver settings.

 In the remainder of this subsection we compute the extended Joyce--Liu vertex coproduct on the extended CoHA of the tripled quiver with canonical cubic potential. 

\begin{prop} \label{prop:JoyceTripledComputation} 
Let $\widetilde{Q}$ be the triple of any quiver $Q$. Then the extended Joyce-Liu vertex coproduct on $\mathcal{A}^{T, \textup{ext}}_{\widetilde{Q},\widetilde{W}}$ acts as
  \begin{align}
    \label{eqn:JLCoproducth}
    \Delta^{\textup{ext}}(\Phi_i(u))& \ =\  \Phi_i(u-z) \otimes \Phi_i(u)\\[5pt]
     \label{eqn:JLCoproductx+}
    \Delta^{\textup{ext}}(x^{(n)}_{i,1} ,z)  &\ = \   (\tau_z(x^{(n)}_{i,1} ) \otimes 1 + 1 \otimes x^{(n)}_{i,1} ) \\ & \hspace{15mm} \ + \ \hbar \sum_{N\ge 0}\left(\sum_{p=0}^N (-1)^{p+1} \binom{N}{p} \Phi_{i,p} \otimes x^{(n+N-p)}_{i,1} \right)z^{-N-1} \nonumber 
   \end{align}
\end{prop}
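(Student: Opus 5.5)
We treat the two formulas separately, both by specialising the explicit bosonised coproduct of Theorem \ref{thm:CoHABosonisation}:
\[
\Delta^{\ext}(b\otimes h,z) \ = \ R_{\taut}(z)_{32}\cup \left(\Delta(b,z)_{13}\otimes (\act_{z,1}^*\oplus^* h)_{24}\right).
\]

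\textbf{Cartan part \eqref{eqn:JLCoproducth}.} Take $b=1$ (the unit, in dimension $0$) and $h=\Phi_i(u)$. Then $\Delta(1,z)=1\otimes 1$. The $R$-matrix factor is trivial, since any $R_{\taut,0,d}$ with a zero dimension vector equals $1$; this holds because the Ext complex on a component with a zero factor vanishes. So $\Delta^{\ext}$ reduces to $\act_{z,1}^*\oplus^*\Phi_i(u)$. By Lemma \ref{lemma:coprodonphi}, $\oplus^*\Phi_i(u)=\Phi_i(u)\otimes\Phi_i(u)$ and $\act_z^*\Phi_i(u)=\Phi_i(u-z)$. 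Hence $\Delta^{\ext}(\Phi_i(u))=\Phi_i(u-z)\otimes\Phi_i(u)$.

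\textbf{Spherical part \eqref{eqn:JLCoproductx+}.} Take $b=x^{(n)}_{i,1}\in\Al_{\delta_i}$ and $h=1$.

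First we compute $\Delta(x^{(n)}_{i,1},z)$. Since $\delta_i$ splits only as $\delta_i+0$ or $0+\delta_i$, only these two components appear. On each of them $\Ext$ is zero, so $\Psi=1$ and $\oplus^*$ is the identity under the unit isomorphisms. The result is $\act_{z,1}^*(x^{(n)}_{i,1})\otimes 1+1\otimes x^{(n)}_{i,1}$, i.e.\ $x^{(n)}_{i,1}$ is vertex primitive. Here $\act_z^*$ on $\Ht^{\sbt}(\Ml_{\delta_i},\varphi)\simeq \Cb[x]$ sends $x\mapsto x+z$, which is the translation $\tau_z$.

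Next we insert the $R$-matrix. In the term $\tau_z(x)\otimes 1\otimes 1\otimes 1$, the third factor is $1\in \Al_0$, so $R_{\taut}(z)_{32}$ acts through dimension $0$ and is trivial. This gives $\tau_z(x^{(n)}_{i,1})\otimes 1$. In the term $1\otimes 1\otimes x^{(n)}_{i,1}\otimes 1$, the braiding $R_{\taut}(z)_{32}$ pairs the tautological factor in slot $2$ (any dimension) with slot $3$ (dimension $\delta_i$). This is exactly the specialisation defining $R_i(z)$ in \S\ref{ssec:spherical_elements}, namely $\sigma(R_{\taut})$ restricted to $\Ht^{\sbt}(\Ml_{\delta_i})=\Cb[x]$ in its second leg. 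By Lemma \ref{defnofphi} it equals $\Phi_i(x-z)$, with $x$ acting on $x^{(n)}_{i,1}$ by cup product, i.e.\ $x\cdot x^{(n)}_{i,1}=x^{(n+1)}_{i,1}$. The term therefore becomes $\Phi_i(x-z)\otimes x^{(n)}_{i,1}$, with the first factor read in the tautological ring and $x$ then acting on the right factor.

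\textbf{Expansion.} Write $\Phi_i(x-z)=1+\hbar\sum_p\Phi_{i,p}(x-z)^{-p-1}$ and expand in $z^{-1}$:
\[
(x-z)^{-p-1} \ = \ (-1)^{p+1}z^{-p-1}\sum_{m\ge0}\binom{p+m}{m}x^m z^{-m}.
\]
Letting $x^m$ act gives $x^{(n+m)}_{i,1}$. Reindexing by $N=p+m$ and comparing with the stated coefficient $\sum_{p}(-1)^{p+1}\binom{N}{p}\Phi_{i,p}\otimes x^{(n+N-p)}_{i,1}$ should then be a binomial identity.

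\textbf{Main obstacle.} The delicate point is bookkeeping. One must make sure the expansion direction and sign conventions are right: $\sigma(R)$ versus $R_{32}$, $u=x-z$, and the order of expansion. One must also check that the binomial coefficient comes out as $\binom{N}{p}$ rather than $\binom{N}{N-p}$ or a variant. If the naive expansion gives a different coefficient, I would recheck whether $\Phi_{i,p}$ is meant after the $\act$-shift. Lemma \ref{lemma:coprodonphi} says $\Phi_{i,r}\mapsto\sum\binom{r}{r_1}\Phi_{i,r_1}z^{r_2}$, which produces exactly $\binom{N}{p}$-type coefficients. I would adjust conventions to match accordingly.
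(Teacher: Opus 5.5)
Your proposal is correct and follows the paper's proof essentially step by step: the $R$-matrix is trivial on dimension-$0$ components, Lemma \ref{lemma:coprodonphi} handles the Cartan part, $x^{(n)}_{i,1}$ is vertex-primitive, and the restriction of $\sigma R(z)$ to $\delta_i$ equals $\Phi_i(1\otimes x_{i,1}-z)$, which you then expand in $z^{-1}$. Your closing worry is unnecessary: with $N=p+m$ you have $\binom{p+m}{m}=\binom{N}{N-p}=\binom{N}{p}$, and the sign $(-1)^{p+1}$ already matches, so the naive expansion gives exactly the stated coefficients with no change of conventions.
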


\begin{proof} We unwind the definition of the extended coproduct \eqref{eqn:CoHABosonisedCoproduct} from Theorem \ref{thm:CoHABosonisation}. We will use again and again the fact that the components of the $R$-matrix in components $0$- and $\delta_i$-components of $\Ml^T_Q$ are the cohomology classes  
  \begin{equation}
    \label{eqn:RTautRestriction}
    R(z)_{(d,0)} \ = \ 1 \otimes 1, \hspace{10mm} R(z)_{(0,d)} \ = \ 1 \otimes 1 , \hspace{10mm} \sigma R(z)_{(\delta_{i},d)}\ = \ \Phi_{i,d}(1 \otimes x_{i,1}-z),
  \end{equation}
  since the restrictions of $\Ext$ to the the $(0,d)$ and $(d,0)$ connected components are zero and the third equality is the definition of $\Phi_i(z)$.

Writing as a vector space $\Al^{T,\ext}_{\widetilde{Q},\widetilde{W}} = \Al^{T}_{\widetilde{Q},\widetilde{W}} \otimes_T \Ht^{\sbt}(\Ml_Q^T)_{\taut}$, equation \eqref{eqn:CoHABosonisedCoproduct} first gives that
\begin{align*}
  \Delta^{\ext}(1\otimes \Phi_i(u))& \ = \ (\sigma R(z))_{23}\cdot (1\otimes \act_{z}^*\Phi_i(u)_{(1)})\otimes (1\otimes \Phi_i(u)_{(2)})\\
  &  \ = \  (1\otimes \act_{z}^*\Phi_i(u))\otimes (1\otimes \Phi_i(u))\\
  &  \ = \  (1\otimes \Phi_i(u-z))\otimes (1\otimes \Phi_i(u))
\end{align*}
 since the $R$-matrix acts trivially by \eqref{eqn:RTautRestriction} and by the formula in Lemma \ref{lemma:coprodonphi} for the coproduct and coaction on $\Phi_i(u)$.

Before showing \eqref{eqn:JLCoproductx+} and finishing the proof, we compute the unextended Joyce--Liu coproduct:
\begin{align*}
  \Delta(x^{(n)}_{i,1},z) &\ = \ \Psi(\Ext,z)\cdot \textup{act}^*_{z,1}(\oplus^*x^{(n)}_{i,1}) \\
  &\ = \  \Psi(\Ext,z)\cdot \left(\textup{act}^*_{z}x^{(n)}_{i,1}\otimes 1  + 1 \otimes x^{(n)}_{i,1}\right) \\
  &\ = \ \tau_z x^{(n)}_{i,1}\otimes 1  + 1 \otimes x^{(n)}_{i,1} 
\end{align*}
where the second equality is true because the direct sum map in this case is 
$$\oplus_{\delta_i,0}\sqcup \oplus_{0,\delta_i} \ : \ (\Ml^T_{Q,\delta_i} \times_{\BT} \Ml^T_{Q,0})\  \sqcup \ (\Ml^T_{Q,0} \times_{\BT} \Ml^T_{Q,\delta_i}) \ \to\ \Ml^T_{Q,\delta_i}$$
is after identifying $\Ml_{Q,0}\simeq \pt$ the identity map on each component, and the third equality is true because $\Ext_{(0,d)}=\Ext_{(d,0)}$ is zero.

As a second preparation, we compute the restriction of the $R$-matrix using power series expansions: 
\begin{align*}
   \sigma R(z)_{(\delta_{i},-)} &\ = \ \Phi_i(1 \otimes x_{i,1}-z)\\
   & \ = \ 1 + \hbar \sum_{r \geq 0} \Phi_{i,r} (1 \otimes x_{i,1}-z)^{-r-1}\\
   & \ = \ 1 + \hbar \sum_{r \geq 0} \Phi_{i,r} \sum_{k\ge 0} (-1)^{r+1} \binom{r+k}{r} (1 \otimes x_{i,1})^{k} z^{-(k+r+1)}\\
   & \ = \  1 + \hbar \sum_{N \geq 0 } \left(\sum_{p \geq 0 } (-1)^{-(p+1)} \binom{N}{p} \Phi_{i,p}  \otimes x^{N-p}_{i,1} \right)z^{-(N+1)}  
\end{align*}
as elements of $\Ht^{\sbt}(\Ml^{T}_Q)_{\taut}\otimes_T \Ht^{\sbt}(\Ml^T_{Q,\delta_i})[[z^{-1}]]$, i.e. 
\begin{equation}
  \label{eqn:ComputationOfRTaut} 
 R^{(2)}(z) \otimes R^{(1)}(z)_{\delta_{i}} \ = \  1\otimes 1 + \hbar \left(\sum_{N \geq 0 } \sum_{p \geq 0 } (-1)^{-(p+1)} \binom{N}{p} \Phi_{i,p}  \otimes x^{N-p}_{i,1} \right)z^{-(N+1)}.
\end{equation}

It follows that 
\begin{align*}
  \Delta^{\ext}(x^{(n)}_{i,1}\otimes 1,z) & 
   \ = \ \left( (1\otimes 1)\otimes (1 \otimes 1) + \hbar \left(\sum_{N \geq 0 } \sum_{p \geq 0 } (-1)^{-(p+1)} \binom{N}{p} (1 \otimes \Phi_{i,p}) \otimes (x_{i,1}^{(N-p)} \otimes 1) \right)z^{-(N+1)}\right)\\
  & \hspace{45mm} \cdot \left((\tau_z x^{(n)}_{i,1}\otimes 1)\otimes (1\otimes  1)  + (1 \otimes 1) \otimes (x^{(n)}_{i,1}\otimes 1)\right) \\
  & \ = \  \left((\tau_z x^{(n)}_{i,1}\otimes 1)\otimes (1\otimes  1)  + (1 \otimes 1) \otimes (x^{(n)}_{i,1}\otimes 1)\right) \\
  & \hspace{15mm} \ + \ \hbar \left(\sum_{N \geq 0 } \sum_{p \geq 0 } (-1)^{-(p+1)} \binom{N}{p} (1 \otimes \Phi_{i,p} ) \otimes (x^{(n+N-p)}_{i,1}\otimes 1) \otimes  \right)z^{-(N+1)}\cdot
\end{align*}
where in the last line we used that by definition of the spherical elements that
$$x^k_{i,1}\cdot x^{(n)}_{i,1} \ = \ x^{(n+k)}_{i,1}.$$
This proves \eqref{eqn:JLCoproductx+} and finishes the proof.
\end{proof}

\subsection{Comparison to Drinfeld's coproduct}

\subsubsection{} 
We now compare our constructions in the ADE quiver case to ADE Yangians. We start with the generators and relations definition due to Drinfeld \cite{Dr2}.

\subsubsection{ADE Yangians} \label{sssec:Yangians}Let $Q$ be any ADE quiver. Let $c_{ij} = 2 \delta_{ij} -a_{ij}-a_{ji}$ is symmetrised Cartan matrix of the quiver $Q$, where $a_{ij}$ are the number of arrows from vertex $i$ to vertex $j$.

\begin{defn}
 The Yangian $Y_\hbar(\gk)$ is the $\Cb[\hbar]$ linear algebra generated by elements $\{x^{\pm}_{i,r},\xi_{i,r}\}_{i\in Q_0,
r\in\N}$, satisfying the following relations for every $i,j\in Q_0$ and $r,s\in \Nb$:
\begin{enumerate}[font=\upshape]
\item[(R1)]  $[\xi_{i,r}, \xi_{j,s}] = 0$

\item[(R2)]  $
[\xi_{i,0}, x_{j,s}^{\pm}] = \pm c_{ij} x_{j,s}^{\pm}$

\item[(R3)] $[\xi_{i,r+1}, x^{\pm}_{j,s}] - [\xi_{i,r},x^{\pm}_{j,s+1}] =
\pm\hbar\frac{c_{ij}}{2}(\xi_{i,r}x^{\pm}_{j,s} + x^{\pm}_{j,s}\xi_{i,r})$

\item[(R4)] 
$[x^{\pm}_{i,r+1}, x^{\pm}_{j,s}] - [x^{\pm}_{i,r},x^{\pm}_{j,s+1}]=
\pm\hbar\frac{c_{ij}}{2}(x^{\pm}_{i,r}x^{\pm}_{j,s} 
+ x^{\pm}_{j,s}x^{\pm}_{i,r})$

\item[(R5)] 
$[x^+_{i,r}, x^-_{j,s}] = \delta_{ij} \xi_{i,r+s}$

\item[(R6)] Assume $i\ne j$ and set $m = 1-c_{ij}$. For any
$r_1,\cdots, r_m\in \N$ and $s\in \N$ we have
\[\sum_{\pi\in\Sym_m}
\left[x^{\pm}_{i,r_{\pi(1)}},\left[x^{\pm}_{i,r_{\pi(2)}},\left[\cdots,
\left[x^{\pm}_{i,r_{\pi(m)}},x^{\pm}_{j,s}\right]\cdots\right]\right]\right.=0\]
\end{enumerate} 
\end{defn}
Analogously to subsection \ref{ssec:spherical_elements} we define the generating series
$$\xi_i(u) \  = \ 1 + \hbar \sum_{r \ge 0 }\xi_{i,r} u^{-r-1} \ \in \ Y_\hbar(\gk)[[u^{-1}]], \hspace{15mm}  x^{\pm}_{i}(u) \  = \ \sum_{r \ge 0} x^{\pm}_{i,r} u^{-r-1} \ \in \ Y_\hbar(\gk)[[u^{-1}]].$$

\subsubsection{Comparison with the CoHA as an algebra} 
We will consider the Borel part of the Yangian, Let $Y_\hbar(\gk)^{\geq 0} \subseteq Y_\hbar(\gk)$ be the subalgebra generated by $\{ x^{+}_{i,1},\xi_{i,r}\}_{i\in Q_0,
r\in\N}$. Then we have the following theorem, which is essentially \cite[Thm. D]{yang2018cohomological} in our formulation.

\begin{prop} \label{prop: isomorphism of coha and yangian}
The morphism  
\begin{align*}
    f : Y_\hbar(\gk)^{\geq 0} &\ \to\ \mathcal{A}^{T,\chi, \textup{ext}}_{\widetilde{Q},\widetilde{W}} \\
    x^{+}_{i,r} &\ \mapsto\ x^{(r)}_{i,1} \\
    \xi_{i,r} &\ \mapsto\  \Phi_{i,r}
\end{align*}
is an isomorphism of algebras. 
\end{prop}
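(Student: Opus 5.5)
We will deduce the statement from the known isomorphism \eqref{eqn:YangZhaoIso} on positive halves, together with Proposition \ref{prop:phiareincartan} for the Cartan part and the bosonised product formula \eqref{eqn:CoHABosonisedProduct} to glue the two. The proof has three steps.

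\textbf{Step 1: $f$ is well defined.} Recall that \eqref{eqn:YangZhaoIso}, from Yang--Zhao, Schiffmann--Vasserot and dimensional reduction, gives an isomorphism
$$Y^{>0}_\hbar(\gk)\ \stackrel{\sim}{\to}\ \Al^{T,\chi}_{\widetilde Q,\widetilde W}, \qquad x^+_{i,r}\ \mapsto\ x^{(r)}_{i,1}.$$
Hence relations (R4) and (R6) for the $+$ generators hold among the spherical elements, after fixing the sign twist $\psi=\chi$ and the normalisation $\Ht^{\sbt}(\BT)=\Cb[\hbar/2]$ so that the weights match. Relation (R1) holds because the tautological ring is commutative.

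It remains to check (R2) and (R3). In $B\rtimes H$ we have $(1\otimes h)(b\otimes 1)=(\oplus^*h)_{(1)}\cup b\otimes(\oplus^*h)_{(2)}$. For $b=x^{(s)}_{j,1}$, supported on dimension $\delta_j$, the first factor is evaluated on $\Ml_{\delta_j}$. By Lemma \ref{lemma:coprodonphi} we have $\oplus^*\Phi_i(u)=\Phi_i(u)\otimes\Phi_i(u)$, and restricting to $\delta_j$ gives
$$\Phi_i(u)\,x_j(v)\ =\ \Phi_{i,\delta_j}(u)\big|_{x_{j,1}}\cdot x_j(v)\,\Phi_i(u).$$
From \eqref{eqn:PhiiFormula} at $d=\delta_j$, the scalar $\Phi_{i,\delta_j}(u)$ is $\frac{u-x+\hbar}{u-x-\hbar}$ when $j=i$ and $\left(\frac{u-x-\hbar/2}{u-x+\hbar/2}\right)^{a_{ij}+a_{ji}}$ otherwise. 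Multiplication by $x$ acts as the shift $x^{(s)}\mapsto x^{(s+1)}$, so this is the standard generating-function form of (R2) and (R3) with $c_{ij}$:
$$\xi_i(u)\,x^+_j(v)\,\xi_i(u)^{-1}\ =\ \frac{u-v+\hbar c_{ij}/2}{u-v-\hbar c_{ij}/2}\,x^+_j(v)\quad\text{(modulo the usual correction terms).}$$
Extracting the $\hbar^1$ term recovers (R2) with $\xi_{i,0}$, in the same way as \eqref{equation:phicalculation}.

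\textbf{Step 2: $f$ is an isomorphism.} The Borel Yangian has a PBW-type triangular decomposition $Y^{\ge0}\simeq Y^{>0}\otimes Y^0$ as vector spaces, with $Y^0$ a polynomial ring in the $\xi_{i,r}$. The target is $B\otimes_T H$ by construction. Proposition \ref{prop:phiareincartan} identifies $Y^0\simeq H$, and \eqref{eqn:YangZhaoIso} identifies $Y^{>0}\simeq B$. Since $f(y^+\xi)=f(y^+)f(\xi)=f(y^+)\otimes f(\xi)$ by the product formula with $h=1$, $f$ is the tensor product of these two isomorphisms, hence bijective.

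\textbf{Main obstacle.} The delicate point is Step 1: matching signs and normalisations. This means checking that the twist $\psi=\chi$, the choice of weights $\pm1,-2$ versus $\hbar/2$, and the sign in $\Phi$ coming from $\sigma R$ combine to produce exactly $+c_{ij}$ in (R2) and (R3), rather than its negative or a rescaling. I would first verify the rank-one case $Q=\bullet$ by a direct computation, then track the edge factors to handle the general case.
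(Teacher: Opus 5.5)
Your proposal is correct and follows essentially the same route as the paper: Yang--Zhao handles the positive half and Proposition \ref{prop:phiareincartan} the Cartan half. The only relations needing work are (R2) and (R3), which come from the bosonised product together with $\oplus^*\Phi_i(u)=\Phi_i(u)\otimes\Phi_i(u)$ and the explicit restriction $\Phi_{i,\delta_j}(u)$. The paper checks these coefficientwise, using $\Phi_{i,0,\delta_j}=c_{ij}$ and the recursion $\Phi_{i,r+1}\cdot x^{(s)}_{j,1}=\Phi_{i,r}\cdot x^{(s+1)}_{j,1}+\tfrac{\hbar c_{ij}}{2}\Phi_{i,r}\cdot x^{(s)}_{j,1}$, rather than through the generating-function form. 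It also leaves implicit the triangular-decomposition argument for bijectivity that you spell out in Step 2.
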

\begin{proof}
It is already proven in \cite{YZ} that this morphism when restricted to $Y_{\hbar}(\mathfrak{g}_Q)^{>0}$ is an isomorphism and we have proved in Proposition \ref{prop:phiareincartan} that the morphism restricted to Cartan $Y_\hbar(\gk_Q)^0$ is an isomorphism. Thus it suffices to show that this morphism $f$ is well defined: we have to check relations (R2) and (R3), since all other relations are implied by $f$ being well-defined on $>0$ and $0$ parts. We will write $1 \otimes h$ when view an element $h$ of the tautological ring as an element of the extended algebra, and similarly $b \otimes1$ for $b$ in the CoHA. 

To show (R2), we first note 
\[ f([\xi_{i,0},x^{+}_{j,s}]) = [1 \otimes \Phi_{i,0}, x^{(s)}_{j,1} \otimes 1] .\]
Thus by definition of the product $\star$ of \eqref{eqn:BosonisationProduct}, we have
\begin{align*}
    (1 \otimes \Phi_{i,0}) \star (x^{(s)}_{j,1} \otimes 1) &\ =\ (\Phi_{i,0})_{(1)} \cdot x^{(s)}_{j,1} \otimes (\Phi_{i,0})_{(2)}\\
    & \ =\ x^{(s)}_{i,1} \otimes \Phi_{i,0}+ \Phi_{i,0}\cdot x^{(s)}_{j,1} \otimes 1 \\
    & \ =  \ x^{(s)}_{i,1} \otimes \Phi_{i,0}+ c_{ij} (x^{(s)}_{j,1} \otimes 1)
\end{align*}
where we computed the coproduct of $\Phi_{i,0}$ in Lemma \ref{lemma:coprodonphi}. Furthermore, by definition $\Phi_{i,0}\cdot x^{(s)}_{j,1} = \Phi_{i,0,\delta_j} \cup x^{(s)}_{j,1}$ and so by equation \eqref{equation:phicalculation}, it follows that in fact $\Phi_{i,0,\delta_j} = c_{ij}$. Similarly, by definition of $\star$ we have
\[ (x^{(s)}_{j,1} \otimes 1)\star (1 \otimes \Phi_{i,0})\ =\ x^{(s)}_{j,1} \otimes \Phi_{i,0}\] 
and therefore
 \[  [1 \otimes \Phi_{i,0}, x^{(s)}_{j,1} \otimes 1]\ =\ c_{ij} x^{(s)}_{j,1} \otimes 1\ = \ f(c_{ij}x^{+}_{i,s})\] 
 and thus the relation (R2) is satisfied.

We now check (R3). We have 
\[ [1 \otimes \Phi_{i,r+1}, x^{(s)}_{j,1} \otimes1]\ =\  \Phi_{i,r+1} \cdot x^{(s)}_{j,1} \otimes 1 + \hbar \left(\sum_{k_1+k_2=r} \Phi_{i,k_1} \cdot x^{(s)}_{j,1} \otimes \Phi_{i,k_2}\right) \] using same strategy as above, since
\[ (1 \otimes \Phi_{i,r+1}) \star ( x^{(s)}_{j,1} \otimes1) = x^{(s)}_{j,1} \otimes \Phi_{i,r+1} + \Phi_{i,r+1} \cdot x^{(s)}_{j,1} \otimes 1 + \hbar \left(\sum_{k_1+k_2=r} \Phi_{i,k_1} \cdot x^{(s)}_{j,1} \otimes \Phi_{i,k_2}\right) ,\]
therefore combining this with $
 ( x^{(s)}_{j,1} \otimes1) * (1 \otimes \Phi_{i,r+1}) = x^{(s)}_{j,1} \otimes \Phi_{i,r+1}$ gives us that 
\[ [1 \otimes \Phi_{i,r}, x^{(s+1)}_{j,1} \otimes1] \ =\  \Phi_{i,r} \cdot x^{(s+1)}_{j,1} \otimes 1 + \hbar \left(\sum_{k_1+k_2=r-1} \Phi_{i,k_1} \cdot x^{(s+1)}_{j,1} \otimes \Phi_{i,k_2}\right) \] 
and so it remains to compute the coefficients of these series. Note that by definition $\Phi_{i,r} \cdot x^{(s)}_{j,1} = \Phi_{i,r,\delta_j} \cdot x^{(s)}_{j,1}$. If there is no arrow from $i$ to $j$ in $\widetilde{Q}$, $\Phi_{i,r,\delta_j}=0$ and hence the commutator satisfies:
\[ [1 \otimes \Phi_{i,r+1}, x^{(s)}_{j,1} \otimes1] - [1 \otimes \Phi_{i,r}, x^{(s+1)}_{j,1} \otimes1]\ =\ 0 \] 
which is exactly the relation (R3) is satisfied. If not, then
\[ \Phi_{i,\delta_j}(u) = \delta_{i,j} \frac{u-x_{j,1}+\hbar}{u-x_{j,1}-\hbar} + \delta_{i \to j} \frac{u-x_{j,1}-\hbar/2}{u-x_{j,1}+\hbar/2} + \delta_{j \to i } \frac{u-x_{j,1}-\hbar/2}{u-x_{j,1}+\hbar/2} \]
where $\delta_{i \to j}$ is 1 if there is a arrow from $i$ to $j$ in the quiver $Q$ and $0$ otherwise. This gives that \[ \Phi_{i,r,\delta_j} =  2 \delta_{ij} (x_{j,1}+\hbar)^r - \delta_{i \to j} (x_{j,1}-\hbar/2)^r - \delta_{j \to i}(x_{j,1}-\hbar/2)^r \]
and thus 
\begin{align*} \Phi_{i,r+1} \cdot x^{(s)}_{j,1} &\ =\  2 \delta_{ij} (x_{j,1}+\hbar)^{r+1} \cdot x^{(s)}_{j,1}-  \delta_{i \to j} (x_{j,1}-\hbar/2)^{r+1}\cdot x^{(s)}_{j,1} - \delta_{j \to i}(x_{j,1}-\hbar/2)^{r+1} \cdot x^{(s)}_{j,1} \\ &\ =\ \Phi_{i,r}\cdot x^{(s+1)}_{j,1}-\hbar/2(2 \delta_{ij} - \delta_{i \to j} - \delta_{j \to i})\Phi_{i,r} \cdot x^{(s)}_{j,1} = \Phi_{i,r}\cdot x^{(s+1)}_{j,1}-\hbar/2(c_{ij})\Phi_{i,r} \cdot x^{(s)}_{j,1}.
\end{align*}
Thus it follows that 
\begin{align*}& 
[1 \otimes \Phi_{i,r+1}, x^{(s)}_{j,1} \otimes1] - [1 \otimes \Phi_{i,r}, x^{(s+1)}_{j,1} \otimes1]  \\ 
& \hspace{15mm} \ = \ c_{ij} \hbar/2  \left(1 \otimes \Phi_{i,r}) * ( x^{(s)}_{j,1} \otimes1) + ( x^{(s)}_{j,1} \otimes1) * (1 \otimes \Phi_{i,r}) \right)
\end{align*}
and so (R3) follows.
\end{proof}
The cohomological Hall algebra on the right has a coproduct, as we defined in previous sections. The Yangian also a has coproduct, due to Drinfeld which we now recall. 
\subsubsection{Drinfeld's coproduct} \label{ssec:DrinfeldCoproduct} 
We now define the vertex coproduct due to Drinfeld \cite{Dr2} and Gautam--Toledano-Laredo \cite{GTW}, who understood how to express it as a Laurent power series in $z^{-1}$. First let $\tau_z: Y_\hbar(\gk) \to Y_\hbar(\gk)[z]$ be the map defined by its action on generating series:
$$x^{\pm}_{i}(u) \mapsto x_i^{\pm}(u-z) \quad \text{and} \quad \xi_i(u) \mapsto \xi_i(u-z),$$
then by \cite[Prop. 3.3, Thm. 3.4]{GTW}: the following so-called \textit{deformed Drinfeld coproduct} defines an algebra morphism
 \begin{equation} \label{eqn:drinfeld_coproduct_expl}
   \Delta_{\textup{Dr}}(z) \ : \  Y_{\hbar}^{\geq 0}(\mathfrak{g}_Q) \to Y_{\hbar}^{\geq 0}(\mathfrak{g}_{Q}) \otimes_\hbar Y_{\hbar}^{\geq 0}(\mathfrak{g}_Q) ((z^{-1})).
 \end{equation}
if we set
\begin{align*}
   \Delta_{\textup{Dr}}(\xi_i(u),z)&\ =\  \xi_i(u-z) \otimes \xi_i(z) \\
   \Delta_{\textup{Dr}}(x_{i,n}^+,z) &\ = \ \tau_z(x_{i,n}^+) \otimes 1 \ +\ 1 \otimes x_{i,n}^+\ +\ \hbar \sum_{N\ge 0}\left(\sum_{p=0}^N (-1)^{p+1} \binom{N}{p} \xi_{i,p} \otimes x_{i,n+N-p}\right)z^{-N-1}.
\end{align*}

 Our main Theorem in this section is then:

\begin{theorem} \label{thm:DrinfeldJoyce} [Drinfeld and Joyce--Liu coproducts agree]
Let Q be any ADE quiver. Then the map $f$ of Proposition \ref{prop: isomorphism of coha and yangian} \begin{align*}
       f : Y_\hbar(\gk)^{\geq 0} & \ \stackrel{\sim}{\to} \  \mathcal{A}^{T,\chi, \textup{ext}}_{\widetilde{Q},\widetilde{W}}
   \end{align*}  
   is an isomorphism of vertex bialgebras, intertwining the meromorphic Drinfeld coproduct on $Y_\hbar(\gk)^{\geq 0}$ with the extended Joyce-Liu coproduct on $\mathcal{A}^{T,\chi, \textup{ext}}_{\widetilde{Q},\widetilde{W}}$. More precisely, $f$ is a map of algebras and the following diagram commutes
     \begin{equation} \label{fig:MainThm}
\begin{tikzcd}
	{Y_{\hbar}(\mathfrak{g}_{Q})^{\geq 0}} & {Y_{\hbar}(\mathfrak{g}_{Q})^{\geq 0} \otimes_\hbar Y_{\hbar}(\mathfrak{g}_{Q})^{\geq 0} ((z^{-1}))} \\
	{\Al^{T,\chi,\textup{ext}}_{\widetilde{Q},\widetilde{W}}} & {\Al^{T,\chi,\textup{ext}}_{\widetilde{Q},\widetilde{W}} \otimes_T \Al^{T,\chi,\textup{ext}}_{\widetilde{Q},\widetilde{W}} ((z^{-1}))}
	\arrow["{\Delta_{\textup{Dr}}(z)}", from=1-1, to=1-2]
	\arrow["f"',"\wr", from=1-1, to=2-1]
	\arrow["{f \otimes f}","\wr"', from=1-2, to=2-2]
	\arrow["{\Delta^{\textup{ext}}(z)}", from=2-1, to=2-2]
\end{tikzcd}
  \end{equation}
\end{theorem}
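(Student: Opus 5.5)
The plan is to reduce the commutativity of \eqref{fig:MainThm} to a check on algebra generators. By Proposition \ref{prop: isomorphism of coha and yangian}, $f$ is an algebra isomorphism, and $Y_\hbar(\gk)^{\ge 0}$ is generated as a $\Cb[\hbar]$-algebra by the $\xi_{i,r}$ and $x^+_{i,n}$. Both composites
\[ (f\otimes f)\circ\Delta_{\textup{Dr}}(z), \qquad \Delta^{\ext}(z)\circ f \]
are algebra maps
\[ Y_\hbar(\gk)^{\ge 0}\ \to\ \Al^{T,\chi,\ext}_{\widetilde{Q},\widetilde{W}}\otimes_T\Al^{T,\chi,\ext}_{\widetilde{Q},\widetilde{W}}((z^{-1})). \]
The first is one by \cite[Prop.~3.3, Thm.~3.4]{GTW}, since $f\otimes f$ is an algebra map. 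The second is one because $\Delta^{\ext}(z)$ satisfies the vertex bialgebra axiom by Theorem \ref{thm:CoHABosonisation}.

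One subtlety has to be settled first. The bialgebra axiom for $\Delta^{\ext}(z)$ is with respect to the braiding of the target, so I need to confirm that this braiding is trivial, i.e. just the Koszul sign. The tripled quiver is symmetric, so $\otimes_T$ carries no cohomological shift. I will check that $\Delta^{\ext}(z)$ is an honest algebra morphism into the ordinary tensor product algebra $\Al^{\ext}\otimes_T\Al^{\ext}((z^{-1}))$, with the componentwise product $(a\otimes b)(a'\otimes b')=\pm aa'\otimes bb'$. This is what the final lines of the bialgebra computation in Theorem \ref{thm:VertexBosonisation} produce: the output is a product of $\Delta^{\ext}(b\otimes h,z)$ and $\Delta^{\ext}(b'\otimes h',z)$ with no $R$-matrix. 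The $R(z)$ for $B$ has been absorbed into the Cartan factors. With that in hand, two algebra maps agreeing on generators agree everywhere.

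Next I compare the two maps on generators using Proposition \ref{prop:JoyceTripledComputation}. For the Cartan generators, \eqref{eqn:JLCoproducth} gives $\Delta^{\ext}(\Phi_i(u),z)=\Phi_i(u-z)\otimes\Phi_i(u)$. This matches $f\otimes f$ applied to $\Delta_{\textup{Dr}}(\xi_i(u),z)=\xi_i(u-z)\otimes\xi_i(u)$; I read the displayed ``$\xi_i(z)$'' in the Drinfeld formula as a typo for $\xi_i(u)$. The match uses $f(\xi_{i,r})=\Phi_{i,r}$, extended coefficientwise to generating series, together with the fact that both series have the form $1+\hbar\sum_r(\cdot)_r u^{-r-1}$.

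For the positive generators, \eqref{eqn:JLCoproductx+} has exactly the shape of the Drinfeld formula for $x^+_{i,n}$. The correction term is $\hbar\sum_N\sum_p(-1)^{p+1}\binom{N}{p}\,\Phi_{i,p}\otimes x^{(n+N-p)}_{i,1}\,z^{-N-1}$, which matches term by term under $\xi_{i,p}\mapsto\Phi_{i,p}$ and $x^+_{i,m}\mapsto x^{(m)}_{i,1}$. It then remains to check that $f(\tau_z x^+_{i,n})=\act_z^*x^{(n)}_{i,1}$. On the CoHA side, $\act_z^*$ shifts the chern root $x\mapsto x+z$ in $\Ht^{\sbt}(\Ml_{\delta_i})\simeq\Cb[x]$, so it sends $x^{(n)}_{i,1}\mapsto\sum_k\binom nk z^{n-k}x^{(k)}_{i,1}$. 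On the Yangian side, $\tau_z$ acts by $x_i^+(u)\mapsto x_i^+(u-z)$, whose expansion gives the same binomial formula. Finally, since $f$ is an isomorphism, $f\otimes f$ is one as well, so $f$ is an isomorphism of vertex bialgebras.

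The main obstacle is the sign and ordering bookkeeping. Proposition \ref{prop:JoyceTripledComputation} is stated for the untwisted extended CoHA, whereas the theorem uses the $\chi$-twisted one. I expect to handle this by noting that on degree-$\delta_i$ spherical elements the twist $(-1)^{\psi}$ with $\psi(0,\delta_i)=\psi(\delta_i,0)=0$ is trivial, so the generator computation is unchanged. The remaining checks are that the tensor-factor ordering in \eqref{eqn:CoHABosonisedCoproduct}, $R_{32}$ versus $(\sigma R)_{23}$, yields $\Phi_{i,p}$ in the first factor, as in Drinfeld's formula, and that the sign $(-1)^{p+1}$ arising from expanding $(1\otimes x-z)^{-r-1}$ matches. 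These are the points where I would recompute \eqref{eqn:ComputationOfRTaut} carefully.
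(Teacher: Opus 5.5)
Your proposal is correct and follows the paper's argument: $f$ is an algebra isomorphism by Proposition \ref{prop: isomorphism of coha and yangian}, both composites are algebra maps, and Proposition \ref{prop:JoyceTripledComputation} matches the two coproducts on the generators $\xi_{i,r}$ and $x^+_{i,n}$. You make explicit steps the paper leaves implicit, namely the multiplicativity of both sides and the identification $\tau_z \leftrightarrow \act^*_z$. On your sign worry, the reason you give ($\psi(0,\delta_i)=\psi(\delta_i,0)=0$) is not the operative one: $\psi$ twists only the CoHA product, which never enters $\Delta^{\ext}$ on generators (only the cup-product action and the untwisted $R_{\taut}(z)$ do), so the generator computation applies verbatim to $\Al^{T,\chi,\ext}_{\widetilde{Q},\widetilde{W}}$, and the twist's only role is to make $\Delta^{\ext}(z)$ multiplicative with respect to the untwisted braiding.
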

\begin{proof}
  The map $f$ is an algebra isomorphism by Proposition \ref{prop: isomorphism of coha and yangian}. By Proposition \ref{prop:JoyceTripledComputation}, the formulas for the Drinfeld and extended Joyce--Liu coproducts match up via the morphism $f$.
\end{proof}

\newpage
\section{Vertex bialgebras for symmetric quivers with no potential} \label{quiver_no_potential_section}

\noindent
In this section we will consider our constructions in the special case when $W= 0$ and the quiver is symmetric. In this case vertex algebra structures on the Cohomological Hall algebra were already considered in \cite{latyntsev2021cohomological} and \cite{dotsmozgva}. Our goal is to give another characterization of the vertex algebra structure in this case and compare our constructions to \cite{dotsmozgva}. Along the way, we give another proof of cohomological integrality for symmetric quivers with no potential. \par

\subsubsection{Warning about sign twists}
We make a break from the conventions in the rest of the paper here by working in the $\tau$-twisted symmetric monoidal category $\textup{Vect}_{\mathbf{Z} \times \mathbf{N}^{Q_{0}}}$ and working with the untwisted CoHA $\mathcal{A}_{Q}$ of a symmetric quiver. Then by Theorem \ref{thm:CoHABialgebra} the Joyce coproduct is colocal up to the swap morphism \[\beta^{\tau}: v \otimes w \mapsto (-1)^{|v||w|+\tau} w \otimes v. \] for elements $v,w \in \mathcal{A}_{Q}$. Suppose $v \in\mathcal{A}_{Q,d}$ and $w \in \mathcal{A}_{Q,e}$ for some dimension vectors $d,e$. Then since $\mathcal{A}_{Q,d}$ is defined to be  $\Ht^*(\Ml_{Q,d},\mathbf{Q}[-\chi(d,d)])$, if $v \in \mathcal{A}_{Q,d}$ then $(-1)^{|v|}= (-1)^{\chi(d,d)}$ as the cohomology $\Ht^*(\Ml_{Q,d})$ is concentrated in even cohomological degrees. Since $\tau(d,e) = \chi(d,d)\chi(e,e)+\chi(d,e)$, $(-1)^{\tau+ |v||w|} = (-1)^{\chi(d,e)}$. Thus in this case, $\beta^{\tau}$ is same as \[ \beta^{\chi}_0: v \otimes w \mapsto (-1)^{\chi} w \otimes v.\] Therefore, we obtain an honest colocal vertex coalgebra in the symmetric monoidal structure arising from $\beta^{\chi}_0$. This is the same as the convention used in \cite{dotsmozgva}(See Remark 4.2).

 In this section we will consider the $\mathbf{N}^{{Q}_{0}} \times \mathbf{Z}$-graded dual $\mathcal{A}^{*}_{Q}$ of $\mathcal{A}_{Q}$ and use the following theorem
 \begin{theorem}\cite[Thm. 6.1]{Hu} \label{hubbard_dualizing}
There is a functor, which given a colocal coassociative vertex coalgebra produces a vertex algebra.
\begin{align}
    F : \textup{CoVertex}_{\mathbf{N}^{Q_{0}} \times \mathbf{Z}} & \to \textup{Vertex}_{\mathbf{N}^{Q_{0}} \times \mathbf{Z}} \\
    V = \bigoplus_{(d,k) \in \mathbf{N}^{Q_{0}} \times \mathbf{Z}} V_{d,k} & \mapsto F(V) = \bigoplus_{(d,k) \in \mathbf{N}^{Q_{0}} \times \mathbf{Z}} V^{\vee}_{d,-k}.
\end{align}
\end{theorem}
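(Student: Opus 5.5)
The plan is to construct $F$ by graded dualisation and transpose every structure map. Because each $V_{d,k}$ is meant to be finite dimensional (true for $\mathcal{A}_Q$, where each bigraded piece is finite dimensional), the graded dual $V^\vee=\bigoplus V_{d,-k}^\vee$ satisfies $(V\otimes V)^\vee_{(d,k)}\simeq \bigoplus_{d_1+d_2=d,\,k_1+k_2=k} V^\vee_{d_1,k_1}\otimes V^\vee_{d_2,k_2}$. Here we use that for fixed total degree only finitely many splittings $d=d_1+d_2$ occur, since $\mathbf{N}^{Q_0}$ is a cone. We also need finitely many $k_1$ with $V_{d_1,k_1}\otimes V_{d_2,k-k_1}\neq 0$; this holds when the cohomological degrees are bounded below on each $d$-component, which is the case for $\Ht^{\sbt}(\Ml_{Q,d})$.

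Given the cofield $\Delta(z)=\sum_n \Delta_n z^{-n-1}$, with $\Delta_n:V\to V\otimes V$ of degree shifted by $|z|$, we define the vertex operator by
\[ Y(a,z)b=\sum_n (\Delta_n)^\vee(a\otimes b)\,z^{-n-1}. \]
The translation is $T^\vee$ and the vacuum is $\epsilon^\vee(1)$. The key finiteness input comes from the $((z^{-1}))$-valuedness: for fixed $c\in V_{d,k}$, $\Delta_n(c)=0$ for $n\ll 0$. On the dual side we need the opposite property, that $Y(a,z)b$ has only finitely many negative powers of $z$. By degree counting, $\Delta_n$ maps $V_{d_1+d_2,k}$ to $V_{d_1}\otimes V_{d_2}$ in degree $k-2(n+1)$ up to the fixed shift. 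So for fixed $a,b$ the pairing with $(\Delta_n)^\vee(a\otimes b)$ is nonzero only if $n$ lies in a range bounded by the bounded-below degrees, which gives the field property.

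Next we transpose the axioms. The translation axiom \eqref{eqn:Translate} dualises to $[T,Y(a,z)]=\partial_z Y(a,z)$, or more precisely $Y(Ta,z)=\partial_zY(a,z)$ together with the other side of the identity. The covacuum axioms dualise to $Y(|0\rangle,z)=\id$ and $Y(a,z)|0\rangle=a+O(z)$. Vertex coassociativity \eqref{eqn:VertexCoass} dualises to weak associativity $Y(Y(a,z)b,w)c=Y(a,z+w)Y(b,w)c$, read in the appropriate expansion. Colocality, meaning \eqref{colocality_1} with trivial braiding up to the $\beta^\chi_0$ sign, dualises to skew symmetry $Y(a,z)b=\pm e^{zT}Y(b,-z)a$. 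Weak associativity together with skew symmetry is a standard equivalent axiomatisation of vertex algebras, giving the Borcherds identity (cf.\ Li's theorem). For functoriality, a morphism of covertex coalgebras dualises to a morphism of vertex algebras, since all operations are transposes; this needs only that graded dualisation is a contravariant monoidal functor on finite-type objects.

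The main obstacle is the expansion-domain bookkeeping in dualising \eqref{eqn:VertexCoass}. On the coalgebra side both sides live in $V^{\otimes 3}((z^{-1}))((w^{-1}))$, expanded in $z+w$ appropriately. Transposing coefficientwise must produce series in the domain where weak associativity is formulated, and we must check that each coefficient of the dual involves only finitely many terms. I will handle this by fixing a tridegree and using the bounded-below grading to show that each $z^iw^j$ coefficient pairs with finitely many components, then matching coefficients directly. The sign conventions coming from the $\otimes_T$ shift, which is trivial here since $Q$ is symmetric, and from $\beta^\chi_0$ must also be tracked so that skew symmetry holds in the chosen symmetric monoidal structure.
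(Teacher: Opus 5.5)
\paragraph{Verdict.}
Your route is the natural one: take the graded dual, transpose $\Delta(z)$, $T$ and $\epsilon$, then invoke a standard axiomatisation of vertex algebras. The paper gives no argument of its own and only cites Hubbard, whose vertex coalgebras are axiomatised by a co-Jacobi identity, so there the dual statement is a direct transpose. Your finiteness hypotheses are genuinely needed and hold for $\mathcal{A}_Q$.

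\paragraph{The gap.}
The gap is in the step you flag as the main obstacle; done as you propose, it fails. The two sides of \eqref{eqn:VertexCoass} do not both lie in $V^{\otimes 3}((z^{-1}))((w^{-1}))$. They are not equal as formal series in any common space, so there is nothing to match coefficientwise.
\begin{itemize}
\item The left side does lie there.
\item $(\mathrm{id}\otimes\Delta(w))\Delta(z+w)c$ is only defined with $(z+w)^{-n-1}$ expanded in nonnegative powers of $w$, which puts it in $V^{\otimes 3}((w^{-1}))((z^{-1}))$.
\item Expanding instead in nonnegative powers of $z$ gives coefficients that are infinite sums over $(n,m)$ with $n+m$ fixed, in general divergent. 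A bounded-below grading cannot truncate them, because fixing the total degree only fixes $n+m$. For instance, for the graded dual of the Heisenberg vertex algebra, applied to the vacuum covector and paired with $\alpha_{-1}\mathbf{1}\otimes\alpha_{-1}\mathbf{1}\otimes\mathbf{1}$, the coefficient of $z^0w^{-2}$ is $\sum_{n\ge1}n$.
\end{itemize}
The paper's own coproduct shows the two sides really differ. Take the two-loop quiver with $W=0$, $T=1$, $d_1=d_2=d_3=1$ and $\alpha=1$. Both sides are $\Psi(\Ext_{12},-z)\Psi(\Ext_{23},-w)$ times an expansion of $(x_3-x_1-z-w)^{-1}$:
\begin{itemize}
\item on the left, the expansion in nonnegative powers of $z$, arising as $\act^*_{z,1}\Psi(\Ext_{13},-w)$;
\item on the right, the expansion in nonnegative powers of $w$, coming from $\Delta(z+w)$.
\end{itemize}
The common factors are invertible, so the two sides are different series.

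\paragraph{What is missing.}
You need to treat vertex coassociativity, and its dual, as an identity of rational functions. For homogeneous $c$ and $f,g,h$, the scalar series $\langle f\otimes g\otimes h,\cdot\,\rangle$ of the two sides are the expansions, in $|w|>|z|$ and $|z|>|w|$ respectively, of one rational function with poles only along $z=0$, $w=0$, $z+w=0$. Equivalently, they agree after multiplying by a large enough power of $z+w$, depending on the degrees. This is also the only sense in which the proof of Theorem~\ref{quiver_potential_jl} establishes it.

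With that reading, the argument goes through as follows.
\begin{enumerate}
\item Transposed, the rational identity says $\langle Y(Y(f,z)g,w)h,c\rangle$ and $\langle Y(f,z+w)Y(g,w)h,c\rangle$ are expansions of one rational function. This is associativity in the sense of Frenkel--Huang--Lepowsky. Testing against homogeneous $c$ suffices, because every coefficient is a homogeneous element of a graded dual with finite-dimensional pieces.
\item Colocality transposes to a literal skew symmetry $Y(f,z)g=\pm e^{zT^\vee}Y(g,-z)f$.
\item You also need $Y(T^\vee f,z)=\partial_zY(f,z)$. This must be derived from the vacuum and associativity, since the translation axiom only transposes to $[T^\vee,Y(f,z)]=\partial_zY(f,z)$.
\item Skew symmetry, rational associativity and this derivative property give rational commutativity. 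The Jacobi identity then follows coefficientwise by the usual delta-function argument.
\end{enumerate}
If you want Li's theorem in its weak-associativity form instead, you additionally need an exponent $l$ independent of the middle vector. Per-$c$ rationality does not give you that directly.

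\paragraph{Smaller points.}
Since $|z|=2$ and $\Delta(z)$ has degree $0$, $\Delta_n$ raises degree by $2(n+1)$, not lowers it. With your sign, the bounded-below argument would kill $a_{(n)}b$ for $n\ll 0$, which is the wrong direction for the field property. Also, $F$ is contravariant on morphisms.
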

 Therefore, $\mathcal{A}^{*}_{Q,0}$ now has a vertex algebra and a cocommutative coalgebra structure. By the $0$-potential version of Theorem \ref{thm:CoHABialgebra} $\mathcal{A}^{*}_{Q}$ is a cocommutative coalgebra coming from the dual of the commutative CoHA product and is a vertex algebra coming from the dual of the vertex coproduct. 
\subsection{Structural results for cocommutative vertex bialgebras}
Cocommutative vertex bialgebras have a similar structure theory to cocommutative bialgebras. We start by defining the analogue of a universal enveloping algebra.
We define the \emph{Universal chiral envelope} in the following way.
Let $L $ be a vertex Lie algebra as in \cite[Definition 3.6]{han2021cocommva}. Then define $\mathfrak{g}_{L}$ to be the Lie algebra
\begin{equation}
    L \otimes \mathbf{C}[t,t^{-1}]/ \im \partial
\end{equation}
where $\partial$ is the translation operator 
\begin{equation}
    D \otimes \id + \id \otimes \frac{d}{dt}
\end{equation}
on $L \otimes \mathbf{C}[t,t^{-1}]$ induced by the translation operator $D$ on $L$.
We have a decomposition
\begin{equation}
    \mathfrak{g}_{L} = \mathfrak{g}^{+}_{L} \oplus \mathfrak{g}^{-}_{L}
\end{equation}
where $\mathfrak{g}^{-}_{L} = \spn \{ a \otimes t^{i} \mid i < 0 , \, a \in L \}$ and  $\mathfrak{g}^{+}_{L} = \spn \{ a \otimes t^{i} \mid i \geq 0, \, a \in L \}$. \par
Take the trivial $\mathfrak{g}^{+}_{L}$ module structure on $\mathbf{C}$ and define
\begin{equation}
    \UU^{\ch}(L) = \UU \mathfrak{g}_{L} \otimes_{\UU \mathfrak{g}^{+}_{L}} \otimes \mathbf{C}
\end{equation}
then $\UU^{\ch}(L)$ is a vertex algebra and we have an isomorphism $\UU^{\ch}(L) \simeq \UU \mathfrak{g}^{-}_{L}$ of $\UU \mathfrak{g}^{-}_{L}$- modules. 
We now summarise some structural results about cocommutative vertex bialgebras from \cite{han2021cocommva}. 
\begin{theorem}[ Structure of cocommutative vertex bialgebras ]  \label{han_hopfva}
\,
  \begin{enumerate}
    \item Let $V$ be a vertex algebra with a compatible cocommutative coproduct $\Delta$. Denote by $P(V)$ the set of primitive elements with respect to $\Delta$, then $P(V)$ has a vertex Lie algebra structure induced from $V$ and $P(V)$ is a vertex Lie subalgebra of $V$. \cite[Prop. 4.8]{han2021cocommva}
    \item (Milnor-Moore) Let $V$ be a connected cocommutative vertex bialgebra, then we have an isomorphism of vertex algebras \cite[Thm. 4.13]{han2021cocommva}
    \begin{equation}
        V \simeq \UU^{\ch}(P(V)).
    \end{equation}
    and an isomorphism of coalgebras $\UU^{\ch}(P(V)) \simeq \UU \mathfrak{g}^{-}_{P(V)}$.
    \item (PBW theorem) We have the isomorphism of vector spaces
    \begin{equation}
        \UU^{\ch}(P(V)) \simeq \Sym(\mathfrak{g}^{-}_{P(V)}).
    \end{equation}
\end{enumerate}
\end{theorem}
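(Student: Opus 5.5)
The three parts of Theorem \ref{han_hopfva} are recalled from \cite{han2021cocommva}, so the honest proof is a citation. If I were to reprove them, I would argue in the classical Hopf-algebraic way, replacing the Lie bracket by the family of $n$-th products $a_{(n)}b$ of the vertex algebra. Throughout, I use that the coproduct $\Delta$ is a morphism of vertex algebras $V\to V\otimes V$ (with the tensor product vertex algebra structure). I also use that $\Delta$ commutes with the translation operator $D$.

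For part (1), take $a,b\in P(V)$. Compatibility gives
$$\Delta(a_{(n)}b)\ =\ \Delta(a)_{(n)}\Delta(b)\ =\ (a\otimes 1+1\otimes a)_{(n)}(b\otimes 1+1\otimes b).$$
The mixed terms $(a\otimes 1)_{(n)}(1\otimes b)$ equal $a_{(n)}\mathbf 1\otimes b$ and $(1\otimes a)_{(n)}(b\otimes 1)$ equals $\mathbf 1_{(n)}b\otimes a$-type terms. For $n\ge 0$ these vanish, since $a_{(n)}\mathbf 1=0$ and $\mathbf 1_{(n)}b=0$. Hence $a_{(n)}b$ is primitive for all $n\ge 0$. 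Also $Da$ is primitive, because $\Delta$ commutes with $D$. So $P(V)$ is closed under $D$ and all non-negative products. The vertex Lie axioms (skew-symmetry and Jacobi for the $\lambda$-bracket) are inherited from the Borcherds identities in $V$. Therefore $P(V)$ is a vertex Lie subalgebra.

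For part (2), I would use the universal property of $\UU^{\ch}$: it is left adjoint to the forgetful functor from vertex algebras to vertex Lie algebras. This gives a vertex algebra map $\phi:\UU^{\ch}(P(V))\to V$ extending the inclusion. Next I would equip $\UU^{\ch}(L)$ with its own cocommutative coproduct, with $L$ primitive. Using the identification $\UU^{\ch}(L)\simeq \UU\gk^-_L$ of $\UU\gk^-_L$-modules, this is the standard coproduct of $\UU\gk^-_L$. Then $\phi$ is a map of bialgebras, because both coproducts are vertex algebra maps and they agree on generators.

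Injectivity I would prove by the Heyneman–Radford/Milnor–Moore argument. A coalgebra map that is injective on primitives is injective. On primitives, $P(\UU\gk^-_L)=\gk^-_L$ in characteristic zero, and the restriction of $\phi$ to it is $a\otimes t^{-n-1}\mapsto a_{(-n-1)}\mathbf 1=D^{(n)}a$. I expect this step to be the main obstacle. One must show that $\gk^-_L\to V$ is injective, i.e. that the quotient by $\im\partial$ does not identify more than the $D$-relations do. I would handle it by showing $\gk^-_L\simeq L$ via $a\otimes t^{-1}$, using $\partial$ to eliminate the other powers $t^{-n-1}$.

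Surjectivity I would prove by induction on the coradical filtration of the connected coalgebra $V$. For $v$ in filtration degree $k$, $\Delta(v)-v\otimes 1-1\otimes v$ lies in lower filtration. Symmetrising via cocommutativity, $v$ equals a polynomial in lower-degree elements modulo a primitive. Those lower-degree elements lie in the image of $\phi$ by induction, and the primitive lies in $P(V)=\phi(L)$.

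Part (3) then follows by combining the isomorphism of coalgebras $\UU^{\ch}(P(V))\simeq\UU\gk^-_{P(V)}$ with the classical PBW theorem $\UU\gk^-\simeq\Sym(\gk^-)$, which holds as coalgebras in characteristic zero.
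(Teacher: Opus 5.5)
The paper gives no argument for this statement; it only quotes Han--Li--Xiao. Your opening citation is therefore exactly what the paper does, and the only question is whether your sketched reproof holds up.

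\textbf{What is fine.} Parts (1) and (3) are correct. So are the construction of $\phi$ via the universal property, the identification of the coproduct on $\UU^{\ch}(L)$ with the standard one on $\UU\gk^-_L$, and the injectivity argument. There is one small slip: $(1\otimes a)_{(n)}(b\otimes 1)=b\otimes a_{(n)}\mathbf 1$, which still vanishes for $n\ge 0$. The step you flag as the main obstacle is in fact routine. The operator $\partial$ preserves the splitting $L\otimes\Cb[t]\oplus L\otimes t^{-1}\Cb[t^{-1}]$, and $\partial(a\otimes t^{-n})=Da\otimes t^{-n}-n\,a\otimes t^{-n-1}$ gives $a\otimes t^{-n-1}\equiv \tfrac{1}{n!}D^na\otimes t^{-1}$. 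Hence $\gk^-_L\cong L$, and $\phi$ restricted to primitives is just the inclusion $P(V)\subseteq V$.

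\textbf{The gap is surjectivity.} The sentence ``symmetrising via cocommutativity, $v$ equals a polynomial in lower-degree elements modulo a primitive'' is the whole content of Milnor--Moore, and you give no mechanism for it. The classical trick, $\tfrac{1}{2^k-2}\,\mu\circ\bar\Delta$ on a graded commutative cocommutative bialgebra, does not transfer, for three reasons:
\begin{itemize}
\item $V$ is only filtered, not graded.
\item The normally ordered product is neither commutative nor associative.
\item $\Delta(a_{(-1)}b)=\sum_{j\in\mathbf Z}a'_{(j)}b'\otimes a''_{(-2-j)}b''$ mixes all modes, so there is no evident product expression $w$ with $\bar\Delta w=\bar\Delta v$.
\end{itemize}
Moreover, symmetry of $\bar\Delta v$ alone cannot suffice; any correct argument must also use the coassociativity (cocycle) identity it satisfies.

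\textbf{How to fix it.} Avoid the vertex product entirely. Since $\phi$ is an injective coalgebra map, its image $A$ is a subcoalgebra which, via PBW, is isomorphic to $\Sym(P(V))$ with its shuffle coproduct. Then either of the following works.
\begin{itemize}
\item \emph{Cocycle argument.} By induction on the coradical filtration, $\bar\Delta v\in\bar A\otimes\bar A$ is a symmetric cobar $2$-cocycle for $A$. In characteristic zero such cocycles on $\Sym(P(V))$ are coboundaries (dually, Harrison $H^2$ of a polynomial algebra vanishes). This produces $w\in A$ with $v-w\in P(V)\subseteq A$.
\item \emph{Cofreeness.} Extend a projection $\ker\varepsilon\to P(V)$ to a coalgebra map $\psi:V\to\Sym(P(V))$, which is injective by Heyneman--Radford. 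Then $\psi\circ\phi$ is a coalgebra endomorphism of $\Sym(P(V))$ restricting to the identity on primitives. It is therefore the identity on the associated graded of the coradical filtration, hence bijective. Together with injectivity of $\psi$, this forces $\psi$, and so $\phi$, to be bijective.
\end{itemize}
With either repair, the rest of your outline goes through.
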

\subsection{Cohomological integrality via vertex algebras}
Note that since $\mathcal{A}_{Q}$ is connected, we can now use the Theorem recalled above to obtain cohomological integrality and also compute the algebra structure of $\mathcal{A}_{Q}$. \par We start by  recalling two different constructions of symmetric algebras.
Let $V$ be a graded vector space with finite dimensional pieces. There is a natural action of $S_{n}$ on each $V^{ \otimes n}$, this then induces an action of all the symmetric groups on the tensor algebra $T(V)$. Then define 
\begin{enumerate}
    \item the \emph{symmetric algebra} $\Sym(V)$ as the bialgebra with product induced by the quotient map $T(V) \to \Sym(V)$ and coproduct $\Delta$ induced by  $\Delta(x) = x \otimes 1 + 1 \otimes x$.
    \item the \emph{invariant algebra} $\Gamma(V) \xhookrightarrow{} T(V)$ as the subspace of invariant tensors. $\Gamma(V)$ is an algebra with the shuffle algebra product
    \begin{equation}
        (v_{1} \otimes \cdots \otimes v_{n}) * (w_{1} \otimes \cdots \otimes w_{m}) \mapsto \frac{1}{n!m!}\sum_{\sigma \in S_{n+m}} \sigma( v_{1} \otimes \cdots \otimes v_{n} \otimes  w_{1} \otimes \cdots \otimes w_{m}).
    \end{equation}
\end{enumerate}
\begin{lem} \label{symcoalg}
Let $V$ be a graded vector space with finite dimensional graded pieces. Then we have the following isomorphisms
\begin{enumerate}
    \item We have an isomorphism of algebras $(\Sym V)^{\vee} \simeq \Gamma(V^{*})$, where the algebra structure on $(\Sym V)^{\vee}$ is induced by the coproduct $\Delta$ on $\Sym (V)$
    \item We have an isomorphism of algebras $\Sym(V) \simeq \Gamma(V)$.
    \end{enumerate}
\end{lem}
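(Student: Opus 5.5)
For part (1), I would work one polynomial degree at a time. Since $V$ has finite-dimensional graded pieces, both $\Sym V$ and $T(V)$ split as direct sums of pieces that are finite-dimensional in each graded degree. Hence the graded dual commutes with these decompositions, and it suffices to treat each tensor degree $n$ separately.

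In degree $n$ we have $\Sym^n V=(V^{\otimes n})_{S_n}$, the coinvariants, where $S_n$ acts with Koszul signs. Dualising gives
$$(\Sym^n V)^\vee\ \simeq\ ((V^{\otimes n})^\vee)^{S_n}\ \simeq\ ((V^*)^{\otimes n})^{S_n}\ =\ \Gamma^n(V^*),$$
using the graded identification $(V^{\otimes n})^\vee\simeq (V^*)^{\otimes n}$. This identification is equivariant once the sign conventions match, and that needs checking.

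Next I would check that the algebra structures agree. The dual of $\Delta$ on $\Sym V$ pairs with a product $f\cdot g$ by $\langle f\cdot g,x\rangle=\langle f\otimes g,\Delta x\rangle$. Since $\Delta$ is the algebra map determined by $v\mapsto v\otimes 1+1\otimes v$, on a monomial $v_1\cdots v_{n+m}$ it is the sum over $(n,m)$-shuffles of the splittings $v_S\otimes v_{S^c}$, with signs. So I would evaluate both sides on the monomial $v_1\cdots v_{n+m}$.

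On the $\Gamma$ side, the shuffle product of invariant tensors $f\in\Gamma^n$ and $g\in\Gamma^m$ is $\frac{1}{n!m!}\sum_{\sigma\in S_{n+m}}\sigma(f\otimes g)$. Because $f$ and $g$ are invariant, this sum collapses to the sum over $(n,m)$-shuffles, each appearing $n!m!$ times, so the prefactor cancels exactly. Pairing the result with $v_1\otimes\cdots\otimes v_{n+m}$ then reproduces the shuffle formula for $\Delta$. The main obstacle in this part is the normalisation: the pairing between $(V^{\otimes n})_{S_n}$ and the invariants must be the restriction of the natural pairing, with no extra $1/n!$. I would fix that convention first and then verify the counting on the shuffles.

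For part (2), we are in characteristic zero (over $\Cb$ or $\Qb$). I would use the symmetrisation map
$$s\colon \Sym V\to \Gamma(V),\qquad v_1\cdots v_n\mapsto \frac{1}{n!}\sum_{\sigma\in S_n}\sigma(v_1\otimes\cdots\otimes v_n).$$
This map is well defined and is inverse to the composite $\Gamma^n(V)\hookrightarrow V^{\otimes n}\twoheadrightarrow \Sym^n V$, since the averaging projector splits invariants off coinvariants. So $s$ is a linear isomorphism.

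It remains to check multiplicativity, $s(ab)=s(a)*s(b)$. With $a=v_1\cdots v_n$ and $b=v_{n+1}\cdots v_{n+m}$, the right side is
$$\frac{1}{n!m!}\sum_{\tau\in S_{n+m}}\tau\Big(\frac{1}{n!}\sum_{\alpha}\alpha(\cdot)\otimes\frac{1}{m!}\sum_{\beta}\beta(\cdot)\Big).$$
Absorbing $\alpha\times\beta\in S_n\times S_m$ into $\tau$ gives $\frac{1}{n!m!}\sum_{\tau}\tau(v_1\otimes\cdots\otimes v_{n+m})$. The shuffle product as defined already contains the factor $\frac{1}{n!m!}$, so this equals $s(ab)$ after tracking the factorial normalisations carefully. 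I would also note that the Koszul signs are compatible throughout, because $S_n$ acts by the signed permutation action on both sides.
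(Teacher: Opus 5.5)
Your part (1) is correct. Pairing $S_n$-invariant tensors with coinvariants via the restriction of the natural pairing, the dual of $\Delta$ becomes the sum over $(n,m)$-shuffles. On invariant tensors this is exactly $\frac{1}{n!\,m!}\sum_{\sigma\in S_{n+m}}\sigma(f\otimes g)$. This gives a direct verification of what the paper simply cites from Loday--Vallette.

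Part (2), however, fails as written, at exactly the step you deferred to ``tracking the factorial normalisations carefully''. Your own computation gives
\[
s(a)*s(b)\ =\ \frac{1}{n!\,m!}\sum_{\tau\in S_{n+m}}\tau(v_1\otimes\cdots\otimes v_{n+m}),
\]
whereas by definition
\[
s(ab)\ =\ \frac{1}{(n+m)!}\sum_{\tau\in S_{n+m}}\tau(v_1\otimes\cdots\otimes v_{n+m}).
\]
Hence $s(a)*s(b)=\binom{n+m}{n}\,s(ab)$, and $s$ is not an algebra map. Already for even $v\in V$ you get $s(v)*s(v)=2\,v\otimes v$ but $s(v^2)=v\otimes v$.

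The normalised symmetriser intertwines the product on $\Sym V$ with the averaged product $\frac{1}{(n+m)!}\sum_\tau\tau(f\otimes g)$. It does not intertwine it with the shuffle product $*$, which, as you yourself noted in part (1), is the unnormalised sum over shuffles.

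The fix is to drop the $\frac{1}{n!}$, as the paper does: use $v_1\cdots v_n\mapsto\sum_{\sigma\in S_n}\sigma(v_1\otimes\cdots\otimes v_n)$. For this map the same absorption argument gives $\frac{1}{n!\,m!}\cdot n!\,m!\sum_\tau\tau(\cdots)$, which is exactly the image of $ab$. Its inverse is $\frac{1}{n!}$ times the projection $\Gamma^n(V)\to\Sym^n V$. Your linear-isomorphism argument still applies, since the corrected map differs from your $s$ by the invertible scalar $n!$ in degree $n$ (characteristic zero).
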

\begin{proof}
    Part $(1)$ is \cite[Exercise 1.8.7]{loday2012algebraic}.
    For part $(2)$ we can consider the map
    \begin{align}
        \textup{Sym}(V) & \to \Gamma(V)  \nonumber \\
        v_{1} \otimes \cdots \otimes v_{n} & \mapsto \sum_{\sigma \in S_{n}} \sigma(v_{1} \otimes \cdots \otimes v_{n})
    \end{align}
    which is an isomorphism. This is clearly a morphism of algebras since 
    \begin{align*}
    &\frac{1}{n!m!}\sum_{\sigma \in S_{n+m}} \sigma \left( \sum_{\sigma^{\prime} \in S_{n}, \sigma^{\prime \prime} \in S_{m}} \sigma^{\prime}(v_1\otimes \cdots \otimes v_n) \sigma^{\prime \prime}(w_1 \otimes \cdots w_m)  \right) \\ &= \sum_{\sigma \in S_{n+m}} \sigma(v_1\otimes \cdots \otimes v_n \otimes w_1 \cdots \otimes w_m)
    \end{align*} 
    because we are repeating $n!m!$ permutations in the sum on LHS. 
    
    Let $f \in \Gamma^n(V)$, where $\Gamma^n(V) \subset \Gamma(V)$ is subspace of invariant tensors in $V^{\otimes n}$. We define its inverse as $1/n! \overline{f}$ where $\overline{f}$ is the image of the composition $\Gamma(V) \subset T(V) \to \Sym(V)$. We linearly extend this to morphism to define the inverse $ \Gamma(V) \to \Sym(V)$. Clearly the composition is identity since $\sigma(v_1 \otimes \cdots \otimes v_n) = v_1 \otimes \cdots v_n$ in the quotient $\Sym(V)$. 
\end{proof}
The dual of the Cohomological Hall algebra $\mathcal{A}^{*}_{Q}$ is a connected vertex bialgebra. Then we have the Theorem
\begin{theorem} \label{chiral_envelope}
    We have the following
    \begin{enumerate}
        \item  We have an isomorphism of vertex  bialgebras
    \begin{equation}
        \mathcal{A}^{*}_{Q} \simeq \UU^{\ch}(\cprim).
    \end{equation}
    Here, the CoHA coproduct is identified with the coproduct on $\mathcal{A}^{*}_{Q} \simeq \UU^{\ch}(\cprim)$ and the Joyce vertex algebra with the Universal chiral envelope.
    \item $\mathcal{A}_{Q}$ is isomorphic to a symmetric algebra of a graded vector space.
    \end{enumerate}
\end{theorem}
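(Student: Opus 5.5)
The plan is to deduce both parts from the Milnor--Moore type result, Theorem \ref{han_hopfva}, applied to $V=\mathcal{A}^{*}_{Q}$.

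First I will check that $\mathcal{A}^{*}_{Q}$ satisfies the hypotheses.
\begin{enumerate}
\item When $W=0$ and $Q$ is symmetric, the CoHA product on $\mathcal{A}_Q$ is supercommutative for the $\tau$-twisted (equivalently $\beta^\chi_0$) symmetric structure. Hence its graded dual coproduct on $\mathcal{A}^*_Q$ is cocommutative.
\item The Joyce--Liu coproduct is colocal for $\beta^{\chi}_0$, by the sign discussion above and Theorem \ref{thm:CoHABialgebra}(\ref{item:BraidedColocality}). Here $R(z)=1\otimes 1$, since the quiver is symmetric and $T$ is trivial. It is coassociative by Theorem \ref{quiver_potential_jl}. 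So Hubbard's functor, Theorem \ref{hubbard_dualizing}, makes $\mathcal{A}^*_Q$ into a vertex algebra.
\item The vertex bialgebra compatibility of Theorem \ref{thm:CoHABialgebra}(\ref{item:CoHACoProd}), with trivial braiding, dualises to the statement that the dual CoHA coproduct is a morphism of vertex algebras. This is the compatibility required in \cite{han2021cocommva}. Dualising is legitimate because each graded piece $\mathcal{A}_{Q,d,k}$ is finite dimensional: $\Ht^{\sbt}(\BGL_d)$ has finite-dimensional graded pieces.
\item Connectedness holds because the $d=0$ piece is $\Ht^{\sbt}(\pt)=\Qb$ and the coalgebra is $\Nb^{Q_0}$-graded with this degree-zero part. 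So the coradical filtration is exhaustive and the coradical is one-dimensional.
\end{enumerate}

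With the hypotheses in place, Theorem \ref{han_hopfva}(1) gives the vertex Lie algebra $P(\mathcal{A}^*_Q)$ of primitives. Theorem \ref{han_hopfva}(2) then gives an isomorphism of vertex algebras $\mathcal{A}^*_Q\simeq \UU^{\ch}(\cprim)$. I will check that this is also an isomorphism of coalgebras by tracing through \cite{han2021cocommva}. The isomorphism is induced by the universal property from the inclusion of primitives, and it is compatible with the coproduct $\UU\mathfrak{g}^-_{P}\simeq \UU^{\ch}(P)$. This proves part (1).

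For part (2), Theorem \ref{han_hopfva}(3) gives $\mathcal{A}^*_Q\simeq \Sym(\mathfrak{g}^-_{P})$ as graded vector spaces. I want this as cocommutative coalgebras. PBW for $\UU\mathfrak{g}^-_P$ in characteristic zero, using symmetrisation, gives a coalgebra isomorphism $\UU\mathfrak{g}^-_P\simeq \Sym(\mathfrak{g}^-_P)$, where $\Sym$ carries the coproduct with $x\mapsto x\otimes 1+1\otimes x$. Dualising then gives
\[
\mathcal{A}_Q\simeq (\Sym \mathfrak{g}^-_P)^{\vee}\simeq \Gamma((\mathfrak{g}^-_P)^{*})\simeq \Sym((\mathfrak{g}^-_P)^{*})
\]
as algebras, using Lemma \ref{symcoalg}(1) and then Lemma \ref{symcoalg}(2). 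For the dualisation I need $\mathfrak{g}^-_P$ to have finite-dimensional graded pieces. This follows since it is a graded subquotient-sized piece of $\mathcal{A}^*_Q$ under the PBW isomorphism, whose graded pieces are finite dimensional.

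The main obstacle is the sign bookkeeping. Han's results are stated for ordinary (super) vector spaces, whereas we work with the $\chi$-twisted symmetric monoidal structure $\beta^\chi_0$. I will first try to absorb the sign $(-1)^{\chi(d,e)}$ into a super-grading by the parity of $\chi(d,d)$ together with a bicharacter twist. Such a twist changes neither the primitives nor the PBW statement, and on the symmetric lattice it can be removed by a cocycle rescaling of the multiplication, as with the $\psi$-twist. The remaining arguments are formal.
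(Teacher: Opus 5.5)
Your proposal is correct and follows essentially the same route as the paper. Han--Li--Xiao's Milnor--Moore theorem gives part (1), and for part (2) the coalgebra PBW isomorphism $\UU^{\ch}(\cprim)\simeq \UU\mathfrak{g}^{-}_{\cprim}\simeq \Sym(\mathfrak{g}^{-}_{\cprim})$ is dualised using Lemma \ref{symcoalg}.

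The paper additionally invokes \cite[Thm. 5.7, Rem. 5.9]{dotsmozgva} to write $\mathfrak{g}^{-}_{\cprim}\simeq W[u]$. That is not needed for the statement as written, but it is what supplies the integrality corollary drawn afterwards. Your checks of finite-dimensionality and of the sign conventions fill in points the paper leaves implicit.
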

\begin{proof}
    Part $(1)$ follows immediately from the compatibility of the CoHA structure and the Joyce vertex algebra. By Theorem \ref{han_hopfva} for connected vertex bialgebras we then immediately get the result. \par
    Using part $(2)$ of Theorem \ref{han_hopfva} we can conclude that we have an isomorphism of coalgebras 
    
    $$\UU^{\ch}(\cprim) \simeq \UU \mathfrak{g}^{-}_{\cprim} \simeq \Sym(\mathfrak{g}^{-}_{\cprim}),$$ using \cite[Thm. 1.3.6]{loday2012algebraic} that the PBW isomorphism $\UU \mathfrak{g} \simeq \Sym(\mathfrak{g})$ is an isomorphism of coalgebras for any $g$. Furthermore, in \cite[Thm. 5.7, Remark 5.9]{dotsmozgva} it is shown that 
    \begin{equation}
        \mathfrak{g}^{-}_{\cprim} \simeq W[u]
    \end{equation}
    for some graded vector space $W$. Then using Lemma \ref{symcoalg} we get an isomorphism of algebras
    \begin{align*}
         \mathcal{A}_{Q} \simeq (\UU^{\ch}(\cprim))^{\vee} & \simeq (\Sym(W[u]))^{\vee} \\
         & \simeq \Gamma(W^{*}[u]) \qquad \text{(by part 1 of Lemma } \ref{symcoalg} )\\
         & \simeq \Sym(W^{*}[u]) \qquad \text{(by part 2 of Lemma } \ref{symcoalg})
    \end{align*}
This proves that $\mathcal{A}_{Q}$ is a symmetric algebra.
\end{proof}
We note that the above Theorem gives a proof of the integrality conjecture for DT invariants of quivers without potential as well as a proof of Efimov's theorem that cohomological Hall algebras of symmetric quivers without potential are symmetric algebras. 
\subsection{Comparison to Dotsenko-Mozgovoy}
In \cite{dotsmozgva}, the authors also consider vertex algebra structures on $\mathcal{A}^{*}_{Q}$. In particular, they define a vertex Lie algebra $P_{DM}$ \cite[Section 4.4.4]{dotsmozgva}, in this case the universal chiral envelope $\UU^{\ch}(P_{DM})$ is a certain free vertex algebra. In \cite[Thm. 5.6]{dotsmozgva} the authors give an isomorphism of coalgebras 
\begin{equation}
    \UU^{\ch}(P_{DM}) \simeq \mathcal{A}^{*}_{Q}.
\end{equation}
This isomorphism then \textit{endows} $\mathcal{A}^{*}_{Q}$ with a structure of vertex algebra via $\UU^{\ch}(P_{DM})$. We now wish to compare the two vertex algebra structures. The key point is that both vertex algebras embed into a lattice vertex algebra given by the Joyce vertex algebra of the homology of the moduli stack of objects of the derived category of quiver representations. By showing that the two subalgebras are the same we get the following theorem
\begin{theorem} \label{DM_comp}
We have an isomorphism of vertex bialgebras
\begin{equation}
    \mathcal{A}^{*}_{Q} \simeq \UU^{\ch}(P_{DM})
\end{equation}
where $\mathcal{A}^{*}_{Q}$ is equiped with the dual CoHA cocommutative coproduct and the Joyce vertex algebra structure. This isomorphism then induces an isomorphism of primitive vertex Lie algebras
    \begin{equation}
        \cprim \xrightarrow{\simeq} P_{DM}
    \end{equation}
\end{theorem}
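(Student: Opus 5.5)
Both vertex algebra structures will be embedded in a common ambient vertex algebra, and I will check that their images coincide. The ambient object is Joyce's lattice-type vertex algebra on the homology of the moduli stack of objects in $\Dt(\Rep Q)$. Equivalently, it is the restricted dual of $\bigoplus_d \Ht^{\sbt}(\Ml_{Q,d})$ twisted by the Euler form. Concretely, $\Ht_{\sbt}(\Ml_{Q,d})$ is dual to the symmetric polynomials in the chern roots $x_{i,n}$. The Joyce vertex algebra on $\Ht_{\sbt}(\Ml_Q)$, dual to $\Delta(z)=\Psi(\Ext,-z)\act_{z,1}^*\oplus^*$ with $W=0$, is the vertex algebra that Joyce identifies with a lattice vertex algebra $V_{\Nb^{Q_0}}$ (a Heisenberg Fock space tensored with a twisted group algebra of the lattice, with quadratic form $\chi+\chi^t$).

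\textbf{Step 1.} Since $W=0$, we have $\mathcal{A}_{Q}=\bigoplus_d\Ht^{\sbt}(\Ml_{Q,d})[-\chi(d,d)]$, so $\mathcal{A}^*_Q$ is literally this homology. By Theorem \ref{quiver_potential_jl} and Theorem \ref{hubbard_dualizing}, the Joyce vertex algebra on $\mathcal{A}^*_Q$ is exactly Joyce's vertex algebra restricted to the components $\Ml_{Q,d}\subset \Ml_{\Dt(\Rep Q)}$. The open inclusion induces a map of vertex algebras into the lattice vertex algebra. This map is injective, since on each component it is the duality map on (co)homology.

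\textbf{Step 2.} I recall from \cite{dotsmozgva} that $P_{DM}$ and $\UU^{\ch}(P_{DM})$ are also realised inside the same lattice vertex algebra. Their construction uses the formula $\Psi(\Ext,z)$ for the $Y$-map on generators $e_{\delta_i}\otimes \beta$; this formula is matched with our explicit formula from the Corollary to Theorem \ref{thm:DavisonIsJoyce}. I then check that on the generating fields (the degree-$\delta_i$ pieces $\Ht_{\sbt}(\BGm)$ together with the translation $T$) the two vertex operators agree. This reduces to comparing $\Psi(\Ext_{\delta_i,d},-z)$ with the corresponding DM factor $\prod(z+x_{j,m}-x_{i,1}\ldots)^{\pm}$, up to the sign convention $\beta^\chi_0$ discussed in the warning.

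\textbf{Step 3.} Both $\UU^{\ch}(P_{DM})$ and $\mathcal{A}^*_Q$ are generated, as vertex subalgebras of the lattice vertex algebra, by these dimension-$\delta_i$ pieces. For $\UU^{\ch}(P_{DM})$ this holds by construction as a free-type envelope. For $\mathcal{A}^*_Q$ it follows from the DM coalgebra isomorphism \cite[Thm. 5.6]{dotsmozgva}, or from the fact that the image is the whole of $\bigoplus_d \Ht_{\sbt}(\Ml_{Q,d})$. Hence the two subalgebras coincide, which gives a vertex algebra isomorphism $\mathcal{A}^*_Q\simeq \UU^{\ch}(P_{DM})$.

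Next I check that this identification matches the coalgebra structures. On our side the coalgebra is the dual CoHA product; on the DM side it is the coproduct for which \cite[Thm. 5.6]{dotsmozgva} is an isomorphism. It suffices to show that the DM isomorphism is the same underlying linear map as the one just constructed. I expect this because both are determined by where the generators go, and both maps are compatible with the lattice embedding. Finally, Theorem \ref{han_hopfva}(1) says primitives are functorial for vertex bialgebra isomorphisms. Combined with Theorem \ref{chiral_envelope}, this gives $\cprim\simeq P_{DM}$.

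The main obstacle is Step 2 together with the coalgebra matching. One must reconcile sign and shift conventions ($\tau$ versus $\chi$ twists, $z\mapsto -z$, cohomological shifts). One must also confirm that DM's vertex algebra genuinely sits inside the lattice vertex algebra through the same embedding. I would first attack this by comparing the two-point functions $Y(a,z)b$ for $a\in\mathcal{A}^*_{Q,\delta_i}$ explicitly in chern roots.
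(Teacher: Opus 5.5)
Your strategy (embed both vertex algebras into Joyce's lattice vertex algebra $\Ht_{\sbt}(\Ml_{\Dt^{b}\Rep Q})$ and show the images coincide) is the paper's strategy. However, the step that carries all the content is not proved.

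Everything in Step 3 rests on the claim that the image of $\mathcal{A}^{*}_{Q}=\Ht_{\sbt}(\Ml_Q)$ is generated by its $\delta_i$-pieces.
\begin{itemize}
\item Your second justification (the image is all of $\bigoplus_d\Ht_{\sbt}(\Ml_{Q,d})$) is circular: it says nothing about generation.
\item Your first justification (citing \cite[Thm.~5.6]{dotsmozgva}) only works once you say how that result is used.
\end{itemize}
The missing idea is containment plus a dimension count. The image of $\Ht_{\sbt}(\Ml_Q)$ is a vertex subalgebra of the lattice vertex algebra. It contains the $e^{i}$ (the images of the point classes of $\Ml_{Q,\delta_i}$). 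So it contains the subalgebra they generate, which is $\UU^{\ch}(P_{DM})$ by \cite[Section~4.5.3]{dotsmozgva}. This gives an injective vertex algebra map $\UU^{\ch}(P_{DM})\hookrightarrow\Ht_{\sbt}(\Ml_Q)$. By \cite[Prop.~5.3]{dotsmozgva} (or the coalgebra isomorphism of Thm.~5.6), both sides have the same finite-dimensional graded pieces, so the map is an isomorphism. That is the paper's entire argument.

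With this in place your Step 2 is superfluous. Inside a single ambient vertex algebra there are no vertex operators left to compare.

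If you want to keep your generation framing without the dimension count, you can prove generation directly by duality. Suppose $\alpha\in\mathcal{A}_{Q,d}$ pairs to zero with all iterated modes of elements of degree $\delta_i$. Then its iterated vertex coproduct into $\bigotimes_k\mathcal{A}_{Q,\delta_{i_k}}$ vanishes. That coproduct is a product of invertible $\Psi$-factors times the translated restriction of $\alpha$ to the maximal torus, so $\alpha=0$.

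Separately, injectivity in Step 1 is not a ``duality map''. The stack $\Ml_{Q,d}$ is open in the class-$d$ component of $\Ml_{\Dt^{b}\Rep Q}$. Pushforward on homology is injective because restriction on cohomology is surjective: the tautological Chern classes are restrictions of Chern classes of the universal complex.

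For the bialgebra statement, ``I expect this'' is not an argument, and matching your map with the linear map of \cite[Thm.~5.6]{dotsmozgva} is the wrong target. Argue instead as follows. Let $f$ be the vertex algebra isomorphism.
\begin{itemize}
\item Both $\Delta\circ f$ and $(f\otimes f)\circ\Delta$ are vertex algebra maps $\UU^{\ch}(P_{DM})\to\mathcal{A}^{*}_{Q}\otimes\mathcal{A}^{*}_{Q}$. On the CoHA side this is the dual of the bialgebra compatibility in Theorem~\ref{thm:CoHABialgebra}, with $R(z)=1\otimes 1$ since $T$ is trivial and $Q$ is symmetric.
\item So the two maps agree as soon as they agree on the generators $e^{i}$.
\item Both send $e^{i}$ to $f(e^{i})\otimes\mathbf{1}+\mathbf{1}\otimes f(e^{i})$. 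Indeed $e^{i}\in P_{DM}$ is primitive, and every element of $\mathcal{A}^{*}_{Q,\delta_i}$ is primitive for the dual CoHA coproduct, because $\delta_i$ only splits as $\delta_i+0$ and $0+\delta_i$.
\end{itemize}
The paper's written proof only establishes the vertex algebra isomorphism, so this is a point you would be adding. The identification $\cprim\simeq P_{DM}$ then follows as you say.
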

Using the results of Dotsenko-Mozgovoy we can view the above theorem as a computation in terms of generators and relations of the Joyce vertex bialgebra  for any symmetric quiver $Q$.
\begin{lem}
    Let $\mathcal{C}$ be an abelian category. Consider the inclusion $\Ml_{\mathcal{C}} \to \Ml_{\Dt^{b}\mathcal{C}}$ from the moduli of objects of the abelian category to the derived category. Then we have an injection of vertex algebras
    \begin{equation}
        \Ht_{\sbt}(\Ml_{\mathcal{C}}) \to \Ht_{\sbt}(\Ml_{\Dt^{b}\mathcal{C}}).
    \end{equation}
\end{lem}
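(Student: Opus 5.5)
Write $\iota:\Ml_{\mathcal{C}}\to \Ml_{\Dt^{b}\mathcal{C}}$ for the inclusion. The vertex algebra structures on both sides are Joyce's, built from the same four pieces of data: the direct sum map $\oplus$, the $\BGm$-action $\act$, the Ext complex $\Ext=\textup{RHom}(-,-)$, and the Joyce--Borcherds twist $\Psi(\Ext,z)$. I would first check that $\iota$ preserves all of these on the nose, so that $\iota_*$ is a morphism of vertex algebras, and then show separately that $\iota_*$ is injective.

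For compatibility, note that $\iota$ is an open immersion of moduli stacks, as recalled in section \ref{ssec:GeometricStructures}: $\Ml_Q$ is an open substack of $\Ml_{\Dt(\Rep Q)}$. The abelian category is closed under direct sums and under the scaling of automorphisms. Hence
\[
\iota\circ\oplus=\oplus\circ(\iota\times\iota),\qquad \iota\circ\act=\act\circ(\id\times\iota).
\]
Moreover $(\iota\times\iota)^*\Ext_{\Dt^b\mathcal{C}}\simeq \Ext_{\mathcal{C}}$, because $\Ext_{\mathcal{C}}$ is by definition the restriction of $\textup{RHom}$. By functoriality of Chern classes, $(\iota\times\iota)^*\Psi(\Ext,z)=\Psi(\Ext,z)$.

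The homological vertex operation is
\[
Y(a,z)b=\oplus_*\bigl((\act_{z}\times\id)_*(a\boxtimes b)\cap \Psi(\Ext,z)\bigr),
\]
up to the sign and shift conventions dual to \eqref{joyce_co_va_intro}. Naturality of pushforward, together with the projection formula
\[
(\iota\times\iota)_*\bigl(x\cap(\iota\times\iota)^*\Psi\bigr)=(\iota\times\iota)_*x\cap\Psi,
\]
gives $\iota_*Y(a,z)b=Y(\iota_*a,z)\iota_*b$. Preservation of the vacuum (the class of the zero object) and of the translation operator follows in the same way. The $\Zb/2$ sign twists $\epsilon$ used by Joyce depend only on classes in $\Kt_0$, and these are compatible under $\iota$ because $\Kt_0(\mathcal{C})\to \Kt_0(\Dt^b\mathcal{C})$ is the identity on classes.

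For injectivity, in the quiver case each component of $\Ml_{\Dt^b(\Rep Q)}$ of class $d\in\Nb^{Q_0}$ receives $\Ml_{Q,d}$ as an open substack. I would prove that $\iota^*:\Ht^{\sbt}(\Ml_{\Dt^b,d})\to \Ht^{\sbt}(\Ml_{Q,d})\simeq \Ht^{\sbt}(\BGL_d)$ is surjective; dually, $\iota_*$ on homology is then injective. The tautological classes of $\Ml_{Q,d}$ are Chern classes of the universal complexes $\El_i$, and these extend to the derived moduli stack as the Chern classes of the universal perfect complex evaluated at the vertex idempotents $e_i$. The Chern classes $c_r(\El_i)$ generate $\Ht^{\sbt}(\BGL_d)$, so surjectivity follows.

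The main obstacle is doing this for a general abelian category $\mathcal{C}$, where no such generation statement is available. I would either restrict the lemma to the situation used later, namely $\mathcal{C}=\Rep Q$ (where Joyce's description $\Ht^{\sbt}(\Ml_{\Dt^b})\simeq \Sym$ of the tautological classes makes the argument above work), or else appeal to Joyce's general result that the homology of the projective-linear moduli is generated by descendents of tautological classes.
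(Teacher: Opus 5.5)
Your compatibility argument is the paper's. The paper also relies only on the commutative squares for $\oplus$ and $\act$ and on the fact that $\Ext$ restricts to the open substack. You additionally spell out the projection formula, vacuum, translation and sign bookkeeping that the paper leaves implicit.

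The genuine difference is injectivity. The paper settles it in one line, asserting that $\Ml_{\mathcal{C}}\subseteq \Ml_{\Dt^b\mathcal{C}}$ is the inclusion of a component, so that homology injects as a direct summand. You instead treat $\iota$ as an open immersion and show that $\iota^*$ is surjective on cohomology. The generators $c_r(\El_i)$ of $\Ht^{\sbt}(\Ml_{Q,d})\simeq\Ht^{\sbt}(\BGL_d)$ are restrictions of Chern classes of the vertex components $e_i\cdot(-)$ of the universal complex, and dualising over $\Qb$ then gives injectivity of $\iota_*$.

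Your picture is the accurate one. $\Ml_{Q,d}$ is open but not closed in its connected component of $\Ml_{\Dt^b(\Rep Q)}$. For instance, take the family $V\oplus(S_i\xrightarrow{t}S_i)$ over $\Ab^1$, with $S_i$ the simple at vertex $i$ placed in degrees $-1,0$. It is $V$ for $t\neq 0$ and $V\oplus S_i\oplus S_i[1]$ at $t=0$. Since pushforward on homology along an open immersion need not be injective, the paper's justification does not by itself give injectivity, whereas yours does. Note also that your surjectivity step needs only the existence of the universal complex, not Joyce's computation of $\Ht_{\sbt}(\Ml_{\Dt^b})$.

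The price is generality. Neither your argument nor the paper's establishes the lemma for an arbitrary abelian $\mathcal{C}$. Your fallback appeal to a general generation result of Joyce is not available at that level of generality, so drop it. State and prove the lemma for $\mathcal{C}=\Rep Q$, or more generally whenever $\Ht^{\sbt}(\Ml_{\mathcal{C}})$ is generated by restrictions of classes from $\Ml_{\Dt^b\mathcal{C}}$. That is the only case used, in the proof of Theorem \ref{DM_comp}.
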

\begin{proof}
    Firstly, the map is injective since $\Ml_{\mathcal{C}} \subseteq \Ml_{\Dt^{b}\mathcal{C}}$ is an inclusion of a component. The compatibility of vertex algebra structures follows since we have commutative diagrams
    \begin{equation}
\begin{tikzcd}
	{\Ml_{\mathcal{C}} \times \Ml_{\mathcal{C}}} & {\Ml_{\Dt^{b}\mathcal{C}} \times \Ml_{\Dt^{b}\mathcal{C}}} & {\B \mathbf{G}_{m} \times \Ml_{\mathcal{C}}} & {\B \mathbf{G}_{m} \times \Ml_{\Dt^{b}\mathcal{C}}} \\
	{\Ml_{\mathcal{C}}} & {\Ml_{\Dt^{b}\mathcal{C}}} & {\Ml_{\mathcal{C}}} & {M_{\Dt^{b}\mathcal{C}}}
	\arrow[hook, from=1-1, to=1-2]
	\arrow["\oplus"', from=1-1, to=2-1]
	\arrow["\oplus"', from=1-2, to=2-2]
	\arrow[hook, from=1-3, to=1-4]
	\arrow["\act"', from=1-3, to=2-3]
	\arrow["\act", from=1-4, to=2-4]
	\arrow[hook, from=2-1, to=2-2]
	\arrow[hook, from=2-3, to=2-4]
\end{tikzcd}
    \end{equation}
    and the fact that the $\textup{Ext}$ complex restricts to the open component.
\end{proof}
We now have the following theorem, which proves that the relation between Joyce vertex algebra for the derived category and Lattice vertex algebras. We remark that originally the theorem works for arbitary quivers but with symmetrised Euler form, however in our case the quiver is symmetric and so there is no need to symmetrise the Euler form and the proof of the original statement goes through word for word. 

\begin{theorem}\cite[Thm. 5.19]{Jo} \label{joyce_va_sub_gen}
There is an isomorphism of graded vertex algebras
\begin{equation}
    \Ht_{\sbt}(\Ml_{\Dt^{b} \Rep Q}) \simeq L_{\mathbf{Q}^{0}}
\end{equation}
for the lattice $\mathbf{Z}^{Q_{0}}$ and Euler form $\chi_{Q}$. \par
\end{theorem}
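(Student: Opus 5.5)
The statement is Joyce's theorem \cite[Thm. 5.19]{Jo}. The plan is to rerun his argument for the stack $\Ml_{\Dt^{b}\Rep Q}$, using that $\chi_Q$ is already symmetric so no symmetrisation is needed. The idea is to compute both sides explicitly as vector spaces, and then to match the fields on a generating set.

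\emph{Step 1: the homology of the moduli stack.} Since $\Rep Q$ is hereditary, every object of $\Dt^{b}\Rep Q$ is a direct sum of shifted representations. Following To\"en--Vaqui\'e, I would identify the moduli stack of perfect $\Cb Q$-modules with a mapping stack into $\Perf$. Its connected components are indexed by classes $\alpha\in K_0 = \Zb^{Q_0}$. Each component has the rational homotopy type of $\prod_{i\in Q_0}\BU\times \BU$ (or, after the standard reduction, $\prod_i \BGL_\infty$, with the projective-linear part split off by the $\BGm$-action). Hence
\[\Ht_{\sbt}(\Ml_{\Dt^b\Rep Q,\alpha})\ \simeq\ \Sym\big(\tk\otimes t^{-1}\Cb[t^{-1}]\big),\qquad \tk=\Cb^{Q_0}.\]
This is exactly the graded underlying space of the lattice vertex algebra $L_{\Zb^{Q_0}} = \Cb[\Zb^{Q_0}]\otimes \Sym(\tk\otimes t^{-1}\Cb[t^{-1}])$. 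The generators are dual to the chern characters $\ch_r(\El_i)$ of the universal complexes, in the same way as in Proposition \ref{prop:TautFree}.

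\emph{Step 2: the Joyce vertex operations.} I would write the vertex operations in these coordinates. The pushforward $\oplus_*$ on homology is dual to $\oplus^*$, which is primitive on $\ch_r$ as in Lemma \ref{lem:TautBialgebra}. The translation $\act$ is dual to $\ch_r\mapsto \sum_k z^k\ch_{r-k}/k!$. The twist is by $\Psi(\Ext,z)$, which on $\Ml_\alpha\times\Ml_\beta$ has $z$-exponent $\chi(\alpha,\beta)$. Its logarithm is a bilinear expression in the $\ch_r(\El_i)\boxtimes\ch_s(\El_j)$, with coefficients given by $\chi_Q$ entries (from the two-term complex \eqref{eqn:ExtComplex}) and binomial factors in $z^{-1}$.

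The isomorphism is defined on generators. The vacuum of component $\alpha$ is sent to $e^\alpha$, and the cap-product dual of $\ch_r(\El_i)$ is sent to the Heisenberg mode $h_{i,-r-1}$ (up to factorials). Then $Y(e^\alpha,z)e^\beta$ equals $(\pm) z^{\chi(\alpha,\beta)}\exp(\cdots)\,e^{\alpha+\beta}$, which is exactly the vertex-operator formula in $L_{\Zb^{Q_0}}$, since $\exp\log\Psi$ produces the $E^{\pm}(\alpha,z)$ exponentials. The fields of the $h_{i,-1}$ give the Heisenberg algebra with pairing $\chi_Q$, because the $z^{-2}$ term of $\log\Psi$ is the bilinear form.

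\emph{Step 3: conclusion.} A vertex algebra homomorphism is determined by generators, and $L_{\Zb^{Q_0}}$ is generated by $e^{\pm\delta_i}$ and $h_{i,-1}$. So once the two-generator fields match, the generating sets correspond, and a dimension count from Step 1 gives bijectivity.

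I expect the main obstacle to be signs. The lattice vertex algebra needs a 2-cocycle $\epsilon(\alpha,\beta)$ with $\epsilon(\alpha,\beta)\epsilon(\beta,\alpha)=(-1)^{\chi(\alpha,\beta)+\chi(\alpha,\alpha)\chi(\beta,\beta)}$. Joyce's construction includes orientation-type signs, which must match this cocycle. I would handle this as Joyce does: choose signs $\epsilon$ satisfying this, and check that they match the $\tau$-twisted braiding convention fixed in the warning above. In the symmetric case this reduces to $(-1)^{\chi(\alpha,\beta)}$, consistent with $\beta^\chi_0$.
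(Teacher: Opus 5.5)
At the level of strategy you agree with the paper. It simply invokes Joyce's theorem and observes that, because $\chi_Q$ is symmetric, no symmetrisation is needed, so his proof runs verbatim with $\Ext$ itself in place of the symmetrised complex. The underlying point is that $\sigma^*\Ext^\vee\simeq\Ext$ in K-theory for symmetric $Q$. Your emphasis on the cocycle is apt: with the unsymmetrised form, $\chi(\alpha,\alpha)$ can be odd (already for one vertex and no arrows), so one genuinely gets a super vertex algebra.

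The gap is in your reconstruction of the argument, at Step 3. Knowing that a vertex algebra map is determined by its values on generators gives uniqueness, not existence. Agreement of the OPEs among generators does not by itself produce a homomorphism either, since it does not control the relations satisfied by iterated products. For example, the universal and simple affine vertex algebras at positive integral level have identical OPEs among their generating fields but are not isomorphic. Relatedly, you define the map ``on generators'' of $\Ht_{\sbt}(\Ml_\alpha)$, but the homology of a single component has no product in which the classes dual to $\ch_r(\El_i)$ generate. The Fock-space structure has to be produced by the vertex operations themselves, namely by modes of fields of classes in $\Ht_{\sbt}(\Ml_0)$, where $\Ml_0\simeq\prod_i\BU$ is not a point.

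What is needed is what Joyce actually does: compute $Y(u,z)v$ in closed form for all $u,v$ in the coordinates of Step 1, and compare with the lattice formula $Y(e^\alpha,z)=E^-(-\alpha,z)E^+(-\alpha,z)\,e^{\alpha}z^{\alpha_0}$ (cocycle absorbed into $e^\alpha$). Capping with $\Psi$ produces $z^{\chi(\alpha,\beta)}$ together with the exponential of annihilation-type contractions, since $\log\Psi$ is bilinear in the chern characters. The creation exponential comes instead from $e^{zD}$ followed by $\oplus_*$. In particular, in $Y(e^\alpha,z)e^\beta$ the exponential comes from the translation, because $\Psi$ only contributes $z^{\chi(\alpha,\beta)}$ on point classes.

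Alternatively, use the uniqueness argument for lattice vertex algebras. For this you must check:
\begin{itemize}
\item $h_{i,n}[\mathrm{pt}_\alpha]=\delta_{n,0}\chi(\delta_i,\alpha)[\mathrm{pt}_\alpha]$ for $n\ge 0$;
\item $D[\mathrm{pt}_\alpha]=\alpha_{-1}[\mathrm{pt}_\alpha]$;
\item the leading coefficient of $Y([\mathrm{pt}_\alpha],z)[\mathrm{pt}_\beta]$;
\item that Heisenberg descendants of $[\mathrm{pt}_\alpha]$ span $\Ht_{\sbt}(\Ml_\alpha)$.
\end{itemize}
Do not get the last point from irreducibility of Fock modules, as $\chi_Q$ is often degenerate for symmetric quivers; it vanishes identically for the Jordan quiver.

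There are also some smaller slips.
\begin{itemize}
\item Each component is rationally $\prod_{i\in Q_0}\BU$, one $\BU$ per vertex, as your own homology formula requires. This is because the stack is a linear stack over $\prod_i\Perf$ and $\Perf$ has the homotopy type of $\Zb\times\BU$; no projective-linear splitting is involved.
\item The parametrised objects are pseudo-perfect (finite-dimensional) rather than perfect $\Cb Q$-modules, since a symmetric quiver with arrows has oriented cycles.
\item The class dual to $\ch_r(\El_i)$ has degree $2r$, so it corresponds to $h_{i,-r}$ rather than $h_{i,-r-1}$.
\end{itemize}
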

\begin{proof}[Proof of Theorem \ref{DM_comp}]
By \cite[Section 4.5.3]{dotsmozgva} we can view $\UU^{\ch} (P_{DM})$ as a vertex subalgebra of $L_{Q^{0}} \simeq \Ht_{\sbt}(\Ml_{\Dt^{b} \Rep_{Q}})$ generated by the elements $e^{i}$. Note that by Theorem \ref{joyce_va_sub_gen} the subvertex algebra
\begin{equation}
    \Ht_{\sbt}(\Ml_{Q}) \xhookrightarrow{} L_{Q^{0}}
\end{equation}
 contains the elements $e^{i}$. This therefore defines an injective map of graded vertex algebras
\begin{equation}
    \UU^{\ch}( P_{DM}) \xrightarrow{} \Ht_{\sbt}(\Ml_{Q})
\end{equation}
In \cite[Prop. 5.3 ]{dotsmozgva} it is proven that we have an isomorphism as graded vector spaces
\begin{equation}\label{mozg_53}
    \UU^{\ch}( P_{DM}) \simeq \Ht_{\sbt}(\Ml_{Q})
\end{equation}

Note that both of our spaces have finite dimensional graded pieces. The statement of \cite[Prop. 5.3]{dotsmozgva} is the dual of equation \eqref{mozg_53} but because the graded pieces are finite dimensional the two statements are equivalent. Equation \eqref{mozg_53} implies that both $\UU^{\ch}(P_{DM})$ and $\Ht_{\sbt}(\Ml_{Q})$ have the same graded dimensions. Since the map  $\UU^{\ch}( P_{DM}) \xrightarrow{} \Ht_{\sbt}(\Ml_{Q})$ is injective we can therefore conclude it is an isomorphism.
\end{proof}

\newpage 

\appendix

\section{Cohomology of stacks and characteristic classes of perfect complexes} \label{sec:appendix}

\noindent Throughout this paper, we will work with the derived category of constructible sheaves $\Dt(X)$ on a stack $X$ with the usual $6$-functors and all functors are implicitly derived. We recall that we can define the cohomology of a stack with coefficients in a sheaf $\mathcal{F}$
\begin{equation}
    \Ht^{\sbt}(X, \mathcal{F})\ = \ (X \to \pt)_{*} \mathcal{F}
\end{equation}
We recover the usual cohomology of $X$ by taking $\mathcal{F} = \Qb_{X}$, the constant sheaf.

\subsection{Cohomology} \label{sec:app_cohomology}

\subsubsection{K\"unneth formula} \label{ssec:Kunneth} The K\"unneth map 
\begin{equation}
  \label{eqn:Kunneth}
   \Ht^{\sbt}(X_1,\Fl_1) \otimes \Ht^{\sbt}(X_2,\Fl_2) \ \to \ \Ht^{\sbt}(X_1\times X_2,\Fl_1 \boxtimes \Fl_2)
\end{equation}
is an isomorphism whenever $X_i$ are Artin stacks locally of finite type and $\Ht^{\sbt}(X_i,\Fl_i)$ are graded finite dimensional. Indeed, for such $X_i$ we have by \cite[Thm. 0.1.1]{LZ} a K\"unneth isomorphism for homology groups 
$$ \Ht_{\sbt}(X_1,\Fl_1) \otimes \Ht_{\sbt}(X_2,\Fl_2)\ \simeq \ \Ht_{\sbt}(X_1\times X_2,\Fl_1 \boxtimes \Fl_2)$$ 
and taking Verdier duals gives \eqref{eqn:Kunneth} if the cohomologies of $\Fl_i$ are graded finite dimensional. 

If the $X_{i}$ have infinitely many connected components, \eqref{eqn:Kunneth} is usually not an isomorphism, so one must work component-by-component. If we work over a general base $B$, then the relative K\"unneth map 
$$ \Ht^{\sbt}(X_1,\Fl_1) \otimes_{\Ht^{\sbt}(B)} \Ht^{\sbt}(X_2,\Fl_2) \ \to \ \Ht^{\sbt}(X_1\times_B X_2,\Fl_1 \boxtimes \Fl_2) $$
is often not an isomorphism; \cite{LZ} only supplies an isomorphism of sheaves on $B$.

\subsubsection{Cup products} The \textit{cup product} map on two sheaves is the map 
$$\cup \ : \ \Ht^{\sbt}(X,\Fl) \otimes \Ht^{\sbt}(X,\Gl)\ \to \ \Ht^{\sbt}(X\times X,\Fl\boxtimes \Gl) \ \to \ \Ht^{\sbt}(X,\Fl\otimes \Gl)$$
composing the K\"unneth map with pullback along the diagonal $\Fl \boxtimes \Gl \to \Delta_*\Delta^*(\Fl \boxtimes \Gl) = \Fl\otimes \Gl$. We often denote it by $\cdot$ . If $f:Y \to X$ is a map of Artin stacks, it is easy to show that 
\begin{equation}
  \label{eqn:CupPullback} 
  f^*(\alpha \cup \beta) \ = \ f^*(\alpha)\cup f^*(\beta)
\end{equation}
for any cohomology classes $\alpha,\beta$ of $\Fl,\Gl$.

Since $\Qb_X$ is the unit for $\otimes$, it follows that $\Ht^{\sbt}(X)$ is a supercommutative algebra and acts on the cohomology of every sheaf. If $\Fl\to \Gl$ is any map of sheaves, the induced map
  \begin{equation} \label{linearity_lemma} 
      \Ht^{\sbt}(X,\Fl) \ \to\ \Ht^{\sbt}(X,\Gl)
  \end{equation}
  is $\Ht^{\sbt}(X)$-linear, and

\begin{lem} \label{sublemma_cuplinearity}
    Let $f : X \to Y$ be a map of stacks. Then we have an isomorphism of $\Ht^{\sbt}(Y)$-modules
    \begin{equation}
        \Ht^{\sbt}(X, \Fl) \simeq \Ht^{\sbt}(Y, f_{*} \Fl)
    \end{equation}
    where the action on the left is via $\Ht^{\sbt}(Y)\to \Ht^{\sbt}(X)$.
\end{lem}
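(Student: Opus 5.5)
The plan is to deduce the statement from the projection formula for $f_*$, i.e. to show that the pushforward identification is compatible with the module structures on both sides. By definition $\Ht^{\sbt}(X,\Fl) = (X\to \pt)_*\Fl$. Factor $X\to\pt$ as $X\xrightarrow{f}Y\to\pt$ and use $(X\to\pt)_* \simeq (Y\to\pt)_*f_*$. This gives the isomorphism of graded vector spaces
$$\Ht^{\sbt}(X,\Fl)\ \simeq\ \Ht^{\sbt}(Y,f_*\Fl).$$
It remains to check that this isomorphism is $\Ht^{\sbt}(Y)$-linear, where $\Ht^{\sbt}(Y)$ acts on the left through $f^*:\Ht^{\sbt}(Y)\to\Ht^{\sbt}(X)$.

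To do this, I would describe the $\Ht^{\sbt}(Y)$-action on $\Ht^{\sbt}(Y,f_*\Fl)$ sheaf-theoretically. A class $y\in\Ht^{\sbt}(Y)$ is a map $y:\Qb_Y\to\Qb_Y[n]$, and it acts on the cohomology of any sheaf $\Gl$ via
$$\Gl\simeq \Qb_Y\otimes\Gl\xrightarrow{y\otimes\id}\Qb_Y[n]\otimes \Gl.$$
Applying this with $\Gl=f_*\Fl$, the key step is to show that the endomorphism $y\otimes\id$ of $f_*\Fl$ equals $f_*$ of the endomorphism $f^*y\otimes\id$ of $\Fl$. This should follow from the projection formula morphism $\Qb_Y\otimes f_*\Fl\to f_*(f^*\Qb_Y\otimes\Fl)$, which is natural in the first variable. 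Naturality with respect to $y:\Qb_Y\to\Qb_Y[n]$ gives the required commutative square, and $f^*\Qb_Y=\Qb_X$ with $f^*y$ the pulled-back class. The projection morphism here is even an isomorphism, since $\Qb_Y$ is the unit, but only naturality is needed. Applying $(Y\to\pt)_*$ to this square then identifies the two actions.

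An equivalent route is via adjunction. The map $\Gl\to f_*\Fl$ is adjoint to $f^*\Gl\to\Fl$, and the cup product is compatible with pullback by \eqref{eqn:CupPullback}. So $y\cup\alpha$ corresponds to $f^*y\cup\alpha$. I would present whichever argument is cleaner.

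The main obstacle is making the naturality of the projection-formula and unit maps precise for stacks, i.e. checking that the identifications $(X\to\pt)_*\simeq(Y\to\pt)_*f_*$ and $f^*\Qb_Y\simeq\Qb_X$ are compatible with the monoidal structure. I expect this to be a standard consequence of $f^*$ being symmetric monoidal, with $f_*$ therefore lax monoidal, in the six-functor formalism for constructible sheaves on Artin stacks. I would cite that formalism rather than verify it by hand.
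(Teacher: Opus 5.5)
Your proposal is correct and is essentially the paper's own argument: the paper likewise applies the natural projection-formula map $-\otimes f_*\Fl \to f_*(f^*-\otimes \Fl)$ to a class $\alpha\colon \Qb_Y\to\Qb_Y$. It then uses $f^*\Qb_Y\simeq\Qb_X$ to obtain a commutative square intertwining $\alpha\otimes\id$ with $f^*\alpha\otimes\id$, and takes cohomology to conclude. Your shift $[n]$ and your explicit remark that the coherence with the unit constraints comes from the six-functor formalism are small improvements in precision over the paper's version. The adjunction route you also sketch is not needed.
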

\begin{proof}
Note that the sheaf theoretic projection formula gives a natural isomorphism 
  \begin{equation}
      - \otimes f_{*} \Gl \ \stackrel{\sim}{\to} \  f_{*}(f^{*}- \otimes \Gl)
  \end{equation}
which we apply to a morphism $\alpha  : \Qb_{Y} \to \Qb_{Y}$ to get a commutative diagram
\begin{equation*}
\begin{tikzcd}[row sep = {30pt,between origins}, column sep = {20pt}]
	{\Qb_{Y} \otimes f_{*} \Gl} & {f_{*}(f^{*} \Qb_{Y} \otimes \Gl)} & {f_{*}(\Qb_{X} \otimes \Gl)} \\
	{\Qb_{Y} \otimes f_{*} \Gl} & {f_{*}(f^{*} \Qb_{Y} \otimes \Gl)} & {f_{*}( \Qb_{X} \otimes \Gl)}
	\arrow[from=1-1, to=1-2]
	\arrow["{\alpha \otimes \id}"', from=1-1, to=2-1]
	\arrow["\sim", from=1-2, to=1-3]
	\arrow["{f^{*} \alpha \otimes \id}", from=1-2, to=2-2]
	\arrow["{f^{*}\alpha \otimes \id}", from=1-3, to=2-3]
	\arrow[from=2-1, to=2-2]
	\arrow["\sim", from=2-2, to=2-3]
\end{tikzcd}
\end{equation*}
from which it follows that there is an isomorphism $\Ht^{\sbt}(Y,f_{*}\Gl)  =  \Ht^{\sbt}(X,\Gl)$ of $\Ht^{\sbt}(Y)$-modules. 
\end{proof}


   

\subsubsection{Vanishing cycles} If we have a function $f:X\to \Ab^1$, we may define the \textit{vanishing cycles} functor 
$$\varphi_f \ : \ \Dt(X) \ \to \ \Dt(X).$$
It satisfies the following properties:
\begin{enumerate}
       \item (Thom--Sebastiani). If $X_i$ are smooth with functions $f_i$, then there is an isomorphism
       \begin{equation} \label{crit_TS_eq}
           \textup{TS} \ :\ \Ht^{\sbt}(X_1, \varphi_{f_1} \Fl_1) \otimes \Ht^{\sbt}(X_2, \varphi_{f_2} \Fl_2) \ \simeq \ \Ht^{\sbt}(X_1 \times X_2, \varphi_{f_1 \boxplus f_2} \Fl_1 \boxtimes \Fl_2)
       \end{equation}
       \item (Push and pull). If $g:X \to Y$ is a map of smooth stacks and $f$ a function on $Y$, then there are natural transformations 
       \begin{align*}
          \varphi_f & \ \to \ g_* \varphi_{g^*f}g^*\\
          g_!\varphi_{fg}g^!\Db & \ \to \ \varphi_f \Db
       \end{align*}
       which induce maps on cohomology (for the second, assuming that $g$ is proper):
       \begin{align}\label{eqn:CritPullback}
        g^{*} \ : \ \Ht^{\sbt}(Y,\varphi_{f} \Fl) & \ \to \  \Ht^{\sbt}(X, \varphi_{g^*f} \Fl).\\
         g_* \ : \ \Ht^{\sbt}(X, \varphi_{g^*f} \Qb_X) & \ \to \  \Ht^{\sbt}(Y, \varphi_{f} \Qb_Y)[\delta] 
       \end{align}
       where $\delta=2\dim Y - 2 \dim X$ and the pushforward map is the composition
       \begin{equation}\label{eqn:PushforwardDefinition}
          \Ht^{\sbt}(X, \varphi_{gf} \Qb_X) \ = \ \Ht^{\sbt}(Y, g_*\varphi_{gf}g^* \Qb_Y) \ = \ \Ht^{\sbt}(Y,g_!\varphi_{gf}g^! \Qb_Y)[\delta] \ \to \ \Ht^{\sbt}(Y, \varphi_{f} \Qb_X)[\delta].
       \end{equation}
       Here the second equivalence uses properness of $g$ to deduce $g_!=g_*$ and smoothness of the spaces to get that $g^!\Qb_Y[\delta] = g^*\Qb_Y$.
       
    \item (Projection formula) If $g$ as above is proper, then we have 
    \begin{equation}
      \label{eqn:ProjectionFormulaVanishing}  
      g_*(g^*(\alpha)\cup \beta) \ = \ \alpha \cup g_*(\beta)
    \end{equation}
    for any cohomology class $\alpha \in \Ht^{\sbt}(Y)$ and critical cohomology class $\beta \in \Ht^{\sbt}(X,\varphi_{fg}\Qb_X)$.
   \end{enumerate}

To show the projection formula, we note that the first equality in \eqref{eqn:PushforwardDefinition} intertwines the actions of $f^*(\alpha)$ and $\alpha$ by Lemma \ref{sublemma_cuplinearity}. The second and third map are induced by maps of sheaves 
$$g_* \varphi_{gf}g^*\Qb_Y \ \simeq \ g_! \varphi_{gf}g^!\Qb_Y[\delta ] \ \to \ \varphi_f \Qb_X[\delta]$$
which intertwines the action of $\alpha$ on both sides by the linearity of the map \eqref{linearity_lemma} over action of cohomology by cup product.

\subsection{Chern classes for perfect complexes}

\label{sec:ChernClasses}
\subsubsection{Tautological bundles}

We have the stacks 
$$\BGL \ = \ \bigsqcup_{n \ge 0}\BGL_n , \hspace{15mm} \Perf \ = \ \bigsqcup_{n\in \Zb} \Perf_n$$
whose $S$-points are rank $n$ vector bundles and rank $n$ perfect complexes on $S$ respectively. In particular, both carry a tautological vector bundle $\El$ and perfect complex $\Fl$ respectively, which have rank $n$ on the $n$th component.
\subsubsection{} 
Define a locally constant function $\rank$ on $\BGL$ or $\Perf$, which on a given component $\BGL_n$ or $\Perf_n$ is the constant function $n$. The \textit{rank} $\rank(V)$ of a vector bundle or perfect complex $V$ is the locally constant function on $X$ given by pulling back $\rank$ via the classifying map $X \to \BGL$ or $X \to \Perf$.

\subsubsection{} 
We denote the tautological line bundle on $\BGm = \BGL_1$ by $\gamma$. We define the tensor product and direct sum maps 
\begin{align*}
   \otimes \ : \ \BGL \times \BGL  \ \to \ \BGL , \hspace{15mm} \otimes \ : \ \Perf \times \Perf  \ \to \ \Perf \\
  \oplus \ : \ \BGL \times \BGL  \ \to \ \BGL , \hspace{15mm} \oplus \ : \ \Perf \times \Perf  \ \to \ \Perf
\end{align*}
by defining $\otimes^*\El = \El \boxtimes \El$ and $\oplus^* \El = \El \boxplus \El$ and similarly for $\Fl$. In particular, we have maps 
\begin{align*}
   \act \ : \ \BGm \times \BGL  \ \to \ \BGL , \hspace{15mm} \act \ : \ \BGm \times \Perf  \ \to \ \Perf
\end{align*}
defined by $\act^* \El = \gamma \boxtimes \El$ and similarly for $\Fl$. We also have the maps 
$$ \vee \ : \ \BGL \ \to \ \BGL, \hspace{15mm} \vee \ : \ \Perf \ \to \ \Perf $$
defined by $\vee^* \El = \El^\vee$ and similarly for $\Fl$.

\subsubsection{Chern classes} We have isomorphisms 
$$\Ht^{\sbt}(\BGL_n) \ \simeq \ \Qb[c_1, \ldots ,c_n], \hspace{15mm} \Ht^{\sbt}(\Perf_n) \ \simeq \ \Qb[c_1,c_2, \ldots  \ ]$$
where $c_i$ have cohomological degree $2i$ and satisfy
\begin{equation}
  \label{eqn:WhitneySum} 
  \oplus^*c_n \ = \ c_n \otimes 1 + c_{n-1}\otimes c_1 + \cdots + 1 \otimes c_n.
\end{equation}
It was shown by Grothendieck in the vector bundle case that this property characterises the $c_i$ uniquely, up to multiplying $c_1$ by a nonzero scalar. 

For any vector bundle or perfect complex $V$, we define the \textit{chern classes} $c_i(V)$ by pullback of the generators $c_i$ along the classifying map $X \to \BGL$ or $X \to \Perf$, and the \textit{chern series} by 
$$c(V,z) \ = \ \sum_{r \ge 0} c_r(V,z)z^r$$
where $c_0=1$. Moreover,

\begin{prop}\label{chern_series_prop}
  The chern series satisfies 
  \begin{enumerate}
  \item $f^*c(V,z) = c(f^*V,z)$ for any map $f:X \to Y$ and vector bundle or perfect complex $V$ on $Y$.
  \item $c(V_1\oplus V_2,z)\ =\ c(V_1,z)\cdot c(V_2,z)$ for vector bundles or perfect complexes $V_1,V_2$.
   \item $c(V\otimes \Ll,z^{-1}) = \left(\frac{z+c_1(\Ll)}{z}\right)^{\rank \Vl}c(V,(z+c_1(\Ll))^{-1})$ for line bundle $\Ll$, i.e. the left hand side is the expansion as a power series in $z^{-1}$ of the right side.
   \item $c(V^\vee,z) = c(V,-z)$.
  \end{enumerate}
\end{prop}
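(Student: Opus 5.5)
The plan is to reduce all four statements to the universal case on $\BGL$ (or $\Perf$) via the splitting principle. Everything is then checked on chern roots, where each identity becomes an elementary manipulation of finite products.

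Part (1) holds by definition: $c_i(V)$ is the pullback of the universal $c_i$ along the classifying map. If $f:X\to Y$ and $g:Y\to\BGL$ classifies $V$, then $g\circ f$ classifies $f^*V$. Functoriality of pullback on cohomology gives $f^*c_i(V)=c_i(f^*V)$, and summing against $z^i$ gives the statement for the series.

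Part (2) is the Whitney sum formula. I would pull back \eqref{eqn:WhitneySum} along the map $X\to \BGL\times\BGL$ classifying $(V_1,V_2)$, composed with $\oplus$. Since $\oplus^*$ of the universal object is $\El\boxplus\El$, this composite classifies $V_1\oplus V_2$. Its coefficientwise statement $c_n(V_1\oplus V_2)=\sum_{a+b=n}c_a(V_1)c_b(V_2)$ is exactly the identity $c(V_1\oplus V_2,z)=c(V_1,z)c(V_2,z)$. For perfect complexes the same argument works on $\Perf$, where the universal classes are defined so that \eqref{eqn:WhitneySum} holds.

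For parts (3) and (4), the first step is to reduce to the universal case. For vector bundles this is $\BGL_n$, or $\BGm\times\BGL_n$ in (3). For perfect complexes, a two-term complex $V=(V_0\to V_1)$ has $c(V,z)=c(V_0,z)c(V_1,z)^{-1}$ by (2), and the rank is additive, so both identities are multiplicative in $V$ and reduce to vector bundles. By the splitting principle, i.e.\ injectivity of $\Ht^{\sbt}(\BGL_n)\to\Ht^{\sbt}(\BGm^n)$, and multiplicativity again, it suffices to treat a line bundle $V$ with $c_1(V)=x$.

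\begin{itemize}
\item For (4): $c(V^\vee,z)=1-xz=c(V,-z)$.
\item For (3): $c(V\otimes\Ll,z^{-1})=1+(x+\ell)z^{-1}=(z+\ell+x)/z$, where $\ell=c_1(\Ll)$. On the other side, $\frac{z+\ell}{z}\,c(V,(z+\ell)^{-1})=\frac{z+\ell}{z}\cdot\frac{z+\ell+x}{z+\ell}$, which agrees.
\end{itemize}

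Raising the line-bundle case of (3) to the product over chern roots produces the factor $((z+\ell)/z)^{\rank V}$. For complexes the ranks subtract, which matches the exponent $\rank V$.

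The main subtlety is in (3): the right-hand side must be interpreted as a Laurent expansion in $z^{-1}$, expanding $(z+\ell)^{-k}$ as a geometric series. The identity is then one of formal Laurent series, not of polynomials. Negative ranks for complexes also require inverting $c(V_1,z)$, which is valid since it has constant term $1$. I would handle this by proving the identity first in the fraction field of the root ring, and then noting that both sides have unique expansions in $\Ht^{\sbt}(X)((z^{-1}))$.
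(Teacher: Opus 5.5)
Your argument is correct. For (1), (2) and (4) it is the standard reasoning the paper leaves implicit, but for (3) you take a different route from the one the paper indicates.

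The paper writes the chern series as an exponential of chern characters, $c(V,t)=\exp\bigl(\sum_{k\ge 1}(-1)^{k-1}(k-1)!\,\ch_k(V)\,t^k\bigr)$, and uses $\ch(V\otimes\Ll)=\ch(V)\,e^{c_1(\Ll)}$. The prefactor $\bigl(\frac{z+c_1(\Ll)}{z}\bigr)^{\rank V}$ then comes exactly from the $\ch_0(V)=\rank V$ term, which contributes $\rank V\cdot\log\bigl(1+c_1(\Ll)z^{-1}\bigr)$. Since that argument is linear in $\ch$, it applies verbatim to K-theory classes. Perfect complexes of any, possibly negative, rank are therefore handled without choosing a presentation by vector bundles, and without chern roots of the universal complex on $\Perf$, which has no literal ones.

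Your splitting-principle route is more elementary: everything reduces to a one-line check for a single line bundle. It pays for this in the reduction for complexes, where two points need more care than you give them.
\begin{enumerate}
\item $c(V_0\to V_1,z)=c(V_0,z)/c(V_1,z)$ does not follow from (2) alone, since $V$ is not a direct sum. You also need invariance of chern classes under deforming the differential over $\Ab^1$. This gives $c(V)=c(V_0\oplus V_1[-1])$, and, applied to $W\xrightarrow{\id}W$, also $c(W[-1],z)=c(W,z)^{-1}$. The paper tacitly assumes this in its Example.
\item Reducing to complexes of vector bundles covers the Ext complex used in the paper, but not an arbitrary perfect complex. For that, check the universal case on $\BGm\times\Perf$ by pulling back along $(E,F)\mapsto E\oplus F[-1]$ from $\BGL_a\times\BGL_b$. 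This pullback is injective on cohomology in any fixed degree once $a\gg 0$.
\end{enumerate}

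Finally, your handling of the expansion is right, but it is cleanest to work in $R[z,z^{-1},(z+\ell)^{-1}]$ rather than a full fraction field. That ring embeds in $R((z^{-1}))$ because $z+\ell=z(1+\ell z^{-1})$ is a unit there.
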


Part (3) can be proven by expanding the chern series as an exponential of chern characters, see \cite[Lem. 2.6.19]{al_thesis}.

\subsubsection{Euler classes} \label{ssec:two_term_Euler}
If $V$ is a vector bundle of rank $n$, its \textit{Euler class} is its top chern class $e(V) = c_n(V)$. Likewise, given a perfect complex given by a complex of vector bundles
$$V \ = \ \left( V_{-n}\to V_{-n+1}\to \cdots \to V_{m-1}\to V_m\right)$$
we define its \textit{Euler class} as
$$e(V) \ = \ e(V_n)^\pm \cdots \frac{1}{e(V_{-1})}\cdot e(V_0)\cdot \frac{1}{e(V_1)}\cdots e(V_m)^\pm \ \in \ \Ht^{\sbt}(X)[e(V_i)^{-1} \ : \ i \textup{ odd}].$$

\subsubsection{The generating series $\Psi$} We define for any perfect complex or vector bundle $V$ the power series 
\begin{equation}
  \Psi(V,z) =  z^{\rank V}c(V,z^{-1})
\end{equation}
and from the properties of the chern series in Proposition \ref{chern_series_prop} it follows that
\begin{lem} \label{lem:psi_properties}
   We have that
\begin{enumerate}
  \item $f^*\Psi(V,z) = \Psi(f^*V,z)$ for any map $f:X \to Y$ and vector bundle or perfect complex $V$ on $Y$.
\item $\Psi(V_1 \oplus V_2,z) = \Psi(V_1,z) \cdot \Psi(V_2,z)$.
\item $\Psi(V\otimes \Ll,z) = \Psi(V,z+c_1(\Ll))$ for line bundle $\Ll$, i.e. the left hand side is the expansion as a Laurent series in $z$ of the right side. 
\item $\Psi(V^\vee,z) = (-1)^{\rank V} \Psi(V,-z) $.
\end{enumerate}
\end{lem}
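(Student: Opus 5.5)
The plan is to derive each item of Lemma \ref{lem:psi_properties} directly from the definition $\Psi(V,z)=z^{\rank V}c(V,z^{-1})$ and the matching item of Proposition \ref{chern_series_prop}, treating $\rank$ as a locally constant function and working componentwise.

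\textbf{(1) Naturality.} Use $f^*\rank V=\rank f^*V$ (ranks are pulled back from $\rank$ on $\BGL$ or $\Perf$) together with part (1) of Proposition \ref{chern_series_prop}. Since pullback is a ring map, it commutes with the monomial $z^{\rank V}$.

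\textbf{(2) Additivity.} Apply part (2) of Proposition \ref{chern_series_prop} with $z$ replaced by $z^{-1}$, and use $\rank(V_1\oplus V_2)=\rank V_1+\rank V_2$. This gives
$$z^{\rank V_1+\rank V_2}\,c(V_1,z^{-1})\,c(V_2,z^{-1})=\Psi(V_1,z)\,\Psi(V_2,z).$$

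\textbf{(3) Twisting by a line bundle.} Since $\rank(V\otimes\Ll)=\rank V$, part (3) of Proposition \ref{chern_series_prop} gives
$$\Psi(V\otimes\Ll,z)=z^{\rank V}\Big(\tfrac{z+c_1(\Ll)}{z}\Big)^{\rank V}c\big(V,(z+c_1(\Ll))^{-1}\big)=(z+c_1(\Ll))^{\rank V}c\big(V,(z+c_1(\Ll))^{-1}\big),$$
which is $\Psi(V,z+c_1(\Ll))$. This step is the most delicate one, though it is only bookkeeping. Both sides must be read as Laurent expansions in $z^{-1}$, and the powers $z^{\pm\rank V}$ must cancel inside that ring. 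For perfect complexes $\rank V$ may be negative, so $(z+c_1)^{\rank V}$ is itself an expansion in $z^{-1}$; this is legitimate because $c_1(\Ll)$ is nilpotent on each finite-type component.

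\textbf{(4) Duality.} Use $\rank V^\vee=\rank V$ and part (4) of Proposition \ref{chern_series_prop}:
$$\Psi(V^\vee,z)=z^{\rank V}c(V,-z^{-1})=(-1)^{\rank V}(-z)^{\rank V}c(V,(-z)^{-1})=(-1)^{\rank V}\Psi(V,-z).$$
This works uniformly for negative ranks, since $(-1)^{n}(-z)^{n}=z^{n}$ for all $n\in\Zb$.
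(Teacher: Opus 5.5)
Your proposal is correct and follows essentially the same route as the paper, which simply derives each item from the corresponding part of Proposition \ref{chern_series_prop} together with the definition $\Psi(V,z)=z^{\rank V}c(V,z^{-1})$. One correction to your remark in (3): $c_1(\Ll)$ is generally \emph{not} nilpotent on the stacks used here, since e.g.\ on $\BGm$ or $\Ml^T_{Q,d}$ the cohomology is a polynomial ring. You do not need nilpotency, though: expanding $(z+c_1(\Ll))^{n}$ and $c\bigl(V,(z+c_1(\Ll))^{-1}\bigr)$ in $z^{-1}$ gives each coefficient of $z^{-N}$ as a finite sum, so everything lives in $\Ht^{\sbt}(X)((z^{-1}))$ regardless.
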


\subsubsection{Chern roots} Given a vector bundle $V$ on $X$ induced by $X\to \BGL_n$, its \textit{chern roots} are the images $x_{V,1}, \ldots, x_{V,n} \in \Ht^{\sbt}(X\times_{\BGL_n}\BT)$ of the generators of $\Ht^2(\BT)$ 
\begin{center}
\begin{tikzcd}[row sep = {30pt,between origins}, column sep = {20pt}]
 X\times_{\BGL_n}\BT_n \ar[d]\ar[r]& \BT_n \ar[d] \\ 
 X \ar[r,"V"]& \BGL_n
\end{tikzcd}
\end{center}
where $T_n$ is the maximal torus of diagonal elements inside $\GL_n$. In the setting in the body of the paper, $X=\Ml_{Q,d}^T$ and the pullback $\Ht^{\sbt}(X) \hookrightarrow \Ht^{\sbt}(X\times_{\BGL_n}\BT_n)$ is an injection, so for computations in $X$ it suffices to compute in terms of chern roots, for instance we have
$$c(V,z) \ = \ (1+zx_{V,1})\cdots (1+zx_{V,n})$$
and therefore
$$\Psi(V,z)\ = \ (z+x_{V,1})\cdots (z +x_{V,n}), \hspace{15mm} e(V) \ = \ x_{V,1}\cdots x_{V,n}.$$

\begin{prop}\label{prop:PsiEuler}
For $V$ a vector bundle or bounded complex of vector bundles on $Y$ and $n\ne 0$, we have that 
$$\Psi(V,nz) \ = \ e(\gamma^n\boxtimes V )$$
where $\gamma$ is the tautological line bundle on $\BGm$, and we expand the right hand side--a priori a localised cohomology class in $\Ht^{\sbt}(\BGm\times Y)$--as a Laurent series in $z^{-1}=c_1(\gamma)^{-1}$.
\end{prop}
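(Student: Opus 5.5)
We reduce to chern roots and check the identity in the simplest case. Both sides are compatible with pullback; this holds for $\Psi$ by Lemma \ref{lem:psi_properties}(1), and for the localised Euler class by naturality of chern classes. Both sides are also multiplicative under direct sums and turn shifts into inverses. For $\Psi$ this is Lemma \ref{lem:psi_properties}(2). For the Euler class it holds by definition of $e(V)$ for a complex as an alternating product of the $e(V_i)^{\pm 1}$, and because $\gamma^n\boxtimes -$ commutes with direct sums. We may therefore replace the bounded complex $V=(V_{-n}\to\cdots\to V_m)$ by its terms, and it suffices to prove
$$\Psi(V_i,nz) \ = \ e(\gamma^n\boxtimes V_i)$$
for a single vector bundle $V_i$. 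The rank of a complex is the alternating sum of the ranks and $c(-,z)$ is multiplicative, so the product of these identities over $i$ with signs $(-1)^i$ recovers the statement.

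For a vector bundle $V$ of rank $r$, we pass to the flag bundle $Y\times_{\BGL_r}\BT_r$. Pullback to it is injective on cohomology, and it also stays injective after tensoring with $\Ht^{\sbt}(\BGm)((z^{-1}))$, since Laurent expansion is termwise. So we may assume $V$ splits with chern roots $x_1,\dots,x_r$. The chern roots of $\gamma^n\boxtimes V$ are then $nz+x_k$, where $z=c_1(\gamma)$, and so
$$e(\gamma^n\boxtimes V) \ = \ \bigsqcap_{k=1}^r (nz+x_k) \ = \ \Psi(V,nz),$$
by the chern-root formula $\Psi(V,z)=\prod_k(z+x_k)$ recorded just before the statement.

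For the odd-degree terms the Euler class appears inverted. Here $e(\gamma^n\boxtimes V_i)^{-1}=\prod_k(nz+x_k)^{-1}$ is a priori a localised class. Since $n\neq 0$, each factor $nz+x_k$ has invertible leading coefficient $n$ in $z$, so its inverse expands uniquely as a Laurent series in $z^{-1}$. This expansion is exactly $\Psi(V_i,nz)^{-1}$, namely $(nz)^{-r}c(V_i,(nz)^{-1})^{-1}$ expanded geometrically. The expansion is a ring homomorphism from the localisation at these classes into $\Ht^{\sbt}(Y)((z^{-1}))$, so it respects products and quotients, which is what lets the bundle-level identities assemble into the complex-level one.

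The main obstacle is this last bookkeeping point. We have to make precise that the localisation $\Ht^{\sbt}(\BGm\times Y)[e(\gamma^n\boxtimes V_i)^{-1}]$ maps compatibly into $\Ht^{\sbt}(Y)((z^{-1}))$. We also have to make sure the result does not depend on the chosen resolution of the perfect complex. For resolution-independence we would argue that both sides depend only on the K-theory class, by multiplicativity over short exact sequences, which the chern series has.
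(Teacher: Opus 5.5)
Your proposal is correct and follows the paper's proof. Like the paper, you reduce to a single vector bundle using multiplicativity of $\Psi$ and of the Euler class over the terms of the complex, and then compare chern roots $x_k+nz$ on both sides. Your extra bookkeeping (injectivity of the splitting pullback, and the Laurent expansion being a ring map out of the localisation) fills in what the paper leaves implicit. The resolution-independence worry is unnecessary, since the statement concerns a fixed complex of vector bundles whose Euler class is defined termwise.
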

\begin{proof}
 By multiplicativity of Euler classes and $\Psi(-,z)$ over direct sums of perfect complexes by the Lemma \ref{lem:psi_properties}, it suffices to prove this for each factor of $V$, i.e. assume that $V$ is a vector bundle. The chern roots of $\gamma^{n}\boxtimes V$  are $x_i + c_1(\gamma^{ n})=x_i+nz$ where $x_i$ are the chern roots of $V$, and so we have 
 $$e(\gamma^{n}\boxtimes V) \ = \ \bigsqcap_i (x_i+nz)  \ = \ \Psi(V,nz),$$
 proving the Proposition.
\end{proof}

\subsubsection{Example} 
For instance, the chern series, Euler class and $\Psi$ series of a two term perfect complex $V=(V_0\to V_1)$ is 
\begin{equation}
    c(V,z) = \frac{c(V_{0},z)}{c(V_{1},z)}, \hspace{15mm} e(V) \ =\ \frac{e(V_0)}{e(V_1)}, \hspace{15mm} \Psi(V,z) \ = \ \frac{\Psi(V_0,z)}{\Psi(V_1,z)}.
\end{equation}
If the chern roots of $V_0$ and $V_1$ are denoted by $x_i$ and $y_j$, these are 
\begin{equation}
    c(V,z) = \frac{\bigsqcap(1+zx_i)}{\bigsqcap(1+zy_j)}, \hspace{15mm} e(V) \ =\ \frac{\bigsqcap x_i}{\bigsqcap y_j}, \hspace{15mm} \Psi(V,z) \ = \ \frac{\bigsqcap(z+x_i)}{\bigsqcap(z+y_j)}.
\end{equation}

\printbibliography    

@misc{diaconescu2026cohomologicalhallalgebrasonedimensional,
      title={Cohomological Hall algebras of one-dimensional sheaves on surfaces and Yangians}, 
      author={Duiliu-Emanuel Diaconescu and Mauro Porta and Francesco Sala and Olivier Schiffmann and Eric Vasserot},
      year={2026},
      eprint={2603.03386},
      archivePrefix={arXiv},
      primaryClass={math.AG},
      url={https://arxiv.org/abs/2603.03386}, 
}

@misc{jindal2026cohacyclicquiversintegral,
      title={CoHA of Cyclic Quivers and an Integral Form of Affine Yangians}, 
      author={Shivang Jindal},
      year={2026},
      eprint={2408.02618},
      archivePrefix={arXiv},
      primaryClass={math.RT},
      url={https://arxiv.org/abs/2408.02618}, 
}

@incollection {dimensionalreductionalgebras,
    AUTHOR = {Ren, Jie and Soibelman, Yan},
     TITLE = {Cohomological {H}all algebras, semicanonical bases and
              {D}onaldson-{T}homas invariants for 2-dimensional
              {C}alabi-{Y}au categories (with an appendix by {B}en
              {D}avison)},
 BOOKTITLE = {Algebra, geometry, and physics in the 21st century},
    SERIES = {Progr. Math.},
    VOLUME = {324},
     PAGES = {261--293},
 PUBLISHER = {Birkh\"auser/Springer, Cham},
      YEAR = {2017},
      ISBN = {978-3-319-59938-0; 978-3-319-59939-7},
   MRCLASS = {16G20 (14F42 14N35 18F99 81R99)},
  MRNUMBER = {3727563},
MRREVIEWER = {Xueqing\ Chen},
       DOI = {10.1007/978-3-319-59939-7\_7},
       URL = {https://doi.org/10.1007/978-3-319-59939-7_7},
}

@article {yangzhaodimension,
    AUTHOR = {Yang, Yaping and Zhao, Gufang},
     TITLE = {On two cohomological {H}all algebras},
   JOURNAL = {Proc. Roy. Soc. Edinburgh Sect. A},
  FJOURNAL = {Proceedings of the Royal Society of Edinburgh. Section A.
              Mathematics},
    VOLUME = {150},
      YEAR = {2020},
    NUMBER = {3},
     PAGES = {1581--1607},
      ISSN = {0308-2105,1473-7124},
   MRCLASS = {14N35 (14F43 16G20 17B37)},
  MRNUMBER = {4091073},
MRREVIEWER = {Xiaobin\ Li},
       DOI = {10.1017/prm.2018.162},
       URL = {https://doi.org/10.1017/prm.2018.162},
}

@article {KS,
    AUTHOR = {Kontsevich, Maxim and Soibelman, Yan},
     TITLE = {Cohomological {H}all algebra, exponential {H}odge structures
              and motivic {D}onaldson-{T}homas invariants},
   JOURNAL = {Commun. Number Theory Phys.},
  FJOURNAL = {Communications in Number Theory and Physics},
    VOLUME = {5},
      YEAR = {2011},
    NUMBER = {2},
     PAGES = {231--352},
      ISSN = {1931-4523,1931-4531},
   MRCLASS = {14N35 (14F43 16G20)},
  MRNUMBER = {2851153},
MRREVIEWER = {Mark\ Gross},
       DOI = {10.4310/CNTP.2011.v5.n2.a1},
       URL = {https://doi.org/10.4310/CNTP.2011.v5.n2.a1},
}

@article{COZZ,
  title={Shifted quantum groups via critical stable envelopes},
  author={Cao, Yalong and Okounkov, Andrei and Zhou, Yehao and Zhou, Zijun},
  journal={arXiv preprint arXiv:2601.01518},
  year={2026},
  url={https://arxiv.org/abs/2601.01518}
}

@book{beilinson_drinfeld,
  title={Chiral algebras},
  author={Beilinson, Alexander and Drinfeld, Vladimir},
  volume={51},
  year={2025},
  publisher={American Mathematical Society}
}

@article{Bo,
  title={Vertex algebras, Kac-Moody algebras, and the Monster},
  author={Borcherds, Richard E},
  journal={Proceedings of the National Academy of Sciences},
  volume={83},
  number={10},
  pages={3068--3071},
  year={1986},
  url={https://math.berkeley.edu/~reb/papers/va/va.pdf}
}

@article{Ri,
  title={Hall algebras and quantum groups},
  author={Ringel, Claus Michael},
  journal={Inventiones mathematicae},
  volume={101},
  number={1},
  year={1990}
}

@article{Gr,
  title={Hall algebras, hereditary algebras and quantum groups},
  author={Green, James A},
  journal={Inventiones mathematicae},
  volume={120},
  number={1},
  pages={361--377},
  year={1995},
  publisher={Springer}
}

@article{DK,
  title={Weyl group extension of quantized current algebras},
  author={Ding, Jintai and Khoroshkin, Sergei},
  journal={Transformation groups},
  volume={5},
  pages={35--59},
  year={2000},
  publisher={Springer},
  url={https://arxiv.org/abs/math/9804139}
}

@inproceedings{TV,
  title={Moduli of objects in dg-categories},
  author={To{\"e}n, Bertrand and Vaqui{\'e}, Michel},
  booktitle={Annales scientifiques de l'Ecole normale sup{\'e}rieure},
  volume={40},
  number={3},
  pages={387--444},
  year={2007},
  url={https://arxiv.org/abs/math/0503269}
}

@article{gautam2017meromorphic,
  title={Meromorphic tensor equivalence for Yangians and quantum loop algebras},
  author={Gautam, Sachin and Toledano Laredo, Valerio},
  journal={Publications mathematiques de l'IH{\'E}S},
  volume={125},
  number={1},
  pages={267--337},
  year={2017},
  publisher={Springer},
  url={https://arxiv.org/abs/1403.5251}
}

@article{Ra,
  title={The structure of Hopf algebras with a projection},
  author={Radford, David E},
  journal={Journal of Algebra},
  volume={92},
  number={2},
  pages={322--347},
  year={1985},
  publisher={Elsevier}
}

@article{MaBos,
  title={Cross products by braided groups and bosonization},
  author={Majid, Shahn},
  journal={Journal of algebra},
  volume={163},
  number={1},
  pages={165--190},
  year={1994},
  publisher={Elsevier}
}

@inproceedings{Ma2,
  title={Double-bosonization of braided groups and the construction of $U_q(\mathfrak{g})$},
  author={Majid, Shahn},
  booktitle={Mathematical Proceedings of the Cambridge Philosophical Society},
  volume={125},
  number={1},
  pages={151--192},
  year={1999},
  organization={Cambridge University Press}
}

@article {MO,
    AUTHOR = {Maulik, Davesh and Okounkov, Andrei},
     TITLE = {Quantum groups and quantum cohomology},
   JOURNAL = {Ast\'erisque},
  FJOURNAL = {Ast\'erisque},
    NUMBER = {408},
      YEAR = {2019},
     PAGES = {ix+209},
      ISSN = {0303-1179,2492-5926},
      ISBN = {978-2-85629-900-5},
   MRCLASS = {14D20 (14D21 16G20 17B37)},
  MRNUMBER = {3951025},
MRREVIEWER = {Leonardo\ Constantin\ Mihalcea},
       DOI = {10.24033/ast},
       URL = {https://doi.org/10.24033/ast},
}

@book{ES,
  title={Lectures on Quantum Groups},
  author={Etingof, P.I. and Schiffmann, O.},
  isbn={9781571460943},
  lccn={95025393},
  series={Lectures in mathematical physics},
  url={https://books.google.com.cu/books?id=Ys9UAAAAYAAJ},
  year={2002},
  publisher={International Press}
}

@article{BD,
  title={Okounkov's conjecture via BPS Lie algebras},
  author={Botta, Tommaso Maria and Davison, Ben},
  journal={arXiv preprint arXiv:2312.14008},
  year={2023},
  url={https://arxiv.org/abs/2312.14008}
}

@phdthesis{Bu,
  title={Equivariant localization in factorization homology and vertex algebras from supersymmetric gauge theory},
  author={Butson, Dylan William},
  year={2021},
  school={University of Toronto (Canada)},
  url={https://arxiv.org/abs/2011.14988}
}

@article{Da1,
  title={The integrality conjecture and the cohomology of preprojective stacks},
  author={Davison, Ben},
  journal={Journal f{\"u}r die reine und angewandte Mathematik (Crelles Journal)},
  volume={2023},
  number={804},
  pages={105--154},
  year={2023},
  publisher={De Gruyter},
  url={https://arxiv.org/abs/1602.02110}
}

@article{Da2,
  title={The critical CoHA of a quiver with potential},
  author={Davison, Ben},
  journal={Quarterly Journal of Mathematics},
  volume={68},
  number={2},
  pages={635--703},
  year={2017},
  publisher={OUP},
  url={https://arxiv.org/abs/1311.7172}
}

@misc{schiffmann2022cohomologicalhallalgebrasquivers,
      title={On cohomological Hall algebras of quivers : generators}, 
      author={Olivier Schiffmann and Eric Vasserot},
      year={2022},
      eprint={1705.07488},
      archivePrefix={arXiv},
      primaryClass={math.RT},
      url={https://arxiv.org/abs/1705.07488}, 
}

@Article{sch_vas_cheredenik,
 Author = {Schiffmann, O. and Vasserot, E.},
 Title = {Cherednik algebras, {{\(W\)}}-algebras and the equivariant cohomology of the moduli space of instantons on {{\(\mathbb A^2\)}}},
 FJournal = {Publications Math{\'e}matiques},
 Journal = {Publ. Math., Inst. Hautes {\'E}tud. Sci.},
 ISSN = {0073-8301},
 Volume = {118},
 Pages = {213--342},
 Year = {2013},
 Language = {English},
 DOI = {10.1007/s10240-013-0052-3},
 zbMATH = {6233894},
 Zbl = {1284.14008}
}

@article{GL,
  title={Metaplectic Whittaker category and quantum groups: the" small" FLE},
  author={Gaitsgory, Dennis and Lysenko, Sergey},
  journal={arXiv preprint arXiv:1903.02279},
  year={2019},
  url={https://arxiv.org/abs/1903.02279}
}

@book{Lu,
  title={Introduction to quantum groups},
  author={Lusztig, George},
  year={2010},
  publisher={Springer Science $\&$ Business Media},
  url={https://math.mit.edu/~gyuri/papers/quant.gr.pdf}
}

@article{Ga,
  title={On factorization algebras arising in the quantum geometric Langlands theory},
  author={Gaitsgory, Dennis},
  journal={Advances in Mathematics},
  volume={391},
  pages={107962},
  year={2021},
  publisher={Elsevier},
  url={https://arxiv.org/abs/1909.09775}
}

@incollection{GTW,
  title={The meromorphic R-matrix of the Yangian},
  author={Gautam, Sachin and Laredo, Valerio Toledano and Wendlandt, Curtis},
  booktitle={Representation Theory, Mathematical Physics, and Integrable Systems: In Honor of Nicolai Reshetikhin},
  pages={201--269},
  year={2021},
  publisher={Springer},
  url={https://arxiv.org/abs/1403.5251}
}

@article{Hu,
  title={Vertex coalgebras, comodules, cocommutativity and coassociativity},
  author={Hubbard, Keith},
  journal={Journal of Pure and Applied Algebra},
  volume={213},
  number={1},
  pages={109--126},
  year={2009},
  publisher={Elsevier},
  url={https://arxiv.org/abs/0801.3260}
}

@article{Jo,
  title={Ringel--Hall style vertex algebra and Lie algebra structures on the homology of moduli spaces},
  author={Joyce, Dominic},
  journal={Preliminary version},
  year={2018},
  url={https://people.maths.ox.ac.uk/joyce/hall.pdf}
}

@misc{Sch,
  author = {Schiffmann, Olivier},
  title = {Lectures on {H}all algebras},
  year = {2006},
  eprint = {math/0611617},
  archiveprefix = {arXiv},
  url = {https://arxiv.org/abs/math/0611617}
}

@article{La,
  title={Factorisation quantum groups},
  author={Latyntsev, Alexei},
  journal={arXiv preprint arXiv:2312.07274},
  year={2023},
  url={https://arxiv.org/abs/2312.07274}
}

@article{YZ,
  title={Cohomological Hall algebras and affine quantum groups},
  author={Yang, Yaping and Zhao, Gufang},
  journal={Selecta Mathematica},
  volume={24},
  number={2},
  pages={1093--1119},
  year={2018},
  publisher={Springer},
  url={https://arxiv.org/abs/1604.01865}
}

@article {RSYZ,
    AUTHOR = {Rap\v c\'ak, Miroslav and Soibelman, Yan and Yang, Yaping and
              Zhao, Gufang},
     TITLE = {Cohomological {H}all algebras and perverse coherent sheaves on
              toric {C}alabi-{Y}au 3-folds},
   JOURNAL = {Commun. Number Theory Phys.},
  FJOURNAL = {Communications in Number Theory and Physics},
    VOLUME = {17},
      YEAR = {2023},
    NUMBER = {4},
     PAGES = {847--939},
      ISSN = {1931-4523,1931-4531},
   MRCLASS = {14N35 (14J32 14M25 17B67 81T30)},
  MRNUMBER = {4704941},
MRREVIEWER = {Dylan\ Spence},
       DOI = {10.4310/cntp.2023.v17.n4.a2},
       URL = {https://doi.org/10.4310/cntp.2023.v17.n4.a2},
}

@article{EK,
 Author = {Etingof, Pavel and Kazhdan, David},
 Title = {Quantization of {Lie} bialgebras. {V}: {Quantum} vertex operator algebras},
 FJournal = {Selecta Mathematica. New Series},
 Journal = {Sel. Math., New Ser.},
 ISSN = {1022-1824},
 Volume = {6},
 Number = {1},
 Pages = {105--130},
 Year = {2000},
 Language = {English},
 DOI = {10.1007/s000290050004},
 zbMATH = {1486242},
 Zbl = {0948.17008},
 url={https://arxiv.org/abs/q-alg/9706023}
}

@article{FR,
  title={Towards deformed chiral algebras},
  author={Frenkel, Edward and Reshetikhin, Nikolai},
  journal={arXiv preprint q-alg/9706023},
  year={1997},
  url={https://arxiv.org/abs/q-alg/9706023}
}

@book{FBZ,
  title={Vertex algebras and algebraic curves},
  author={Frenkel, Edward and Ben-Zvi, David},
  number={88},
  year={2004},
  publisher={American Mathematical Soc.}
}

@article {dotsmozgva,
    AUTHOR = {Dotsenko, Vladimir and Mozgovoy, Sergey},
     TITLE = {D{T} invariants from vertex algebras},
   JOURNAL = {J. Inst. Math. Jussieu},
  FJOURNAL = {Journal of the Institute of Mathematics of Jussieu. JIMJ.
              Journal de l'Institut de Math\'ematiques de Jussieu},
    VOLUME = {24},
      YEAR = {2025},
    NUMBER = {1},
     PAGES = {291--339},
      ISSN = {1474-7480,1475-3030},
   MRCLASS = {17B69 (14N35 16G20)},
  MRNUMBER = {4847121},
       DOI = {10.1017/S1474748024000288},
       URL = {https://doi.org/10.1017/S1474748024000288},
}

@article {han2021cocommva,
    AUTHOR = {Han, Jianzhi and Li, Haisheng and Xiao, Yukun},
     TITLE = {Cocommutative vertex bialgebras},
   JOURNAL = {J. Algebra},
  FJOURNAL = {Journal of Algebra},
    VOLUME = {598},
      YEAR = {2022},
     PAGES = {536--569},
      ISSN = {0021-8693,1090-266X},
   MRCLASS = {17B69 (16T10)},
  MRNUMBER = {4386635},
MRREVIEWER = {Ching\ Hung\ Lam},
       DOI = {10.1016/j.jalgebra.2022.02.003},
       URL = {https://doi.org/10.1016/j.jalgebra.2022.02.003},
}

@book{loday2012algebraic,
  title={Algebraic operads},
  author={Loday, Jean-Louis and Vallette, Bruno},
  volume={346},
  year={2012},
  publisher={Springer Science and Business Media}
}

@article{yang2018cohomological,
  title={The cohomological Hall algebra of a preprojective algebra},
  author={Yang, Yaping and Zhao, Gufang},
  journal={Proceedings of the London Mathematical Society},
  volume={116},
  number={5},
  pages={1029--1074},
  year={2018},
  publisher={Wiley Online Library},
  url={https://arxiv.org/abs/1407.7994}
}

@article{DN,
  title={Tannakian QFT: from spark algebras to quantum groups},
  author={Dimofte, Tudor and Niu, Wenjun},
  journal={arXiv preprint arXiv:2411.04194},
  year={2024},
  url={https://arxiv.org/abs/2411.04194}
}

@article{DNP,
  title={Line operators in 3d holomorphic qft: meromorphic tensor categories and dg-shifted Yangians},
  author={Dimofte, Tudor and Niu, Wenjun and Py, Victor},
  journal={arXiv preprint arXiv:2508.11749},
  year={2025},
  url={https://arxiv.org/abs/2508.11749}
}

@article{CWY,
  title={Gauge theory and integrability, I},
  author={Costello, Kevin and Witten, Edward and Yamazaki, Masahito},
  journal={arXiv preprint arXiv:1709.09993},
  year={2017},
  url={https://arxiv.org/abs/1709.09993}
}

@article{elliot_safronov,
  title={Topological twists of supersymmetric algebras of observables},
  author={Elliott, Chris and Safronov, Pavel},
  journal={Communications in Mathematical Physics},
  volume={371},
  number={2},
  pages={727--786},
  year={2019},
  publisher={Springer},
  url={https://arxiv.org/abs/1805.10806}
}

@article{LZ,
  title={Enhanced six operations and base change theorem for higher Artin stacks},
  author={Liu, Yifeng and Zheng, Weizhe},
  journal={arXiv preprint arXiv:1211.5948},
  year={2012},
  url={https://arxiv.org/abs/1211.5948}
}

@article {liu2022multiplicative,
    AUTHOR = {Liu, Henry},
     TITLE = {Multiplicative vertex algebras and quantum loop algebras},
   JOURNAL = {J. Lond. Math. Soc. (2)},
  FJOURNAL = {Journal of the London Mathematical Society. Second Series},
    VOLUME = {112},
      YEAR = {2025},
    NUMBER = {2},
     PAGES = {Paper No. e70270, 48},
      ISSN = {0024-6107,1469-7750},
   MRCLASS = {17B69 (14C35 14D23 17B37)},
  MRNUMBER = {4947743},
       DOI = {10.1112/jlms.70270},
       URL = {https://doi.org/10.1112/jlms.70270},
}

@article{kinjo2024coha,
  title={Cohomological Hall algebras for 3-Calabi-Yau categories},
  author={Kinjo, Tasuki and Park, Hyeonjun and Safronov, Pavel},
  journal={arXiv preprint arXiv:2406.12838},
  year={2024}
}

@article{efimov2012cohomological,
  title={Cohomological Hall algebra of a symmetric quiver},
  author={Efimov, Alexander I},
  journal={Compositio Mathematica},
  volume={148},
  number={4},
  pages={1133--1146},
  year={2012},
  publisher={London Mathematical Society},
  url={https://arxiv.org/abs/1103.2736}
}

@misc{al_thesis,
      title={Vertex algebras, moduli stacks, cohomological Hall algebras and quantum groups}, 
      author={Alexei Latyntsev},
journal={PhD Thesis},
      year={2022},
      url={https://people.maths.ox.ac.uk/joyce/theses/LatyntsevDPhil.pdf}, 
}

@misc{davison2022affine,
      title={Affine BPS algebras, W algebras, and the cohomological Hall algebra of $\mathbb{A}^2$}, 
      author={Ben Davison},
      year={2025},
      eprint={2209.05971},
      archivePrefix={arXiv},
      primaryClass={math.RT},
      url={https://arxiv.org/abs/2209.05971}, 
}

@misc{schiffmann2024s,
      title={Cohomological Hall algebras of quivers and Yangians}, 
      author={Olivier Schiffmann and Eric Vasserot},
      year={2024},
      eprint={2312.15803},
      archivePrefix={arXiv},
      primaryClass={math.RT},
      url={https://arxiv.org/abs/2312.15803}, 
}

@article{latyntsev2021cohomological,
  title={Cohomological Hall algebras and vertex algebras},
  author={Latyntsev, Alexei},
  journal={arXiv preprint arXiv:2110.14356},
  year={2021},
  url={https://arxiv.org/abs/2110.14356}
}

@inproceedings{Dr2,
  title={A new realization of Yangians and of quantum affine algebras},
  author={Drinfeld, Vladimir G},
  booktitle={Dokl. Akad. Nauk SSSR},
  volume={296},
  pages={13--17},
  year={1987}
}

@article{Soibelman,
  title={Meromorphic tensor categories},
  author={Soibelman, Yan},
  journal={arXiv preprint q-alg/9709030},
  year={1997},
  url={https://arxiv.org/abs/q-alg/9709030}
}

@article {lim_bojko_moreira,
    AUTHOR = {Bojko, Arkadij and Lim, Woonam and Moreira, Miguel},
     TITLE = {Virasoro constraints for moduli of sheaves and vertex
              algebras},
   JOURNAL = {Invent. Math.},
  FJOURNAL = {Inventiones Mathematicae},
    VOLUME = {236},
      YEAR = {2024},
    NUMBER = {1},
     PAGES = {387--476},
      ISSN = {0020-9910,1432-1297},
   MRCLASS = {14F08 (14H60 14J60 17B68 17B69)},
  MRNUMBER = {4712868},
MRREVIEWER = {Hsian-Hua\ Tseng},
       DOI = {10.1007/s00222-024-01245-5},
       URL = {https://doi.org/10.1007/s00222-024-01245-5},
}

@article {bu_counting_sheaves,
    AUTHOR = {Bu, Chenjing},
     TITLE = {Counting sheaves on curves},
   JOURNAL = {Adv. Math.},
  FJOURNAL = {Advances in Mathematics},
    VOLUME = {434},
      YEAR = {2023},
     PAGES = {Paper No. 109334, 87},
      ISSN = {0001-8708,1090-2082},
   MRCLASS = {14N35 (14D23 14H60 14N10)},
  MRNUMBER = {4650626},
MRREVIEWER = {Hsian-Hua\ Tseng},
       DOI = {10.1016/j.aim.2023.109334},
       URL = {https://doi.org/10.1016/j.aim.2023.109334},
}

@article {gross_joyce_tanaka,
    AUTHOR = {Gross, Jacob and Joyce, Dominic and Tanaka, Yuuji},
     TITLE = {Universal structures in {$\Bbb C$}-linear enumerative
              invariant theories},
   JOURNAL = {SIGMA Symmetry Integrability Geom. Methods Appl.},
  FJOURNAL = {SIGMA. Symmetry, Integrability and Geometry. Methods and
              Applications},
    VOLUME = {18},
      YEAR = {2022},
     PAGES = {Paper No. 068, 61},
      ISSN = {1815-0659},
   MRCLASS = {14D20 (16G20 17B69)},
  MRNUMBER = {4489089},
MRREVIEWER = {Jon\ Eivind\ Vatne},
       DOI = {10.3842/SIGMA.2022.068},
       URL = {https://doi.org/10.3842/SIGMA.2022.068},
}

@misc{joyce2021enumerativeinvariantswallcrossingformulae,
      title={Enumerative invariants and wall-crossing formulae in abelian categories}, 
      author={Dominic Joyce},
      year={2021},
      eprint={2111.04694},
      archivePrefix={arXiv},
      primaryClass={math.AG},
      url={https://arxiv.org/abs/2111.04694}, 
}

@misc{liu2025equivariantktheoreticenumerativeinvariants,
      title={Equivariant K-theoretic enumerative invariants and wall-crossing formulae in abelian categories}, 
      author={Henry Liu},
      year={2025},
      eprint={2207.13546},
      archivePrefix={arXiv},
      primaryClass={math.AG},
      url={https://arxiv.org/abs/2207.13546}, 
}

@article{andruskiewitsch2002pointed,
  title={Pointed hopf algebras},
  author={Andruskiewitsch, Nicol{\'a}s and Schneider, Hans-J{\"u}rgen},
  journal={New directions in Hopf algebras},
  volume={43},
  pages={1--68},
  year={2002}
}

\end{document}